\documentclass[10pt,twoside,a4paper,english,reqno]{amsart}

\usepackage{amscd}
\usepackage{caption}
\usepackage{a4wide}
\usepackage{amssymb,amsthm,amsmath,amsfonts,mathrsfs}
\usepackage{bm,latexsym,enumitem,dsfont,tabularx,graphicx,url}
\usepackage[utf8]{inputenc}
\usepackage{hyperref}
\usepackage{stmaryrd}
\usepackage{graphicx}
\usepackage{subcaption}
\usepackage{algorithm}

\usepackage{algpseudocode}
\hypersetup{
    colorlinks=true,
    linkcolor=blue,
    filecolor=red,
    urlcolor=blue,
    citecolor=red,
}
\usepackage{nicefrac,microtype,upref,ulem,doi,footmisc}
\usepackage[svgnames]{xcolor}
\usepackage{tikz}
\usepackage{orcidlink}

\usepackage{booktabs}
\usepackage{makecell}
\usepackage{pdflscape}
\usepackage{siunitx}

\usepackage{placeins}

\algrenewcommand\algorithmicrequire{\textbf{Input:}}
\algrenewcommand\algorithmicensure{\textbf{Output:}}

\theoremstyle{plain}
\newtheorem{theorem}{Theorem}[section]
\newtheorem{proposition}[theorem]{Proposition}
\newtheorem{lemma}[theorem]{Lemma}

\newtheorem{remark}[theorem]{Remark}
\newtheorem{assumption}[theorem]{Assumption}
\newtheorem{corollary}[theorem]{Corollary}
\numberwithin{equation}{section}

\algrenewcommand\algorithmicrequire{\textbf{Require:}}
\algrenewcommand\algorithmicensure{\textbf{Ensure:}}
\algrenewcommand\algorithmicfor{\textbf{For}}
\algrenewcommand\algorithmicdo{\textbf{do}}
\algrenewcommand\algorithmicif{\textbf{If}}
\algrenewcommand\algorithmicthen{\textbf{then}}
\algrenewcommand\algorithmicend{\textbf{end}}

\usepackage{xcolor}
\definecolor{lateksii_color}{RGB}{155,0,119}

\title[A FEM for inverse fractional conductivity]{A finite element exterior-to-interior reconstruction in the fractional conductivity inverse problem}

\author[M. Dwivedi]{Mukul Dwivedi\textsuperscript{1,*}\,\orcidlink{0000-0002-5891-2788}}
\address{Department of Mathematics, Saarland University, Saarbr\"ucken, Germany}
\email{mukul.dwivedi@uni-saarland.de}

\author[J. Railo]{Jesse Railo\textsuperscript{2}\,\orcidlink{0000-0001-9226-4190}}
\address{Computational Engineering, School of Engineering Sciences, Lappeenranta-Lahti University of Technology, Finland}
\email{jesse.railo@lut.fi}

\author[A. Rupp]{Andreas Rupp\textsuperscript{1}\,\orcidlink{0000-0001-5527-7187}}
\address{Department of Mathematics, Saarland University, Saarbr\"ucken, Germany}
\email{andreas.rupp@uni-saarland.de}

\makeatletter
\g@addto@macro\@setauthors{\par\nobreak\begingroup\centering\footnotesize
\textsuperscript{1}Department of Mathematics, Saarland University, Saarbr\"ucken, Germany\par
\textsuperscript{2}Computational Engineering, School of Engineering Sciences,\par
Lappeenranta-Lahti University of Technology, Finland\par\smallskip
\textsuperscript{*}Corresponding author: Mukul Dwivedi, \href{mailto:mukul.dwivedi@uni-saarland.de}{\texttt{mukul.dwivedi@uni-saarland.de}}\par
Jesse Railo: \href{mailto:jesse.railo@lut.fi}{\texttt{jesse.railo@lut.fi}}; Andreas Rupp: \href{mailto:andreas.rupp@uni-saarland.de}{\texttt{andreas.rupp@uni-saarland.de}}\par\endgroup}
\renewcommand{\@setaddresses}{}
\makeatother

\subjclass[2020]{35R30, 65N30, 65J20, 65N15, 26A33}
\keywords{Fractional conductivity equation, exterior-to-interior determination, finite element method, Tikhonov regularization, Cauchy-continuation problem}

\newcommand{\R}{\mathbb{R}}

\newcommand{\supp}{\operatorname{supp}}

\begin{document}

\begin{abstract}
We develop a regularized finite element method for the fractional conductivity equation, recovering the conductivity on an exterior observation region from noisy Dirichlet-to-Neumann measurements and using this reconstruction to determine the interior conductivity. Mesh-localized inputs and mass-lumped Tikhonov regularization discretize the exterior determination principle of Covi--Railo--Zimmermann (2026, Calc. Var. PDE). We prove exterior \(L^2\)- and \(L^\infty\)-convergence with error estimates, including the logarithmic correction at the critical H\"older exponent. The exterior reconstruction enables the Liouville reduction to the Calder\'on problem for the fractional Schr\"odinger equation. Separated measurements recover the interior potential, after which overlapping measurements determine the exterior potential tail. Together with the recovered exterior conductivity, the tail supplies the exterior Cauchy data for a Tikhonov-regularized finite element least-squares continuation. We prove \(H^s\)-convergence of the transformed conductivity and \(L^2\)-convergence of the interior conductivity, together with logarithmic state-bias and conductivity error estimates for full observation spaces. The admissible reconstructions remain uniformly positive and bounded. We present one- and two-dimensional numerical experiments illustrating the accuracy of the exterior and interior reconstructions and their sensitivity to measurement noise and discretization.
\end{abstract}

\maketitle

\section{Introduction}
The internal structure of a heterogeneous medium leaves a measurable signature in the potential field it generates. The classical Calder\'on problem captures this using the local conductivity equation \(\nabla\!\cdot\!(\gamma\nabla u)=0\), the core model for electrical impedance tomography; see, for instance, \cite{Calderon1980,Uhlmann2009}. Its constitutive law is pointwise: flux depends only on the gradient at the same location. When transport involves long-range jumps or direct interactions between separated points, such locality breaks down, and the conductivity law must encode pairwise exchanges across distance \cite{Covi2020Inverse}. This motivates the fractional conductivity equation
\begin{equation}\label{eq:frac-cond}
    \operatorname{div}_{s}\!\left(\Theta_{\gamma}\nabla^{s}u\right)=0 \;\; \text{in } \Omega,\qquad u=f \;\; \text{in } \Omega_{e}:=\mathbb{R}^{d}\setminus\overline{\Omega},
\end{equation}
where \(d\geq1\), \(0<s<\min\{1,d/2\}\), and \(\Omega\subset\mathbb{R}^{d}\) is open and bounded in one direction. The unknown \(u\) denotes the state of the system, \(f\) is a prescribed exterior value, and \(\gamma\in L^{\infty}(\mathbb{R}^{d})\) is a uniformly positive isotropic conductivity. The corresponding interaction operator is
\begin{equation}
    \Theta_{\gamma}(x,y)
    :=
    \gamma(x)^{1/2}\gamma(y)^{1/2},
    \qquad (x,y)\in\mathbb{R}^{2d},
    \label{eq:Cond_matrix}
\end{equation}
and the fractional gradient is the two-point vector field
\begin{equation}
    \nabla^{s}u(x,y)
    :=
    \sqrt{\frac{C_{d,s}}{2}}\,
    \frac{u(x)-u(y)}
         {|x-y|^{d/2+s+1}}(x-y),
    \qquad (x,y)\in\mathbb{R}^{2d},
    \label{eq:frac_grad}
\end{equation}
where \(C_{d,s}>0\) is the standard fractional-Laplacian normalization constant; see for example \cite{Covi2020Inverse,covi2022global}. The fractional divergence \(\operatorname{div}_{s}\) is defined as the adjoint of \(\nabla^{s}\), and the identity \(\operatorname{div}_{s}\nabla^{s}=(-\Delta)^{s}\) connects this nonlocal vector calculus with the fractional Laplacian. If \(u_f\) denotes the solution with exterior value \(f\), the measurements are represented by the exterior Dirichlet-to-Neumann (DN) map
\begin{equation}\label{eq:sec2-DN}
    \langle\Lambda_{\gamma}f,g\rangle := B_{\gamma}(u_f,g),
    \qquad B_{\gamma}(u,v) := \iint_{\R^{2d}}
    \Theta_{\gamma}(x,y)\nabla^{s}u(x,y)\cdot\nabla^{s}v(x,y)\,dx\,dy,
\end{equation}
where \(\langle \cdot,\cdot \rangle\) is the duality pairing; see Section~\ref{sec:2}. The inverse problem is to recover \(\gamma\), globally or on an exterior set, from full or partial knowledge of these pairings. Here \(\gamma(x)^{1/2}\gamma(y)^{1/2}\) acts as a pairwise conductivity, while the kernel \(|x-y|^{-d-2s}\) controls how the interaction decays with distance. The order \(s\) therefore determines the interaction range, with smaller values giving greater relative weight to distant exchanges. Du et al. \cite{Du2012NonlocalDiffusion,Du2013NonlocalVectorCalculus} developed the nonlocal vector calculus and volume-constrained diffusion framework underlying these two-point operators. Valdinoci \cite{Valdinoci2009LongJump} derived the fractional Laplacian from long-jump random walks, giving the interaction kernel a probabilistic interpretation. Under suitable regularity and normalization assumptions, Covi \cite{Covi2020Inverse} showed that this weighted long-jump model approaches the classical conductivity equation as \(s\uparrow1\).

For the fractional Schr\"odinger equation, Ghosh, Salo, and Uhlmann \cite{Ghosh2020calderon} proved partial-data uniqueness using unique continuation and Runge approximation. Ghosh, R\"uland, Salo, and Uhlmann \cite{Ghosh2020uniqueness} established single-measurement uniqueness and reconstruction; Propositions~6.1 and~6.2 give supported Cauchy stability and instability under their stated geometric assumptions. R\"uland and Salo \cite{RulandSalo2020low} proved low-regularity uniqueness and logarithmic stability, while R\"uland \cite{Ruland2021single} obtained single-measurement logarithmic stability for H\"older potentials supported in a fixed interior subdomain. Covi \cite{Covi2020Inverse} introduced the conductivity equation and Liouville reduction, with uniqueness and reconstruction for exterior-trivial conductivities. Railo and Zimmermann \cite{railo2023fractional} treated domains bounded in one direction. Covi et al.\ \cite{covi2022global} established global uniqueness, exterior reconstruction, and exterior Lipschitz stability for nontrivial exterior conductivities. Railo and Zimmermann \cite{RailoZimmermann2023counterexamples,RailoZimmermann2024low} gave counterexamples for disjoint measurement sets and reduced the regularity requirements. Interior stability and instability are studied in \cite{Covi2024stability,BaersCoviRuland2025instability,CoviKujanpaaRailo2027partial}: the partial-data estimate in \cite{CoviKujanpaaRailo2027partial} is log--log when the conductivity difference is compactly supported away from the measurement set and logarithmic when the conductivities agree throughout the exterior. See \cite{covi2025survey} for a survey. Kow et al.\ \cite{KowNurminenRailo2025Bayesian} proved posterior consistency and rates for noisy discrete single-measurement observations. Rinaldo and Warma \cite{RinaldoWarma2026Reconstruction} studied constructive recovery from localized fractional Cauchy data, including nonlocal Robin inverse problems and numerical continuation. Conforming finite element approximation and singular-integral assembly for the integral fractional Laplacian are studied in \cite{AcostaBorthagaray2017,Acosta2017short}; nonhomogeneous exterior Dirichlet data are treated in \cite{AcostaBorthagarayHeuer2019}. Li \cite{Li2025numerical} developed a finite-difference and Tikhonov reconstruction of the fractional Schr\"odinger potential from a single measurement, while Dwivedi et al.\ \cite{DwivediRailoRuppFCP} developed a Galerkin--Tikhonov reconstruction for that potential problem.

In this paper, we develop a finite element method for reconstructing the conductivity \(\gamma\). The exterior determination formula is explicit and Lipschitz stable but uses ideal concentrating sequences \cite{covi2022global}. Interior recovery has logarithmic-type stability and exponential instability under the respective geometric and admissibility assumptions \cite{Covi2024stability,CoviKujanpaaRailo2027partial}. We develop a finite element reconstruction from noisy, finite-dimensional DN data, combining exterior determination with regularized interior continuation.

Let \(W_0\Subset W\Subset\Omega_e\) be bounded open sets. Write \(W^s(\mathbb R^d)=H^s(\mathbb R^d)\) with energy norm \(\|v\|_{W^s}^2=\|v\|_{L^2(\mathbb R^d)}^2+\|\nabla^sv\|_{L^2(\mathbb R^{2d})}^2\). Mesh-localized exterior inputs normalized in \(W^s(\mathbb R^d)\) provide nodal conductivity samples, which determine a mass-lumped \(H^1\)-Tikhonov reconstruction on \(W_0\). Clipping and local smoothing approximate the Liouville multiplier \(a_\gamma=\gamma^{-1/2}\). The Liouville reduction introduces \(m_\gamma=\sqrt\gamma-1\) and \(q_\gamma=-(-\Delta)^s m_\gamma/(1+m_\gamma)\). Separated measurements recover the Schr\"odinger states and \(Q_\gamma=q_\gamma|_\Omega\) by regularized continuation and residual fitting \cite{Ghosh2020calderon,DwivediRailoRuppFCP}. On an open set \(W_c\Subset W_0\), overlapping measurements recover \(q_\gamma|_{W_c}\), hence \((-\Delta)^s m_\gamma|_{W_c}\). A final regularized Cauchy problem reconstructs \(m_\gamma\), and admissible projection yields the interior conductivity.

The main contributions are as follows; \(h\) denotes the mesh size and \(\delta\) the total data discrepancy.
\begin{enumerate}[label=(\roman*),leftmargin=*,itemsep=0.35em,topsep=0.25em]
\item \textbf{Finite element concentration.} We construct normalized, mesh-localized inputs and quantify their concentration error, including the critical H\"older logarithm and the interpolation contribution.
\item \textbf{Exterior reconstruction.} Mass-lumped \(H^1\)-Tikhonov regularization gives a symmetric positive definite system and \(L^2(W_0)\)- and \(L^\infty(W_0)\)-error bounds separating concentration, approximation, regularization, and data discrepancy.
\item \textbf{Interior reconstruction.} Separated measurements recover the Schr\"odinger states and interior potential, while overlapping measurements recover the exterior tail. Regularized Liouville--Cauchy continuation then reconstructs the conductivity.
\item \textbf{Error propagation and convergence.} We propagate the errors between reconstruction stages and prove convergence under explicit approximation and parameter conditions. Full observation spaces additionally give logarithmic state-bias and conductivity estimates.
\end{enumerate}

The following two theorems summarize the exterior and interior conclusions. We use fitted, shape-regular, quasi-uniform meshes on a bounded polyhedral Lipschitz computational domain \(\Omega_R\) containing \(\overline\Omega\cup\overline W\), with the finite element spaces specified in Sections~\ref{sec3.1} and~\ref{sec:interior-cauchy}.

For the exterior reconstruction, let \(k\ge1\) be the degree used to interpolate the concentrating inputs, let \(\sigma_I>0\) satisfy \(d/2<s+\sigma_I\le k+1\), and choose \(N(h)=\lfloor h^{-\theta_N}\rfloor\), \(0<\theta_N<1\). At each reconstruction node \(x_i\in\overline{W_0}\), \(i=1,\ldots,N_h\), let \(\phi_{i,N(h)}\) be the \(W^s(\mathbb R^d)\)-normalized smooth concentrating profile of Lemma~\ref{lem:uniform-crz-concentrating}; \(N_h\) is the number of reconstruction nodes. For the H\"older exponent \(0<\beta_c\le1\), define
\begin{equation}\label{eq:concentration-mesh-modulus}
\lambda_c:=\theta_N\min\{\beta_c,2s\},\qquad \mathfrak c_h:=\begin{cases}h^{\lambda_c}(1+|\log h|),&\beta_c=2s,\\ h^{\lambda_c},&\beta_c\ne2s,\end{cases}
\end{equation}
and define the ideal-profile concentration error by
\begin{equation}\label{eq:ideal-profile-concentration-error}
\eta_\phi(h):=\max_{1\le i\le N_h}\left|\langle\Lambda_\gamma\phi_{i,N(h)},\phi_{i,N(h)}\rangle-\gamma(x_i)\right|.
\end{equation}

\begin{theorem}[Exterior reconstruction and maximum-norm convergence]\label{cor:linfty-fully-discrete-rate}
Assume \(0<\gamma_0\le\gamma\le\gamma_1<\infty\) almost everywhere in \(\mathbb R^d\), \(\gamma\in C^{0,\beta_c}(\overline W)\), and \(\gamma|_{W_0}\in H^2(W_0)\cap W^{1,\infty}(W_0)\). Assume the interpolation estimates of Assumption~\ref{ass:interp-concentrating-data} and the sample discrepancy bound \eqref{eq:sample-noise-Dih}. Let \(\gamma_h^\delta\) be the mass-lumped Tikhonov minimizer \eqref{eq:def-recon-minimizer}, with parameter \(\alpha>0\). If \(\alpha C_Bh^{-2}\le q<1\) for fixed \(q\), where \(C_B\) is the mesh-family stability constant, independent of \(h\), from Lemma~\ref{lem:linfty-stab}, then, for sufficiently small \(h\),
\begin{equation*}
\|\gamma_h^\delta-\gamma\|_{L^\infty(W_0)}\le C\bigl(\eta_\phi(h)+\mathfrak c_hh^{(1-\theta_N)\sigma_I}+\delta+\alpha h^{-1}+h^{\beta_c}\bigr).
\end{equation*}
The ideal-profile error satisfies \(\eta_\phi(h)\le C\mathfrak c_h\). In particular, if \(0<\alpha(h)\le c_\alpha h^2\) with a fixed constant \(c_\alpha>0\) satisfying \(c_\alpha C_B<1\), then
\begin{equation}\label{eq:linfty_rate_sim}
\|\gamma_h^\delta-\gamma\|_{L^\infty(W_0)}\le C\bigl(\mathfrak c_h+\delta+h^{\beta_c}\bigr).
\end{equation}
Consequently, \(\gamma_h^\delta\to\gamma\) in \(L^\infty(W_0)\) whenever \(h\to0\) and \(\delta(h)\to0\). If \(\delta=\mathcal O(h^{\lambda_\delta})\), \(\lambda_\delta>0\), then the error is \(\mathcal O(\mathfrak c_h+h^{\lambda_\delta})\). The constants are independent of \(h\) and \(\delta\), but may depend on the fixed degree \(k\) and interpolation gain \(\sigma_I\).
\end{theorem}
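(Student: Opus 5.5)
The bound will come from splitting the error at the nodes and away from them. Let \(I_h\gamma\) be the nodal interpolant of \(\gamma\) on the reconstruction mesh over \(\overline{W_0}\), and let \(\tilde\gamma_i\) be the noisy nodal samples fed into the Tikhonov functional \eqref{eq:def-recon-minimizer}. We write
\[
\gamma_h^\delta-\gamma=(\gamma_h^\delta-\gamma_h^{0})+(\gamma_h^{0}-I_h\gamma)+(I_h\gamma-\gamma),
\]
where \(\gamma_h^{0}\) is the mass-lumped Tikhonov minimizer with exact nodal data \(\gamma(x_i)\). The last term is at most \(Ch^{\beta_c}\), since \(\gamma\in C^{0,\beta_c}(\overline W)\) and the nodal interpolant is stable in \(L^\infty\). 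The first and second terms are finite element functions, so their \(L^\infty\) norms equal their maximal nodal values.

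\emph{Step 1 (sample error).} We bound \(\max_i|\tilde\gamma_i-\gamma(x_i)|\). Write \(\tilde\gamma_i-\gamma(x_i)\) as the sum of three pieces. The first is the data discrepancy \(\delta\), controlled by \eqref{eq:sample-noise-Dih}. The second is the ideal concentration error \(\eta_\phi(h)\). The third is the error from replacing \(\phi_{i,N(h)}\) by its degree-\(k\) interpolant. For this third piece we use bilinearity and boundedness of \(\Lambda_\gamma\) on \(W^s\):
\[
|\langle\Lambda_\gamma\phi,\phi\rangle-\langle\Lambda_\gamma I\phi,I\phi\rangle|\le C\|\phi-I\phi\|_{W^s}\,(\|\phi\|_{W^s}+\|I\phi\|_{W^s}).
\]
Assumption~\ref{ass:interp-concentrating-data} should give \(\|\phi-I\phi\|_{W^s}\lesssim \mathfrak c_h h^{(1-\theta_N)\sigma_I}\). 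The reason is that the profile has scale \(N^{-1}\sim h^{\theta_N}\), so the interpolation gain is \((hN)^{\sigma_I}=h^{(1-\theta_N)\sigma_I}\). The claim \(\eta_\phi(h)\le C\mathfrak c_h\) comes from Lemma~\ref{lem:uniform-crz-concentrating}. That lemma gives a uniform-in-\(x_i\) concentration estimate of order \(N^{-\min\{\beta_c,2s\}}\), with a \(\log N\) factor when \(\beta_c=2s\). Substituting \(N=h^{-\theta_N}\) turns this into \(\mathfrak c_h\).

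\emph{Step 2 (Tikhonov stability and bias).} The discrete Euler--Lagrange system is \((M_L+\alpha K)g=M_L\tilde\gamma\), with \(M_L\) the lumped mass matrix and \(K\) the stiffness matrix. Write it as \(g=(I+\alpha M_L^{-1}K)^{-1}\tilde\gamma\). Lemma~\ref{lem:linfty-stab} gives \(\|\alpha M_L^{-1}K\|_{\infty}\le\alpha C_Bh^{-2}\le q<1\). A Neumann series then yields \(\|(I+\alpha M_L^{-1}K)^{-1}\|_\infty\le(1-q)^{-1}\). Applying this to the data difference gives \(\|\gamma_h^\delta-\gamma_h^0\|_{L^\infty}\le C(\delta+\eta_\phi+\mathfrak c_hh^{(1-\theta_N)\sigma_I})\).

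For the bias, note that \(\gamma_h^0-I_h\gamma=-(I+\alpha M_L^{-1}K)^{-1}\alpha M_L^{-1}K\,I_h\gamma\). We must show \(\|\alpha M_L^{-1}K I_h\gamma\|_\infty\le C\alpha h^{-1}\). Row \(i\) of \(KI_h\gamma\) equals \(\int\nabla I_h\gamma\cdot\nabla\varphi_i\). By the \(W^{1,\infty}\) stability of the interpolant, this is bounded by \(\|\gamma\|_{W^{1,\infty}}\|\nabla\varphi_i\|_{L^1}\lesssim h^{d-1}\). Dividing by the lumped mass \(\sim h^d\) gives \(h^{-1}\).

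\emph{Main obstacle.} Lemma~\ref{lem:linfty-stab} only provides stability; the delicate part is this uniform nodal bias bound on a quasi-uniform mesh. Near \(\partial W_0\) the natural boundary condition of the \(H^1\) penalty gives no cancellation, so the crude \(h^{-1}\) rate is what survives. The \(H^2\) regularity would only improve the bound in the interior, so I would not try to exploit it and would accept \(\alpha h^{-1}\).

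\emph{Conclusion.} Collecting the three terms yields the stated estimate. If \(\alpha\le c_\alpha h^2\) with \(c_\alpha C_B<1\), then the hypothesis \(\alpha C_Bh^{-2}\le q<1\) holds with \(q=c_\alpha C_B\), and \(\alpha h^{-1}\le c_\alpha h\le h^{\beta_c}\) because \(\beta_c\le1\). In addition, \(\eta_\phi\le C\mathfrak c_h\) and \(\mathfrak c_hh^{(1-\theta_N)\sigma_I}\le\mathfrak c_h\), which gives \eqref{eq:linfty_rate_sim}. Convergence as \(h,\delta\to0\) follows, and the rate \(\mathcal O(\mathfrak c_h+h^{\lambda_\delta})\) follows because \(h^{\beta_c}\le \mathfrak c_h\), using \(\lambda_c\le\beta_c\).
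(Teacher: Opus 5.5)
Your Step~2 and the final bookkeeping follow the paper's route: the Neumann series from Lemma~\ref{lem:linfty-stab}, the row-wise bias bound of Lemma~\ref{lem:regularization-bias-consistency}, and the $h^{\beta_c}$ interpolation term. Step~1, however, has a genuine gap.

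Assumption~\ref{ass:interp-concentrating-data} with \eqref{eq:phi-Hst-rate} gives only $\|I_h^W\phi_{i,N(h)}-\phi_{i,N(h)}\|_{W^s(\R^d)}\le C(hN(h))^{\sigma_I}\le Ch^{(1-\theta_N)\sigma_I}$. There is no factor $\mathfrak c_h$, and for a profile of scale $N^{-1}$ the interpolation error really is of this order. Your bilinearity bound therefore perturbs each sample by $O(h^{(1-\theta_N)\sigma_I})$. That proves neither the first displayed estimate nor \eqref{eq:linfty_rate_sim}. Whenever $(1-\theta_N)\sigma_I<\lambda_c$ (for instance, $\theta_N$ close to $1$), one has $h^{(1-\theta_N)\sigma_I}\gg\mathfrak c_h$, so your step ``$\mathfrak c_hh^{(1-\theta_N)\sigma_I}\le\mathfrak c_h$'' rests on a factor you never obtained.

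The missing idea is to compare the two \emph{normalized} inputs through the shifted form $\mathcal B_i(v,w):=\langle\Lambda_\gamma v,w\rangle-\gamma(x_i)(v,w)_{W^s(\R^d)}$. By homogeneity and polarization of \eqref{eq:localized-holder-concentration}, $|\mathcal B_i(v,w)|\le C\Psi_{\beta_c,s}(r)\|v\|_{W^s(\R^d)}\|w\|_{W^s(\R^d)}$ for $v,w$ supported in $B(x_i,r)$. Since $\|g_{i,h}\|_{W^s(\R^d)}=\|\phi_{i,N(h)}\|_{W^s(\R^d)}=1$, the $\gamma(x_i)$-terms cancel. This gives $\langle\Lambda_\gamma g_{i,h},g_{i,h}\rangle-\langle\Lambda_\gamma\phi_{i,N(h)},\phi_{i,N(h)}\rangle=\mathcal B_i(g_{i,h}-\phi_{i,N(h)},g_{i,h}+\phi_{i,N(h)})=O(\mathfrak c_hh^{(1-\theta_N)\sigma_I})$, with $r\le C(N(h)^{-1}+h)$. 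The normalization is essential: with the raw interpolant, the term $\gamma(x_i)(\|I_h^W\phi_{i,N(h)}\|_{W^s(\R^d)}^2-1)$ is again only $O(h^{(1-\theta_N)\sigma_I})$. For \eqref{eq:linfty_rate_sim} alone, you could instead apply the concentration estimate directly to $g_{i,h}$, whose support lies in a ball of radius $O(h^{\theta_N})$, and bypass $\eta_\phi$ altogether.

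There is a second gap. The bound $\eta_\phi(h)\le C\mathfrak c_h$ is not contained in Lemma~\ref{lem:uniform-crz-concentrating}, which only gives normalization, support radius, and $H^{s+\rho}$ bounds. The concentration rate has to be proved, and the $\mathcal B_i$ bound above depends on it. For normalized $g$ supported in $B(x,r)$, the argument runs as follows.
\begin{enumerate}
\item The exterior energy estimate \eqref{eq:crz-exterior-energy} gives $0\le B_\gamma(g,g)-\langle\Lambda_\gamma g,g\rangle\le C\|g\|_{L^2(\R^d)}^2\le Cr^{2s}$.
\item Rewrite $B_\gamma(g,g)-\gamma(x)$ using the normalization, as in \eqref{eq:Bgg-minus-gamma}.
\item Split $\iint|\sqrt{\gamma(y)\gamma(z)}-\gamma(x)|\,|\nabla^sg(y,z)|^2\,dy\,dz$ into the part over $B(x,2r)^2$, which is of order $r^{\beta_c}$, and the part with one point outside $B(x,2r)$. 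Integration over annuli bounds the latter by $Cr^{2s}\bigl(1+\int_{2r}^{d_W}t^{\beta_c-2s-1}\,dt\bigr)$.
\end{enumerate}
This produces $\Psi_{\beta_c,s}$, including the critical logarithm at $\beta_c=2s$ (Corollary~\ref{cor:eta-c-explicit-rate}). With $r\le Ch^{\theta_N}$ it yields $\mathfrak c_h$.
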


For the interior reconstruction, let \(V_h^R\) be the conforming degree-\(k\) finite element space with zero boundary values on \(\Omega_R\), extended by zero to \(\mathbb R^d\), and let \(\Pi_h^c\) be the \(L^2(W_c)\)-orthogonal projection onto the full fitted Cauchy observation space \(V_h^c\) defined in Section~\ref{sec:interior-cauchy}. The clipped potential \(\widehat Q_h^\Omega\), exterior value \(m_h^W\), and recovered fractional-Laplacian datum \(\tau_h^c\) approximate \(Q_\gamma\), \(m_\gamma|_{W_c}\), and \(\tau_\gamma:=(-\Delta)^sm_\gamma|_{W_c}\), respectively. The constructions are given in \eqref{eq:tail-clipped-potential}, \eqref{eq:ah-mh}, and \eqref{eq:h-c-definition}. With fixed positive Cauchy residual weights \(\alpha_v\) and \(\alpha_\tau\), define
\begin{equation}\label{eq:final-combined-input-error}
\begin{aligned}E_h^{\rm in}:=\|\widehat Q_h^\Omega-Q_\gamma\|_{L^{d/(2s)}(\Omega)}+\sqrt{\alpha_v}\,\|\Pi_h^c((m_h^W-m_\gamma)|_{W_c})\|_{L^2(W_c)}+\sqrt{\alpha_\tau}\,\|\tau_h^c-\Pi_h^c\tau_\gamma\|_{L^2(W_c)}.\end{aligned}
\end{equation}
Let \(m_h\) minimize the Cauchy functional \eqref{eq:m-cauchy-functional} with coercive parameter \(\alpha_m>0\), and let \(\gamma_h^\Omega\) be the clipped and squared reconstruction \eqref{eq:gamma-interior-reconstruction}. 

\begin{theorem}[Interior convergence and logarithmic error estimate]\label{cor:final-interior-logarithmic}
Suppose that Assumption~\ref{ass:liouville-regularity} and the approximation properties \eqref{eq:cauchy-space-density}, \eqref{eq:cauchy-projection-consistency}, and \eqref{eq:VR-density} hold, and use the full observation spaces of Proposition~\ref{prop:discrete-cauchy-logarithmic}. Choose \(m_{\gamma,h}\in V_h^R\) and set
\[
a_{m,h}:=\|m_{\gamma,h}-m_\gamma\|_{W^s(\mathbb R^d)},\qquad \eta_{{\rm C},h}:=a_{m,h}+E_h^{\rm in}+\sqrt{\alpha_m}+h^s.
\]
If \(\alpha_m\to0\), \(a_{m,h}^2/\alpha_m\to0\), and \(E_h^{\rm in}/\sqrt{\alpha_m}\to0\) as \(h\to0\), then \(m_h\to m_\gamma\) in \(W^s(\mathbb R^d)\) and \(\gamma_h^\Omega\to\gamma\) in \(L^2(\Omega)\). Moreover, for sufficiently small \(h\),
\begin{equation}\label{eq:final-interior-logarithmic}
\|\gamma_h^\Omega-\gamma\|_{L^2(\Omega)}\le C[\log(2+\eta_{{\rm C},h}^{-1})]^{-\sigma}.
\end{equation}
The constants \(C,\sigma>0\) are independent of \(h,\delta,\alpha_m\) along the parameter coupling; the logarithmic exponent \(\sigma\) is distinct from the interpolation gain \(\sigma_I\).
\end{theorem}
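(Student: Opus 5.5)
The plan is the standard Tikhonov argument: compare the minimizer \(m_h\) of \eqref{eq:m-cauchy-functional} with the competitor \(m_{\gamma,h}\in V_h^R\), turn the energy comparison into \(W^s\)-convergence through a compactness/uniqueness step, then apply a conditional logarithmic stability estimate to the state bias. I write the Cauchy functional as
\[
J_h(v)=\|\text{equation residual on }\Omega\text{ with }\widehat Q_h^\Omega\|^2+\alpha_v\|\Pi_h^c(v-m_h^W)\|_{L^2(W_c)}^2+\alpha_\tau\|\Pi_h^c(-\Delta)^sv-\tau_h^c\|_{L^2(W_c)}^2+\alpha_m\|v\|_{W^s}^2 ,
\]
with the residual measured in the dual norm \(H^{-s}(\Omega)\). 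The exact \(m_\gamma\) satisfies \((-\Delta)^s m_\gamma+Q_\gamma(1+m_\gamma)=0\) in \(\Omega\), because \(q_\gamma=-(-\Delta)^sm_\gamma/(1+m_\gamma)\).

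\textbf{Step 1 (energy comparison).} Evaluate \(J_h(m_{\gamma,h})\).
\begin{itemize}
\item The equation residual of \(m_{\gamma,h}\) is bounded by \(C(a_{m,h}+\|\widehat Q_h^\Omega-Q_\gamma\|_{L^{d/(2s)}})\). This uses the Sobolev multiplier bound \(\|Qw\|_{H^{-s}}\lesssim\|Q\|_{L^{d/(2s)}}\|w\|_{H^s}\) and the uniform bounds from clipping in \eqref{eq:tail-clipped-potential}.
\item The two Cauchy residuals are bounded by \(a_{m,h}\) plus the corresponding terms of \(E_h^{\rm in}\), via \eqref{eq:cauchy-projection-consistency}.
\end{itemize}
This gives \(J_h(m_h)\le J_h(m_{\gamma,h})\le C(a_{m,h}+E_h^{\rm in})^2+\alpha_m\|m_{\gamma,h}\|_{W^s}^2\). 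Dividing by \(\alpha_m\) and using \(a_{m,h}^2/\alpha_m\to0\) and \(E_h^{\rm in}/\sqrt{\alpha_m}\to0\), I obtain:
\begin{itemize}
\item \(\limsup\|m_h\|_{W^s}\le\|m_\gamma\|_{W^s}\);
\item all three residuals of \(m_h\) tend to zero, since \(\alpha_m\to0\).
\end{itemize}

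\textbf{Step 2 (convergence).} Take a weakly convergent subsequence \(m_h\rightharpoonup m^*\) in \(H^s(\mathbb R^d)\). Using the residual bounds, \eqref{eq:cauchy-space-density} (the projections \(\Pi_h^c\) converge strongly to the identity) and \eqref{eq:VR-density}, pass to the limit:
\begin{itemize}
\item \(m^*\) solves the Schrödinger equation with potential \(Q_\gamma\) in \(\Omega\);
\item \(m^*=m_\gamma\) on \(W_c\) and \((-\Delta)^sm^*=(-\Delta)^sm_\gamma\) on \(W_c\).
\end{itemize}
The difference \(m^*-m_\gamma\) then vanishes together with its fractional Laplacian on the open set \(W_c\). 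The fractional unique continuation property (Ghosh–Salo–Uhlmann) gives \(m^*=m_\gamma\). Weak convergence together with the norm limsup yields strong \(W^s\)-convergence of the full sequence.

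Strong \(W^s\)-convergence gives \(m_h\to m_\gamma\) in \(L^2(\Omega)\). The map \(\gamma_h^\Omega=(\text{clip}(1+m_h))^2\) is Lipschitz on the admissible range, and clipping only decreases the distance to \(1+m_\gamma\) by Assumption~\ref{ass:liouville-regularity}. Hence \(\|\gamma_h^\Omega-\gamma\|_{L^2(\Omega)}\le C\|m_h-m_\gamma\|_{L^2(\Omega)}\to0\).

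\textbf{Step 3 (logarithmic rate).} Let \(w=m_h-m_\gamma\). Step 1 gives a uniform bound on \(\|w\|_{W^s}\). Because all residuals of \(m_h\) are \(O(\eta_{{\rm C},h})\), so are the residuals of \(w\), up to \(\|(\widehat Q_h^\Omega-Q_\gamma)(1+m_h)\|\le CE_h^{\rm in}\). The discretization terms are absorbed using the \(h^s\) summand, via an inverse-type or projection estimate.

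I then invoke the quantitative unique continuation of Proposition~\ref{prop:discrete-cauchy-logarithmic} for the full observation spaces. For an \(H^s\)-bounded function with small exterior Cauchy data on \(W_c\) and small Schrödinger residual, it gives
\[
\|w\|_{L^2(\Omega)}\le C[\log(2+\varepsilon^{-1})]^{-\sigma},\qquad \varepsilon\lesssim\eta_{{\rm C},h}.
\]
Combining this with the Lipschitz clipping bound from Step 2 gives \eqref{eq:final-interior-logarithmic}.

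\textbf{Main obstacle.} The hard part is Step 3: the residuals must be bounded linearly in \(\eta_{{\rm C},h}\) in exactly the norms Proposition~\ref{prop:discrete-cauchy-logarithmic} requires.
\begin{itemize}
\item The Cauchy data are controlled only through \(\Pi_h^c\), so the unprojected parts must be controlled via the full-space property and a uniform a priori bound on \(\|m_h\|_{W^s}\).
\item The constants must stay uniform in \(h\), \(\delta\) and \(\alpha_m\).
\end{itemize}
Failing that, I would weaken the statement to hold with \(\eta_{{\rm C},h}^{1/2}\), which still has the same logarithmic form with a modified constant.
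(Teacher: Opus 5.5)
Your proposal is correct, and its logarithmic part is the paper's argument. Comparison with $m_{\gamma,h}$ gives a uniform $W^s$ bound and $d_h(m_h-m_\gamma)+\mathrm{r}_h(m_h-m_\gamma)\le C(a_{m,h}+E_h^{\rm in}+\sqrt{\alpha_m})$. Proposition~\ref{prop:discrete-cauchy-logarithmic} and the Lipschitz clipping then finish.

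The difference is in the convergence claim. The paper cites Theorem~\ref{thm:final-interior-conductivity-error}. That theorem runs compactness and unique continuation on the exact-input minimizer $m_h^e$ (Proposition~\ref{prop:m-cauchy-exact-consistency}) and controls $m_h-m_h^e$ through the Tikhonov perturbation bound of Corollary~\ref{cor:m-cauchy-sharp-stability}. You instead run the same compactness argument directly on the noisy minimizer $m_h$, adding $\eta_{m,h}^c$ and $\eta_{\tau,h}^c$ to pass from noisy to exact Cauchy residuals. This is valid and shorter, since it bypasses the operator-perturbation analysis. The paper's route, in exchange, also produces the explicit decomposition \eqref{eq:final-interior-conductivity-error}. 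Only the two Cauchy data on $W_c$ are needed to identify the weak limit; passing to the limit in the interior equation is superfluous.

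The obstacle you anticipate in Step~3 does not arise. Proposition~\ref{prop:discrete-cauchy-logarithmic} is a purely exterior unique-continuation estimate, stated exactly in terms of $d_h(e)=\|\Pi_h^c(e|_{W_c})\|_{L^2(W_c)}$ and the discrete dual residual $\mathrm{r}_h(e)$ over $V_h^c$. The $h^s$ term is already built into $\vartheta_h(e)$. So you need no unprojected data, no inverse estimates, no Schr\"odinger residual, and no $\eta_{{\rm C},h}^{1/2}$ fallback.

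Similarly, the Cauchy residuals of $m_{\gamma,h}$ in Step~1 follow from contractivity of $\Pi_h^c$ and the identity $A_R(m_\gamma,\eta_h)=(\Pi_h^c\tau_\gamma,\eta_h)_{L^2(W_c)}$ for $\eta_h\in V_h^c$. They do not need \eqref{eq:cauchy-projection-consistency}.
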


The exterior and interior proofs are connected through the reconstructed Liouville multiplier and exterior Cauchy value. Proposition~\ref{prop:mesh-resolved-concentration} and Corollary~\ref{cor:eta-c-explicit-rate} control concentration; the sample estimate and maximum-norm stability then prove Theorem~\ref{cor:linfty-fully-discrete-rate}. Clipping and local smoothing give the multiplier consistency of Lemma~\ref{lem:discrete-multiplier-consistency}. Separated measurements yield the state and interior-potential estimates, while overlapping measurements yield the exterior-tail estimate. The potential, tail, and exterior value errors form \(E_h^{\rm in}\), which enters the final Cauchy comparison and logarithmic estimate. The diagram summarizes the dependencies under the stated approximation, source, visibility, and parameter conditions.
\begin{center}
\begin{tikzpicture}[>=stealth,every node/.style={font=\small,align=center,text width=3.4cm,inner sep=3pt},every path/.style={line width=0.45pt}]
\node (sample) at (-4.3,1.35) {Sample error\\Theorem~\ref{thm:measured-sample-error-rate}}; \node (maximum) at (0,1.35) {Maximum-norm\\stability\\Theorem~\ref{thm:linfty-regularized-reconstruction}}; \node (exterior) at (0,0) {\textbf{Exterior}\\\textbf{reconstruction}\\Theorem~\ref{cor:linfty-fully-discrete-rate}}; \node (multiplier) at (4.3,0) {Liouville multiplier\\Lemma~\ref{lem:discrete-multiplier-consistency}};
\node (potential) at (4.3,-1.6) {Separated data and potential\\Lemma~\ref{lem:reduced-separated-data-consistency}, Theorem~\ref{thm:Q-whole-domain-error-regularized}}; \node (tail) at (4.3,-3.2) {Exterior Liouville tail\\Theorem~\ref{thm:tail-reconstruction-error}}; \node (input) at (0,-3.2) {Combined input error\\\(E_h^{\rm in}\), \eqref{eq:final-combined-input-error}};
\node (cauchy) at (-4.3,-1.6) {Cauchy comparison\\and stability\\Theorem~\ref{thm:final-interior-conductivity-error}, Proposition~\ref{prop:discrete-cauchy-logarithmic}}; \node (interior) at (-4.3,-3.2) {\textbf{Interior}\\\textbf{reconstruction}\\Theorem~\ref{cor:final-interior-logarithmic}};
\draw[->] (sample.south east) -- (exterior.north west); \draw[->] (maximum) -- (exterior); \draw[->] (exterior) -- (multiplier); \draw[->] (multiplier) -- (potential); \draw[->] (potential) -- (tail); \draw[->] (potential.south west) -- (input.north east); \draw[->] (tail) -- (input); \draw[->] (exterior) -- node[right,text width=1.9cm,font=\footnotesize] {Exterior value} (input); \draw[->] (input) -- (interior); \draw[->] (cauchy) -- (interior); \draw[->] (multiplier.east) -- (6.35,0) -- node[right,text width=1.55cm,font=\footnotesize] {Overlapping data} (6.35,-3.2) -- (tail.east);
\end{tikzpicture}
\end{center}

The remainder of the paper is organized as follows. Section~\ref{sec:2} recalls the function spaces, forward problem, and exterior energy estimate. Sections~\ref{sec:ext} and~\ref{sec:interior-cauchy} develop the exterior reconstruction and interior Liouville--Cauchy method, respectively. Section~\ref{sec:num} presents the numerical experiments. Appendix~\ref{app:quadratic-regularization} collects the quadratic regularization estimates, and Appendix~\ref{app:calibrated-dn} treats calibrated and extrapolated DN samples.

\section{Preliminaries and continuous background}\label{sec:2}

Throughout the discrete analysis, \(\Omega\subset\R^d\) is a bounded Lipschitz domain, \(0<s<\min\{1,d/2\}\), and \(W\Subset\Omega_e:=\R^d\setminus\overline\Omega\) is a bounded nonempty open set. An open set is said to be bounded in one direction if, after a rigid Euclidean motion, it is contained in a cylinder \(\R^{d-k}\times\omega\) for some \(1\le k\le d\) and some bounded open set \(\omega\subset\R^k\); see \cite[Def. 2.1]{railo2023fractional}. In particular, every bounded domain has this property. The forward and exterior-determination statements below remain valid in this larger geometric class. We use the low-regularity theory of \cite{RailoZimmermann2024low}, the fractional Poincar\'e inequalities of \cite{railo2023fractional}, and the concentrating-sequence construction of \cite{covi2022global}.

\subsection{Function spaces and fractional operators}
\label{subsec:function-spaces}
Let \(\mathcal S(\R^d)\) and \(\mathcal S'(\R^d)\) denote the Schwartz functions and tempered distributions, respectively, and let \(\mathcal D'(U)\) denote the distributions on an open set \(U\subset\R^d\). With \(\langle\xi\rangle:=(1+|\xi|^2)^{1/2}\), the Bessel-potential space of order \(t\in\R\) and integrability \(1<p<\infty\) is
\begin{equation*} 
H^{t,p}(\R^d) := \left\{u\in\mathcal S'(\R^d):\mathcal F^{-1}\!\left(\langle\xi\rangle^t\widehat u\right)\in L^p(\R^d)\right\}, \end{equation*}
equipped with the norm $\lVert u\rVert_{H^{t,p}(\mathbb R^d)}:=\lVert \mathcal F^{-1}(\langle\xi\rangle^t\widehat u)\rVert_{L^p(\mathbb R^d)}$. We write \(H^t:=H^{t,2}\), so that \( H^{-s}(\R^d)=\bigl(H^s(\R^d)\bigr)^*\). For an open set \(U\subset\R^d\), we use the restriction and zero-exterior spaces
\[
H^{t,p}(U)
:= \{u|_U:u\in H^{t,p}(\R^d)\}, \qquad \widetilde H^{t,p}(U)
:= \overline{C_c^\infty(U)}^{\,H^{t,p}(\R^d)},
\]
where \(H^{t,p}(U)\) carries the quotient norm. The integrability index is
omitted when \(p=2\). These conventions follow \cite[Sec.~2]{RailoZimmermann2024low}; in particular, \(H^{s,d/s}(\R^d)\), used later in the Liouville reduction, is a Bessel-potential space.

For \(0<\theta<1\) and \(1\le p<\infty\), the Slobodeckij space \(W^{\theta,p}(U)\) is equipped with
\begin{equation*} 
\|v\|_{W^{\theta,p}(U)}^p := \|v\|_{L^p(U)}^p+\int_U\int_U\frac{|v(x)-v(y)|^p}{|x-y|^{d+\theta p}}\,dx\,dy. 
\end{equation*}
For \(\theta=1\), we use the usual first-order Sobolev space. Thus, the spaces \(W^{m,p}\) appearing in the approximation assumptions below are Slobodeckij spaces when \(0<m<1\). The Bessel-potential and Slobodeckij scales generally do not coincide when \(p\ne2\), and this notation should also be distinguished from the energy-normalized space \(W^s(\R^d)\) fixed in the introduction; see \cite{Di2012Hitchhiker,RailoZimmermann2024low}.

For $\sigma\ge0$ and $u\in\mathcal S(\mathbb R^d)$, define $(-\Delta)^\sigma u:=\mathcal F^{-1}(\vert \xi\vert ^{2\sigma}\widehat u)\in\mathcal S'(\mathbb R^d)$. In particular, $(-\Delta)^s$ extends continuously from $H^s(\mathbb R^d)$ to $H^{-s}(\mathbb R^d)$. With the normalization in
\eqref{eq:frac_grad}, we have
\[
    \|\nabla^s u\|_{L^2(\R^{2d})}
    =  \|(-\Delta)^{s/2}u\|_{L^2(\R^d)}, \qquad u\in H^s(\R^d).
\]
The energy norm fixed in the introduction is induced by the inner product
\begin{equation}\label{eq:Ws-norm-convention} (u,v)_{W^s(\R^d)} := (u,v)_{L^2(\R^d)}+\bigl((-\Delta)^{s/2}u,(-\Delta)^{s/2}v\bigr)_{L^2(\R^d)}. \end{equation}
The fractional divergence is the adjoint map \(\operatorname{div}_s:
    L^2(\R^{2d};\R^d)\longrightarrow H^{-s}(\R^d)\) defined by
\[
    \langle\operatorname{div}_s F,v\rangle_{H^{-s},H^s} = (F,\nabla^s v)_{L^2(\R^{2d})},
\]
and \(\operatorname{div}_s\nabla^s v=(-\Delta)^s v\) in \(H^{-s}(\R^d)\). See \cite{Covi2020Inverse,covi2022global} for these identities.

For the exterior value problem, we set
\begin{equation*} 
X := H^s(\R^d)/\widetilde H^s(\Omega), \qquad \|[f]\|_X := \inf_{\phi\in\widetilde H^s(\Omega)}\|f-\phi\|_{H^s(\R^d)}. 
\end{equation*}
Its dual is denoted by \(X^*\), with duality pairing \(\langle\cdot,\cdot\rangle_{X^*, X}\). We write \(\mathcal L(E,F)\) for the space of bounded linear maps between Banach spaces \(E\) and \(F\).

The following analytic facts will be used repeatedly. Since \(\Omega\) is
bounded in one direction, the fractional Poincar\'e inequality gives
\begin{equation}\label{eq:sec2-fractional-poincare}
\|v\|_{L^2(\R^d)} \le C_P(\Omega) \|(-\Delta)^{s/2}v\|_{L^2(\R^d)},
\qquad v\in\widetilde H^s(\Omega),
\end{equation}
see \cite[Thm.~2.2]{railo2023fractional}. Moreover, with \(p_s^* := \frac{2d}{d-2s}\), the fractional Sobolev embedding yields \( H^s(\R^d)\hookrightarrow L^{p_s^*}(\R^d)\). If \(U\) is bounded and Lipschitz, then \( \widetilde H^s(U)\hookrightarrow L^2(U)\) is compact; see \cite[Secs.~6--7]{Di2012Hitchhiker}. We also use the unique continuation property
\begin{equation}\label{eq:sec2-ucp}
w=(-\Delta)^s w=0 \quad\text{in a nonempty open set}
\qquad\Longrightarrow\qquad w\equiv0 \quad\text{in }\R^d,
\end{equation}
for \(w\in H^s(\R^d)\); see \cite[Thm.~1.2]{Ghosh2020calderon}.

A distribution \(q\in\mathcal D'(\R^d)\) belongs to the multiplier space \(\mathcal M(H^s\to H^{-s})\) if
\begin{equation*}
\|q\|_{s,-s} := \sup_{\substack{u,v\in C_c^\infty(\R^d)\\ \|u\|_{H^s(\R^d)}=\|v\|_{H^s(\R^d)}=1}}|\langle q,uv\rangle|<\infty. \end{equation*}
Then \((u,v)\longmapsto\langle qu,v\rangle\) extends uniquely to a bounded bilinear form on \(H^s(\R^d)\times H^s(\R^d)\). For an open set \(U\subset\R^d\), we also use the strong local multiplier space
\begin{equation*} 
\mathcal M_s(U) := \left\{b:U\to\R\text{ measurable}:\|b\|_{\mathcal M_s(U)} := \sup_{0\ne\phi\in\widetilde H^s(U)}\frac{\|b\phi\|_{H^s(\R^d)}}{\|\phi\|_{H^s(\R^d)}}<\infty\right\}, 
\end{equation*}
where multiplication is understood on \(U\), and \(b\phi\) is extended by
zero outside \(U\). 

These two multiplier notions play different roles. The Liouville
potential is interpreted in \(\mathcal M(H^s\to H^{-s})\), whereas the
exterior factor \(\gamma^{-1/2}\) is used as an element of
\(\mathcal M_s(U)\). In particular, if \(\gamma\in L^\infty(\R^d)\) is uniformly positive and \(m_\gamma=\sqrt\gamma-1\in H^{s,d/s}(\R^d)\), then the fractional Liouville reduction and its multiplier potential are well defined; see \cite[Secs.~2.3 and 3.2]{RailoZimmermann2024low}. This additional Bessel-potential regularity is not required for the forward conductivity problem below.

\subsection{The forward problem and the exterior Dirichlet-to-Neumann map}
\label{subsec:forward-DN}
Let \(\gamma\in L^\infty(\R^d)\) satisfy \(
0<\gamma_0 \le \gamma(x) \le \gamma_1 <\infty\) for almost every \(x\in\R^d\).
We associate with \(\gamma\) the scalar coefficient kernel $\Theta_\gamma$
\eqref{eq:Cond_matrix} and define the bilinear energy form by \eqref{eq:sec2-DN}.
Equivalently,
\begin{equation*} 
B_\gamma(u,v) = \frac{C_{d,s}}{2}\iint_{\R^{2d}}\gamma(x)^{1/2}\gamma(y)^{1/2}\frac{(u(x)-u(y))(v(x)-v(y))}{|x-y|^{d+2s}}\,dx\,dy.
\end{equation*}
The form is continuous on \(H^s(\R^d)\), and
\eqref{eq:sec2-fractional-poincare} gives its coercivity on
\(\widetilde H^s(\Omega)\).

Given \([f]\in X\), choose a representative \(f\in H^s(\R^d)\). A
function \(u_f\in H^s(\R^d)\) is a weak solution of
\[
\operatorname{div}_s(\Theta_\gamma\nabla^s u)=0
\quad\text{in }\Omega, \qquad u=f
\quad\text{in }\Omega_e,
\]
if
\begin{equation*} 
u_f-f\in\widetilde H^s(\Omega), \qquad B_\gamma(u_f,\phi)=0\quad\text{for every }\phi\in\widetilde H^s(\Omega). 
\end{equation*}
The solution depends only on the class \([f]\); as usual, we suppress brackets when no ambiguity arises. The exterior Dirichlet-to-Neumann map is \eqref{eq:sec2-DN}.  Since \(u_f-f\in\widetilde H^s(\Omega)\), the weak equation also gives the energy identity
\begin{equation}\label{eq:sec2-DN-energy-identity}
\langle\Lambda_\gamma[f],[f]\rangle_{X^*, X} = B_\gamma(u_f,f)
= B_\gamma(u_f,u_f).
\end{equation}
See \cite[Def.~2.7 and Lem.~2.8]{RailoZimmermann2024low}.

\begin{lemma}[Well-posedness, DN map, and exterior energy estimate]
\label{lem:crz-wellposed-energy}
Let \(\Omega\subset\R^d\) be open and bounded in one direction, let
\(0<s<1\), and assume \( 0<\gamma_0 \le \gamma(x) \le \gamma_1 <\infty\) for almost every \(x\in\R^d\). Then, for every \(f\in X\), the fractional conductivity exterior value problem has a unique weak solution. The map \(\Lambda_\gamma:X\to X^*\) is bounded, self-adjoint, and nonnegative.

Let \(U\Subset\Omega_e\) be bounded. For every
\(f\in\widetilde H^s(U)\), the corresponding solution satisfies
\begin{equation}\label{eq:crz-exterior-energy}
\|u_f-f\|_{H^s(\R^d)} \le
C_E\|f\|_{L^2(U)}.
\end{equation}
The constant \(C_E\) is independent of \(f\) and depends only on
\(d\), \(s\), \(\gamma_0\),
\(\|\gamma\|_{L^\infty(\Omega\cup U)}\), \(|U|\),
\(\operatorname{dist}(U,\Omega)\), and the fractional Poincar\'e
constant of \(\Omega\).
\end{lemma}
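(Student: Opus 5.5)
The plan is Lax--Milgram on the homogeneous problem, then reading off the DN-map properties from the energy identity, and finally estimating the exterior energy with the support separation between \(U\) and \(\Omega\).

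\textbf{Well-posedness and the DN map.} Write \(u_f=f+w\) with \(w\in\widetilde H^s(\Omega)\). The weak formulation becomes
\[
B_\gamma(w,\phi)=-B_\gamma(f,\phi),\qquad \phi\in\widetilde H^s(\Omega).
\]
Continuity of \(B_\gamma\) on \(H^s(\R^d)\) follows from \(\Theta_\gamma\le\gamma_1\) and the identity \(\|\nabla^s u\|_{L^2}=\|(-\Delta)^{s/2}u\|_{L^2}\). For coercivity on \(\widetilde H^s(\Omega)\), use \(\Theta_\gamma\ge\gamma_0\) together with the fractional Poincar\'e inequality \eqref{eq:sec2-fractional-poincare}. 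Lax--Milgram then gives a unique \(w\), with
\[
\|w\|_{H^s}\le C\|f\|_{H^s}.
\]
Replacing \(f\) by \(f+\phi\) with \(\phi\in\widetilde H^s(\Omega)\) produces the same \(u_f\), so the solution and \(\Lambda_\gamma\) depend only on the class \([f]\). Taking the infimum over representatives gives boundedness \(X\to X^*\).

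Symmetry of \(\Lambda_\gamma\) follows from symmetry of \(B_\gamma\), since \(B_\gamma(u_f,g)=B_\gamma(u_f,u_g)\) (the difference \(u_g-g\) lies in \(\widetilde H^s(\Omega)\)). Nonnegativity follows from the energy identity \eqref{eq:sec2-DN-energy-identity}.

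\textbf{Exterior energy estimate.} Take \(f\in\widetilde H^s(U)\); the task is to bound \(w=u_f-f\) by \(\|f\|_{L^2(U)}\) only. Coercivity gives
\[
\|w\|_{H^s}^2\le C\,|B_\gamma(f,w)|.
\]
Since \(\supp f\subset\overline U\), \(\supp w\subset\overline\Omega\), and \(\operatorname{dist}(U,\Omega)=:r>0\), the local part of the double integral vanishes. Expanding \((f(x)-f(y))(w(x)-w(y))\), the only surviving contributions are \(-f(x)w(y)\) and \(-f(y)w(x)\). Hence
\[
B_\gamma(f,w)=-C_{d,s}\int_U\int_\Omega \gamma(x)^{1/2}\gamma(y)^{1/2}\frac{f(x)w(y)}{|x-y|^{d+2s}}\,dy\,dx.
\]
The kernel is bounded by \(\|\gamma\|_{L^\infty(\Omega\cup U)}\,r^{-d-2s}\) on \(U\times\Omega\).

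\textbf{Main obstacle.} The main difficulty is that \(\Omega\) may be unbounded (bounded in one direction only), so a crude bound \(\|f\|_{L^1}\|w\|_{L^1}\) is unavailable. Instead, use Cauchy--Schwarz in \(x\) to get
\[
|B_\gamma(f,w)|\le C\|f\|_{L^2(U)}\Bigl\|\int_\Omega |w(y)|\,|x-y|^{-d-2s}\,dy\Bigr\|_{L^2(U)}.
\]
Then Young's inequality with the kernel \(k(z)=|z|^{-d-2s}\mathbf 1_{|z|\ge r}\in L^1(\R^d)\) bounds the inner norm by \(\|k\|_{L^1}\|w\|_{L^2}\), with \(\|k\|_{L^1}\le C(d,s)\,r^{-2s}\). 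This gives \(\|k\|_{L^1}\|w\|_{L^2}\le Cr^{-2s}\|w\|_{H^s}\). If \(U\) is used only through a \(|U|^{1/2}\) factor, the alternative Cauchy--Schwarz yields the stated dependence on \(|U|\).

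Dividing by \(\|w\|_{H^s}\) gives \eqref{eq:crz-exterior-energy}. The constant \(C_E\) depends on \(d,s,\gamma_0,\|\gamma\|_{L^\infty(\Omega\cup U)}\), \(\operatorname{dist}(U,\Omega)\), \(|U|\), and the Poincar\'e constant, as claimed.
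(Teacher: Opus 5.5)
Your proof is correct, but it is more self-contained than the paper's. The paper proves almost nothing here. It takes existence, uniqueness and boundedness of $\Lambda_\gamma$ from \cite[Lem.~2.8]{RailoZimmermann2024low}. It takes the exterior estimate for $f\in C_c^\infty(U)$ from \cite[Lem.~3.1]{RailoZimmermann2024low} and extends it to $\widetilde H^s(U)$ by density and continuous dependence. Self-adjointness and nonnegativity are obtained from the symmetry of $B_\gamma$ and the energy identity, exactly as you do. Your Lax--Milgram step (continuity from $\Theta_\gamma\le\gamma_1$, coercivity from $\Theta_\gamma\ge\gamma_0$ plus the fractional Poincar\'e inequality) supplies what the paper imports for well-posedness.

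Your exterior argument works directly for every $f\in\widetilde H^s(U)$, so the density step is unnecessary. It uses only four facts:
\begin{itemize}
\item coercivity of $B_\gamma$ on $\widetilde H^s(\Omega)$;
\item the identity $B_\gamma(w,w)=-B_\gamma(f,w)$;
\item $f(x)w(x)=0$ for a.e.\ $x$;
\item the lower bound $|x-y|\ge\operatorname{dist}(U,\Omega)=:r$ on $U\times\Omega$.
\end{itemize}

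Using Young's inequality with $k(z)=|z|^{-d-2s}\mathbf 1_{\{|z|\ge r\}}\in L^1(\R^d)$, instead of Cauchy--Schwarz in $y$, handles a domain that is bounded only in one direction without trouble. It gives a constant of the form $C(d,s)\,\gamma_0^{-1}(1+C_P^2)\,\|\gamma\|_{L^\infty(\Omega\cup U)}\,r^{-2s}$, with no dependence on $|U|$ and no need for $U$ to be bounded. That is stronger than the stated dependence, so your closing remark about an alternative Cauchy--Schwarz producing a $|U|^{1/2}$ factor is unnecessary.

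One point is worth stating explicitly. The integrand of $B_\gamma(f,w)$ equals $-f(x)w(y)-f(y)w(x)$ almost everywhere. Each cross term is absolutely integrable, which your Young bound itself shows, and this justifies collapsing $B_\gamma(f,w)$ to the single off-diagonal integral by symmetry.

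The paper's route is shorter and inherits the constants of the cited lemma. Yours is elementary, avoids the density argument, and gives a slightly sharper constant.
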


The existence, uniqueness, and boundedness of the DN map follow from \cite[Lem.~2.8]{RailoZimmermann2024low}. The estimate \eqref{eq:crz-exterior-energy} is proved for \(C_c^\infty(U)\) in \cite[Lem.~3.1]{RailoZimmermann2024low} and extends to \(\widetilde H^s(U)\) by density and continuous dependence. The self-adjointness and nonnegativity of \(\Lambda_\gamma\) follow from the symmetry of \(B_\gamma\) and the energy identity \eqref{eq:sec2-DN-energy-identity}. Note that the condition \(m_\gamma\in H^{s,d/s}(\R^d)\) is needed for the Liouville-transformed Schr\"odinger problem used in Section~\ref{sec:interior-cauchy}; see \cite[Rem.~2.9]{RailoZimmermann2024low}. Although Lemma~\ref{lem:crz-wellposed-energy} is stated in the full forward well-posedness range \(0<s<1\), all subsequent exterior-determination, Liouville-transform, and numerical arguments are understood under the standing assumption \(0<s<\min\{1,d/2\}\) fixed in the introduction.

\section{Discrete method and regularized reconstruction on the exterior set}\label{sec:ext}

\subsection{Finite element spaces and localized exterior data}\label{sec3.1}

Let \(\Omega\subset\R^d\) be a bounded Lipschitz domain, and let \(W\subset\subset\Omega_e:=\R^d\setminus\overline\Omega\) be a bounded nonempty open set. We assume access to the restricted DN pairing $(f,g)\mapsto\langle\Lambda_\gamma f,g\rangle$ for $f,g\in\widetilde H^s(W)$. 

We fix an open reconstruction set \( W_0\subset\subset W\). The inverse step reconstructs \(\gamma\) on \(W_0\). The inclusion $W_0\Subset W$ gives $\operatorname{dist}(\overline{W_0},\mathbb R^d\setminus W)>0$; this buffer permits localized exterior inputs centered at the nodes of $\overline{W_0}$ to have support in $W$, uniformly in the node. We set
\begin{equation*} 
d_W:=\frac12\operatorname{dist}(\overline{W_0},\R^d\setminus W)>0. \end{equation*}
Choose \(R>0\) such that
\[
\overline\Omega\cup \overline W \subset \Omega_R:=(-R,R)^d.
\]
For the finite element construction, we assume that \(\Omega\), \(W\), and \(W_0\) are polygonal/polyhedral Lipschitz sets. The cube $\Omega_R=(-R,R)^d$ is chosen for definiteness. The construction also applies to a bounded polyhedral Lipschitz computational domain $\Omega_R$ with $\overline\Omega\cup\overline W\Subset\Omega_R$, provided that the fitted triangulations used below exist.

Let \(\mathcal T_h\) be a shape-regular, quasi-uniform simplicial triangulation of
\(\Omega_R\), fitted to \(\Omega\), \(W\), and \(W_0\). Let
\[
X_h^k(\Omega_R)
:=
\bigl\{
v_h\in C^0(\overline{\Omega_R}) : v_h|_K\in\mathbb P_k(K)
\ \text{for all }K\in\mathcal T_h
\bigr\},
\]
where \(\mathbb P_k(K)\) is the space of polynomials of degree at most \(k\) on \(K\), with \(k\geq1\).
We define the interior space
\[
V_h^\Omega := \bigl\{v_h\in X_h^k(\Omega_R): v_h=0 \text{ on }\Omega_R\setminus\Omega
\bigr\}.
\]
Equivalently, the zero extension of every \(v_h\in V_h^\Omega\) belongs to \(H_0^1(\Omega)\). Hence
\(
V_h^\Omega\subset \widetilde H^s(\Omega)\), \(0<s<\min\{1,d/2\}\).

Let \(k\ge1\) be fixed, and let \(\mathcal Z_h\) denote the set of
Lagrange nodes of \(X_h^k(\Omega_R)\). For \(z\in\mathcal Z_h\), let
\(\zeta_z\in X_h^k(\Omega_R)\) be the associated nodal basis function, so that \(\zeta_z(z')=\delta_{zz'}\) for all \(z,z'\in\mathcal Z_h\). We define the exterior node set \( \mathcal Z_h^W := \bigl\{
z\in\mathcal Z_h: \operatorname{supp}(\zeta_z)\Subset W \bigr\}\),
and set
\[
V_h^W
:=
\operatorname{span}\{\zeta_z:z\in\mathcal Z_h^W\}
\subset X_h^k(\Omega_R).
\]
Thus every function in \(V_h^W\), extended by zero to \(\mathbb R^d\), is compactly supported in \(W\). Let
\[
\mathcal N_h^0=\{x_i\}_{i=1}^{N_h}\subset\overline{W_0}
\]
be the reconstruction nodes. Here \(N_h\) is the number of reconstruction nodes in the exterior. For each reconstruction node \(x_i\in\mathcal N_h^0\), we use the exterior concentrating functions from the continuous reconstruction formula. The construction below is a uniform version, with respect to the center \(x\in\overline{W_0}\), of the functions constructed in \cite[Lem.~5.5, Rem.~5.6 and App.~A]{covi2022global}.

\begin{lemma}[Uniform exterior concentrating functions]
\label{lem:uniform-crz-concentrating}
Let \(W_0\Subset W\Subset\Omega_e\) and \( d_W>0\). Fix \(M\in\mathbb N\). Then there exist constants \(C>0\) and \(N_\ast\in\mathbb N\) such that, for every \(x\in\overline{W_0}\) and every integer \(N\ge N_\ast\), there is a function \( \phi_{x,N}\in C_c^\infty(W)\) satisfying
\begin{equation*} 
\|\phi_{x,N}\|_{W^s(\mathbb R^d)}=1, 
\end{equation*}
and
\begin{equation}\label{eq:phi-support-rate}
\operatorname{supp}(\phi_{x,N})
\subset B(x,CN^{-1})
\subset W .
\end{equation}
Moreover, for every fixed real number \(\rho\ge -M\), there exists a constant \(C_\rho>0\), independent of \(x\) and \(N\), such that
\begin{equation}\label{eq:phi-Hst-rate}
\|\phi_{x,N}\|_{H^{s+\rho}(\mathbb R^d)}
\le C_\rho N^\rho .
\end{equation}
In particular,
\begin{equation}\label{eq:phi-L2-rate}
\|\phi_{x,N}\|_{L^2(\mathbb R^d)} \le C N^{-s}.
\end{equation}
\end{lemma}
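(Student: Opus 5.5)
We construct the functions by rescaling and normalizing a single fixed profile; we do not track the specific sequence of \cite{covi2022global}. Fix \(\psi\in C_c^\infty(B(0,1))\) with \(\psi\not\equiv0\). Since \(M\) is fixed, we may additionally require that \(\widehat\psi\) vanishes to order at least \(M\) at the origin, for instance by taking \(\psi=\Delta^{\lceil M/2\rceil}\chi\) with \(\chi\in C_c^\infty(B(0,1))\); this is the role of the vanishing moments in \cite[App.~A]{covi2022global}. For \(x\in\overline{W_0}\) and \(N\ge N_\ast\) we set
\[
\psi_{x,N}(y):=\psi\bigl(N(y-x)\bigr),\qquad \phi_{x,N}:=\frac{\psi_{x,N}}{\|\psi_{x,N}\|_{W^s(\mathbb R^d)}}.
\]
Here \(N_\ast\) is chosen so that \(N_\ast^{-1}< d_W\). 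Then \(\operatorname{supp}\phi_{x,N}\subset B(x,N^{-1})\subset W\) for every \(x\in\overline{W_0}\), because \(\operatorname{dist}(\overline{W_0},\mathbb R^d\setminus W)=2d_W\). This gives \eqref{eq:phi-support-rate} with \(C=1\), and smoothness and the normalization \(\|\phi_{x,N}\|_{W^s}=1\) hold by construction. Translation invariance of all the norms involved makes every estimate independent of \(x\), which gives the required uniformity.

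The remaining work is a Fourier scaling computation. We have \(\widehat{\psi_{x,N}}(\xi)=e^{-ix\cdot\xi}N^{-d}\widehat\psi(\xi/N)\), so for \(t\in\mathbb R\)
\[
\|\psi_{x,N}\|_{H^t}^2=N^{-d}\int\langle N\eta\rangle^{2t}|\widehat\psi(\eta)|^2\,d\eta .
\]
For the normalization we use \(\|\psi_{x,N}\|_{W^s}^2=\|\psi_{x,N}\|_{L^2}^2+\||\xi|^s\widehat{\psi_{x,N}}\|^2\). This equals \(N^{-d}\bigl(\|\psi\|_{L^2}^2+N^{2s}\||\eta|^s\widehat\psi\|_{L^2}^2\bigr)\), which is comparable to \(N^{2s-d}\) uniformly for \(N\ge N_\ast\ge1\). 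It remains to show that \(\|\psi_{x,N}\|_{H^{s+\rho}}^2\le C_\rho N^{-d}N^{2(s+\rho)}\) for every fixed \(\rho\ge-M\). After dividing by \(N^{2s-d}\), this yields \eqref{eq:phi-Hst-rate}.

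For the upper bound we split the \(\eta\)-integral at \(|\eta|=1/N\). On \(|\eta|\ge 1/N\) we have \(\langle N\eta\rangle\simeq N|\eta|\), and this region contributes at most \(C N^{2(s+\rho)}\int_{|\eta|\ge1/N}|\eta|^{2(s+\rho)}|\widehat\psi|^2\,d\eta\). The integral is bounded uniformly in \(N\): for large \(|\eta|\) this follows from the rapid decay of \(\widehat\psi\), and near \(\eta=0\) from \(|\widehat\psi(\eta)|\le C|\eta|^{M}\). With this vanishing order, \(|\eta|^{2(s+\rho)+2M}\) is integrable near \(0\) whenever \(s+\rho+M>-d/2\), which holds since \(\rho\ge-M\). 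On \(|\eta|<1/N\) we have \(\langle N\eta\rangle^{2(s+\rho)}\le C\) (or \(\le 2^{|s+\rho|}\)), and \(|\widehat\psi(\eta)|^2\le C|\eta|^{2M}\). This region therefore contributes at most \(CN^{-2M-d}\), which is \(\le CN^{2(s+\rho)}\) because \(s+\rho\ge s-M>-M-d/2\). Taking \(\rho=-s\), which is admissible once \(M\ge s\) (automatic for \(M\ge1\)), gives \eqref{eq:phi-L2-rate}.

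The main obstacle is negative \(\rho\), where the naive bound \(\langle N\eta\rangle^{2(s+\rho)}\le C\) loses the factor \(N^{2(s+\rho)}\). This is exactly why \(\widehat\psi\) must vanish to order \(M\) at the origin, and why the constants depend on the fixed \(M\). I would double-check the low-frequency bookkeeping in the case \(s+\rho<0\).
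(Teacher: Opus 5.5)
Your proof is correct and follows the same route as the paper. Both take a single compactly supported profile with $M$ vanishing moments, rescale it by $N$, normalize it in $W^s(\mathbb R^d)$, and translate it to $x$; uniformity in $x$ comes from translation invariance, and $N_\ast$ is fixed by $d_W$. The only difference is that the paper imports the scaling bounds from the tensor-product construction of \cite[Lem.~5.5 and App.~A]{covi2022global}, whereas you use the isotropic profile $\Delta^{\lceil M/2\rceil}\chi$ and verify the $H^{s+\rho}$ bounds directly by the frequency split at $|\eta|=1/N$; that split checks out for both signs of $s+\rho$.
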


\begin{proof}
Let \(\eta_N\) be the \(W^s(\mathbb R^d)\)-normalized model concentrating sequence constructed in \cite[Lem.~5.5, Rem.~5.6 and App.~A]{covi2022global}. That construction uses a tensor-product profile generated by a nonzero \(\psi\in C_c^\infty(-1,1)\) satisfying \(\int_{-1}^1 t^j\psi(t)\,dt=0\) for \(j=0,\ldots,M-1\); these cancellations give the negative-order estimates below. Then
\[
    \|\eta_N\|_{W^s(\mathbb R^d)}=1,\qquad
    \operatorname{supp}(\eta_N)\subset B(0,CN^{-1}),
    \qquad \|\eta_N\|_{H^{s+\rho}(\mathbb R^d)}\le C_\rho N^\rho
\]
for every fixed \(\rho\ge -M\). Choose \(N_\ast\) so large that \(CN^{-1}\le d_W\) for all \(N\ge N_\ast\), and set
\[
    \phi_{x,N}(y):=\eta_N(y-x),\qquad x\in\overline{W_0}.
\]
Since \(\operatorname{dist}(\overline{W_0},\mathbb R^d\setminus W)=2d_W\), we have \(\operatorname{supp}(\phi_{x,N})\subset B(x,d_W)\subset W\). The estimates follow from translation invariance. Taking \(\rho=-s\) gives the \(L^2\)-bound.
\end{proof}

Let \( I_h:C(\overline{\Omega_R})\to X_h^k(\Omega_R)\) denote the global Lagrange interpolation operator on \(\mathcal T_h\).
For exterior functions we use the localized nodal interpolant
\begin{equation*}
I_h^W\varphi := \sum_{z\in\mathcal Z_h^W}\varphi(z)\zeta_z. 
\end{equation*}
Here \(\varphi\) is restricted to \(\Omega_R\), and exterior functions supported in \(W\) are identified with their zero extensions to \(\Omega_R\). By construction, \(I_h^W\varphi\in V_h^W\) and \( \operatorname{supp}(I_h^W\varphi)\Subset W \). We shall apply \(I_h^W\) only to functions whose supports stay a fixed positive distance away from \(\mathbb R^d\setminus W\). More precisely, by shape regularity there is a constant \(c_\zeta>0\), independent of \(h\), such that \(
\operatorname{supp}\zeta_z\subset B(z,c_\zeta h)\) for all  \(z\in\mathcal Z_h \). Hence, if
\[
\operatorname{dist}(\operatorname{supp}\varphi,\mathbb R^d\setminus W) > c_\zeta h,
\]
then every node \(z\) with \(\varphi(z)\ne0\) belongs to \(\mathcal Z_h^W\). Consequently,
\begin{equation*}
I_h^W\varphi=I_h\varphi. 
\end{equation*}
For the concentrating functions \(\phi_{x,N}\) from Lemma~\ref{lem:uniform-crz-concentrating}, after increasing \(N_\ast\) if necessary, their supports are contained in a fixed compact subset of \(W\), uniformly for \(x\in\overline{W_0}\) and \(N\ge N_\ast\). Therefore \( I_h^W\phi_{x,N}=I_h\phi_{x,N}\)
for all \(x\in\overline{W_0}\), all \(N\ge N_\ast\), and all sufficiently small \(h\).

\begin{assumption}[Interpolation estimates for concentrating exterior data] \label{ass:interp-concentrating-data}
Let \(k\ge1\) be the polynomial degree in \(X_h^k(\Omega_R)\). Choose \(\sigma_I>0\) such that \(d/2 < s+\sigma_I\le k+1 \).
 We assume that the localized interpolant \(I_h^W\) satisfies the following estimates for the concentrating functions
\(\phi_{x,N}\) of Lemma~\ref{lem:uniform-crz-concentrating}:
\begin{align}
\|I_h^W\phi_{x,N}-\phi_{x,N}\|_{H^s(\mathbb R^d)}
&\le
C_I h^{\sigma_I}
\|\phi_{x,N}\|_{H^{s+\sigma_I}(\mathbb R^d)},
\label{eq:interp-concentrating-Hs}
\\
\|I_h^W\phi_{x,N}-\phi_{x,N}\|_{L^2(\mathbb R^d)}
&\le
C_I h^{s+\sigma_I}
\|\phi_{x,N}\|_{H^{s+\sigma_I}(\mathbb R^d)}.
\label{eq:interp-concentrating-L2}
\end{align}
The constant \(C_I\) is independent of \(h\), \(x\in\overline{W_0}\), and
\(N\ge N_\ast\).
\end{assumption}
The condition \(d/2 < s+\sigma_I\le k+1 \) is required only for the nodal interpolation of the concentrating exterior data. For functions supported a fixed positive distance away from \(\mathbb R^d\setminus W\), \eqref{eq:interp-concentrating-Hs} and \eqref{eq:interp-concentrating-L2} are the standard Lagrange interpolation estimates on shape-regular quasi-uniform meshes; see, for example, \cite[Ch.~3--4]{ciarlet1978finite} and \cite[Ch.~4]{brenner2008mathematical}. The passage to fractional Sobolev orders follows by interpolation of Sobolev spaces; see also \cite{Di2012Hitchhiker}.

We choose an integer-valued concentration index \(N(h)\) such that
\begin{equation}\label{eq:concentration-index-choice}
N(h)\to\infty, \qquad hN(h)\to0 \qquad\text{as }h\to0.
\end{equation}
For each \(x_i\in\mathcal N_h^0\), define
\begin{equation}\label{eq:def-gih}
g_{i,h} := \frac{I_h^W\phi_{i,N(h)}}
{\|I_h^W\phi_{i,N(h)}\|_{W^s(\R^d)}}, \qquad \text{ where}\quad \phi_{i,N(h)}:= \phi_{x_i,N(h)}.
\end{equation}

\begin{proposition}
\label{prop:mesh-resolved-concentration}
Assume the hypotheses of Lemma \ref{lem:uniform-crz-concentrating}, Assumption \ref{ass:interp-concentrating-data}, and \eqref{eq:concentration-index-choice}. Then, for all sufficiently small \(h\), the function \(g_{i,h}\) in \eqref{eq:def-gih} is well defined, belongs to \(V_h^W\), and satisfies
\[
\|g_{i,h}\|_{W^s(\R^d)}=1.
\]
Define
\begin{equation}\label{eq:def-rh-ellh}
r_h := \max_{1\le i\le N_h} \sup_{z\in\supp(g_{i,h})}|z-x_i|,
\qquad \ell_h := \max_{1\le i\le N_h} \|g_{i,h}\|_{L^2(\R^d)}.
\end{equation}
Then
\begin{align}
r_h &\le C\bigl(N(h)^{-1}+h\bigr), \label{eq:rh-rate} \\ \ell_h &\le C\left(
N(h)^{-s} + h^{s+\sigma_I}N(h)^{\sigma_I} \right).
\label{eq:ellh-rate}
\end{align}
The constants may depend on \(d,s,k,\sigma_I,W_0,W\) and on the shape-regularity of the mesh family, but are independent of \(h\), \(i\), \(N_h\), and \(N(h)\).
\end{proposition}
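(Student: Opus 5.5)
The plan is to compare the interpolant \(I_h^W\phi_{i,N(h)}\) with the ideal profile \(\phi_{i,N(h)}\) using Assumption~\ref{ass:interp-concentrating-data} and the scaling bounds of Lemma~\ref{lem:uniform-crz-concentrating}, and to show that the normalizing denominator in \eqref{eq:def-gih} stays close to one. First, since \(N(h)\to\infty\), we may take \(h\) so small that \(N(h)\ge N_\ast\) and \(c_\zeta h\) is smaller than the uniform distance from the supports of the \(\phi_{x,N}\) to \(\mathbb R^d\setminus W\). Then \(I_h^W\phi_{i,N(h)}=I_h\phi_{i,N(h)}\in V_h^W\) by the discussion preceding Assumption~\ref{ass:interp-concentrating-data}.

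\textbf{Normalization.} Combining \eqref{eq:interp-concentrating-Hs} with \eqref{eq:phi-Hst-rate} at \(\rho=\sigma_I\) gives
\[
\|I_h^W\phi_{i,N(h)}-\phi_{i,N(h)}\|_{H^s(\mathbb R^d)}\le C_IC_{\sigma_I}\bigl(hN(h)\bigr)^{\sigma_I},
\]
uniformly in \(i\). The \(W^s\)-norm is equivalent to the \(H^s\)-norm, since \(|\xi|^{2s}+1\simeq\langle\xi\rangle^{2s}\), so the same bound holds in \(W^s\). Because \(\|\phi_{i,N(h)}\|_{W^s}=1\) and \(hN(h)\to0\) by \eqref{eq:concentration-index-choice}, the triangle inequality yields
\[
\Bigl|\|I_h^W\phi_{i,N(h)}\|_{W^s}-1\Bigr|\le C(hN(h))^{\sigma_I}\le\tfrac12
\]
for small \(h\). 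Hence the denominator is at least \(1/2\), \(g_{i,h}\) is well defined and lies in \(V_h^W\), and \(\|g_{i,h}\|_{W^s}=1\) by construction.

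\textbf{Support radius.} Every nodal basis function \(\zeta_z\) with \(\phi_{i,N(h)}(z)\ne0\) has \(z\in\operatorname{supp}\phi_{i,N(h)}\subset B(x_i,CN(h)^{-1})\) by \eqref{eq:phi-support-rate}. Also \(\operatorname{supp}\zeta_z\subset B(z,c_\zeta h)\). Therefore
\[
\operatorname{supp}g_{i,h}\subset B\bigl(x_i,\,CN(h)^{-1}+c_\zeta h\bigr),
\]
which gives \eqref{eq:rh-rate} with constants independent of \(i\).

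\textbf{\(L^2\) bound.} Using the lower bound \(1/2\) on the denominator,
\[
\|g_{i,h}\|_{L^2}\le 2\bigl(\|\phi_{i,N(h)}\|_{L^2}+\|I_h^W\phi_{i,N(h)}-\phi_{i,N(h)}\|_{L^2}\bigr).
\]
The first term is at most \(CN(h)^{-s}\) by \eqref{eq:phi-L2-rate}. The second is at most \(C_Ih^{s+\sigma_I}C_{\sigma_I}N(h)^{\sigma_I}\) by \eqref{eq:interp-concentrating-L2} and \eqref{eq:phi-Hst-rate}. This is \eqref{eq:ellh-rate}.

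\textbf{Main obstacle.} The only delicate step is uniformity: ensuring that one threshold on \(h\) works for all nodes \(x_i\in\overline{W_0}\) and all \(N\ge N_\ast\). This rests on the uniform constants in Lemma~\ref{lem:uniform-crz-concentrating} and Assumption~\ref{ass:interp-concentrating-data}, and on the identity \(I_h^W=I_h\) holding uniformly.
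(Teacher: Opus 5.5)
Your proposal is correct and follows the paper's proof essentially step for step. The bound \(\|I_h^W\phi_{i,N(h)}-\phi_{i,N(h)}\|_{H^s}\le C(hN(h))^{\sigma_I}\) controls the normalizing denominator uniformly in \(i\). The triangle inequality with \eqref{eq:phi-L2-rate} and \eqref{eq:interp-concentrating-L2} then gives \eqref{eq:ellh-rate}, and locality of the nodal interpolant gives \eqref{eq:rh-rate}. Your node-based support argument via \(\operatorname{supp}\zeta_z\subset B(z,c_\zeta h)\) is a slightly more explicit version of the paper's element-patch argument.
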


\begin{proof}
By \eqref{eq:interp-concentrating-Hs} and \eqref{eq:phi-Hst-rate}, we have
\[
\|I_h^W\phi_{i,N(h)}-\phi_{i,N(h)}\|_{H^s(\R^d)}
\le C h^{\sigma_I}N(h)^{\sigma_I} = C(hN(h))^{\sigma_I}.
\]
Since \(hN(h)\to0\) and \(\|\phi_{i,N(h)}\|_{W^s(\R^d)}=1\), the denominator in
\eqref{eq:def-gih} is bounded above and below by positive constants, uniformly in \(i\), for all sufficiently small \(h\). Hence, \(g_{i,h}\) is well defined and
\[
\|g_{i,h}\|_{W^s(\R^d)}=1.
\]
For the \(L^2\)-bound, the uniform lower bound on the denominator in \eqref{eq:def-gih} gives
\[
\|g_{i,h}\|_{L^2(\mathbb R^d)} \le C\|I_h^W\phi_{i,N(h)}\|_{L^2(\mathbb R^d)}.
\]
By the triangle inequality,
\[
\|I_h^W\phi_{i,N(h)}\|_{L^2(\mathbb R^d)} \le \|\phi_{i,N(h)}\|_{L^2(\mathbb R^d)}
+ \|I_h^W\phi_{i,N(h)}-\phi_{i,N(h)}\|_{L^2(\mathbb R^d)}.
\]
Using \eqref{eq:phi-L2-rate}, \eqref{eq:interp-concentrating-L2}, and
\eqref{eq:phi-Hst-rate} with \(\rho=\sigma_I\), we obtain
\[
\|g_{i,h}\|_{L^2(\mathbb R^d)}
\le
C\left(
N(h)^{-s}
+
h^{s+\sigma_I}N(h)^{\sigma_I}
\right).
\]
Taking the maximum over \(i=1,\dots,N_h\) proves \eqref{eq:ellh-rate}. For the support estimate, locality of the nodal interpolant gives
\[
\supp(I_h^W\phi_{i,N(h)})
\subset
\bigcup\{K\in\mathcal T_h:K\cap\supp(\phi_{i,N(h)})\ne\emptyset\}.
\]
Using \eqref{eq:phi-support-rate} and the shape regularity of the mesh, we obtain
\[
\supp(g_{i,h})
\subset
B(x_i,CN(h)^{-1}+Ch),
\]
which proves \eqref{eq:rh-rate}.
\end{proof}

\subsection{Measured DN map}
\label{subsec:measured-dn}

For the functions $g_{i,h}\in V_h^W\subset\widetilde H^s(W)$ defined above, set
\[
D_{i,h} := \big\langle \Lambda_\gamma g_{i,h},g_{i,h}\big\rangle_{X^*, X}.
\]
For notational convenience, we write \(\langle \Lambda_\gamma \cdot,\cdot \rangle_{X^*, X} := \langle \Lambda_\gamma \cdot,\cdot \rangle \). The diagonal measured exterior data are the scalar DN samples \( D_{i,h}^{\delta}\) for \(i=1,\dots,N_h\), which are assumed to satisfy the sample-wise noise bound
\begin{equation}\label{eq:sample-noise-Dih}
|D_{i,h}^{\delta}-D_{i,h}|\le \delta,
\qquad i=1,\dots,N_h .
\end{equation}
The exterior determination requires only the diagonal samples \eqref{eq:sample-noise-Dih}. A stronger finite-dimensional DN noise model is the following. Suppose that the measured data are generated by a bilinear form \(
\mathcal D_h^\delta:V_h^W\times V_h^W\to\mathbb R\)
such that
\begin{equation}\label{eq:dn-noise-discrete}
\big|
\mathcal D_h^\delta(f_h,g_h)
- \langle \Lambda_\gamma f_h,g_h \rangle 
\big| \le \delta \|f_h\|_{W^s(\mathbb R^d)} \|g_h\|_{W^s(\mathbb R^d)} \qquad \forall f_h,g_h\in V_h^W .
\end{equation}
If \( D_{i,h}^{\delta}:=\mathcal D_h^\delta(g_{i,h},g_{i,h})\), then \eqref{eq:sample-noise-Dih} follows from \eqref{eq:dn-noise-discrete}, because \(\|g_{i,h}\|_{W^s(\mathbb R^d)}=1 \).

If \(\mathcal D_h^\delta\) is obtained from a numerical forward solver, condition~\eqref{eq:dn-noise-discrete} is understood in terms of its total discrepancy from the exact DN form. Let $\mathcal D_h^0$ be the noiseless assembled form. Assume that, for all $f_h,g_h\in V_h^W$, $\vert \mathcal D_h^\delta(f_h,g_h)-\mathcal D_h^0(f_h,g_h)\vert \le\delta_m\lVert f_h\rVert_{W^s(\mathbb R^d)}\lVert g_h\rVert_{W^s(\mathbb R^d)}$ and $\vert \mathcal D_h^0(f_h,g_h)-\langle\Lambda_\gamma f_h,g_h\rangle\vert \le\varepsilon_f(h)\lVert f_h\rVert_{W^s(\mathbb R^d)}\lVert g_h\rVert_{W^s(\mathbb R^d)}$. Then \eqref{eq:dn-noise-discrete} holds with $\delta_t(h):=\delta_m+\varepsilon_f(h)$ in place of $\delta$. The convergence statements require \(\delta_{t}(h)\to0\); below we continue to write \(\delta\) for this total discrepancy, for example see \cite{DwivediRailoRuppFCP}.

\begin{lemma}[Consistency of the exact Galerkin DN form]
\label{lem:exact-galerkin-dn-consistency}
Assume the ellipticity hypotheses of Lemma~\ref{lem:crz-wellposed-energy} and use the full fitted spaces \(V_h^\Omega\) of Section~\ref{sec3.1}. For \(f\in\widetilde H^s(W)\), let \(u_{f,h}\in f+V_h^\Omega\) satisfy \(B_\gamma(u_{f,h},v_h)=0\) for all \(v_h\in V_h^\Omega\), and set \(\mathcal D_h^{\rm G}(f,g):=B_\gamma(u_{f,h},u_{g,h})\). There is a sequence \(\kappa_{G,h}\to0\) such that
\[
 |\mathcal D_h^{\rm G}(f,g)-\langle\Lambda_\gamma f,g\rangle|
 \le C\kappa_{G,h}^2\|f\|_{W^s(\mathbb R^d)}\|g\|_{W^s(\mathbb R^d)},
 \qquad f,g\in\widetilde H^s(W).
\]
\end{lemma}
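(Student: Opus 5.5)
Our plan is the standard Galerkin-orthogonality argument for symmetric coercive problems: the energy error of a Galerkin solution enters the bilinear output quadratically. Write \(u_f\) for the exact solution with exterior value \(f\) and \(u_{f,h}\) for the discrete one. Then
\[
e_f:=u_f-u_{f,h}\in\widetilde H^s(\Omega),
\]
since both functions agree with \(f\) on \(\Omega_e\). The exact and discrete equations give the Galerkin orthogonality \(B_\gamma(e_f,v_h)=0\) for all \(v_h\in V_h^\Omega\). We will also use \(B_\gamma(u_f,\phi)=0\) for every \(\phi\in\widetilde H^s(\Omega)\).

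The first step is an exact error identity. We have
\[
\langle\Lambda_\gamma f,g\rangle=B_\gamma(u_f,g)=B_\gamma(u_f,u_g),
\]
because \(u_g-g\in\widetilde H^s(\Omega)\). We expand
\[
B_\gamma(u_{f,h},u_{g,h})=B_\gamma(u_f-e_f,\,u_g-e_g)
\]
into four terms. The cross term \(B_\gamma(u_f,e_g)\) vanishes because \(e_g\in\widetilde H^s(\Omega)\) and \(u_f\) is a weak solution; by symmetry \(B_\gamma(e_f,u_g)\) vanishes as well. This gives
\[
\mathcal D_h^{\rm G}(f,g)-\langle\Lambda_\gamma f,g\rangle=B_\gamma(e_f,e_g).
\]
Continuity of \(B_\gamma\) on \(H^s(\mathbb R^d)\), with constant depending on \(\gamma_1\), then yields \(|B_\gamma(e_f,e_g)|\le C\|e_f\|_{H^s}\|e_g\|_{H^s}\).

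The second step applies Céa's lemma in \(\widetilde H^s(\Omega)\). Coercivity follows from the fractional Poincaré inequality \eqref{eq:sec2-fractional-poincare} together with \(\gamma\ge\gamma_0\). Write \(w_f:=u_f-f\in\widetilde H^s(\Omega)\). Since \(u_{f,h}-f\in V_h^\Omega\) is the Galerkin approximation of \(w_f\), we get
\[
\|e_f\|_{H^s}\le C\inf_{v_h\in V_h^\Omega}\|w_f-v_h\|_{H^s}.
\]
We then need a bound that is uniform over normalized \(f\). Define
\[
\kappa_{G,h}:=\sup\Bigl\{\inf_{v_h\in V_h^\Omega}\|w-v_h\|_{H^s(\mathbb R^d)}:\ w=S f,\ f\in\widetilde H^s(W),\ \|f\|_{W^s}\le1\Bigr\},
\]
where \(S:f\mapsto u_f-f\) is the solution operator. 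With this definition, \(\|e_f\|_{H^s}\le C\kappa_{G,h}\|f\|_{W^s}\), and the claimed estimate follows from the first step, using that the \(W^s\) and \(H^s\) norms are equivalent.

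The main obstacle is to show \(\kappa_{G,h}\to0\), since the supremum runs over an infinite-dimensional set. Pointwise convergence of best approximations is clear: \(V_h^\Omega\) consists of continuous piecewise polynomials vanishing outside \(\Omega\), which are dense in \(\widetilde H^s(\Omega)\) as \(h\to0\), for instance by approximating \(C_c^\infty(\Omega)\) with interpolants in \(H^1\). To upgrade this to uniform convergence, we will show that \(S\) is compact from \(\widetilde H^s(W)\) into \(\widetilde H^s(\Omega)\). The key is the disjointness of the supports: \(w_f\) solves
\[
B_\gamma(w_f,\phi)=-B_\gamma(f,\phi)=C_{d,s}\int_\Omega\int_W \gamma^{1/2}(x)\gamma^{1/2}(y)\frac{f(y)\phi(x)}{|x-y|^{d+2s}}\,dy\,dx,
\]
and the kernel is bounded because \(\operatorname{dist}(W,\Omega)>0\). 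This is exactly the mechanism behind the exterior energy estimate \eqref{eq:crz-exterior-energy}, which gives \(\|Sf\|_{H^s}\le C_E\|f\|_{L^2(W)}\). Since \(\widetilde H^s(W)\hookrightarrow L^2(W)\) is compact, \(S\) factors through a compact embedding and is therefore compact. The image of the unit ball is then precompact, and on precompact sets pointwise convergence of the best-approximation errors is uniform. These errors are \(1\)-Lipschitz in \(w\), so a finite \(\varepsilon\)-net argument applies. Hence \(\kappa_{G,h}\to0\). If a rate is wanted later, one could instead use regularity of \(w_f\) on \(\Omega\), but the statement needs only convergence.
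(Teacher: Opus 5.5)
Your proposal is correct and follows essentially the same route as the paper: Galerkin orthogonality reduces the discrepancy to \(B_\gamma(e_f,e_g)\) with \(e_f=(I-P_h)Sf\), and compactness of \(S\) upgrades density of \(V_h^\Omega\) to \(\kappa_{G,h}\to0\). Compactness comes from the exterior energy estimate \eqref{eq:crz-exterior-energy} composed with the compact embedding \(\widetilde H^s(W)\hookrightarrow L^2(W)\). The only cosmetic difference is that you define \(\kappa_{G,h}\) through best approximation and invoke C\'ea's lemma, whereas the paper defines it directly through the \(B_\gamma\)-orthogonal projection \(P_h\) and uses its uniform boundedness.
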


\begin{proof}
Let \(Sf:=u_f-f\), and let \(P_h\) be the \(B_\gamma\)-orthogonal projection onto \(V_h^\Omega\). Estimate \eqref{eq:crz-exterior-energy} extends \(S\) boundedly from \(L^2(W)\) to \(\widetilde H^s(\Omega)\). The compact embedding \(\widetilde H^s(W)\hookrightarrow L^2(W)\) makes \(S:\widetilde H^s(W)\to\widetilde H^s(\Omega)\) compact. Density of the fitted spaces and uniform boundedness of \(P_h\) imply \(P_hv\to v\) in \(W^s\) and hence $\kappa_{G,h}:=
\sup_{0\ne f\in\widetilde H^s(W)}
\frac{\|(I-P_h)Sf\|_{W^s(\mathbb R^d)}} {\|f\|_{W^s(\mathbb R^d)}}\longrightarrow0$. Galerkin orthogonality gives
\[
 \mathcal D_h^{\rm G}(f,g)-\langle\Lambda_\gamma f,g\rangle
 =B_\gamma((I-P_h)Sf,(I-P_h)Sg),
\]
which proves the estimate.
\end{proof}

If, for every $f_h,g_h\in V_h^W$, $\vert \mathcal D_h^0(f_h,g_h)-\mathcal D_h^{\rm G}(f_h,g_h)\vert \le\varepsilon_{\rm ass}(h)\lVert f_h\rVert_{W^s(\mathbb R^d)}\lVert g_h\rVert_{W^s(\mathbb R^d)}$, then $\varepsilon_f(h)\le C\kappa_{G,h}^2+\varepsilon_{\rm ass}(h)$. Convergence of the total discrepancy therefore also requires vanishing assembly and outer-tail errors.

\subsection{Regularized reconstruction of the conductivity on the exterior domain }

We reconstruct the conductivity on \(W_0\) from the measured DN samples
\(D_{i,h}^{\delta}\), \(i=1,\dots,N_h\), associated with the reconstruction
nodes \(\mathcal N_h^0=\{x_i\}_{i=1}^{N_h}\subset\overline{W_0}\).

We assume that
\begin{equation}\label{eq:gamma-reg-recon}
\gamma\in C^{0,\beta_c}(\overline W),
\qquad \gamma|_{W_0}\in H^2(W_0), \qquad 0<\beta_c\le1, \qquad
0<\gamma_0\le \gamma\le\gamma_1 \quad\text{on }\overline W ,
\end{equation}
where \( [\gamma]_{C^{0,\beta_c}(\overline{W_0})}
:=\sup_{\substack{x,y\in\overline{W_0}\\x\ne y}}
\frac{|\gamma(x)-\gamma(y)|}{|x-y|^{\beta_c}}\).

Let \(\mathcal T_h^{W_0}\) be the mesh induced on \(W_0\) by \(\mathcal T_h\). Define
\[
Z_h
:= \bigl\{ c_h\in C^0(\overline{W_0}) : c_h|_K\in \mathbb P_1(K)
\ \text{for all }K\in\mathcal T_h^{W_0}
\bigr\}
\subset H^1(W_0),
\]
and denote its nodal basis by \(\{\psi_i\}_{i=1}^{N_h}\), so that \(\psi_i(x_j)=\delta_{ij}\). We take \(\mathcal N_h^0\) to be the nodal set of the \(\mathbb P_1\) reconstruction mesh \(\mathcal T_h^{W_0}\). For each node \(x_i\), define the lumped mass weight
\[
\mu_i:=\int_{W_0}\psi_i(x)\,dx = \frac1{d+1}
\sum_{K\in\mathcal T_h^{W_0}:\,x_i\in\overline K}|K|>0.
\]
For \(\mathbf z=(z_1,\dots,z_{N_h})^\top,\,\mathbf{\bar z}=(\bar z_1,\dots,\bar z_{N_h})^\top\in\mathbb R^{N_h}\), we write
\[
\|\mathbf z\|_{\mu,h}^2 := \sum_{i=1}^{N_h}\mu_i |z_i|^2 \qquad \text{ and } \qquad (\mathbf z,\mathbf{\bar z} )_{\mu,h}
:=
\sum_{i=1}^{N_h}\mu_i z_i \bar z_i.
\]

For \(c_h\in Z_h\), we write \( \mathbf c_h:= (c_1,\dots,c_{N_h})^\top=(c_h(x_1),\dots,c_h(x_{N_h}))^\top\in\mathbb R^{N_h} \), and the row-sum lumped mass matrix is spectrally equivalent to the consistent mass matrix; in particular, 
\begin{equation}\label{eq:lumped-L2} 
\|c_h\|_{L^2(W_0)}^2 \le \sum_{i=1}^{N_h}\mu_i c_i^2 \le (d+2)\|c_h\|_{L^2(W_0)}^2, \qquad c_h=\sum_i c_i\psi_i, \qquad \forall c_h\in Z_h,
\end{equation}
see \cite[Lem.~1]{LangerLoescherSteinbachYang2024}.

Let us set
\[
\mathbf d_h^\delta := (D_{1,h}^{\delta},\dots,D_{N_h,h}^{\delta})^\top,
\qquad \mathbf d_h := (D_{1,h},\dots,D_{N_h,h})^\top, \quad 
\text{ and }\quad \boldsymbol\gamma_h^\ast :=(\gamma(x_1),\dots,\gamma(x_{N_h}))^\top.
\]

For a regularization parameter \(\alpha>0\), we define
\begin{equation}\label{eq:def-J-h-lambda}
\mathcal J_{h,\alpha}^\delta(c_h)
:= \frac12 \|\mathbf c_h-\mathbf d_h^\delta\|_{\mu,h}^2
+ \frac{\alpha}{2} \|c_h\|_{H^1(W_0)}^2, \qquad c_h\in Z_h .
\end{equation}
The experiments can also use the calibrated and extrapolated samples from Appendix~\ref{app:calibrated-dn} in the data vector of \eqref{eq:def-J-h-lambda}. In this case, Lemma~\ref{lem:calibrated-extrapolated-sample} bounds their nodal error; Theorems~\ref{thm:recon-error} and~\ref{thm:linfty-regularized-reconstruction} then apply with error \(E_h^\delta\), defined below in \eqref{eq:def-Eh-delta}, replaced by \(\widehat E_h^\delta\) from \eqref{eq:def-Eh-cal-delta}.

Apply Lemma~\ref{lem:abstract-shifted-tikhonov} with $\mathsf X=E=Z_h$ in the $H^1(W_0)$-norm, $\mathsf Y=\mathbb R^{N_h}$ in the norm $\|\cdot\|_{\mu,h}$, $T c_h=(c_h(x_i))_{i=1}^{N_h}$, $b=\mathbf d_h^\delta$, and $x_c=0$. For every $\alpha>0$, the functional \eqref{eq:def-J-h-lambda} has a unique minimizer
\begin{equation}\label{eq:def-recon-minimizer}
\gamma_h^\delta=\operatorname*{argmin}_{c_h\in Z_h}\mathcal J_{h,\alpha}^\delta(c_h),
\end{equation}
characterized by
\[
\sum_{i=1}^{N_h}\mu_i\bigl(\gamma_h^\delta(x_i)-D_{i,h}^\delta\bigr)v_h(x_i)+\alpha(\gamma_h^\delta,v_h)_{H^1(W_0)}=0,\qquad v_h\in Z_h.
\]
Writing $\gamma_h^\delta=\sum_jc_j^\delta\psi_j$, the coefficient vector solves the symmetric positive definite system
\begin{equation}\label{eq:recon-linear-system}
(D_h+\alpha K_h)\mathbf c^\delta=D_h\mathbf d_h^\delta,
\end{equation}
where $D_h=\operatorname{diag}(\mu_1,\ldots,\mu_{N_h})$ and $(K_h)_{ij}=\int_{W_0}(\psi_i\psi_j+\nabla\psi_i\cdot\nabla\psi_j)\,dx$.

\subsection{Error analysis of the reconstructed conductivity}
\label{sec:error}

For each \(i=1,\dots,N_h\), let \(u^e_{g_i}\in H^s(\mathbb R^d)\) denote
the exact weak solution of \eqref{eq:frac-cond} with exterior value
\(g_{i,h}\). Thus
\begin{equation}\label{eq:weakstate-gi}
u^e_{g_i}-g_{i,h}\in \widetilde H^s(\Omega),
\qquad
B_\gamma(u^e_{g_i},\varphi)=0
\quad
\forall \varphi\in \widetilde H^s(\Omega).
\end{equation}
The DN definition gives
\begin{equation*} 
D_{i,h} = B_\gamma(u^e_{g_i},g_{i,h}) = B_\gamma(u^e_{g_i},u^e_{g_i}). \end{equation*}
The last equality follows from
\(u^e_{g_i}-g_{i,h}\in\widetilde H^s(\Omega)\) and the weak equation. We define the continuous concentration error by
\begin{equation*} 
\eta_c(h) := \max_{1\le i\le N_h}|D_{i,h}-\gamma(x_i)| =: \|\mathbf d_h-\boldsymbol\gamma_h^\ast\|_{\ell^\infty}. 
\end{equation*}
We also define the measured sample error by
\begin{equation}\label{eq:def-Eh-delta}
E_h^\delta
:= \max_{1\le i\le N_h} |D_{i,h}^{\delta}-\gamma(x_i)| =: \|\mathbf d_h^\delta - \boldsymbol\gamma_h^\ast\|_{\ell^\infty}.
\end{equation}
Thus, we have \(E_h^\delta \le \eta_c(h)+\delta\). We now estimate the deterministic error \(\eta_c(h)\) produced by the localized concentrating data. We start with the following tail estimate for a cutoff function.

\begin{lemma}[Fractional cutoff tail estimate]
\label{lem:fractional-cutoff-tail}
Let \(x_i\in\overline{W_0}\), let \(0<r\le t/4\) for some $t>0$, and let
\(\chi_{i,t}\in C_c^\infty(B(x_i,t))\) satisfy
\[
0\le \chi_{i,t}\le 1,
\qquad
\chi_{i,t}\equiv 1
\quad\text{on }B(x_i,t/2).
\]
Then
\begin{equation*} 
\sup_{x\in B(x_i,r)}\int_{\R^d}|\nabla^s\chi_{i,t}(x,y)|^2\,dy \le Ct^{-2s}, 
\end{equation*}
where \(C>0\) depends only on \(d\) and \(s\).
\end{lemma}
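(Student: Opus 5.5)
Fix \(x\in B(x_i,r)\). By \eqref{eq:frac_grad},
\[
\int_{\R^d}|\nabla^s\chi_{i,t}(x,y)|^2\,dy=\frac{C_{d,s}}{2}\int_{\R^d}\frac{|\chi_{i,t}(x)-\chi_{i,t}(y)|^2}{|x-y|^{d+2s}}\,dy .
\]
Since \(r\le t/4<t/2\), we have \(\chi_{i,t}(x)=1\). The integrand therefore vanishes whenever \(y\in B(x_i,t/2)\). If \(y\notin B(x_i,t/2)\), the triangle inequality gives
\[
|x-y|\ge |y-x_i|-|x-x_i|\ge t/2-r\ge t/4 .
\]
Hence the integration can be restricted to the exterior region \(\{|y-x|\ge t/4\}\). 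There we use the crude bound \(|\chi_{i,t}(x)-\chi_{i,t}(y)|\le 1\), which follows from \(0\le\chi_{i,t}\le1\).

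The remaining step is the explicit radial integral:
\[
\int_{|y-x|\ge t/4}\frac{dy}{|x-y|^{d+2s}}=|\mathbb S^{d-1}|\int_{t/4}^\infty\rho^{-1-2s}\,d\rho=\frac{|\mathbb S^{d-1}|}{2s}\,4^{2s}\,t^{-2s}.
\]
This yields the claim with \(C=\tfrac{C_{d,s}}{2}\cdot\tfrac{|\mathbb S^{d-1}|4^{2s}}{2s}\), which depends only on \(d\) and \(s\). The bound is uniform in \(x\in B(x_i,r)\), in \(x_i\), and in the particular cutoff. No smoothness of \(\chi_{i,t}\) beyond the plateau property and the bounds \(0\le\chi_{i,t}\le1\) is needed.

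The argument is essentially routine. The only point needing care is the geometric separation: we must check that \(r\le t/4\) forces \(x\) into the plateau region and also keeps it a distance comparable to \(t\) from the non-plateau region. This is what allows us to avoid the near-diagonal singularity entirely, so no gradient bound on \(\chi_{i,t}\) is required.
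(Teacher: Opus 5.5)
Your proof is correct and follows the paper's argument. You use the plateau to get \(\chi_{i,t}(x)=1\), restrict to \(|x-y|\ge t/4\) by the triangle inequality, bound the difference by one, and evaluate the tail integral as \(Ct^{-2s}\); the only addition is that you compute the constant \(C\) explicitly, which the paper leaves implicit.
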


\begin{proof}
Fix \(x\in B(x_i,r)\). Since \(r\le t/4\), we have 
\(\chi_{i,t}(x)=1\). The integrand vanishes for \(y\in B(x_i,t/2)\). On the remaining set,
\( |x-y|\ge t/4\).
Therefore, using the definition of the fractional gradient and \(0\le \chi_{i,t}\le1\),
\[
\int_{\R^d}|\nabla^s\chi_{i,t}(x,y)|^2\,dy
\le
C\int_{\{|x-y|\ge t/4\}}\frac{dy}{|x-y|^{d+2s}}
\le
C t^{-2s}.
\]
This completes the proof.
\end{proof}

\begin{theorem}[Uniform quantitative concentration]
\label{thm:uniform-concentration}
Assume \( 0<\gamma_0 \le \gamma(x) \le \gamma_1 <\infty\) for almost every \(x\in\R^d\) and \(\sqrt\gamma\in C(\overline W)\). Let \(u^e_{g_i}\) be the exact weak solution satisfying \eqref{eq:weakstate-gi} with exterior data \(g_{i,h}\) defined by \eqref{eq:def-gih}. Let \(r_h\) and \(\ell_h\) be defined by \eqref{eq:def-rh-ellh}. For all sufficiently small \(h\), assume \(4r_h<d_W\). Then for every \(t\in[4r_h,d_W]\),
\begin{equation}\label{eq:uniform-concentration-t}
\eta_c(h)
\le
C\left(
\omega_{\sqrt\gamma}(t)
+
t^{-2s}\ell_h^2
+
\ell_h^2
\right),
\end{equation}
where
\[
\omega_{\sqrt\gamma}(t)
:=
\sup_{\substack{x,y\in\overline W\\ |x-y|\le t}}
|\sqrt\gamma(x)-\sqrt\gamma(y)|.
\]
The constant \(C>0\) is independent of \(h\), \(i\), \(N_h\), and \(t\).
\end{theorem}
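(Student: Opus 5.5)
Write \(g=g_{i,h}\), \(u=u^e_{g_i}\), \(x_i\) the node, and \(w:=u-g\in\widetilde H^s(\Omega)\). By \eqref{eq:crz-exterior-energy} with \(U=W\), and since \(g\in V_h^W\subset\widetilde H^s(W)\), we have
\[
\|w\|_{H^s(\R^d)}\le C_E\|g\|_{L^2(W)}\le C_E\ell_h .
\]
Then
\[
D_{i,h}=B_\gamma(u,u)=B_\gamma(g,g)+2B_\gamma(g,w)+B_\gamma(w,w).
\]
Weak-solution orthogonality gives \(B_\gamma(u,w)=0\), so \(B_\gamma(g,w)=-B_\gamma(w,w)\) and hence
\[
D_{i,h}=B_\gamma(g,g)-B_\gamma(w,w).
\]
By continuity of \(B_\gamma\), \(|B_\gamma(w,w)|\le C\ell_h^2\). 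It therefore suffices to show
\[
|B_\gamma(g,g)-\gamma(x_i)|\le C\bigl(\omega_{\sqrt\gamma}(t)+t^{-2s}\ell_h^2+\ell_h^2\bigr).
\]
For this we use the normalization \(\|g\|_{W^s}=1\), which gives \(\|\nabla^s g\|_{L^2(\R^{2d})}^2=1-\|g\|_{L^2}^2=1-O(\ell_h^2)\). Thus \(\gamma(x_i)\|\nabla^sg\|^2\) differs from \(\gamma(x_i)\) by \(O(\ell_h^2)\).

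Next we localize with the cutoff \(\chi:=\chi_{i,t}\) of Lemma~\ref{lem:fractional-cutoff-tail}, where \(t\in[4r_h,d_W]\) so \(B(x_i,t)\subset W\). We write \(\sqrt{\gamma(x)}=\sqrt{\gamma(x_i)}+e(x)\) with \(|e|\le\omega_{\sqrt\gamma}(t)\) on \(B(x_i,t)\). The natural quantity is \(\Theta_\gamma-\gamma(x_i)\), but \(\Theta_\gamma(x,y)\) involves both points, and \(y\) may lie far from \(x_i\). So we split \(\R^{2d}\) into \(B(x_i,t)^2\) and its complement. On \(B(x_i,t)^2\) we have \(|\Theta_\gamma(x,y)-\gamma(x_i)|\le C\omega_{\sqrt\gamma}(t)\), using \(\sqrt\gamma\le\sqrt{\gamma_1}\) and \(|ab-cd|\le|a||b-d|+|d||a-c|\). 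This contributes at most \(C\omega_{\sqrt\gamma}(t)\|\nabla^sg\|^2\le C\omega_{\sqrt\gamma}(t)\).

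On the complement, \(g(x)-g(y)\) is nonzero only if one of the points lies in \(\supp g\subset B(x_i,r_h)\) and the other lies outside \(B(x_i,t)\), hence outside \(B(x_i,t/2)\). By symmetry we may take \(x\in B(x_i,r_h)\), and there the integrand reduces to \(|g(x)|^2\,|x-y|^{-d-2s}\) up to constants. Both the \(\Theta_\gamma\)-weighted and the \(\gamma(x_i)\)-weighted versions of this piece are bounded by
\[
C\int_{B(x_i,r_h)}|g(x)|^2\int_{|x-y|\ge t/4}|x-y|^{-d-2s}\,dy\,dx\le Ct^{-2s}\ell_h^2 .
\]
This is exactly the computation in the proof of Lemma~\ref{lem:fractional-cutoff-tail}, since \(|\nabla^s\chi(x,y)|^2\) has the same kernel there. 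Combining the pieces gives
\[
|B_\gamma(g,g)-\gamma(x_i)\|\nabla^sg\|^2|\le C\bigl(\omega_{\sqrt\gamma}(t)+t^{-2s}\ell_h^2\bigr).
\]
Adding the \(O(\ell_h^2)\) normalization defect and the \(B_\gamma(w,w)\) term, then taking the maximum over \(i\), yields \eqref{eq:uniform-concentration-t}. All constants depend only on \(\gamma_0,\gamma_1,d,s,C_E\), uniformly in \(i\), \(h\) and \(t\).

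The main obstacle is the off-diagonal region, where \(\Theta_\gamma\) cannot be compared to \(\gamma(x_i)\) pointwise because \(y\) may be far away, even in \(\Omega\) where \(\gamma\) is only \(L^\infty\). The estimate above resolves it using only \(\gamma\le\gamma_1\) and the support separation \(r_h\le t/4\). I would double-check that the \(\gamma(x_i)\|\nabla^s g\|^2\) reference term is handled with the same split, so that no cross terms are lost. An alternative is to apply the product rule to \(\chi g=g\) with Lemma~\ref{lem:fractional-cutoff-tail}, but the direct split seems simpler.
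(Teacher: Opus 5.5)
Your proof is correct. The identity $D_{i,h}=B_\gamma(g,g)-B_\gamma(w,w)$, the bound $B_\gamma(w,w)\le C\ell_h^2$ from \eqref{eq:crz-exterior-energy}, and the normalization defect $\gamma(x_i)\|g\|_{L^2(\R^d)}^2$ are exactly as in the paper. The only difference is in how you localize.

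The paper first bounds $|\Theta_\gamma(x,y)-\gamma(x_i)|$ by $C\bigl(|\sqrt{\gamma(x)}-\sqrt{\gamma(x_i)}|+|\sqrt{\gamma(y)}-\sqrt{\gamma(x_i)}|\bigr)$ and reduces by symmetry to the oscillation in $y$ alone. It then writes $g=\chi_{i,t}g$ and applies the fractional product rule $\nabla^s(\chi_{i,t}g)(x,y)=\chi_{i,t}(y)\nabla^sg(x,y)+g(x)\nabla^s\chi_{i,t}(x,y)$. The first term localizes $y$ to $B(x_i,t)$ and yields $\omega_{\sqrt\gamma}(t)$. The second is controlled by Lemma~\ref{lem:fractional-cutoff-tail} and yields $t^{-2s}\ell_h^2$.

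You instead partition $\R^{2d}$ into $B(x_i,t)^2$ and its complement. On the complement, $g(x)-g(y)$ is just $g(x)$ or $-g(y)$, with $|x-y|\ge 3t/4$. So you need only $\gamma\le\gamma_1$ and the kernel integral from the proof of Lemma~\ref{lem:fractional-cutoff-tail}; no cutoff function or product rule enters.

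Your route is more elementary. It is also precisely the decomposition the paper itself adopts later, in the proof of Corollary~\ref{cor:eta-c-explicit-rate}, where the complement is further split into annuli to exploit H\"older continuity and produce $\Psi_{\beta_c,s}$. The paper's version isolates the tail estimate in a stand-alone cutoff lemma and uses the modulus of $\sqrt\gamma$ in only one variable at a time. For the present statement, the two arguments give the same bound with constants uniform in $h$, $i$, $N_h$, and $t$.
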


\begin{proof}
Fix \(i\in\{1,\dots,N_h\}\), and for a moment, write
\( u_i:=u^e_{g_i}\) and \(g_i:=g_{i,h}\).
We split
\begin{equation*} 
|B_\gamma(u_i,u_i)-\gamma(x_i)| \le |B_\gamma(u_i,u_i)-B_\gamma(g_i,g_i)|+|B_\gamma(g_i,g_i)-\gamma(x_i)|.
\end{equation*}
For the first term, let \(w_i = u_i-g_i \in \widetilde H^s(\Omega)\); then \(B_\gamma(u_i,w_i) =0\) and 
\[
B_\gamma(g_i,g_i) = B_\gamma(u_i - w_i, u_i - w_i) = B_\gamma(u_i, u_i) + B_\gamma(w_i, w_i),
\]
which, together with the boundedness of \(B_\gamma\) and the exterior energy estimate \eqref{eq:crz-exterior-energy}, implies
\begin{equation}\label{eq:eta-c-first-term}
|B_\gamma(u_i,u_i)-B_\gamma(g_i,g_i)| = B_\gamma(w_i, w_i) \leq C \|w_i\|^2_{H^s(\R^d)} = C \|u_i- g_i\|^2_{H^s(\R^d)}\leq C \|g_i\|^2_{L^2(W)} \leq C\ell_h^2.
\end{equation}
We now estimate \(B_\gamma(g_i,g_i)-\gamma(x_i)\). Since \(
\|g_i\|_{W^s(\R^d)}^2 = \|g_i\|_{L^2(\R^d)}^2 + \|\nabla^s g_i\|_{L^2(\R^{2d})}^2
= 1\), we have \( \|\nabla^s g_i\|_{L^2(\R^{2d})}^2 = 1-\|g_i\|_{L^2(\R^d)}^2\). Therefore,
\begin{align}
B_\gamma(g_i,g_i)-\gamma(x_i) = \iint_{\R^{2d}}
\bigl(\gamma(x)^{1/2}\gamma(y)^{1/2}-\gamma(x_i)\bigr)
|\nabla^s g_i(x,y)|^2\,dx\,dy - \gamma(x_i)\|g_i\|_{L^2(\R^d)}^2 .
\label{eq:Bgg-minus-gamma}
\end{align}
The last term in \eqref{eq:Bgg-minus-gamma} is bounded by \(C\ell_h^2\). Since \(\sqrt{\gamma}\) is bounded,
\[
|\gamma(x)^{1/2}\gamma(y)^{1/2}-\gamma(x_i)|
\le
C\bigl(|\gamma(x)^{1/2}-\gamma(x_i)^{1/2}|+|\gamma(y)^{1/2}-\gamma(x_i)^{1/2}|\bigr).
\]
By symmetry, it is enough to estimate
\[
J_i := \iint_{\R^{2d}}
|\gamma(y)^{1/2}-\gamma(x_i)^{1/2}|\,|\nabla^s g_i(x,y)|^2\,dx\,dy.
\]
Fix \(t\in[4r_h,d_W]\). Since \(t\le d_W\), we have \(B(x_i,t)\subset\subset W\). Choose
\(\chi_{i,t}\in C_c^\infty(B(x_i,t))\) such that
\[
0\le \chi_{i,t}\le1,
\qquad
\chi_{i,t}\equiv1
\quad\text{on }B(x_i,t/2).
\]
Since \(\operatorname{supp}(g_i)\subset\overline{B(x_i,r_h)}\) and \(r_h\le t/4\), we have \(g_i\chi_{i,t}=g_i\). Using the product rule for the fractional gradient, we get
\[
|\nabla^s g_i(x,y)|^2 = |\nabla^s (g_i\chi_{i,t})(x,y)|^2 \le
2|\chi_{i,t}(y)|^2|\nabla^s g_i(x,y)|^2 + 2|g_i(x)|^2|\nabla^s\chi_{i,t}(x,y)|^2.
\]
Hence \(J_i\le 2J_{i,1}+2J_{i,2}\), where
\[
J_{i,1} := \iint_{\R^{2d}} |\gamma(y)^{1/2}-\gamma(x_i)^{1/2}|\,|\chi_{i,t}(y)|^2|\nabla^s g_i(x,y)|^2\,dx\,dy
\]
and
\[
J_{i,2} := \iint_{\R^{2d}} |\gamma(y)^{1/2}-\gamma(x_i)^{1/2}|\,|g_i(x)|^2|\nabla^s\chi_{i,t}(x,y)|^2\,dx\,dy.
\]
For \(J_{i,1}\), the cutoff implies \(y\in B(x_i,t)\). Hence \(|\gamma(y)^{1/2}-\gamma(x_i)^{1/2}|\le \omega_{\sqrt\gamma}(t)\), and therefore
\[
J_{i,1} \le
\omega_{\sqrt\gamma}(t)
\|\nabla^s g_i\|_{L^2(\R^{2d})}^2 \le \omega_{\sqrt\gamma}(t).
\]
For \(J_{i,2}\), boundedness of \(\sqrt{\gamma}\), the support of \(g_i\), and
Lemma \ref{lem:fractional-cutoff-tail} give
\[
J_{i,2}
\le
C
\int_{\operatorname{supp}(g_i)}
|g_i(x)|^2
\left(
\int_{\R^d}|\nabla^s\chi_{i,t}(x,y)|^2\,dy
\right)dx \le C t^{-2s}\|g_i\|_{L^2(\R^d)}^2 \le C t^{-2s}\ell_h^2.
\]
Thus
\[
J_i \le C\left( \omega_{\sqrt\gamma}(t)+t^{-2s}\ell_h^2 \right).
\]
The same bound holds for the term containing \(|\gamma(x)^{1/2}-\gamma(x_i)^{1/2}|\). Combining this with
\eqref{eq:Bgg-minus-gamma}, we obtain
\[
|B_\gamma(g_i,g_i)-\gamma(x_i)|
\le
C\left(
\omega_{\sqrt\gamma}(t) + t^{-2s}\ell_h^2 + \ell_h^2
\right).
\]
Taking the maximum over \(i=1,\dots,N_h\) proves \eqref{eq:uniform-concentration-t}.
\end{proof}
Under H\"older regularity, integration over annuli sharpens Theorem~\ref{thm:uniform-concentration}.

\begin{corollary}[H\"older concentration rate]
\label{cor:eta-c-explicit-rate}
Let \(\Omega,W_0,W\) be as in Section~\ref{sec3.1}, assume \(0<\gamma_0\le\gamma\le\gamma_1<\infty\) almost everywhere in \(\mathbb R^d\), and let \(\gamma\in C^{0,\beta_c}(\overline W)\), \(0<\beta_c\le1\). Define
\begin{equation*} 
\Psi_{\beta_c,s}(r):=
\begin{cases} r^{\beta_c},&\beta_c<2s,\\ r^{2s}(1+|\log r|),&\beta_c=2s,\\ r^{2s},&\beta_c>2s. 
\end{cases} 
\end{equation*}
For \(0<r<\min\{1,d_W/4\}\), every \(g\in\widetilde H^s(W)\) with \(\|g\|_{W^s(\mathbb R^d)}=1\) and \(\operatorname{supp}g\subset B(x,r)\), \(x\in\overline{W_0}\), satisfies
\begin{equation}\label{eq:localized-holder-concentration}
 |\langle\Lambda_\gamma g,g\rangle-\gamma(x)|\le C\Psi_{\beta_c,s}(r).
\end{equation}
The same estimate holds with \(\mathcal D_h^{\rm G}(g,g)\) from Lemma~\ref{lem:exact-galerkin-dn-consistency} in place of \(\langle\Lambda_\gamma g,g\rangle\), uniformly in \(h\). Under the hypotheses of Proposition~\ref{prop:mesh-resolved-concentration}, for sufficiently small \(h\),
\begin{equation}\label{eq:eta-c-radius-rate}
 \eta_c(h)\le C\Psi_{\beta_c,s}(r_h).
\end{equation}
For \(N(h)=\lfloor h^{-\theta_N}\rfloor\), \(0<\theta_N<1\), and \(\mathfrak c_h\) defined in \eqref{eq:concentration-mesh-modulus}, we obtain
\begin{equation*}
\eta_c(h)\le C\mathfrak c_h.
\end{equation*}
The constants are independent of \(g,x,r,h\).
\end{corollary}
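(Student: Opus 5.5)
Starting from the splitting in the proof of Theorem~\ref{thm:uniform-concentration}, I would write \(\langle\Lambda_\gamma g,g\rangle-\gamma(x)=-B_\gamma(w,w)+\bigl(B_\gamma(g,g)-\gamma(x)\bigr)\) with \(w=u_g-g\in\widetilde H^s(\Omega)\). By \eqref{eq:crz-exterior-energy}, \(B_\gamma(w,w)\le C\|g\|_{L^2(W)}^2\). For the \(L^2\)-norm I would not rely on \(\ell_h\). Instead, I would use the fractional Poincar\'e inequality on the ball \(B(x,r)\) (a scaled version of \eqref{eq:sec2-fractional-poincare}, with constant proportional to \(r^s\)). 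This gives \(\|g\|_{L^2}^2\le Cr^{2s}\|\nabla^s g\|^2\le Cr^{2s}\). The term \(\gamma(x)\|g\|_{L^2}^2\) in \eqref{eq:Bgg-minus-gamma} is bounded in the same way. Both contributions are therefore \(O(r^{2s})\le C\Psi_{\beta_c,s}(r)\) for \(r<1\). For the Galerkin version, the same argument applies because \(\mathcal D_h^{\rm G}(g,g)=B_\gamma(u_{g,h},u_{g,h})\) and the discrete correction \(w_h=u_{g,h}-g\in V_h^\Omega\) satisfies \(B_\gamma(g,g)=B_\gamma(u_{g,h},u_{g,h})+B_\gamma(w_h,w_h)\). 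Since \(w_h=P_h w\), we get \(B_\gamma(w_h,w_h)\le B_\gamma(w,w)\le Cr^{2s}\), uniformly in \(h\).

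The main term is \(J=\iint|\sqrt\gamma(y)-\sqrt\gamma(x)|\,|\nabla^sg(x,y)|^2\,dx\,dy\) (together with its symmetric counterpart). The coarse cutoff argument only gives \(\omega(t)+t^{-2s}r^{2s}\), which loses the sharp rate, so I would replace it by an annular decomposition. Because \(g\) vanishes off \(B(x,r)\), the integrand is nonzero only when at least one of the two points lies in \(B(x,r)\). On \(B(x,2r)\times B(x,2r)\), Hölder continuity of \(\sqrt\gamma\) (inherited from \(\gamma\ge\gamma_0\)) gives a weight of at most \(Cr^{\beta_c}\), and the energy is at most 1. For \(z\in B(x,r)\) and \(y\in A_k:=B(x,2^{k+1}r)\setminus B(x,2^kr)\), \(k\ge1\) with \(2^kr\le d_W\), we have \(|\nabla^sg|^2=C|g(z)|^2|z-y|^{-d-2s}\). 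The weight is \(\le C(2^kr)^{\beta_c}\), and \(\int_{A_k}|z-y|^{-d-2s}dy\le C(2^kr)^{-2s}\). So the contribution of \(A_k\) is bounded by \(C\|g\|_{L^2}^2(2^kr)^{\beta_c-2s}\le Cr^{2s}(2^kr)^{\beta_c-2s}\). Beyond \(d_W\), I would use \(\sqrt\gamma\le\sqrt{\gamma_1}\) and the tail integral, which gives \(Cr^{2s}\).

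Summing the dyadic series over \(1\le k\lesssim\log_2(d_W/r)\) produces the three regimes. If \(\beta_c<2s\), the series is geometric and dominated by its first term, giving \(r^{\beta_c}\). If \(\beta_c>2s\), it is dominated by the last term, giving \(r^{2s}d_W^{\beta_c-2s}\). If \(\beta_c=2s\), there are \(\sim|\log r|\) equal terms, giving \(r^{2s}(1+|\log r|)\). Together these prove \eqref{eq:localized-holder-concentration}. The main obstacle is this bookkeeping: keeping the near-diagonal region, where the Hölder bound \(r^{\beta_c}\) is used, separate from the far region, where only the \(L^2\)-smallness of \(g\) is available, and checking that the scaled Poincaré constant is uniform in \(x\).

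Next, \eqref{eq:eta-c-radius-rate} follows by applying \eqref{eq:localized-holder-concentration} to \(g=g_{i,h}\) with \(r=r_h\). Here \(r_h\to0\) by \eqref{eq:rh-rate} and \eqref{eq:concentration-index-choice}, and \(g_{i,h}\in V_h^W\subset\widetilde H^s(W)\) has unit norm by Proposition~\ref{prop:mesh-resolved-concentration}. Finally, take \(N(h)=\lfloor h^{-\theta_N}\rfloor\). Since \(\theta_N<1\), we have \(r_h\le C(h^{\theta_N}+h)\le Ch^{\theta_N}\). Since \(\Psi_{\beta_c,s}\) is increasing for small \(r\), we get \(\Psi_{\beta_c,s}(r_h)\le C\Psi_{\beta_c,s}(Ch^{\theta_N})\). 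This is \(Ch^{\theta_N\min\{\beta_c,2s\}}\), with the extra factor \(1+|\log h|\) when \(\beta_c=2s\), i.e.\ \(C\mathfrak c_h\).
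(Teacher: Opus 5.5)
Your proposal is correct and follows essentially the same route as the paper: an \(O(r^{2s})\) bound on \(\|g\|_{L^2}^2\), a split of the coefficient integral into \(B(x,2r)^2\) and the off-diagonal region, and \(B_\gamma(P_hw,P_hw)\le B_\gamma(w,w)\) for the Galerkin form. For the first step the paper restricts the seminorm to \(B(x,r)\times(B(x,3r)\setminus B(x,2r))\), which proves the scaled Poincar\'e inequality directly, and your dyadic sum is the discrete form of its \(r^{2s}\int_{2r}^{d_W}t^{\beta_c-2s-1}\,dt\). Two small differences remain. The paper applies the localized estimate with radius \(2r_h\) and uses \(\Psi_{\beta_c,s}(2r_h)\le C\Psi_{\beta_c,s}(r_h)\), because \(\operatorname{supp}g_{i,h}\) is only known to lie in the closed ball \(\overline{B(x_i,r_h)}\). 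Also, the mirrored off-diagonal piece, with the weighted variable in \(B(x,r)\) and the other far away, is bounded trivially by \(Cr^{\beta_c}\) and should be mentioned.
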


\begin{proof}
Write \(a=\sqrt\gamma\). Restricting the fractional seminorm to \(B(x,r)\times(B(x,3r)\setminus B(x,2r))\) gives $\|g\|_{L^2(\mathbb R^d)}^2
\le Cr^{2s}\|\nabla^sg\|_{L^2(\mathbb R^{2d})}^2
\le Cr^{2s}$. By \eqref{eq:eta-c-first-term} and \eqref{eq:Bgg-minus-gamma}, it remains to bound
\[
 \iint_{\mathbb R^{2d}}|a(y)a(z)-\gamma(x)|\,|\nabla^sg(y,z)|^2\,dy\,dz.
\]
On \(B(x,2r)^2\), the coefficient factor is at most \(Cr^{\beta_c}\). Elsewhere the integrand is nonzero only if one variable belongs to \(\operatorname{supp}g\) and the other lies outside \(B(x,2r)\). H\"older continuity within distance \(d_W\) of \(x\), global boundedness, and symmetry therefore give
\[
 |\langle\Lambda_\gamma g,g\rangle-\gamma(x)|
 \le C\left[r^{\beta_c}+r^{2s}\left(1+\int_{2r}^{d_W}t^{\beta_c-2s-1}\,dt\right)\right]
 \le C\Psi_{\beta_c,s}(r).
\]
For the Galerkin estimate, let \(w=u_g-g\) and let \(P_h\) be the \(B_\gamma\)-orthogonal projection onto \(V_h^\Omega\). Galerkin orthogonality gives \(\mathcal D_h^{\rm G}(g,g)=B_\gamma(g,g)-B_\gamma(P_hw,P_hw)\), while \(B_\gamma(P_hw,P_hw)\le B_\gamma(w,w)\le C\|g\|_{L^2(\R^{d})}^2\) by \eqref{eq:crz-exterior-energy}. Apply \eqref{eq:localized-holder-concentration} to \(g_{i,h}\) with radius \(2r_h\), using \(\Psi_{\beta_c,s}(2r_h)\le C\Psi_{\beta_c,s}(r_h)\) for small \(h\), and then use \(r_h\le Ch^{\theta_N}\) from \eqref{eq:rh-rate}.
\end{proof}

Estimate \eqref{eq:localized-holder-concentration} includes normalized finite element sources supported in balls of radius at most $Ch$, with $C$ independent of $h$. The condition \(hN(h)\to0\) in Proposition~\ref{prop:mesh-resolved-concentration} ensures interpolation of the prescribed smooth profiles; it is not needed for this support-based estimate. Assembly and outer-tail errors remain part of the total discrepancy in Section~\ref{subsec:measured-dn}.

\begin{theorem}[Measured-sample error]
\label{thm:measured-sample-error-rate}
Assume the hypotheses of Corollary~\ref{cor:eta-c-explicit-rate} and Proposition~\ref{prop:mesh-resolved-concentration}, and the noise model \eqref{eq:sample-noise-Dih}. Then
\begin{equation*} 
E_h^\delta\le C\Psi_{\beta_c,s}(r_h)+\delta. 
\end{equation*}
For \(N(h)=\lfloor h^{-\theta_N}\rfloor\), \(0<\theta_N<1\), the ideal-profile concentration error \(\eta_\phi(h)\) from \eqref{eq:ideal-profile-concentration-error} satisfies, for sufficiently small \(h\),
\begin{equation}\label{eq:profile-concentration-comparison}
|\eta_c(h)-\eta_\phi(h)|\le C\mathfrak c_hh^{(1-\theta_N)\sigma_I},\qquad \eta_\phi(h)\le C\mathfrak c_h.
\end{equation}
Consequently,
\begin{equation}\label{eq:measured-sample-h-rate}
E_h^\delta\le\eta_\phi(h)+C\mathfrak c_hh^{(1-\theta_N)\sigma_I}+\delta\le C\mathfrak c_h+\delta.
\end{equation}
The constants may depend on the fixed $k$ and $\sigma_I$, but are independent of $h$ and $\delta$.
\end{theorem}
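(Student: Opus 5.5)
The first bound is immediate. By the triangle inequality,
\[
E_h^\delta\le\eta_c(h)+\max_i|D_{i,h}^\delta-D_{i,h}|\le\eta_c(h)+\delta .
\]
Inserting \eqref{eq:eta-c-radius-rate} from Corollary~\ref{cor:eta-c-explicit-rate} gives $E_h^\delta\le C\Psi_{\beta_c,s}(r_h)+\delta$. Once the comparison \eqref{eq:profile-concentration-comparison} is available, \eqref{eq:measured-sample-h-rate} also follows, by writing $\eta_c\le\eta_\phi+|\eta_c-\eta_\phi|$ and using $h^{(1-\theta_N)\sigma_I}\le1$. So the work lies in \eqref{eq:profile-concentration-comparison}.

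For $\eta_\phi(h)\le C\mathfrak c_h$, I would apply \eqref{eq:localized-holder-concentration} directly to $g=\phi_{i,N(h)}$. This function is $W^s$-normalized and supported in $B(x_i,CN(h)^{-1})$ by Lemma~\ref{lem:uniform-crz-concentrating}. This yields $\eta_\phi\le C\Psi_{\beta_c,s}(CN(h)^{-1})$. With $N(h)^{-1}\le Ch^{\theta_N}$, the bound $\Psi_{\beta_c,s}(Ch^{\theta_N})\le C\mathfrak c_h$ holds; the logarithmic factor $1+|\log h^{\theta_N}|\le C(1+|\log h|)$ appears only in the critical case $\beta_c=2s$.

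For the comparison, fix $i$ and write $\phi=\phi_{i,N(h)}$, $\tilde\phi=I_h^W\phi$, and $g=\tilde\phi/\|\tilde\phi\|_{W^s}$. I would split the difference into two terms:
\[
\langle\Lambda_\gamma g,g\rangle-\langle\Lambda_\gamma\phi,\phi\rangle
=\bigl(\langle\Lambda_\gamma g,g\rangle-\langle\Lambda_\gamma\phi,\phi\rangle\bigr),
\]
handled as (a) the normalization change and (b) the difference $\langle\Lambda_\gamma\tilde\phi,\tilde\phi\rangle-\langle\Lambda_\gamma\phi,\phi\rangle$.

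A naive bilinear estimate, $\|\Lambda_\gamma\|\,\|\tilde\phi-\phi\|_{H^s}\le C(hN)^{\sigma_I}=Ch^{(1-\theta_N)\sigma_I}$, loses the factor $\mathfrak c_h$. To recover it, I would subtract the constant-coefficient reference. Set $\mathcal E(f):=\langle\Lambda_\gamma f,f\rangle-\gamma(x_i)\|f\|_{W^s}^2$, a quadratic form. Then
\[
\langle\Lambda_\gamma g,g\rangle-\gamma(x_i)=\mathcal E(g)\quad\text{and}\quad\langle\Lambda_\gamma\phi,\phi\rangle-\gamma(x_i)=\mathcal E(\phi).
\]
Hence $|\eta_c-\eta_\phi|\le\max_i|\mathcal E(g)-\mathcal E(\phi)|$. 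Because $\mathcal E$ is quadratic and $\mathcal E(g)=\mathcal E(\tilde\phi)/\|\tilde\phi\|_{W^s}^2$, the difference splits into two parts. The first is $\mathcal E(\phi)\bigl(\|\tilde\phi\|^{-2}_{W^s}-1\bigr)$, bounded by $C\mathfrak c_h(hN)^{\sigma_I}$. The second is $\|\tilde\phi\|_{W^s}^{-2}\bigl(\mathcal E(\tilde\phi)-\mathcal E(\phi)\bigr)$, where
\[
\mathcal E(\tilde\phi)-\mathcal E(\phi)=2\mathcal E(\phi,e)+\mathcal E(e,e),\qquad e=\tilde\phi-\phi .
\]
The quadratic term $\mathcal E(e,e)$ is $O((hN)^{2\sigma_I})$, which is acceptable as long as $(hN)^{\sigma_I}\lesssim\mathfrak c_h$; otherwise it should be absorbed via the localized coefficient estimate below. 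The main obstacle is the cross term $\mathcal E(\phi,e)$: I need a bound of the form $C\mathfrak c_h\|e\|_{H^s}$.

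To get it, I would redo the annulus argument of Corollary~\ref{cor:eta-c-explicit-rate} on the bilinear form. Both $\phi$ and $e$ are supported in $B(x_i,2r_h)$. The form $\mathcal E(\phi,e)$ consists of three pieces:
\begin{itemize}
\item the coefficient-weighted cross integral $\iint(a(y)a(z)-\gamma(x_i))\nabla^s\phi\cdot\nabla^s e$;
\item the exterior solution correction $-B_\gamma(w_\phi,w_e)$, bounded via \eqref{eq:crz-exterior-energy} by $C\|\phi\|_{L^2}\|e\|_{L^2}\le CN^{-s}h^{s}(hN)^{\sigma_I}$;
\item the $L^2$ term $-\gamma(x_i)(\phi,e)_{L^2}$, bounded similarly.
\end{itemize}
For the weighted cross integral, Cauchy--Schwarz with weight $|a(y)a(z)-\gamma(x_i)|$ gives a bound by the square root of the weighted energy of $\phi$ times that of $e$. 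Each weighted energy is at most $C\Psi_{\beta_c,s}(r_h)\|\cdot\|^2$ by the same annulus computation, giving $C\Psi(r_h)\|e\|_{H^s}\le C\mathfrak c_h(hN)^{\sigma_I}$. Since $(hN)^{\sigma_I}\sim h^{(1-\theta_N)\sigma_I}$, every term collects into $C\mathfrak c_h h^{(1-\theta_N)\sigma_I}$, uniformly in $i$, for small $h$.
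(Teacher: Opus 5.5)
Your proposal is correct and is essentially the paper's proof. The paper also works with the reference-subtracted symmetric form $\mathcal B_i(v,w):=\langle\Lambda_\gamma v,w\rangle-\gamma(x_i)(v,w)_{W^s(\mathbb R^d)}$ (your $\mathcal E$), bounds it by $C\Psi_{\beta_c,s}(r)\|v\|_{W^s(\mathbb R^d)}\|w\|_{W^s(\mathbb R^d)}$ for $v,w$ supported in $B(x_i,r)$, and multiplies by $(hN(h))^{\sigma_I}$. It takes the radius $r=C(N(h)^{-1}+h)$, which contains both supports by \eqref{eq:phi-support-rate} and \eqref{eq:rh-rate}. You should use that radius too, since containment of $\operatorname{supp}\phi_{i,N(h)}$ in $B(x_i,2r_h)$ is not automatic.

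The paper is more economical at two points. First, it gets the bilinear bound from \eqref{eq:localized-holder-concentration} by homogeneity and scaled polarization, so your cross-term obstacle disappears without re-running the annulus argument. Second, it writes the difference exactly as $\mathcal B_i(g-\phi,g+\phi)$ with your $g,\phi$, which is valid because $\|g\|_{W^s(\mathbb R^d)}=\|\phi\|_{W^s(\mathbb R^d)}=1$. This avoids your normalization split and the separate $\mathcal E(e,e)$ term, although your localized bound applied to $e$ would control that term anyway.
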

\begin{proof}
The radius estimate follows from $E_h^\delta\le\eta_c(h)+\delta$ and \eqref{eq:eta-c-radius-rate}. Write $\phi_i=\phi_{i,N(h)}$ and $g_i=g_{i,h}$. Both functions have unit $W^s$ norm, and normalization gives
\[
\|g_i-\phi_i\|_{W^s(\mathbb R^d)}\le2\|I_h^W\phi_i-\phi_i\|_{W^s(\mathbb R^d)}\le C(hN(h))^{\sigma_I}.
\]
Choose $r_{\phi,h}=C(N(h)^{-1}+h)$ so that both supports lie in $B(x_i,r_{\phi,h})$. For $v,w\in\widetilde H^s(W)$ supported in this ball, define the symmetric form $\mathcal B_i(v,w):=\langle\Lambda_\gamma v,w\rangle-\gamma(x_i)(v,w)_{W^s(\R^{d})}$. Homogeneity and scaled polarization of \eqref{eq:localized-holder-concentration} give $|\mathcal B_i(v,w)|\le C\Psi_{\beta_c,s}(r_{\phi,h})\|v\|_{W^s(\R^{d})}\|w\|_{W^s(\R^{d})}$. Since $\|g_i\|_{W^s(\R^{d})}=\|\phi_i\|_{W^s(\R^{d})}=1$,
\[
\left|\langle\Lambda_\gamma g_i,g_i\rangle-\langle\Lambda_\gamma\phi_i,\phi_i\rangle\right|=|\mathcal B_i(g_i-\phi_i,g_i+\phi_i)|\le C\Psi_{\beta_c,s}(r_{\phi,h})(hN(h))^{\sigma_I}.
\]
Now $\Psi_{\beta_c,s}(r_{\phi,h})\le C\mathfrak c_h$ and $(hN(h))^{\sigma_I}\le Ch^{(1-\theta_N)\sigma_I}$. Taking the maximum over $i$ proves the first estimate in \eqref{eq:profile-concentration-comparison}; applying \eqref{eq:localized-holder-concentration} to $\phi_i$ proves the second. The noise bound then yields \eqref{eq:measured-sample-h-rate}.
\end{proof}
For the maximal admissible interpolation gain $\sigma_I=k+1-s$, the profile interpolation contribution is $\mathfrak c_hh^{(1-\theta_N)(k+1-s)}$. The degree therefore enters the higher-order interpolation exponent, while the proved leading concentration exponent $\lambda_c=\theta_N\min\{\beta_c,2s\}$ and the critical logarithm remain unchanged. The forward approximation error is included separately in the total discrepancy $\delta$.

Let \(I_h^{W_0}:C(\overline{W_0})\to Z_h\) denote the nodal interpolation operator on \(\mathcal T_h^{W_0}\).  We assume that the nodal interpolant \(I_h^{W_0}\gamma\) satisfies
\begin{equation}\label{eq:interp-estimates-recon}
\|I_h^{W_0}\gamma-\gamma\|_{L^2(W_0)}
\le
C_I h^2\|\gamma\|_{H^2(W_0)},
\qquad
\|I_h^{W_0}\gamma\|_{H^1(W_0)}
\le
C_I\|\gamma\|_{H^2(W_0)}.
\end{equation}
For example, for $d\le3$ the embedding $H^2(W_0)\hookrightarrow C(\overline{W_0})$ makes the nodal interpolant well defined, and the fitted, shape-regular $P_1$ mesh yields \eqref{eq:interp-estimates-recon}. In higher dimensions we retain that estimate as an explicit approximation assumption on $\gamma$; see \cite[Thm.~4.4.20]{brenner2008mathematical}. For the maximum-norm analysis we also use the standard local \(W^{1,\infty}\)-stability of the affine nodal interpolant: for every element patch \(\omega_i=\operatorname{supp}\psi_i\),
\begin{equation}\label{eq:local-W1inf-stability}
\|\nabla I_h^{W_0}\gamma\|_{L^\infty(\omega_i)} \le C \|\nabla\gamma\|_{L^\infty(\omega_i)} .
\end{equation}
Estimate \eqref{eq:local-W1inf-stability} follows by affine scaling on shape-regular simplices; see \cite[Ch.~3]{ciarlet1978finite} or \cite[Ch.~4]{brenner2008mathematical}.

\begin{theorem}[A priori reconstruction \(L^2\) error]
\label{thm:recon-error}
Assume \eqref{eq:gamma-reg-recon} and
\eqref{eq:interp-estimates-recon}. Then there exists \(C>0\), independent of
\(h\), \(\alpha\), and the data error, such that
\begin{equation}\label{eq:recon-error-final}
\|\gamma_h^\delta-\gamma\|_{L^2(W_0)}
\le
C\left(
E_h^\delta
+
(\alpha^{1/2}+h^2)\|\gamma\|_{H^2(W_0)}
\right).
\end{equation}
\end{theorem}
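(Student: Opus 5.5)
The plan is to compare the minimizer with the nodal interpolant \(\gamma_I:=I_h^{W_0}\gamma\in Z_h\) and then use the triangle inequality
\[
\|\gamma_h^\delta-\gamma\|_{L^2(W_0)}\le\|\gamma_h^\delta-\gamma_I\|_{L^2(W_0)}+\|\gamma_I-\gamma\|_{L^2(W_0)}.
\]
The second term is at most \(C_Ih^2\|\gamma\|_{H^2(W_0)}\) by \eqref{eq:interp-estimates-recon}. For the first term, the key observation is that the nodal vector of \(\gamma_I\) is exactly \(\boldsymbol\gamma_h^\ast\). Its data misfit against \(\mathbf d_h^\delta\) is therefore governed by \(E_h^\delta\): the weights \(\mu_i\) sum to \(|W_0|\), so
\[
\|\boldsymbol\gamma_h^\ast-\mathbf d_h^\delta\|_{\mu,h}\le|W_0|^{1/2}E_h^\delta.
\]

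Next I would set \(e_h:=\gamma_h^\delta-\gamma_I\in Z_h\) and test the optimality condition with \(v_h=e_h\). Subtracting \(\sum_i\mu_i\gamma(x_i)e_h(x_i)\) from both sides gives
\[
\|\mathbf e_h\|_{\mu,h}^2+\alpha\|e_h\|_{H^1(W_0)}^2
=(\mathbf d_h^\delta-\boldsymbol\gamma_h^\ast,\mathbf e_h)_{\mu,h}-\alpha(\gamma_I,e_h)_{H^1(W_0)}.
\]
Applying Cauchy--Schwarz and Young to the right-hand side, with \(\|\gamma_I\|_{H^1}\le C_I\|\gamma\|_{H^2}\) from \eqref{eq:interp-estimates-recon}, yields
\[
\tfrac12\|\mathbf e_h\|_{\mu,h}^2+\tfrac{\alpha}{2}\|e_h\|_{H^1}^2\le\tfrac12\|\mathbf d_h^\delta-\boldsymbol\gamma_h^\ast\|_{\mu,h}^2+\tfrac{\alpha}{2}C_I^2\|\gamma\|_{H^2}^2 .
\]
Hence \(\|\mathbf e_h\|_{\mu,h}\le C(E_h^\delta+\alpha^{1/2}\|\gamma\|_{H^2(W_0)})\). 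Equivalently, I could invoke Lemma~\ref{lem:abstract-shifted-tikhonov} with the comparison element \(\gamma_I\), which encodes exactly this quadratic estimate.

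Finally, the lower bound in the lumped-mass equivalence \eqref{eq:lumped-L2} gives \(\|e_h\|_{L^2(W_0)}\le\|\mathbf e_h\|_{\mu,h}\), and combining this with the triangle inequality proves \eqref{eq:recon-error-final}. No step is really an obstacle. The only points needing care are that \(\gamma_I\) is well defined, which uses continuity of \(\gamma\) on \(\overline W\) from \eqref{eq:gamma-reg-recon}, and that the regularization bias is controlled through the \(H^1\)-stability of the interpolant rather than through \(\gamma\) itself. This is why the \(\alpha^{1/2}\) term carries \(\|\gamma\|_{H^2(W_0)}\), and why all constants stay independent of \(h\), \(\alpha\) and \(\delta\).
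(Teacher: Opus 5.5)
Your proposal is correct and follows essentially the same route as the paper. You compare with the nodal interpolant \(I_h^{W_0}\gamma\), whose nodal vector is \(\boldsymbol\gamma_h^\ast\), bound the discrete error in \(\|\cdot\|_{\mu,h}\) by \(|W_0|^{1/2}E_h^\delta+\alpha^{1/2}\|I_h^{W_0}\gamma\|_{H^1(W_0)}\), and conclude with \eqref{eq:lumped-L2} and \eqref{eq:interp-estimates-recon}. The only difference is that you test the optimality condition with the error, whereas the paper compares functional values and then applies the triangle inequality; your version merely improves the factor \(2\) in front of the data misfit to \(1\).
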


\begin{proof}
Let \(I_h\gamma:=I_h^{W_0}\gamma\in Z_h\). Its nodal vector is \(\boldsymbol\gamma_h^\ast=(\gamma(x_1),\ldots,\gamma(x_{N_h}))^\top\). By the triangle inequality and \eqref{eq:lumped-L2}, we obtain
\[
    \|\gamma_h^\delta-\gamma\|_{L^2(W_0)}
    \le
    \|\mathbf c^\delta-\boldsymbol\gamma_h^\ast\|_{\mu,h}
    +\|I_h\gamma-\gamma\|_{L^2(W_0)} .
\]
Minimality of \(\gamma_h^\delta\) gives
\[
    \|\mathbf c^\delta-\mathbf d_h^\delta\|_{\mu,h}
    \le \|\mathbf d_h^\delta-\boldsymbol\gamma_h^\ast\|_{\mu,h}
    +\alpha^{1/2}\|I_h\gamma\|_{H^1(W_0)}.
\]
Hence
\[
    \|\mathbf c^\delta-\boldsymbol\gamma_h^\ast\|_{\mu,h}
    \le 2\|\mathbf d_h^\delta-\boldsymbol\gamma_h^\ast\|_{\mu,h}
    +\alpha^{1/2}\|I_h\gamma\|_{H^1(W_0)}.
\]
Using \eqref{eq:interp-estimates-recon} and \( \|\mathbf d_h^\delta-\boldsymbol\gamma_h^\ast\|_{\mu,h}^2 \le (E_h^\delta)^2\sum_{i=1}^{N_h}\mu_i =(E_h^\delta)^2|W_0|\) proves \eqref{eq:recon-error-final}.
\end{proof}

Under the hypotheses of Theorems \ref{thm:recon-error} and \ref{thm:measured-sample-error-rate}, with $N(h)=\lfloor h^{-\theta_N}\rfloor$, $0<\theta_N<1$, their estimates give
\begin{equation*} 
\|\gamma_h^\delta-\gamma\|_{L^2(W_0)}\le C\bigl(\eta_\phi(h)+\mathfrak c_hh^{(1-\theta_N)\sigma_I}+\delta+\alpha^{1/2}+h^2\bigr). 
\end{equation*}
By \eqref{eq:profile-concentration-comparison}, the right-hand side is bounded by $C(\mathfrak c_h+\delta+\alpha^{1/2}+h^2)$. For $\rho_h:=\mathfrak c_h+\delta+h^2$, the choice $0<\alpha(h)\le C_\alpha\rho_h^2$ therefore yields $\|\gamma_h^\delta-\gamma\|_{L^2(W_0)}\le C\rho_h$. The term $h^2$ is the approximation error in the piecewise affine conductivity space $Z_h$; $k$ is the degree of the forward and exterior-data spaces.

The preceding estimate controls the averaged reconstruction error. For the interior Liouville step below, however, we also need uniform control of the exterior coefficient, in particular to preserve positivity and to define \(\sqrt{\Gamma_h^\delta}\) and \((\Gamma_h^\delta)^{-1/2}\). We therefore derive an independent maximum-norm estimate at the nodal level. This estimate does not follow from the \(L^2\) bound by an inverse inequality; such an argument would introduce a large factor \(h^{-d/2}\). Instead, we use the row-wise stability of the finite-dimensional regularized reconstruction system.

We first record a sign-free \(\ell^\infty\)-stability estimate for the regularized reconstruction operator. The estimate uses only row-sum bounds for the lumped mass matrix and the stiffness matrix. For vectors and matrices, we use
\[
\|\mathbf v\|_{\ell^\infty}=\max_i |v_i|,
\qquad \|A\|_{\ell^\infty\to\ell^\infty}
:= \sup_{\mathbf v\ne0} \frac{\|A\mathbf v\|_{\ell^\infty}}{\|\mathbf v\|_{\ell^\infty}}
= \max_i\sum_j |A_{ij}|.
\]

\begin{lemma}[Maximum-norm stability of the regularized reconstruction operator]
\label{lem:linfty-stab}
Assume that the reconstruction meshes \(\mathcal T_h^{W_0}\) are fitted, shape-regular, and quasi-uniform, and that \(Z_h\) is the continuous piecewise affine space defined above. Then there exists a constant \(C_B>0\), depending only on the shape-regularity and quasi-uniformity constants, such that, for all sufficiently small \(h\),
\begin{equation}\label{eq:Dh-inv-Kh-linfty-bound}
\|D_h^{-1}K_h\|_{\ell^\infty\to\ell^\infty}
\le
C_B h^{-2}.
\end{equation}
Consequently, if \(0<q<1\) and
\begin{equation}\label{eq:linfty-alpha-smallness}
\alpha C_Bh^{-2}\le q,
\end{equation}
then
\begin{equation}\label{eq:sign-free-smoother-stability}
\left\| (D_h+\alpha K_h)^{-1}D_h\mathbf z \right\|_{\ell^\infty}
\le \frac1{1-q}\|\mathbf z\|_{\ell^\infty} \qquad \forall\,\mathbf z\in\mathbb R^{N_h},
\end{equation}
where \(K_h\) and \(D_h\) are defined in \eqref{eq:recon-linear-system}.
\end{lemma}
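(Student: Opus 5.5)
The plan has two parts. First I bound the row sums of \(D_h^{-1}K_h\) by local element estimates. Then I turn that bound into \(\ell^\infty\)-stability through a Neumann series.

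\textbf{Step 1: the row-sum bound \eqref{eq:Dh-inv-Kh-linfty-bound}.} Fix a row \(i\).
\begin{itemize}
\item Since \(D_h\) is diagonal, \(\sum_j|(D_h^{-1}K_h)_{ij}|=\mu_i^{-1}\sum_j|(K_h)_{ij}|\).
\item The entry \((K_h)_{ij}\) vanishes unless \(x_j\) lies in the patch \(\omega_i=\operatorname{supp}\psi_i\).
\item By shape regularity, the number of such \(j\) is bounded by a constant \(C_1\) that does not depend on \(h\) or \(i\).
\item For each nonzero entry, \(0\le\psi_i,\psi_j\le1\) and \(|\nabla\psi_i|,|\nabla\psi_j|\le C\rho_K^{-1}\le Ch^{-1}\) on each element \(K\subset\omega_i\). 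Quasi-uniformity gives \(\rho_K\ge ch\). Hence
\[|(K_h)_{ij}|\le\sum_{K\subset\omega_i}|K|(1+Ch^{-2})\le C|\omega_i|h^{-2}\]
for \(h\le1\).
\item By the formula for \(\mu_i\), we have \(\mu_i=\frac1{d+1}|\omega_i\cap W_0|\). Because the mesh is fitted, \(\omega_i\) is understood as the patch within \(\mathcal T_h^{W_0}\), so \(\mu_i=|\omega_i|/(d+1)\).
\end{itemize}
Combining these gives \(\mu_i^{-1}\sum_j|(K_h)_{ij}|\le (d+1)C_1Ch^{-2}=:C_Bh^{-2}\). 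Boundary nodes of \(W_0\) need the most care here. Their patches are only partial stars, but each still contains at least one element of measure \(\ge ch^d\). Since both numerator and denominator scale with \(|\omega_i|\), the bound is unaffected.

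\textbf{Step 2: \(\ell^\infty\)-stability \eqref{eq:sign-free-smoother-stability}.} Write
\[(D_h+\alpha K_h)^{-1}D_h=(I+\alpha D_h^{-1}K_h)^{-1}.\]
Under \eqref{eq:linfty-alpha-smallness}, Step 1 gives \(\|\alpha D_h^{-1}K_h\|_{\ell^\infty\to\ell^\infty}\le\alpha C_Bh^{-2}\le q<1\).

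The Neumann series \(\sum_{n\ge0}(-\alpha D_h^{-1}K_h)^n\) therefore converges in the operator norm induced by \(\ell^\infty\). Its norm is at most \(\sum_n q^n=1/(1-q)\).

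Invertibility of \(D_h+\alpha K_h\) is already known from symmetric positive definiteness. The Neumann series only supplies the norm bound, and it needs no sign structure of \(K_h\); this is why the estimate is called sign-free.

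\textbf{Main obstacle.} The only delicate point is the uniformity of the patch-cardinality constant and of the lower bound on \(\mu_i\) at nodes on \(\partial W_0\). Both follow from shape regularity and quasi-uniformity of the fitted mesh. I would state them explicitly via the minimal-angle condition.
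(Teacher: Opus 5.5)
Your proposal is correct and matches the paper's proof: a row-sum bound for $D_h^{-1}K_h$ from bounded patch overlap and the $h^{-1}$ gradient bound on quasi-uniform meshes, then the factorization $(D_h+\alpha K_h)^{-1}D_h=(I+\alpha D_h^{-1}K_h)^{-1}$ and a Neumann series in the $\ell^\infty$ operator norm. The only cosmetic difference is bookkeeping: you cancel the patch measure with the exact identity $\mu_i=|\omega_i|/(d+1)$ and absorb the mass part into $h^{-2}$, whereas the paper computes the mass rows exactly via the partition of unity (row sum $=\mu_i$) and uses $|\operatorname{supp}\psi_i|\le Ch^d$ and $\mu_i\ge ch^d$ separately for the stiffness rows.
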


\begin{proof}
Write \(K_h=\mathsf M_h+\mathsf S_h\), where \(
(\mathsf M_h)_{ij}:=\int_{W_0}\psi_i\psi_j\,dx\), and 
\((\mathsf S_h)_{ij}:=\int_{W_0}\nabla\psi_i\cdot\nabla\psi_j\,dx\).
Since the \(\mathbb P_1\) nodal basis is nonnegative and forms a partition of unity, 
\[
\sum_{j=1}^{N_h}|(\mathsf M_h)_{ij}| = \sum_{j=1}^{N_h}(\mathsf M_h)_{ij}
= \int_{W_0}\psi_i\sum_{j=1}^{N_h}\psi_j\,dx = \int_{W_0}\psi_i\,dx = \mu_i .
\]
By shape regularity and quasi-uniformity,
\[
|\operatorname{supp}\psi_i|\le Ch^d, \qquad \|\nabla\psi_i\|_{L^\infty(\operatorname{supp}\psi_i)}\le Ch^{-1}, \qquad
\mu_i\ge ch^d,
\]
and only a uniformly bounded number of basis functions overlap with \(\psi_i\). Therefore
\( \sum_{j=1}^{N_h}|(\mathsf S_h)_{ij}| \le Ch^{d-2}\). It follows that
\[
\sum_{j=1}^{N_h}|(D_h^{-1}K_h)_{ij}| \le \frac1{\mu_i} \left( \sum_{j=1}^{N_h}|(\mathsf M_h)_{ij}| + \sum_{j=1}^{N_h}|(\mathsf S_h)_{ij}| \right) \le C(1+h^{-2}) \le C_Bh^{-2}
\]
for all sufficiently small \(h\). This proves \eqref{eq:Dh-inv-Kh-linfty-bound}. Since
\( D_h+\alpha K_h = D_h(I+\alpha D_h^{-1}K_h)\) and \( \qquad (D_h+\alpha K_h)^{-1}D_h
= (I+\alpha D_h^{-1}K_h)^{-1}\), condition \eqref{eq:linfty-alpha-smallness} gives
\[
\|\alpha D_h^{-1}K_h\|_{\ell^\infty\to\ell^\infty}\le q<1.
\]
Hence the Neumann series converges in \(\ell^\infty\to\ell^\infty\) operator norm:
\[
(I+\alpha D_h^{-1}K_h)^{-1} = \sum_{m=0}^{\infty}(-\alpha D_h^{-1}K_h)^m, \qquad \Longrightarrow  \qquad \|(I+\alpha D_h^{-1}K_h)^{-1}\|_{\ell^\infty\to\ell^\infty}
\le \sum_{m=0}^{\infty}q^m = \frac1{1-q}.
\]
This proves \eqref{eq:sign-free-smoother-stability}.
\end{proof}

\begin{lemma}[Row-wise regularization consistency]
\label{lem:regularization-bias-consistency}
Let \( \boldsymbol\gamma_h^\ast = (\gamma(x_1),\dots,\gamma(x_{N_h}))^\top \). Assume \eqref{eq:gamma-reg-recon} and \(\gamma|_{W_0}\in W^{1,\infty}(W_0)\). Then
\begin{equation*} 
\|D_h^{-1}K_h\boldsymbol\gamma_h^\ast\|_{\ell^\infty} \le C\left(\|\gamma\|_{L^\infty(W_0)}+h^{-1}\|\nabla\gamma\|_{L^\infty(W_0)}\right). 
\end{equation*}
\end{lemma}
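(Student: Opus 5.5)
We will estimate the $i$-th row of $D_h^{-1}K_h\boldsymbol\gamma_h^\ast$ by recognizing it as a finite element functional of the nodal interpolant $I_h\gamma:=I_h^{W_0}\gamma=\sum_j\gamma(x_j)\psi_j\in Z_h$. As in the proof of Lemma~\ref{lem:linfty-stab}, split $K_h=\mathsf M_h+\mathsf S_h$. By definition of the matrices,
\[
(K_h\boldsymbol\gamma_h^\ast)_i=\int_{W_0}\psi_i\,I_h\gamma\,dx+\int_{W_0}\nabla\psi_i\cdot\nabla I_h\gamma\,dx ,
\]
so it suffices to bound each integral by a multiple of $\mu_i$ times the claimed quantity. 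Both integrals are supported on the patch $\omega_i=\operatorname{supp}\psi_i$.

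For the mass part we use that the $\mathbb P_1$ basis functions are nonnegative. Since $I_h\gamma$ is a convex combination of nodal values on each element, $|I_h\gamma|\le\|\gamma\|_{L^\infty(W_0)}$. The nodal values are well defined because $\gamma\in C^{0,\beta_c}(\overline W)$. Hence
\[
\Bigl|\int_{W_0}\psi_i I_h\gamma\,dx\Bigr|\le\|\gamma\|_{L^\infty(W_0)}\int_{W_0}\psi_i\,dx=\mu_i\|\gamma\|_{L^\infty(W_0)} .
\]
This is just the row-sum identity $\sum_j(\mathsf M_h)_{ij}=\mu_i$ from Lemma~\ref{lem:linfty-stab}.

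For the stiffness part we use the local $W^{1,\infty}$-stability \eqref{eq:local-W1inf-stability} together with the shape-regularity bounds already recorded in the proof of Lemma~\ref{lem:linfty-stab}, namely $\|\nabla\psi_i\|_{L^\infty(\omega_i)}\le Ch^{-1}$, $|\omega_i|\le Ch^d$ and $\mu_i\ge ch^d$. They give
\[
\Bigl|\int_{\omega_i}\nabla\psi_i\cdot\nabla I_h\gamma\,dx\Bigr|\le |\omega_i|\,Ch^{-1}\,C\|\nabla\gamma\|_{L^\infty(\omega_i)}\le C\mu_i h^{-1}\|\nabla\gamma\|_{L^\infty(W_0)} .
\]
Dividing both estimates by $\mu_i$ and taking the maximum over $i$ then yields the claim.

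The main obstacle is essentially the only nontrivial point: the naive bound $\sum_j|(\mathsf S_h)_{ij}|\,|\gamma(x_j)|\le Ch^{d-2}\|\gamma\|_\infty$ would give the worse factor $h^{-2}$. The argument above avoids this by writing the stiffness row through the gradient of $I_h\gamma$, which implicitly exploits the zero row sum of $\mathsf S_h$ at interior nodes. At boundary nodes of $W_0$ the row sum of $\mathsf S_h$ need not vanish, but the gradient formulation does not need it, so no case distinction is required. We only need to check that \eqref{eq:local-W1inf-stability} applies on boundary patches intersected with $W_0$, which holds because the mesh is fitted to $W_0$.
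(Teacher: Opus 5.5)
Your proof is correct and is essentially the paper's argument: you expand $(K_h\boldsymbol\gamma_h^\ast)_i$ through the nodal interpolant $I_h^{W_0}\gamma$, bound the mass part by $\mu_i\|\gamma\|_{L^\infty(W_0)}$ using nonnegativity and the partition of unity, bound the stiffness part by $Ch^{d-1}\|\nabla\gamma\|_{L^\infty(W_0)}$ via \eqref{eq:local-W1inf-stability}, and divide by $\mu_i\ge ch^d$. One side remark is inaccurate but does not affect the proof: $Z_h$ carries no boundary condition, so $\sum_j\psi_j\equiv1$ on $W_0$ and $\sum_j(\mathsf S_h)_{ij}=\int_{W_0}\nabla\psi_i\cdot\nabla 1\,dx=0$ at every node, boundary nodes included.
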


\begin{proof}
Let \(I_h^{W_0}\gamma = \sum\limits_{j=1}^{N_h}\gamma(x_j)\psi_j\in Z_h\) be the nodal interpolant of \(\gamma\). Since the coefficient vector of \(I_h^{W_0}\gamma\) is \(\boldsymbol\gamma_h^\ast\), we have
\[
(K_h\boldsymbol\gamma_h^\ast)_i
= \int_{W_0}I_h^{W_0}\gamma\,\psi_i\,dx
+ \int_{W_0}\nabla I_h^{W_0}\gamma\cdot\nabla\psi_i\,dx .
\]
The mass part satisfies
\[
\left| \int_{W_0}I_h^{W_0}\gamma\,\psi_i\,dx \right| \le \|\gamma\|_{L^\infty(W_0)}\mu_i,
\]
because the nodal basis is nonnegative and forms a partition of unity. For the gradient part, using the \(W^{1,\infty}\)-stability of the nodal interpolant \eqref{eq:local-W1inf-stability}, \( |\operatorname{supp}\psi_i|\le Ch^d\) and  \( \|\nabla\psi_i\|_{L^\infty(\operatorname{supp}\psi_i)}\le Ch^{-1}\), we obtain
\[
\left| \int_{W_0}\nabla I_h^{W_0}\gamma\cdot\nabla\psi_i\,dx
\right| \le Ch^{d-1}\|\nabla\gamma\|_{L^\infty(W_0)}.
\]
Since \(\mu_i\ge ch^d\), division by \(\mu_i\) gives
\[
\frac{|(K_h\boldsymbol\gamma_h^\ast)_i|}{\mu_i} \le C \left( \|\gamma\|_{L^\infty(W_0)} + h^{-1}\|\nabla\gamma\|_{L^\infty(W_0)} \right).
\]
Taking the maximum over \(i\) proves the claim.
\end{proof}

\begin{theorem}[Maximum-norm consistency of the regularized reconstruction]
\label{thm:linfty-regularized-reconstruction}
Assume \eqref{eq:gamma-reg-recon} and \(\gamma|_{W_0}\in W^{1,\infty}(W_0)\). Let \(\alpha=\alpha(h)>0\) and \(0<q<1\), and suppose that \( \alpha C_Bh^{-2}\le q\), where \(C_B\) is the constant from Lemma~\ref{lem:linfty-stab}. Then, for all
sufficiently small \(h\), we have
\begin{equation}\label{eq:linfty-regulrec}
\begin{aligned}
\|\gamma_h^\delta-\gamma\|_{L^\infty(W_0)}
\le
\frac1{1-q}
\left[
E_h^\delta
+
C\alpha
\left(
\|\gamma\|_{L^\infty(W_0)}
+
h^{-1}\|\nabla\gamma\|_{L^\infty(W_0)}
\right)
\right]
+
Ch^{\beta_c}[\gamma]_{C^{0,\beta_c}(\overline{W_0})}.
\end{aligned}
\end{equation}
In particular, if \(h\to0\), \(E_h^\delta\to0\), and \(\alpha C_Bh^{-2}\le q<1\), then
\[
\gamma_h^\delta\to\gamma
\qquad
\text{in }L^\infty(W_0).
\]
\end{theorem}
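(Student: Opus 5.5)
The plan is to split the error into a nodal part and an interpolation part, and to control the nodal part through the linear system \eqref{eq:recon-linear-system} and Lemma~\ref{lem:linfty-stab}. Writing \(I_h\gamma:=I_h^{W_0}\gamma\), we have
\[
\|\gamma_h^\delta-\gamma\|_{L^\infty(W_0)}\le\|\gamma_h^\delta-I_h\gamma\|_{L^\infty(W_0)}+\|I_h\gamma-\gamma\|_{L^\infty(W_0)}.
\]
Since \(\gamma_h^\delta-I_h\gamma\in Z_h\) is piecewise affine with nonnegative partition-of-unity basis, its \(L^\infty\) norm equals the \(\ell^\infty\) norm of its nodal vector \(\mathbf c^\delta-\boldsymbol\gamma_h^\ast\). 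For the interpolation term, on each simplex \(K\) the affine interpolant is a convex combination of nodal values. Hence \(|I_h\gamma(x)-\gamma(x)|\le\max_{\text{nodes }x_j\in\overline K}|\gamma(x_j)-\gamma(x)|\le (Ch)^{\beta_c}[\gamma]_{C^{0,\beta_c}(\overline{W_0})}\), which gives the last term in \eqref{eq:linfty-regulrec}.

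For the nodal part, \eqref{eq:recon-linear-system} gives \(\mathbf c^\delta=(D_h+\alpha K_h)^{-1}D_h\mathbf d_h^\delta\). The key step is the identity
\[
\boldsymbol\gamma_h^\ast=(D_h+\alpha K_h)^{-1}D_h\bigl(\boldsymbol\gamma_h^\ast+\alpha D_h^{-1}K_h\boldsymbol\gamma_h^\ast\bigr),
\]
which holds because \(D_h+\alpha K_h=D_h(I+\alpha D_h^{-1}K_h)\). Subtracting,
\[
\mathbf c^\delta-\boldsymbol\gamma_h^\ast=(D_h+\alpha K_h)^{-1}D_h\bigl[(\mathbf d_h^\delta-\boldsymbol\gamma_h^\ast)-\alpha D_h^{-1}K_h\boldsymbol\gamma_h^\ast\bigr].
\]
Under \(\alpha C_Bh^{-2}\le q\), the stability estimate \eqref{eq:sign-free-smoother-stability} bounds the \(\ell^\infty\) norm of the right-hand side by \((1-q)^{-1}\) times the \(\ell^\infty\) norm of the bracket. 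The first entry of the bracket is exactly \(E_h^\delta\) by \eqref{eq:def-Eh-delta}. The second is bounded by Lemma~\ref{lem:regularization-bias-consistency} as \(C\alpha(\|\gamma\|_{L^\infty(W_0)}+h^{-1}\|\nabla\gamma\|_{L^\infty(W_0)})\). Combining these gives \eqref{eq:linfty-regulrec}.

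For the convergence statement, note that \(\alpha\le qh^2/C_B\). This gives \(\alpha h^{-1}\le Ch\to0\) and \(\alpha\to0\), while \(E_h^\delta\to0\) by assumption and \(h^{\beta_c}\to0\), so \(\gamma_h^\delta\to\gamma\) in \(L^\infty(W_0)\).

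The main obstacle is not analytic. It is making sure the bias is measured in the right operator: it must be written as \(D_h^{-1}K_h\boldsymbol\gamma_h^\ast\), so that the row-wise consistency of Lemma~\ref{lem:regularization-bias-consistency} applies. Working instead with the \(H^1\)-energy form would lose a factor \(h^{-d/2}\). A secondary point to check is that the Hölder interpolation bound uses only nodes in \(\overline{W_0}\), which holds because the mesh \(\mathcal T_h^{W_0}\) is fitted.
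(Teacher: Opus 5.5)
Your proposal is correct and matches the paper's proof essentially step for step. Both use the same split into the nodal error $\mathbf c^\delta-\boldsymbol\gamma_h^\ast$ and the interpolation error, and the same error identity with the bias written as $\alpha D_h^{-1}K_h\boldsymbol\gamma_h^\ast$, controlled by Lemma~\ref{lem:linfty-stab} and Lemma~\ref{lem:regularization-bias-consistency}, followed by $\alpha h^{-1}\le (q/C_B)h\to0$ for convergence; your convex-combination argument supplies the H\"older interpolation bound that the paper only states.
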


\begin{proof}
Let \( \gamma_h^\delta=\sum_{j=1}^{N_h}c_j^\delta\psi_j\) where 
\(\mathbf c^\delta:=(c_1^\delta,\dots,c_{N_h}^\delta)^\top\). By \eqref{eq:recon-linear-system}, we have
\[
(D_h+\alpha K_h)\mathbf c^\delta
=
D_h\mathbf d_h^\delta .
\]
Subtracting
\((D_h+\alpha K_h)\boldsymbol\gamma_h^\ast\) gives
\[
(D_h+\alpha K_h)(\mathbf c^\delta-\boldsymbol\gamma_h^\ast)
=
D_h(\mathbf d_h^\delta-\boldsymbol\gamma_h^\ast)
-
\alpha K_h\boldsymbol\gamma_h^\ast .
\]
Since \( K_h\boldsymbol\gamma_h^\ast = D_h(D_h^{-1}K_h\boldsymbol\gamma_h^\ast)\), we obtain
\[
\mathbf c^\delta-\boldsymbol\gamma_h^\ast = (D_h+\alpha K_h)^{-1}D_h
(\mathbf d_h^\delta-\boldsymbol\gamma_h^\ast)
- \alpha (D_h+\alpha K_h)^{-1}D_h (D_h^{-1}K_h\boldsymbol\gamma_h^\ast).
\]
By Lemma~\ref{lem:linfty-stab}, we get
\[
\|\mathbf c^\delta-\boldsymbol\gamma_h^\ast\|_{\ell^\infty}
\le
\frac1{1-q}
\left[
\|\mathbf d_h^\delta-\boldsymbol\gamma_h^\ast\|_{\ell^\infty}
+
\alpha
\|D_h^{-1}K_h\boldsymbol\gamma_h^\ast\|_{\ell^\infty}
\right].
\]
By definition \eqref{eq:def-Eh-delta}, \(\|\mathbf d_h^\delta-\boldsymbol\gamma_h^\ast\|_{\ell^\infty} = E_h^\delta\). Using Lemma~\ref{lem:regularization-bias-consistency}, we get
\[
\|\mathbf c^\delta-\boldsymbol\gamma_h^\ast\|_{\ell^\infty}
\le \frac1{1-q} \left[ E_h^\delta + C\alpha \left( \|\gamma\|_{L^\infty(W_0)}
+ h^{-1}\|\nabla\gamma\|_{L^\infty(W_0)} \right) \right].
\]
Let \(I_h^{W_0}\gamma\in Z_h\) be the nodal interpolant of \(\gamma\). Since \(
\gamma_h^\delta-I_h^{W_0}\gamma = \sum_{i=1}^{N_h}(c_i^\delta-\gamma(x_i))\psi_i\) and the nodal basis is nonnegative with
\(\sum_{i=1}^{N_h}\psi_i\equiv1\), we have
\[
\|\gamma_h^\delta-I_h^{W_0}\gamma\|_{L^\infty(W_0)}
\le
\|\mathbf c^\delta-\boldsymbol\gamma_h^\ast\|_{\ell^\infty}.
\]
Moreover, \( \|I_h^{W_0}\gamma-\gamma\|_{L^\infty(W_0)} \le
Ch^{\beta_c}[\gamma]_{C^{0,\beta_c}(\overline{W_0})}\). Combining the estimates proves \eqref{eq:linfty-regulrec}. If \(\alpha C_Bh^{-2}\le q\), then
\[
\alpha h^{-1}\le \frac q{C_B}h\to0.
\]
Thus the right-hand side tends to zero whenever \(E_h^\delta\to0\) and
\(h\to0\).
\end{proof}

\begin{proof}[Proof of Theorem~\ref{cor:linfty-fully-discrete-rate}]
Insert \eqref{eq:measured-sample-h-rate} into \eqref{eq:linfty-regulrec}. The parameter rule gives $\alpha h^{-1}\le c_\alpha h\le c_\alpha h^{\beta_c}$. Estimate \eqref{eq:profile-concentration-comparison} and $h^{(1-\theta_N)\sigma_I}\le1$ imply \eqref{eq:linfty_rate_sim}. The final rate follows from $h^{\beta_c}\le\mathfrak c_h$ for sufficiently small $h$.
\end{proof}

\section{Interior reconstruction by discrete Liouville--Cauchy continuation}
\label{sec:interior-cauchy}

In this section, we reconstruct \(\gamma\) in \(\Omega\) through the fractional Liouville transform, recovery of the Schr\"odinger potential, and regularized continuation of \(m_\gamma=\sqrt\gamma-1\).

The continuous change of variables is the fractional Liouville reduction for the fractional conductivity equation; see \cite[Thm.~3.1]{Covi2020Inverse} and, for global conductivities under stronger regularity, \cite[Lem.~4.2]{covi2022global}. Under the regularity assumed here, the reduction and DN pairing identity follow from \cite[Lem.~3.9, 3.11, and 3.13]{RailoZimmermann2024low}. We use the notation
\begin{equation}\label{eq:qgamma_def}
m_\gamma =\sqrt\gamma-1,
\qquad a_\gamma:=\gamma^{-1/2}=(1+m_\gamma)^{-1},
\qquad q_\gamma
:= -\frac{(-\Delta)^s m_\gamma}{1+m_\gamma}
= -\frac{(-\Delta)^s\sqrt\gamma}{\sqrt\gamma}.
\end{equation}
Thus, if \(u\) solves the fractional conductivity equation \eqref{eq:frac-cond} and \( v:=\sqrt\gamma\,u,\) then \(v\) solves the fractional Schr\"odinger equation
\begin{equation}\label{eq:fse}
\bigl((-\Delta)^s+q_\gamma\bigr)v=0
\quad\text{in }\Omega .
\end{equation}
The identity \eqref{eq:qgamma_def} is understood in the weak multiplier sense in the general low-regularity setting.  We write the interior Schr\"odinger potential as
\begin{equation*}
Q_\gamma:=q_\gamma|_\Omega.
\end{equation*}
Moreover, from \eqref{eq:qgamma_def}, \(m_\gamma\) satisfies the inhomogeneous Liouville equation
\begin{equation}\label{eq:fse2}
\bigl((-\Delta)^s+Q_\gamma\bigr)m_\gamma=-Q_\gamma
\quad\text{in }\Omega .
\end{equation}

\subsection*{Standing Liouville assumptions for the interior analysis}

We use the multiplier spaces from Section~\ref{subsec:function-spaces}. The following assumptions ensure the weak Liouville reduction and the additional regularity needed to reconstruct the potential and Cauchy data as functions.  

\begin{assumption}[Standing Liouville regularity]
\label{ass:liouville-regularity}
Throughout the Liouville--Cauchy reconstruction we assume the following.

\begin{enumerate}[label=(\roman*)]
\item \textbf{Ellipticity, normalization, and Liouville admissibility.}
There exist constants \(0<\gamma_0\le\gamma_1<\infty\) such that \(\gamma_0\le\gamma\le\gamma_1\) almost everywhere in \(\mathbb R^d\) and  \( m_\gamma \in \widetilde H^s(\Omega_R)\cap H^{s,d/s}(\mathbb R^d)\). In particular, \(m_\gamma=0\) almost everywhere in \(\mathbb R^d\setminus\Omega_R\), equivalently \(\gamma\equiv1\) in \(\mathbb R^d\setminus\Omega_R\). The \(H^{s,d/s}\)-regularity is the hypothesis used in the weak fractional Liouville reduction.

\item \textbf{\(L^\infty\)-representative of the Liouville potential on the computational domain.} The restriction of the weak Sobolev multiplier \(q_\gamma\) to \(\Omega_R\) is represented by a function in \(L^\infty(\Omega_R)\), still denoted by \(q_\gamma\). More precisely, \(
\langle q_\gamma,\varphi\rangle =\int_{\Omega_R}q_\gamma\varphi\,dx\) for all \(\varphi\in C_c^\infty(\Omega_R)\). We let \( Q_\gamma=q_\gamma|_\Omega\in L^\infty(\Omega)\) and \(q_\gamma|_{W}\in L^\infty(W)\). 

\item \textbf{Exterior \(C^1\)-regularity.}
The conductivity has a \(C^1\)-representative on an open neighborhood
\(\mathcal O_0\) of \(\overline{W_0}\), with
\(\mathcal O_0\Subset W\). Consequently,
\[
    a_\gamma:=\gamma^{-1/2}
    \in W^{1,\infty}(W_0)
    \cap
    \bigcap_{U\Subset W_0}\mathcal M_s(U).
\]
This exterior \(C^1\)-regularity is used only to verify the discrete multiplier consistency estimate \eqref{eq:eps-a-def} below.
\end{enumerate}
\end{assumption}

The exact Schr\"odinger states associated with \(Q_\gamma\) are obtained from the well-posed conductivity problem through the Liouville transform. Hence the transformed Schr\"odinger equation is well-posed, and we do not need to assume separately that zero is not a Dirichlet eigenvalue.

For functions supported in \(\overline{\Omega_R}\), define the standard truncated fractional Laplacian energy form
\begin{align}
A_R(a,b):={}& \frac{C_{d,s}}{2} \int_{\Omega_R}\int_{\Omega_R}
\frac{(a(x)-a(y))(b(x)-b(y))}
{|x-y|^{d+2s}}\,dx\,dy \nonumber\\ &\quad+
C_{d,s}\int_{\Omega_R}a(x)b(x)
\left( \int_{\mathbb R^d\setminus\Omega_R} \frac{dy}{|x-y|^{d+2s}}\right)dx .
\label{eq:AR-standard-form}
\end{align}

For $a,b\in\widetilde H^s(\Omega_R)$, zero extension gives $A_R(a,b)=\langle(-\Delta)^s a,b\rangle$ exactly. Thus \eqref{eq:AR-standard-form} includes the full exterior interaction; approximating its tail integral is a separate numerical operation.

The key point is that the conductivity DN pairing can be converted into a Schr\"odinger DN pairing once the exterior multiplier \(a_\gamma\) is known on the measurement set. For \(Q\in L^\infty(\Omega)\), the weak
Schr\"odinger DN pairing is written as
\begin{equation}\label{eq:inter_DN_schro}
\big\langle\Lambda_Q^\Omega F,G\big\rangle
= A_R(v_F^Q,G)+\int_\Omega Qv_F^QG\,dx , \qquad\qquad \forall~ G\in\widetilde H^s(\Omega_R),
\end{equation}
where \(v_F^Q\) is the solution of the exterior value problem associated with the fractional Schr\"odinger equation \eqref{eq:fse} for the exterior value \(F\) in \(\Omega_e\); see \cite[Sec. 2]{Ghosh2020uniqueness} and \cite[Lem. 2.4]{Ghosh2020calderon}, whenever the associated exterior value problem is uniquely solvable. Throughout this discussion,  \(F\) and \(G\) are exterior functions in \(\Omega_R\), extended by zero to \(\mathbb R^d\).

It is useful to distinguish the global Liouville Schr\"odinger pairing from the interior Schr\"odinger pairing defined above. The low-regularity Liouville identity \cite[Lem.~3.13]{RailoZimmermann2024low} gives, for \(F,G\in\widetilde H^s(W)\) and \(a_\gamma=\gamma^{-1/2}\),
\begin{equation*} 
\big\langle\Lambda_\gamma(a_\gamma F),a_\gamma G\big\rangle=\big\langle\Lambda_{q_\gamma}F,G\big\rangle. 
\end{equation*}
Here \(\Lambda_{q_\gamma}\) uses the global potential \(q_\gamma\). The transformed state \(v=\sqrt\gamma\,u_{a_\gamma F}\) equals \(v_F^{Q_\gamma}\) by uniqueness; since \(G=0\) in \(\Omega\) and \(v=F\) in \(\Omega_e\), comparison with \eqref{eq:inter_DN_schro} leaves only the exterior potential term. Consequently,
\begin{equation}\label{eq:split-intro}
\big\langle\Lambda_\gamma(a_\gamma F),a_\gamma G\big\rangle=\big\langle\Lambda_{Q_\gamma}^{\Omega}F,G\big\rangle+\int_{\Omega_e}q_\gamma FG\,dx.
\end{equation}
This is the interior-exterior split of the global Liouville DN identity. The separate roles of disjoint and overlapping exterior test supports in the global inverse problem were emphasized in \cite[Rem. 4.1]{covi2022global}. In the exterior-trivial setting of \cite[Lem. 3.4]{Covi2020Inverse}, the same tail is written as an integral involving \((-\Delta)^s m_\gamma\); indeed, when \(\gamma=1\) in \(\Omega_e\), \(q_\gamma=-(-\Delta)^s m_\gamma\) in \(\Omega_e\).

Now we introduce the discrete setup and the main idea of the interior conductivity reconstruction.  We choose nonempty mesh-fitted polygonal Lipschitz open sets \(W_a,W_b,W_c\Subset W_0\) and assume
 \begin{equation*} 
 \operatorname{dist}(W_a,W_b)>0. 
 \end{equation*}
Therefore, if \(F\in\widetilde H^s(W_a)\) and \(G\in\widetilde H^s(W_b)\) then \(FG=0\) in \(\Omega_e\). Hence \eqref{eq:split-intro} reduces to
\begin{equation}\label{eq:separated-identity}
\big\langle\Lambda_\gamma(a_\gamma F),a_\gamma G\big\rangle
= \big\langle\Lambda_{Q_\gamma}^{\Omega}F,G\big\rangle .
\end{equation}
If instead \(F,G\in\widetilde H^s(W_c)\), then \(FG\) is supported in \(W_c\), and therefore
\begin{equation}\label{eq:overlap-identity}
\big\langle\Lambda_\gamma(a_\gamma F),a_\gamma G\big\rangle - \big\langle\Lambda_{Q_\gamma}^{\Omega}F,G\big\rangle = \int_{W_c}q_\gamma FG\,dx.
\end{equation}
The separated identity \eqref{eq:separated-identity} is used to reconstruct \(Q_\gamma\). The overlapping identity \eqref{eq:overlap-identity} is used only after \(Q_\gamma\) has been approximated; it then gives the exterior tail \(q_\gamma|_{W_c}\).

We assume that the triangulation \(\mathcal T_h\) is fitted to \(W_a\), \(W_b\), and \(W_c\). We recall the discretization from Section \ref{sec:ext}. For \(U\in\{W_a,W_b\}\), define
\[
     V_h^U:=\operatorname{span}\{\zeta_z:z\in\mathcal Z_h\cap U\}.
\]
Because the mesh is fitted to $U$, these functions belong to $H_0^1(U)\subset\widetilde H^s(U)$ after extension by zero. Their supports lie in $\overline U\Subset W$, so they also belong to the compactly supported exterior finite element space used in Section~\ref{sec:ext}. For the overlapping tail step, we enrich the source and test space on $W_c$ by cell bubbles. First set
\[
    V_{h,0}^c:=\operatorname{span}\{\zeta_z:z\in\mathcal Z_h\cap W_c\}.
\]
Let \(\mathcal T_h^c=\{K_1,\ldots,K_{N_t}\}\) be the elements contained in \(W_c\). For each \(K_m\), let \(b_{m,h}\) be the standard simplex bubble, defined on \(K_m\) by the product of the barycentric coordinates and extended by zero outside \(K_m\). Thus \( b_{m,h}\in H_0^1(K_m)\subset \widetilde H^s(W_c)\), \(b_{m,h}>0\) in the interior of \(K_m\). If \(k\ge d+1\), these bubbles are already contained in the global polynomial space \(X_h^k(\Omega_R)\). If \(k<d+1\), we use them as a local enrichment only for the tail source/test space. Define
\[
    B_h^c:=\operatorname{span}\{b_{m,h}:m=1,\ldots,N_t\}, \qquad V_h^c:=V_{h,0}^c+B_h^c .
\]
We write \( V_h^a:=V_h^{W_a}\) and  \(V_h^b:=V_h^{W_b}\), and let
\[
    \{\chi_{\ell,h}^a\}_{\ell=1}^{N_a},\qquad
    \{\chi_{\ell,h}^b\}_{\ell=1}^{N_b},\qquad
    \{\chi_{\ell,h}^c\}_{\ell=1}^{N_c}
\]
be bases of \(V_h^a,V_h^b,V_h^c\), respectively. The basis of \(V_h^c\) is arbitrary, but in implementations it is convenient to include the bubbles
\(\{b_{m,h}\}_{m=1}^{N_t}\) explicitly. For \(U=W_a,W_b\), let \(I_h^U\) be the localized nodal interpolant. Write \(I_{h,0}^c\) for nodal interpolation into \(V_{h,0}^c\), and let \(b_K=b_{m,h}\) for \(K=K_m\). For \(u\in C(\overline{W_c})\) vanishing on \(\partial W_c\), define, when \(k<d+1\),
\[
 I_h^c u:=I_{h,0}^c u+
 \sum_{K\in\mathcal T_h^c}
 \frac{u(x_K)-I_{h,0}^cu(x_K)}{b_K(x_K)}b_K,
\]
where \(x_K\) is the barycenter of \(K\). Every degree-\(k\) simplex node then lies on the element boundary, so \(I_{h,0}^c b_K=0\). For \(k\ge d+1\), set \(I_h^c=I_{h,0}^c\). Both definitions reproduce \(V_h^c\). Let \(\overline{W_c}\subset W_*\Subset W_0\), \(\omega\in W^{1,\infty}(W_*)\), and \(\phi_h\in V_h^c\). For \(E_h:=I_h^c(\omega\phi_h)-\omega\phi_h\), local reproduction, freezing \(\omega\) on each element, and inverse estimates give
\[
 \|E_h\|_{L^2(\mathbb R^d)}+h\|E_h\|_{H^1(\mathbb R^d)}
 \le Ch\|\nabla\omega\|_{L^\infty(W_*)}\|\phi_h\|_{L^2(\mathbb R^d)}.
\]
Interpolation yields
\begin{equation}\label{eq:tail-enriched-superapprox}
 \|I_h^c(\omega\phi_h)-\omega\phi_h\|_{H^s(\mathbb R^d)}
 \le Ch^{1-s}\|\omega\|_{W^{1,\infty}(W_*)}\|\phi_h\|_{H^s(\mathbb R^d)},
\end{equation}
with \(C\) independent of \(h\), \(\omega\), and \(\phi_h\).

The final Cauchy--Liouville continuation is posed on the computational domain \(\Omega_R\), not only on \(\Omega\). We therefore use \(V_h^R := X_h^k(\Omega_R)\cap H_0^1(\Omega_R)\), where functions are extended by zero to \(\mathbb R^d\). Then \( V_h^\Omega\subset V_h^R\subset \widetilde H^s(\Omega_R)\). We denote the nodal basis of \(V_h^R\) by \(
\{\Phi_{i,h}\}_{i=1}^{N_R}\). The \(L^2(W_c)\)-projection onto the Cauchy observation space is denoted by
\begin{equation}\label{eq:Pi-h-c}
\Pi_h^c:L^2(W_c)\to V_h^c .
\end{equation}

On the full fitted, shape-regular, quasi-uniform observation meshes, define $\mathcal I_h^Uv:=I_{h,0}^U(\varrho_h*v)$ for $U\in\{W_b,W_c\}$ and functions $v$ supported in a fixed compact subset of $U$. Here $\varrho_h$ is a symmetric smooth mollifier of radius $h$, and $I_{h,0}^U$ is nodal interpolation of fixed degree $k\ge1$. For sufficiently small $h$, $\mathcal I_h^Uv$ belongs to the nodal observation subspace and satisfies
\begin{equation}\label{eq:local-averaged-interpolation}
\begin{aligned}
\|\mathcal I_h^Uv\|_{H^s}\le C\|v\|_{H^s},\qquad \|v-\mathcal I_h^Uv\|_{L^2(\R^{d})}\le Ch^s\|v\|_{H^s}, \qquad \|v-\mathcal I_h^Uv\|_{H^s}\le Ch^s\|v\|_{H^{2s}}.
\end{aligned}
\end{equation}
We write $\mathcal I_h^b:=\mathcal I_h^{W_b}$ and $\mathcal I_h^c:=\mathcal I_h^{W_c}$; all norms in \eqref{eq:local-averaged-interpolation} are on $\mathbb R^d$. Local averaging and affine reproduction give $L^2$- and $H^1$-stability and order-$h$ approximation from $H^1$ to $L^2$ and from $H^2$ to $H^1$. Interpolation, localized by a fixed cutoff inside $U$, gives the first two estimates and $\|v-\mathcal I_h^Uv\|_{H^s}\le Ch\|v\|_{H^{1+s}}$; interpolating the latter with $H^s$-stability gives the third estimate.

The exterior reconstruction developed in the previous sections gives a
finite-dimensional approximation \(\gamma_h^\delta\) of \(\gamma\) on
\(W_0\). By Theorem~\ref{cor:linfty-fully-discrete-rate}, under
the hypotheses and parameter choices stated there,
\[
\gamma_h^\delta\to\gamma
\quad\text{in }L^\infty(W_0).
\]
Since the exact conductivity satisfies \( 0<\gamma_0\le \gamma\le \gamma_1\),
this convergence implies that \(\gamma_h^\delta\) is eventually positive in
the asymptotic regime. Nevertheless, the Liouville transform below uses the
finite-dimensional quantities \((\gamma_h^\delta)^{-1/2}\) and
\(\sqrt{\gamma_h^\delta}\) before the limit is taken. To keep
these quantities well-defined and uniformly bounded for every discrete data set, we use the admissible exterior representative
\begin{equation}\label{eq:Gamma-h-delta}
\Gamma_h^\delta
:=
\min\{2\gamma_1,\max\{\gamma_h^\delta,\gamma_0/2\}\}
\quad\text{on }W_0 .
\end{equation}
This projection of the approximation does not change the convergence statement. Indeed, the map \(
T(t):=\min\{2\gamma_1,\max\{t,\gamma_0/2\}\}\) is Lipschitz with constant one, and \(T(\gamma)=\gamma\). Hence
\[
\|\Gamma_h^\delta-\gamma\|_{L^\infty(W_0)}
\le
\|\gamma_h^\delta-\gamma\|_{L^\infty(W_0)}
\to0 .
\]
We define the exterior background deviation by
\begin{equation}\label{eq:ah-mh}
m_h^W:=\sqrt{\Gamma_h^\delta}-1.
\end{equation}
Consequently, $m_h^W\to\sqrt\gamma-1$ and $(\Gamma_h^\delta)^{-1/2}\to\gamma^{-1/2}$ in $L^\infty(W_0)$. The smoothed multiplier $a_h^\delta$ used in the DN pairings is constructed in Lemma~\ref{lem:discrete-multiplier-consistency} below.
In this section, the only noisy measured object is the finite-dimensional bilinear form \(\mathcal D_h^\delta\) from \eqref{eq:dn-noise-discrete}. If the tail space \(V_h^c\) is bubble-enriched, we use the same noise model on the corresponding enlarged exterior data space. Equivalently, in the tail estimates below, the noise bound \eqref{eq:dn-noise-discrete} is understood on \(V_h^W+B_h^c\subset\widetilde H^s(W)\), with \(B_h^c=\{0\}\) if no enrichment is used.

The reconstruction in \(\Omega\) has four steps. First, we use the exterior approximation \(a_h^\delta\) of \(a_\gamma=\gamma^{-1/2}\) and the separated identity \eqref{eq:separated-identity} to convert conductivity measurements with sources in \(W_a\) and tests in \(W_b\) into interior Schr\"odinger DN data. Second, after subtracting the known source contribution, we recover the interior corrections, and hence the Schr\"odinger states, from their exterior fractional-Laplacian observations by regularized continuation; we then approximate \(Q_\gamma\) by fitting the weak equations for these states without dividing by small state values. Third, we use sources and tests in \(W_c\) and subtract the interior Schr\"odinger response computed with the approximate \(Q_\gamma\); by \eqref{eq:overlap-identity}, the remainder consists of moments of \(q_\gamma|_{W_c}\), which we fit in a piecewise-constant space (cell-bubble tests give a sufficient identifiability criterion). Fourth, the exterior reconstruction supplies \(m_\gamma|_{W_c}=\sqrt{\gamma}-1\), while the recovered tail supplies \((-\Delta)^s m_\gamma|_{W_c}=-\sqrt{\gamma}\,q_\gamma|_{W_c}\); we combine these data with \eqref{eq:fse2} in a regularized Cauchy problem on \(\Omega_R\), then clip the reconstructed square root to a positive admissible interval and square it to obtain the interior conductivity.

The details of these four steps are given below. Before starting the reconstruction, we establish a consistency result for the discrete exterior Liouville multiplier \(a_h^\delta\phi_h\). As mentioned earlier, the exact Liouville identities use the exterior multiplier \(a_\gamma=\gamma^{-1/2}\). In the discrete data this multiplier is replaced by an approximation constructed from the exterior reconstruction \(\Gamma_h^\delta\). The convergence
\[
    e_h^\delta:=\|\Gamma_h^\delta-\gamma\|_{L^\infty(W_0)}\to0
\]
is provided by Theorem~\ref{cor:linfty-fully-discrete-rate}. Since the DN map acts on \(H^s\)-based exterior data, this \(L^\infty\)-convergence alone is not sufficient when the multiplier is applied to all functions in the exterior finite element spaces. We therefore smooth \(\Gamma_h^\delta\) locally inside \(W_0\) and prove convergence of the induced multiplier in the operator norm needed for the transformed discrete data.

Under the hypotheses of Theorem~\ref{cor:linfty-fully-discrete-rate}, choose \(0<\alpha(h)\le c_\alpha h^2\) with \(c_\alpha C_B<1\), and let \(\delta(h)\to0\). Set \(\eta_h^\infty:=\mathfrak c_h+\delta+h^{\beta_c}\). Equations \eqref{eq:Gamma-h-delta} and \eqref{eq:linfty_rate_sim} give \(e_h^\delta\le C\eta_h^\infty\). Choose \(W_*\Subset W_0\) with \(\overline{W_a\cup W_b\cup W_c}\subset W_*\), set \(d_*:=\operatorname{dist}(W_*,\mathbb R^d\setminus W_0)>0\), and choose
\( \sigma_h:=\min\bigl\{(\eta_h^\infty)^{1/2},d_*/4\bigr\}\). Then \(\sigma_h\to0\), and, for all sufficiently small \(h\),
\[
\frac{e_h^\delta}{\sigma_h}\le C(\eta_h^\infty)^{1/2}\longrightarrow0.
\]

Let \(\rho\in C_c^\infty(B(0,1))\) be nonnegative with \(\int_{\mathbb R^d}\rho\,dx=1\), and set \(\rho_\sigma(x)=\sigma^{-d}\rho(x/\sigma)\). For
\(x\in W_*\), define
\[
    \widehat\Gamma_h^\delta(x) := \int_{W_0}
    \rho_{\sigma_h}(x-y)\Gamma_h^\delta(y)\,dy .
\]
Note that the restriction \(\sigma_h<\operatorname{dist}(W_*,\mathbb R^d\setminus W_0)\)
ensures that only values of \(\Gamma_h^\delta\) inside \(W_0\) are used.

\begin{lemma}[Consistency of the discrete exterior Liouville multiplier]
\label{lem:discrete-multiplier-consistency}
Let \(W_*\) and \(\sigma_h\) be chosen as above, and assume Assumption~\ref{ass:liouville-regularity}. Let \(\widehat\Gamma_h^\delta\) be the local mollification of \(\Gamma_h^\delta\) on \(W_*\) with a nonnegative standard mollifier of radius \(\sigma_h\), and define \(a_{h,\sigma}^\delta:=(\widehat\Gamma_h^\delta)^{-1/2}\).  Then, for every \(U\in\{W_a,W_b,W_c\}\), there holds
\begin{equation}\label{eq:eps-a-def}
    \varepsilon_U^a(h,\delta)
    :=
    \sup_{0\ne\phi_h\in V_h^U}
    \frac{
        \|I_h^U(a_{h,\sigma}^\delta\phi_h)-a_\gamma\phi_h\|_{H^s(\mathbb R^d)}
    }{
        \|\phi_h\|_{H^s(\mathbb R^d)}
    }
    \longrightarrow0 .
\end{equation}
More precisely,
\begin{equation}\label{eq:eps-a-bound}
    \varepsilon_U^a(h,\delta)
    \le
    C\left(
        e_h^\delta+\omega_\gamma(\sigma_h)
        +\frac{e_h^\delta}{\sigma_h}
        +\omega_{\nabla\gamma}(\sigma_h)
        +h^{1-s}
    \right),
\end{equation}
where \(\omega_\gamma\) and \(\omega_{\nabla\gamma}\) denote the moduli of
continuity of \(\gamma\) and \(\nabla\gamma\) on \(\overline{W_0}\). The
constant \(C\) is independent of \(h\), \(\delta\), and \(U\).
\end{lemma}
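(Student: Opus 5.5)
We split the error with an intermediate smooth multiplier. For \(\phi_h\in V_h^U\) we write
\[
I_h^U(a_{h,\sigma}^\delta\phi_h)-a_\gamma\phi_h
=\bigl[I_h^U(a_{h,\sigma}^\delta\phi_h)-a_{h,\sigma}^\delta\phi_h\bigr]
+(a_{h,\sigma}^\delta-a_\gamma)\phi_h .
\]
The first bracket is an interpolation (superapproximation) error and the second a multiplier error. Since \(\operatorname{supp}\phi_h\subset\overline U\subset W_*\), only the values of \(a_{h,\sigma}^\delta\) on \(W_*\) matter. There \(\widehat\Gamma_h^\delta\) is an average of values in \([\gamma_0/2,2\gamma_1]\), so it lies in that interval and \(t\mapsto t^{-1/2}\) is smooth and Lipschitz with bounded derivatives on its range.

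For the first bracket we apply the superapproximation estimate \eqref{eq:tail-enriched-superapprox} with \(\omega=a_{h,\sigma}^\delta\). For \(U=W_a,W_b\) we use the analogous nodal version, proved by the same element-freezing and inverse-estimate argument. This gives the bound \(Ch^{1-s}\|a_{h,\sigma}^\delta\|_{W^{1,\infty}(W_*)}\|\phi_h\|_{H^s}\). It then remains to show that \(\|\nabla\widehat\Gamma_h^\delta\|_{L^\infty(W_*)}\) is bounded uniformly. We write \(\nabla\widehat\Gamma_h^\delta=\nabla\rho_{\sigma_h}*(\Gamma_h^\delta-\gamma)+\rho_{\sigma_h}*\nabla\gamma\). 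The first term is at most \(C e_h^\delta/\sigma_h\), which tends to zero, and the second is at most \(\|\nabla\gamma\|_{L^\infty}\) by Assumption~\ref{ass:liouville-regularity}(iii). Hence the first bracket is \(O(h^{1-s})\).

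For the second bracket we use the elementary multiplier bound on \(\widetilde H^s(W_*)\), obtained by interpolating between \(L^2\) and \(H^1\):
\[
\|b\phi\|_{H^s}\le C\bigl(\|b\|_{L^\infty(W_*)}+\|b\|_{L^\infty(W_*)}^{1-s}\|\nabla b\|_{L^\infty(W_*)}^{s}\bigr)\|\phi\|_{H^s}\le C\|b\|_{W^{1,\infty}(W_*)}\|\phi\|_{H^s}.
\]
This can also be checked directly from the Gagliardo seminorm by splitting \(b(x)\phi(x)-b(y)\phi(y)\). We apply it with \(b=a_{h,\sigma}^\delta-a_\gamma\). By the Lipschitz property of \(t^{-1/2}\),
\[
\|b\|_{L^\infty(W_*)}\le C\|\widehat\Gamma_h^\delta-\gamma\|_{L^\infty(W_*)}\le C\bigl(e_h^\delta+\omega_\gamma(\sigma_h)\bigr),
\]
where we split \(\widehat\Gamma_h^\delta-\gamma=\rho_{\sigma_h}*(\Gamma_h^\delta-\gamma)+(\rho_{\sigma_h}*\gamma-\gamma)\). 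For the gradient we use
\[
\nabla b=-\tfrac12\Bigl[(\widehat\Gamma_h^\delta)^{-3/2}\nabla\widehat\Gamma_h^\delta-\gamma^{-3/2}\nabla\gamma\Bigr].
\]
The difference of the prefactors is controlled by \(e_h^\delta+\omega_\gamma(\sigma_h)\) times the bounded gradient. The gradient difference is controlled by
\[
\|\nabla\widehat\Gamma_h^\delta-\nabla\gamma\|_{L^\infty(W_*)}\le Ce_h^\delta/\sigma_h+\|\rho_{\sigma_h}*\nabla\gamma-\nabla\gamma\|_{L^\infty}\le C\bigl(e_h^\delta/\sigma_h+\omega_{\nabla\gamma}(\sigma_h)\bigr),
\]
using the \(C^1\) regularity on \(\mathcal O_0\supset\overline{W_0}\). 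Collecting the terms gives \eqref{eq:eps-a-bound}.

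Convergence \eqref{eq:eps-a-def} then follows from the choice of \(\sigma_h\). We have \(\sigma_h\to0\) and \(e_h^\delta/\sigma_h\le C(\eta_h^\infty)^{1/2}\to0\), and uniform continuity of \(\gamma\) and \(\nabla\gamma\) on the compact set \(\overline{W_0}\) gives \(\omega_\gamma(\sigma_h),\omega_{\nabla\gamma}(\sigma_h)\to0\).

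The main obstacle is the first bracket. We must check that the superapproximation estimate holds with a constant that is uniform in the \(h\)-dependent multiplier, and that it holds for the nodal spaces \(V_h^a\) and \(V_h^b\) as well as for \(V_h^c\). This is where the bound \(e_h^\delta/\sigma_h\to0\) is genuinely needed, so that \(\|\nabla a_{h,\sigma}^\delta\|_{L^\infty}\) stays bounded.
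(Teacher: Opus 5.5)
Your proposal is correct and follows essentially the same route as the paper: the same splitting into a superapproximation term bounded by $Ch^{1-s}\|a_{h,\sigma}^\delta\|_{W^{1,\infty}(W_*)}\|\phi_h\|_{H^s(\mathbb R^d)}$ and a multiplier term controlled by $\|a_{h,\sigma}^\delta-a_\gamma\|_{W^{1,\infty}(W_*)}$, with the same mollification bounds $e_h^\delta+\omega_\gamma(\sigma_h)$ and $e_h^\delta/\sigma_h+\omega_{\nabla\gamma}(\sigma_h)$.

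Two differences are cosmetic. First, the paper localizes the multiplier estimate with a cutoff $\zeta_U\in C_c^\infty(W_*)$, $\zeta_U=1$ on $U$, and applies a global $W^{1,\infty}(\mathbb R^d)$ multiplier bound; this avoids needing an interpolation identity for $\widetilde H^s(W_*)$ or global Lipschitz control on a possibly nonconvex $W_*$. Second, the first bracket needs only boundedness of $e_h^\delta/\sigma_h$, and not even that for the estimate itself, since $h^{1-s}(1+e_h^\delta/\sigma_h)\le h^{1-s}+e_h^\delta/\sigma_h$ for $h\le1$. The vanishing of $e_h^\delta/\sigma_h$ is what the gradient part of the second bracket requires.
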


\begin{proof}
For \(x\in W_*\), the choice of \(\sigma_h\) gives
\(B(x,\sigma_h)\subset W_0\). Hence the mollification uses only exterior
values where \(\Gamma_h^\delta\) is known. Since the mollifier is nonnegative
and \(\Gamma_h^\delta\in[\gamma_0/2,2\gamma_1]\), we have \( \gamma_0/2\le \widehat\Gamma_h^\delta\le 2\gamma_1 \quad\text{on }W_* \).
Moreover, \( \|\widehat\Gamma_h^\delta-\gamma\|_{L^\infty(W_*)} \le e_h^\delta+\omega_\gamma(\sigma_h)\). Differentiating the mollification and adding and subtracting \(\nabla(\rho_{\sigma_h}*\gamma)=\rho_{\sigma_h}*\nabla\gamma\), we obtain
\[
\|\nabla\widehat\Gamma_h^\delta-\nabla\gamma\|_{L^\infty(W_*)}
\le
\|\nabla\rho_{\sigma_h}\|_{L^1}
\|\Gamma_h^\delta-\gamma\|_{L^\infty(W_0)}
+
\omega_{\nabla\gamma}(\sigma_h) \le C\frac{e_h^\delta}{\sigma_h}
+
\omega_{\nabla\gamma}(\sigma_h),
\]
since \(\|\nabla\rho_{\sigma_h}\|_{L^1}=C\sigma_h^{-1}\). Thus \(\widehat\Gamma_h^\delta\to\gamma\) in \(W^{1,\infty}(W_*)\). Since
\(t\mapsto t^{-1/2}\) is smooth on \([\gamma_0/2,2\gamma_1]\),
\[
    \eta_a(h,\delta)
    := \|a_{h,\sigma}^\delta-a_\gamma\|_{W^{1,\infty}(W_*)} \le
    C\left( e_h^\delta+\omega_\gamma(\sigma_h) +\frac{e_h^\delta}{\sigma_h}
        +\omega_{\nabla\gamma}(\sigma_h)
    \right).
\]
Fix $U\in\{W_a,W_b,W_c\}$, $\phi_h\in V_h^U$, and $\omega\in W^{1,\infty}(W_*)$. For $U=W_a,W_b$, nodal reproduction permits subtracting $\omega(x_K)\phi_h$ on each element $K$, where $x_K\in K$. The bound $\|\omega-\omega(x_K)\|_{L^\infty(K)}\le h_K\|\nabla\omega\|_{L^\infty(K)}$, local norm equivalence, and polynomial inverse estimates give the following product estimates; see \cite[Thm.~1]{BrennerSuperapproximation2020} for smooth cutoffs:
\[
\begin{aligned}
\|I_h^U(\omega\phi_h)-\omega\phi_h\|_{L^2(\mathbb R^d)}
&\le
Ch\|\nabla\omega\|_{L^\infty(W_*)}
\|\phi_h\|_{L^2(\mathbb R^d)},\\
\|I_h^U(\omega\phi_h)-\omega\phi_h\|_{H^1(\mathbb R^d)}
&\le
C\|\nabla\omega\|_{L^\infty(W_*)}
\|\phi_h\|_{L^2(\mathbb R^d)}.
\end{aligned}
\]
For \(U=W_c\), the same estimates hold by the explicit construction of the local interpolant \(I_h^c\) on the enriched space; see \eqref{eq:tail-enriched-superapprox}. Interpolation between \(L^2\) and \(H^1\) therefore yields, uniformly for \(U\in\{W_a,W_b,W_c\}\),
\[
\|I_h^U(\omega\phi_h)-\omega\phi_h\|_{H^s(\mathbb R^d)}
\le
C h^{1-s}
\|\omega\|_{W^{1,\infty}(W_*)}
\|\phi_h\|_{H^s(\mathbb R^d)} .
\]

Taking \(\omega=a_{h,\sigma}^\delta\), and using the uniform bound \(\|a_{h,\sigma}^\delta\|_{W^{1,\infty}(W_*)}\le C\), gives
\[
\|I_h^U(a_{h,\sigma}^\delta\phi_h) -a_{h,\sigma}^\delta\phi_h\|_{H^s(\mathbb R^d)}
\le Ch^{1-s}\|\phi_h\|_{H^s(\mathbb R^d)} .
\]
Since \(U\Subset W_*\), choose \(\zeta_U\in C_c^\infty(W_*)\) with \(\zeta_U=1\) on \(U\). Multiplication by a \(W^{1,\infty}(\mathbb R^d)\)-function is bounded on \(H^s(\mathbb R^d)\) for \(0<s<\min\{1,d/2\}\). Applying this to \(\zeta_U(a_{h,\sigma}^\delta-a_\gamma)\) gives
\[
\|(a_{h,\sigma}^\delta-a_\gamma)\phi_h\|_{H^s(\mathbb R^d)}
\le
C\|a_{h,\sigma}^\delta-a_\gamma\|_{W^{1,\infty}(W_*)}
\|\phi_h\|_{H^s(\mathbb R^d)} \le C \eta_a(h,\delta)\|\phi_h\|_{H^s(\mathbb R^d)}.
\]
Combining the last two bounds proves \eqref{eq:eps-a-bound}. The choice of \(\sigma_h\) above gives \(e_h^\delta\to0\), \(\sigma_h\to0\), and \(e_h^\delta/\sigma_h\to0\). Moreover, Assumption~\ref{ass:liouville-regularity}(iii) implies that \(\nabla\gamma\) is uniformly continuous on \(\overline{W_0}\). Hence the right-hand side of \eqref{eq:eps-a-bound} tends to zero, proving \eqref{eq:eps-a-def}.
\end{proof}

We henceforth set $a_h^\delta:=a_{h,\sigma}^\delta=(\widehat\Gamma_h^\delta)^{-1/2}$ on $W_*$. The exterior value $m_h^W=\sqrt{\Gamma_h^\delta}-1$ and the projected tail datum $\tau_h^c$ in \eqref{eq:h-c-definition} continue to use the unsmoothed clipped conductivity $\Gamma_h^\delta$.

\subsection{Discrete reconstruction of the Schr\"odinger potential}
\label{subsec:discrete-Qgamma}

Using identity \eqref{eq:separated-identity} and the reconstructed multiplier \(a_h^\delta\), we form finite-dimensional approximations of the Schr\"odinger DN data for \(Q_\gamma\) and establish their consistency. We then recover the Schr\"odinger states by Galerkin--Tikhonov regularization of the exterior observation problem \cite[Thm.~2]{Ghosh2020uniqueness}, compute their regularized weak fractional Laplacian, and fit \(Q_\gamma\) by a stabilized residual method \cite[Sec. 3]{DwivediRailoRuppFCP}. The single-measurement logarithmic potential estimate of \cite[Thm.~1]{Ruland2021single} is not applied: it requires uniformly \(C^{0,s}\) potentials supported in a common \(\Omega'\Subset\Omega\) and a continuous single-source Schr\"odinger DN discrepancy, whereas \(Q_\gamma\) is only assumed to lie in \(L^\infty(\Omega)\) and the present analysis controls finite-dimensional observations and residuals.

Choose exterior Schr\"odinger sources \(\{F_{j,h}^a\}_{j=1}^{J_a}\subset V_h^a\), where \(J_a=J_a(h)\) may depend on \(h\). The observation space is \(V_h^b\), with nodal basis
\(\{\chi_{\ell,h}^b\}_{\ell=1}^{N_b}\). Since \(V_h^a\subset \widetilde H^s(W_a)\), \(V_h^b\subset \widetilde H^s(W_b)\), and \(\operatorname{dist}(W_a,W_b)>0\), the source and observation supports are separated.

Because \(V_h^a,V_h^b\subset V_h^W\), the measured bilinear form
\(\mathcal D_h^\delta\) from \eqref{eq:dn-noise-discrete} can be evaluated at
the transformed exterior data
\(I_h^a(a_h^\delta F_{j,h}^a)\) and
\(I_h^b(a_h^\delta\chi_{\ell,h}^b)\). Define
\[
    \mathsf S_h^\delta
    :=
    \bigl(\mathsf S_{\ell j,h}^{\delta}\bigr)_{
    \ell=1,\ldots,N_b;\,j=1,\ldots,J_a}
    \in \mathbb R^{N_b\times J_a}
\]
by
\begin{equation}\label{eq:sep_re_data}
    \mathsf S_{\ell j,h}^{\delta}
    :=
    \mathcal D_h^\delta\!\left(
        I_h^a(a_h^\delta F_{j,h}^a),
        I_h^b(a_h^\delta\chi_{\ell,h}^b)
    \right).
\end{equation}
The exact reduced separated data are
\begin{equation}\label{eq:separated-exact-data}
    \mathsf S_{\ell j,h}^{e}
    :=
    \big\langle
        \Lambda_{Q_\gamma}^{\Omega}F_{j,h}^a,
        \chi_{\ell,h}^b
    \big\rangle .
\end{equation}
By \eqref{eq:separated-identity}, this is equivalently \(
    \mathsf S_{\ell j,h}^{e} = \big\langle  \Lambda_\gamma(a_\gamma F_{j,h}^a),
        a_\gamma\chi_{\ell,h}^b\big\rangle \). By Lemma~\ref{lem:discrete-multiplier-consistency} and the equivalence of the
\(H^s(\mathbb R^d)\)- and \(W^s(\mathbb R^d)\)-norms, after enlarging the
constant in \(\varepsilon_U^a(h,\delta)\), we have
\begin{equation}\label{eq:eps-a-Ws-version}
    \|I_h^U(a_h^\delta\phi_h)-a_\gamma\phi_h\|_{W^s(\mathbb R^d)}
    \le
    \varepsilon_U^a(h,\delta)\|\phi_h\|_{W^s(\mathbb R^d)},
    \qquad
    \phi_h\in V_h^U,\quad U\in\{W_a,W_b\}.
\end{equation}
Set
\[
    M_a:=
    \sup_{0\ne F\in\widetilde H^s(W_a)}
    \frac{\|a_\gamma F\|_{W^s(\mathbb R^d)}}{\|F\|_{W^s(\mathbb R^d)}},
    \qquad
    M_b:=
    \sup_{0\ne G\in\widetilde H^s(W_b)}
    \frac{\|a_\gamma G\|_{W^s(\mathbb R^d)}}{\|G\|_{W^s(\mathbb R^d)}} .
\]
By Assumption~\ref{ass:liouville-regularity}(iii), \(a_\gamma\in\mathcal M_s(W_a)\cap\mathcal M_s(W_b)\); hence \(M_a,M_b<\infty\), after using the equivalence of the \(H^s\)- and \(W^s\)-norms. Since \(\|[f]\|_X\le \|f\|_{H^s(\mathbb R^d)}\le C\|f\|_{W^s(\mathbb R^d)}\), there is a constant \(C_\Lambda>0\), independent of \(h\) and \(\delta\), such that
\begin{equation}\label{eq:DN-Ws-continuity}
    \big|\langle \Lambda_\gamma [f],[g] \rangle \big|
    \le  C_\Lambda
    \|f\|_{W^s(\mathbb R^d)}
    \|g\|_{W^s(\mathbb R^d)}
    \qquad
    \forall f,g\in H^s(\mathbb R^d).
\end{equation}

\begin{lemma}[Consistency of the reduced separated data]
\label{lem:reduced-separated-data-consistency}
Let \(a_h^\delta\) denote the smoothed exterior multiplier constructed in
Lemma~\ref{lem:discrete-multiplier-consistency}, with \(\sigma_h\) chosen as
there. Assume the finite-dimensional DN noise model \eqref{eq:dn-noise-discrete}.
Then, for every \(F_h\in V_h^a\) and \(G_h\in V_h^b\), the following holds:
\begin{equation}\label{eq:reduced-data-functional-consistency}
\begin{aligned}
\left|\mathcal D_h^\delta\!\left(I_h^a(a_h^\delta F_h), I_h^b(a_h^\delta G_h) \right)- \big\langle \Lambda_{Q_\gamma}^{\Omega}F_h,G_h\big\rangle
\right|\le \eta_{ab}(h,\delta) \|F_h\|_{W^s(\mathbb R^d)} \|G_h\|_{W^s(\mathbb R^d)},
\end{aligned}
\end{equation}
where, writing \(
    \varepsilon_a:=\varepsilon_{W_a}^a(h,\delta)\) and \(\varepsilon_b:=\varepsilon_{W_b}^a(h,\delta)\), we take
\begin{equation}\label{eq:eta-ab-explicit}
    \eta_{ab}(h,\delta) := \delta(M_a+\varepsilon_a)(M_b+\varepsilon_b)
    + C_\Lambda\Big(\varepsilon_a(M_b+\varepsilon_b)+M_a\varepsilon_b
    \Big),
\end{equation}
and the constant \(C_\Lambda\) is from \eqref{eq:DN-Ws-continuity}. In particular, if  \(\delta\to0\), \(\varepsilon_{W_a}^a(h,\delta)\to0\), and \(\varepsilon_{W_b}^a(h,\delta)\to0\), then \(\eta_{ab}(h,\delta)\to0\). Moreover, for all sufficiently small \(h\) and \(\delta\), we have \( \eta_{ab}(h,\delta) \le C\bigl( \delta+\varepsilon_a +\varepsilon_b \bigr)\).
Consequently, for the matrix entries defined in
\eqref{eq:sep_re_data} and \eqref{eq:separated-exact-data}, there holds
\[
\left|
\mathsf S_{\ell j,h}^{\delta}-\mathsf S_{\ell j,h}^{e}
\right|
\le
\eta_{ab}(h,\delta)
\|F_{j,h}^a\|_{W^s(\mathbb R^d)}
\|\chi_{\ell,h}^b\|_{W^s(\mathbb R^d)} .
\]
\end{lemma}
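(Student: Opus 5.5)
We split the error by inserting the exact transformed inputs. Write \(\tilde F:=I_h^a(a_h^\delta F_h)\), \(\tilde G:=I_h^b(a_h^\delta G_h)\), and \(F^\ast:=a_\gamma F_h\), \(G^\ast:=a_\gamma G_h\). Since \(V_h^a,V_h^b\subset V_h^W\) and the localized interpolants preserve \(V_h^W\), we have \(\tilde F,\tilde G\in V_h^W\). By the separated identity \eqref{eq:separated-identity}, applied with \(F_h\in\widetilde H^s(W_a)\) and \(G_h\in\widetilde H^s(W_b)\), the exact reduced datum equals \(\langle\Lambda_\gamma F^\ast,G^\ast\rangle\). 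Hence
\[
\mathcal D_h^\delta(\tilde F,\tilde G)-\langle\Lambda_{Q_\gamma}^\Omega F_h,G_h\rangle
=\bigl[\mathcal D_h^\delta(\tilde F,\tilde G)-\langle\Lambda_\gamma\tilde F,\tilde G\rangle\bigr]
+\langle\Lambda_\gamma(\tilde F-F^\ast),\tilde G\rangle+\langle\Lambda_\gamma F^\ast,\tilde G-G^\ast\rangle .
\]

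For the first bracket, apply the noise model \eqref{eq:dn-noise-discrete}. This gives the bound \(\delta\|\tilde F\|_{W^s}\|\tilde G\|_{W^s}\). By \eqref{eq:eps-a-Ws-version} and the triangle inequality, \(\|\tilde F\|_{W^s}\le\|F^\ast\|_{W^s}+\varepsilon_a\|F_h\|_{W^s}\le(M_a+\varepsilon_a)\|F_h\|_{W^s}\). In the same way, \(\|\tilde G\|_{W^s}\le(M_b+\varepsilon_b)\|G_h\|_{W^s}\). This produces the first summand of \eqref{eq:eta-ab-explicit}.

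For the second term, use the continuity bound \eqref{eq:DN-Ws-continuity} together with \eqref{eq:eps-a-Ws-version}. This gives \(C_\Lambda\varepsilon_a(M_b+\varepsilon_b)\|F_h\|\|G_h\|\). For the third term, use \(\|F^\ast\|_{W^s}\le M_a\|F_h\|_{W^s}\) and \(\|\tilde G-G^\ast\|_{W^s}\le\varepsilon_b\|G_h\|_{W^s}\), which gives \(C_\Lambda M_a\varepsilon_b\). Summing the three contributions yields \eqref{eq:reduced-data-functional-consistency} with \(\eta_{ab}\) exactly as in \eqref{eq:eta-ab-explicit}.

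The remaining claims are direct consequences. By Lemma~\ref{lem:discrete-multiplier-consistency}, \(\varepsilon_a,\varepsilon_b\to0\), and \(M_a,M_b\) are fixed and finite. So \(\eta_{ab}\to0\) when \(\delta\to0\). Once \(\varepsilon_a,\varepsilon_b\le1\), we get \(\eta_{ab}\le C(\delta+\varepsilon_a+\varepsilon_b)\) with \(C\) depending on \(M_a,M_b,C_\Lambda\). The matrix-entry bound follows by taking \(F_h=F^a_{j,h}\) and \(G_h=\chi^b_{\ell,h}\).

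The only delicate point is a bookkeeping check: we must confirm that \eqref{eq:eps-a-Ws-version} is available for the \(W^s\) norms, and that the interpolated products lie in \(V_h^W\), so that \(\mathcal D_h^\delta\) can legitimately be evaluated on them. The support claim holds because \(\overline{W_a\cup W_b}\subset W_*\Subset W_0\Subset W\) and the interpolants \(I_h^a\), \(I_h^b\) are local.
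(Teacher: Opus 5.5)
Your proposal is correct and follows the paper's proof essentially line by line. You insert $\langle\Lambda_\gamma \tilde F,\tilde G\rangle$ and, via \eqref{eq:separated-identity}, $\langle\Lambda_\gamma(a_\gamma F_h),a_\gamma G_h\rangle$; you bound the noise bracket with \eqref{eq:dn-noise-discrete}; and your bilinear split, using \eqref{eq:DN-Ws-continuity} and \eqref{eq:eps-a-Ws-version}, produces exactly the three summands of \eqref{eq:eta-ab-explicit}, with the asymptotic and matrix-entry consequences obtained the same way.
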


\begin{proof}
Fix \(F_h\in V_h^a\) and \(G_h\in V_h^b\), and define \((f_h^\delta,g_h^\delta) : = (I_h^a(a_h^\delta F_h),I_h^b(a_h^\delta G_h))\). Then \(f_h^\delta,g_h^\delta\in V_h^W\), so \(\mathcal D_h^\delta(f_h^\delta,g_h^\delta)\) is well defined. Also
\(a_\gamma F_h\in\widetilde H^s(W_a)\) and \(a_\gamma G_h\in\widetilde H^s(W_b)\). Since the
supports are separated, the separated Liouville identity
\eqref{eq:separated-identity} holds. Hence
\[
\mathcal D_h^\delta(f_h^\delta,g_h^\delta)- \big\langle \Lambda_{Q_\gamma}^{\Omega}F_h,G_h\big\rangle = 
\mathcal D_h^\delta(f_h^\delta,g_h^\delta)
- \big\langle \Lambda_\gamma f_h^\delta,g_h^\delta\big \rangle 
+ \big\langle \Lambda_\gamma f_h^\delta,g_h^\delta\big \rangle 
- \big\langle \Lambda_\gamma a_\gamma F_h,a_\gamma G_h\big \rangle .
\]
The first two terms on the right hand side are controlled directly by \eqref{eq:dn-noise-discrete}:
\[
\left|
\mathcal D_h^\delta(f_h^\delta,g_h^\delta) -
\big\langle \Lambda_\gamma f_h^\delta,g_h^\delta\big \rangle 
\right|
\le \delta \|f_h^\delta\|_{W^s(\mathbb R^d)} \|g_h^\delta\|_{W^s(\mathbb R^d)}.
\]
By \eqref{eq:eps-a-Ws-version} and the definitions of \(M_a\) and \(M_b\), \( \|f_h^\delta\|_{W^s(\mathbb R^d)} \le (M_a+\varepsilon_a)\|F_h\|_{W^s(\mathbb R^d)}\) and
\(\|g_h^\delta\|_{W^s(\mathbb R^d)}  \le (M_b+\varepsilon_b)\|G_h\|_{W^s(\mathbb R^d)}\). Thus, we have
\[
    \left|
\mathcal D_h^\delta(f_h^\delta,g_h^\delta) -
\big\langle \Lambda_\gamma f_h^\delta,g_h^\delta\big \rangle 
\right| \le \delta(M_a+\varepsilon_a)(M_b+\varepsilon_b)
    \|F_h\|_{W^s(\mathbb R^d)}\|G_h\|_{W^s(\mathbb R^d)}.
\]
For the last two terms, we use bilinearity and \eqref{eq:DN-Ws-continuity}, which gives
\[
\begin{aligned}
&\left|\big\langle\Lambda_\gamma f_h^\delta,g_h^\delta\big\rangle-\big\langle\Lambda_\gamma a_\gamma F_h,a_\gamma G_h\big\rangle\right|\\
&\quad\le C_\Lambda\|f_h^\delta-a_\gamma F_h\|_{W^s(\mathbb R^d)}\|g_h^\delta\|_{W^s(\mathbb R^d)}+C_\Lambda\|a_\gamma F_h\|_{W^s(\mathbb R^d)}\|g_h^\delta-a_\gamma G_h\|_{W^s(\mathbb R^d)}.
\end{aligned}
\]
Again by \eqref{eq:eps-a-Ws-version}, \(
\|f_h^\delta-a_\gamma F_h\|_{W^s(\mathbb R^d)}\le \varepsilon_a\|F_h\|_{W^s(\mathbb R^d)}\) and 
\(\|g_h^\delta-a_\gamma G_h\|_{W^s(\mathbb R^d)}\le \varepsilon_b\|G_h\|_{W^s(\mathbb R^d)}\), and by the definition of \(M_a\), \( \|a_\gamma F_h\|_{W^s(\mathbb R^d)}\le M_a\|F_h\|_{W^s(\mathbb R^d)}\).
Together with the bound for \(\|g_h^\delta\|_{W^s(\mathbb R^d)}\), this yields
\[
\left| \big\langle \Lambda_\gamma f_h^\delta,g_h^\delta\big\rangle
- \big\langle \Lambda_\gamma a_\gamma F_h,a_\gamma G_h\big\rangle
\right| \le C_\Lambda\Big(\varepsilon_a(M_b+\varepsilon_b)+M_a\varepsilon_b
\Big) \|F_h\|_{W^s(\mathbb R^d)}\|G_h\|_{W^s(\mathbb R^d)}.
\]
Combining the two bounds proves \eqref{eq:reduced-data-functional-consistency}
with \(\eta_{ab}\) given by \eqref{eq:eta-ab-explicit}. The convergence and the
simplified bound follow immediately from
\(\delta\to0\), \(\varepsilon_a\to0\), and \(\varepsilon_b\to0\). Finally, taking
\(F_h=F_{j,h}^a\) and \(G_h=\chi_{\ell,h}^b\) gives the entrywise matrix
estimate.
\end{proof}

Since \(\operatorname{dist}(W_b,\Omega)>0\), define the continuous exterior observation operator by
\begin{equation}\label{eq:Lb-continuous}
    \mathcal L_b:L^2(\Omega)\to L^2(W_b),
    \qquad (\mathcal L_b z)(x) := -C_{d,s}\int_\Omega \frac{z(y)}{|x-y|^{d+2s}}\,dy , \quad x\in W_b .
\end{equation}
The separated kernel is smooth; in particular, \(\mathcal L_b:L^2(\Omega)\to H^m(W_b)\) is bounded for every integer \(m\ge0\). Moreover, if \(z\in\widetilde H^s(\Omega)\), then \(\mathcal L_bz=((-\Delta)^sz)|_{W_b}\) in the distributional sense. The map \(\mathcal L_b:\widetilde H^s(\Omega)\to L^2(W_b)\) is compact and injective by exterior unique continuation; see \cite[Lem.~2.2 and Thm.~2]{Ghosh2020uniqueness} and \cite[Lem.~2.3]{DwivediRailoRuppFCP}. 

For each discrete exterior source \(F_{j,h}^a\), let \(v_{j,h}^{e}\in H^s(\mathbb R^d)\) denote the exact Liouville-transformed Schr\"odinger state with exterior value \(F_{j,h}^a\), and write
\begin{equation*}
v_{j,h}^{e} = F_{j,h}^a+z_{j,h}^{e}, \qquad z_{j,h}^{e}\in\widetilde H^s(\Omega). 
\end{equation*}
Thus
\[
    A_R(v_{j,h}^{e},\phi) + \int_\Omega Q_\gamma v_{j,h}^{e}\phi\,dx  = 0 \qquad
    \forall\phi\in\widetilde H^s(\Omega).
\]
For a linear functional $A_h:V_h^b\to\mathbb R$, define
\begin{equation}\label{eq:discrete-negative-norm}
    \|A_h\|_{-s,b,h}
    :=
    \sup_{0\ne G_h\in V_h^b}
    \frac{|A_h(G_h)|}
         {\|G_h\|_{W^s(\mathbb R^d)}}.
\end{equation}
 Since \(V_h^b\) is finite-dimensional and is equipped with the \(W^s(\mathbb R^d)\)-inner product, every functional \(A_h\in (V_h^b)^*\) has a unique Riesz representative \(R_{b,h}A_h\in V_h^b\) defined by
\[
    (R_{b,h}A_h,G_h)_{W^s(\mathbb R^d)} = A_h(G_h) \qquad
    \forall G_h\in V_h^b.
\]
We use the induced Hilbert structure on \((V_h^b)^*\), \(
    (A_h,B_h)_{-s,b,h} := (R_{b,h}A_h,R_{b,h}B_h)_{W^s(\mathbb R^d)}\).
The corresponding norm is exactly the dual norm in
\eqref{eq:discrete-negative-norm}. For each exterior source $F_{j,h}^a$, define $\mu_{j,h}^\delta,\mu_{j,h}^e\in(V_h^b)^*$ by
\begin{align*} 
\langle\mu_{j,h}^\delta,G_h\rangle &:= \mathcal D_h^\delta\!\left(I_h^a(a_h^\delta F_{j,h}^a),I_h^b(a_h^\delta G_h)\right)-A_R(F_{j,h}^a,G_h), \\ \langle\mu_{j,h}^e,G_h\rangle &:= \big\langle\Lambda_{Q_\gamma}^{\Omega}F_{j,h}^a,G_h\big\rangle-A_R(F_{j,h}^a,G_h) = A_R(z_{j,h}^e,G_h), 
\end{align*}
for every $G_h\in V_h^b$. Lemma~\ref{lem:reduced-separated-data-consistency} directly gives
\begin{equation*}
\|\mu_{j,h}^\delta-\mu_{j,h}^e\|_{-s,b,h} \le \eta_{ab}(h,\delta)\|F_{j,h}^a\|_{W^s(\mathbb R^d)} =: \eta_{\mu,j}(h,\delta). \end{equation*}

Define \( \mathcal L_{b,h}:V_h^\Omega\to(V_h^b)^*\) by
\begin{equation*}
\langle\mathcal L_{b,h}z_h,G_h\rangle := A_R(z_h,G_h), \qquad z_h\in V_h^\Omega,\quad G_h\in V_h^b. 
\end{equation*}
For  \(z_h\in V_h^\Omega\) and  \(G_h\in V_h^b\), we have
\begin{equation}\label{eq:Lbh-smooth-kernel}
    A_R(z_h,G_h)
    = -C_{d,s} \int_{W_b}\int_\Omega \frac{z_h(y)G_h(x)}
    {|x-y|^{d+2s}}\,dy\,dx = (\mathcal L_bz_h,G_h)_{L^2(W_b)} .
\end{equation}
We regularize the compact observation operator by quadratic Tikhonov minimization; see \cite[Sec.~5]{BenningBurger2018}. The penalty is centered at the zero-potential correction, which is generally nonzero for exterior forcing. For each \(j=1,\ldots,J_a\), define \(z_{0,j,h}\in V_h^\Omega\) by
\begin{equation}\label{eq:z0-reference-state}
  A_R(F_{j,h}^a+z_{0,j,h},\varphi_h)=0
  \qquad
  \forall \varphi_h\in V_h^\Omega .
\end{equation}
For \(\alpha_z>0\), define the generalized Tikhonov state reconstruction by
\begin{equation}\label{eq:z-state-recovery}
  \zeta_{j,h}^{\alpha_z,\delta} :=
  \operatorname*{argmin}_{\zeta_h\in V_h^\Omega} \left\{
  \frac12 \|\mathcal L_{b,h}(z_{0,j,h}+\zeta_h)-\mu_{j,h}^\delta\|_{-s,b,h}^2 +
  \frac{\alpha_z}{2}\|\zeta_h\|_{W^s(\mathbb R^d)}^2\right\}.
\end{equation}
Set \(z_{j,h}^{\alpha_z,\delta}:=z_{0,j,h}+\zeta_{j,h}^{\alpha_z,\delta}\)  and  \(v_{j,h}^{\alpha_z,\delta}:=F_{j,h}^a+z_{j,h}^{\alpha_z,\delta}\). When no ambiguity is possible, we write \( z_{j,h}:=z_{j,h}^{\alpha_z,\delta}\) and \(v_{j,h}:=v_{j,h}^{\alpha_z,\delta}\).

Apply Lemma~\ref{lem:abstract-shifted-tikhonov} to $z_h=z_{0,j,h}+\zeta_h$ with $\mathsf X=E=V_h^\Omega$ in the $W^s(\mathbb R^d)$-norm, $\mathsf Y=(V_h^b)^*$ in the norm $\|\cdot\|_{-s,b,h}$, $T=\mathcal L_{b,h}$, $b=\mu_{j,h}^\delta$, $x_c=z_{0,j,h}$, and $\alpha=\alpha_z$. The unique minimizer of \eqref{eq:z-state-recovery} satisfies
\begin{equation*} \begin{aligned}
\bigl(\mathcal L_{b,h}(z_{0,j,h}+\zeta_{j,h}^{\alpha_z,\delta}),\mathcal L_{b,h}\theta_h\bigr)_{-s,b,h}+\alpha_z\bigl(\zeta_{j,h}^{\alpha_z,\delta},\theta_h\bigr)_{W^s(\mathbb R^d)}=\bigl(\mu_{j,h}^\delta,\mathcal L_{b,h}\theta_h\bigr)_{-s,b,h},\quad\theta_h\in V_h^\Omega. 
\end{aligned} \end{equation*}
The normal-equation matrix is symmetric positive definite. Let $\zeta_{j,h}^{\alpha_z,e}$ be the minimizer for $\mu_{j,h}^e$ and set $z_{j,h}^{\alpha_z,e}=z_{0,j,h}+\zeta_{j,h}^{\alpha_z,e}$. The common center and \eqref{eq:abstract-tikhonov-data-stability} give
\begin{equation}\label{eq:z-data-stability}
\begin{aligned}
\|\zeta_{j,h}^{\alpha_z,\delta}-\zeta_{j,h}^{\alpha_z,e}\|_{W^s(\mathbb R^d)}
 =\|z_{j,h}^{\alpha_z,\delta}-z_{j,h}^{\alpha_z,e}\|_{W^s(\mathbb R^d)}\le\frac{\|\mu_{j,h}^\delta-\mu_{j,h}^e\|_{-s,b,h}}{2\sqrt{\alpha_z}}
\le\frac{\eta_{\mu,j}(h,\delta)}{2\sqrt{\alpha_z}}.
\end{aligned}
\end{equation}

Let \( \eta_\mu(h,\delta) := \left( \frac1{J_a}\sum_{j=1}^{J_a}
 \eta_{\mu,j}(h,\delta)^2\right)^{1/2}\),
and define
\begin{equation*} 
\rho_{z,h}(\alpha_z) := \left(\frac1{J_a}\sum_{j=1}^{J_a}\|z_{j,h}^{\alpha_z,e}-z_{j,h}^{e}\|_{W^s(\mathbb R^d)}^2\right)^{1/2}\quad\text{and}\quad \mathcal E_{v,h}:=\left(\frac1{J_a}\sum_{j=1}^{J_a}\|v_{j,h}-v_{j,h}^{e}\|_{W^s(\mathbb R^d)}^2\right)^{\frac12}. 
\end{equation*}
Since the exact and reconstructed states have the same exterior value, \eqref{eq:z-data-stability} and the triangle inequality give
\begin{equation}\label{eq:state-error-dual}
 \mathcal E_{v,h} \le
 \frac{\eta_\mu(h,\delta)}{2\sqrt{\alpha_z}}
 +\rho_{z,h}(\alpha_z).
\end{equation}
Thus it is sufficient to choose
\[
 \alpha_z\to0,
 \qquad
 \frac{\eta_\mu(h,\delta)^2}{\alpha_z}\to0,
 \qquad
 \rho_{z,h}(\alpha_z)\to0.
\]

We next compute a regularized $L^2(\Omega)$-representation of the
interior fractional Laplacian. Let $\alpha_w>0$. For each reconstructed
state $v_{j,h}$, define $w_{j,h}\in V_h^\Omega$ by
\begin{equation}\label{eq:regularized-weak-laplacian}
 (w_{j,h},\phi_h)_{L^2(\Omega)}
 +
 \alpha_w
 (w_{j,h},\phi_h)_{W^s(\mathbb R^d)}
 =
 A_R(v_{j,h},\phi_h)
 \qquad
 \forall\phi_h\in V_h^\Omega .
\end{equation}
The Hessian in \eqref{eq:regularized-weak-laplacian} is the one in Lemma~\ref{lem:abstract-shifted-tikhonov} with $\mathsf X=E=V_h^\Omega$ in the $W^s(\mathbb R^d)$-norm, $\mathsf Y=L^2(\Omega)$, $T$ the embedding, $b=0$, $x_c=0$, and $\alpha=\alpha_w$; its load is $\ell_{j,h}(\phi_h)=A_R(v_{j,h},\phi_h)$. Hence the system is symmetric positive definite and has a unique solution. The $W^s$-penalty regularizes the unregularized Riesz representation of \cite{DwivediRailoRuppFCP}, recovered when $\alpha_w=0$. Define the exact-data solution $w_{j,h}^{e,\alpha_w}\in V_h^\Omega$ by
\begin{equation}\label{eq:regularized-weak-laplacian-exact}
\begin{aligned}(w_{j,h}^{e,\alpha_w},\phi_h)_{L^2(\Omega)}
 + \alpha_w (w_{j,h}^{e,\alpha_w},\phi_h)_{W^s(\mathbb R^d)}
 = A_R(v_{j,h}^e,\phi_h) = (w_{j,h}^e,\phi_h)_{L^2(\Omega)}
 \qquad
 \forall\phi_h\in V_h^\Omega ,
\end{aligned}
\end{equation}
where \( w_{j,h}^e=-Q_\gamma v_{j,h}^e\). Set
\begin{equation*} 
\rho_{w,h}(\alpha_w) := \left(\frac1{J_a}\sum_{j=1}^{J_a}\|w_{j,h}^{e,\alpha_w}-w_{j,h}^e\|_{L^2(\Omega)}^2\right)^{\frac12}.
\end{equation*}
The consistency terms \(\rho_{z,h}(\alpha_z)\) and \(\rho_{w,h}(\alpha_w)\) collect exact-data discretization and regularization errors. Suppose that \(J_a\) is fixed, \(F_{j,h}^a\to F_j^a\) in \(W^s(\mathbb R^d)\), the families $\{V_h^\Omega\}_h$ and $\{V_h^b\}_h$ satisfy $\inf_{p_h\in V_h^\Omega}\lVert u-p_h\rVert_{W^s(\mathbb R^d)}\to0$ for every $u\in\widetilde H^s(\Omega)$ and $\inf_{G_h\in V_h^b}\lVert G-G_h\rVert_{W^s(\mathbb R^d)}\to0$ for every $G\in\widetilde H^s(W_b)$, respectively, and
\[
\alpha_z\to0,\qquad \frac{1}{J_a\alpha_z}\sum_{j=1}^{J_a}\inf_{p_h\in V_h^\Omega}\|z_{j,h}^e-p_h\|_{W^s(\mathbb R^d)}^2\to0.
\]
For a best approximant \(p_{j,h}\in V_h^\Omega\), exact-data minimality and \eqref{eq:Lbh-smooth-kernel} give
\[
\|\mathcal L_{b,h}z_{j,h}^{\alpha_z,e}-\mu_{j,h}^e\|_{-s,b,h}^2+\alpha_z\|z_{j,h}^{\alpha_z,e}-z_{0,j,h}\|_{W^s(\mathbb R^d)}^2 \le C\|p_{j,h}-z_{j,h}^e\|_{W^s(\mathbb R^d)}^2+\alpha_z\|p_{j,h}-z_{0,j,h}\|_{W^s(\mathbb R^d)}^2.
\]
Let $z_j^e$ be the exact correction for $F_j^a$, and let $z_{0,j}\in\widetilde H^s(\Omega)$ solve $A_R(F_j^a+z_{0,j},\phi)=0$ for all $\phi\in\widetilde H^s(\Omega)$. The coupling makes the exact-data residual vanish and bounds the reconstructed shifts. Forward continuity gives $z_{j,h}^e\to z_j^e$ in $W^s(\mathbb R^d)$; C\'ea's lemma for \eqref{eq:z0-reference-state} and $F_{j,h}^a\to F_j^a$ give $z_{0,j,h}\to z_{0,j}$ in the same space. Density of $V_h^b$ and injectivity of $\mathcal L_b$ identify every weak limit of $z_{j,h}^{\alpha_z,e}$ with $z_j^e$. Minimality and weak lower semicontinuity give convergence of the shifted norms, hence $z_{j,h}^{\alpha_z,e}-z_{j,h}^e\to0$ in $W^s(\mathbb R^d)$ and $\rho_{z,h}(\alpha_z)\to0$.

\begin{proposition}[Logarithmic control of the state bias]\label{prop:state-logarithmic-bias}
Let $J_a$ be fixed, let $F_{j,h}^a\to F_j^a$ in $W^s(\mathbb R^d)$, and use the full fitted observation spaces $V_h^b$ on shape-regular, quasi-uniform meshes. Define
\[
a_{z,h}:=\left(\frac1{J_a}\sum_{j=1}^{J_a}\inf_{p_h\in V_h^\Omega}\|z_{j,h}^e-p_h\|_{W^s(\mathbb R^d)}^2\right)^{1/2}.
\]
If $\alpha_z\to0$ and $a_{z,h}^2/\alpha_z\to0$, then, for sufficiently small $h$,
\begin{equation}\label{eq:state-logarithmic-bias}
\rho_{z,h}(\alpha_z)\le C\left[\frac{a_{z,h}}{\sqrt{\alpha_z}}+\sqrt{a_{z,h}}+\left\{\log\left(2+\frac1{a_{z,h}+\sqrt{\alpha_z}+h^s}\right)\right\}^{-\sigma_z/2}\right],
\end{equation}
where $C,\sigma_z>0$ are independent of $h,\alpha_z$ along the coupling. Equation~\eqref{eq:state-error-dual} adds the noisy-data contribution.
\end{proposition}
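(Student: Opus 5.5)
Fix $j$ and write $e_h:=z_{j,h}^{\alpha_z,e}-z_{j,h}^e\in\widetilde H^s(\Omega)$. The plan is to bound $e_h$ in two ways. Tikhonov minimality gives an a priori bound on $\|e_h\|_{W^s}$ and a small bound on its exterior observation $\mathcal L_b e_h$. A conditional logarithmic stability estimate for the exterior observation operator $\mathcal L_b$ on bounded sets of $\widetilde H^s(\Omega)$ then converts the small observation into control of $e_h$.

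\textbf{Step 1 (minimality).} Let $p_{j,h}$ be the best approximant of $z_{j,h}^e$ in $V_h^\Omega$ and compare it with the minimizer in the exact-data version of \eqref{eq:z-state-recovery}. Since $\mu_{j,h}^e=\mathcal L_{b,h}z_{j,h}^e$ by \eqref{eq:Lbh-smooth-kernel}, the displayed inequality before the proposition gives
\[
\|\mathcal L_{b,h}e_h\|_{-s,b,h}^2+\alpha_z\|z_{j,h}^{\alpha_z,e}-z_{0,j,h}\|_{W^s}^2\le C a_{j}^2+\alpha_z\|p_{j,h}-z_{0,j,h}\|_{W^s}^2 .
\]
The sequences $z_{j,h}^e$ and $z_{0,j,h}$ are bounded. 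Together with $a_{z,h}^2/\alpha_z\to0$, this yields $\|e_h\|_{W^s}\le C$ uniformly and $\|\mathcal L_{b,h}e_h\|_{-s,b,h}\le C(a_{z,h}+\sqrt{\alpha_z})$ after averaging over $j$. A direct triangle inequality, $\|e_h\|\le\|z^{\alpha_z,e}-p\|+a$, does not work because $\alpha_z$ does not control $\|z^{\alpha_z,e}-p\|$. So I keep the bias term from comparing the minimizer with $p_{j,h}$ as the source of $a_{z,h}/\sqrt{\alpha_z}$. Expanding the squared penalty around $p_{j,h}$ gives the $\sqrt{a_{z,h}}$ cross-term contribution, via $\|z^{\alpha_z,e}-z_0\|^2-\|p-z_0\|^2$ bounds of the type $\|x\|^2\le\|y\|^2+\epsilon$.

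\textbf{Step 2 (discrete to continuous observation).} The discrete dual norm over $V_h^b$ must be converted to the continuous $\|\mathcal L_be_h\|_{H^{-s}(W_b)}$ or $L^2(W_b)$ norm. For $G\in\widetilde H^s(W_b)$, I use $G_h=\mathcal I_h^bG$ from \eqref{eq:local-averaged-interpolation}, which satisfies $\|G-G_h\|_{L^2}\le Ch^s\|G\|_{H^s}$. Since the kernel is smooth, $\|\mathcal L_be_h\|_{L^2(W_b)}\le C\|e_h\|_{L^2}\le C$. Hence
\[
|(\mathcal L_be_h,G)|\le |(\mathcal L_be_h,G_h)|+Ch^s\|G\|_{H^s}\le C\bigl(a_{z,h}+\sqrt{\alpha_z}+h^s\bigr)\|G\|_{H^s}.
\]
Smoothing of $\mathcal L_b$ into $H^m(W_b)$ and interpolation then upgrade this to an $L^2(W_b)$ bound of the same order, possibly with a power loss that can be absorbed into $\sigma_z$.

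\textbf{Step 3 (logarithmic stability, the main obstacle).} I need a quantitative unique continuation estimate of the form
\[
\|e\|_{L^2(\Omega)}\le C\Bigl[\log\Bigl(2+\tfrac{\|e\|_{\widetilde H^s}}{\|(-\Delta)^se\|_{H^{-s}(W_b)}}\Bigr)\Bigr]^{-\sigma}\|e\|_{\widetilde H^s}
\]
for $e\in\widetilde H^s(\Omega)$. This is the Rüland–Salo type stability of the antilocality property, cf. \cite{RulandSalo2020low,Ruland2021single}. I would invoke it from that literature as a black box, and that citation is where I expect the real difficulty.

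\textbf{Step 4 ($L^2$ to $W^s$ and conclusion).} To pass from $L^2$ to $W^s$, I would again use minimality rather than interpolation, since the $W^s$ bound alone does not give decay. The minimizer's penalty energy converges to that of $p_{j,h}$, which controls $\|e_h\|_{W^s}^2$ by $\langle e_h,\cdot\rangle$-type cross terms. Those terms are bounded by the $L^2$ error times a fixed-regularity norm, giving $\|e_h\|_{W^s}\lesssim \|e_h\|_{L^2}^{1/2}+a_{z,h}/\sqrt{\alpha_z}+\sqrt{a_{z,h}}$. This halving explains the exponent $\sigma_z/2$. Finally, averaging over $j$ yields \eqref{eq:state-logarithmic-bias}.
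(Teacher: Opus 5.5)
Your proposal is correct in outline and follows the paper's proof essentially step for step. The shared steps are:
\begin{itemize}
\item minimality against the best approximant $p_{j,h}$;
\item the averaged interpolant $\mathcal I_h^b$, applied on a ball $W_b'\Subset W_b$ (you need this so that $\mathcal I_h^b$ is defined), giving $\|\mathcal L_b e_h\|_{H^{-s}(W_b')}\le C(a_j+\sqrt{\alpha_z}+h^s)$;
\item the quantitative antilocality estimate, which the paper applies as \cite[Prop.~6.1]{Ghosh2020uniqueness} with $s'=0$ directly to this $H^{-s}$ bound, so your $L^2$ upgrade is unnecessary;
\item expansion of the penalties around $p_{j,h}$, giving $\|e_h\|_{W^s(\mathbb R^d)}^2\le C(a_j^2/\alpha_z+a_j+\|e_h\|_{L^2(\mathbb R^d)})$.
\end{itemize}

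The cross-term bound you leave vague in Step 4 is obtained as follows. Write the cross term as $(z_{j,h}^e-z_{0,j,h},\theta_h)_{W^s(\mathbb R^d)}$ with the discrete function $\theta_h:=z_{j,h}^{\alpha_z,e}-p_{j,h}\in V_h^\Omega$. Subtracting \eqref{eq:z0-reference-state} from the exact state equation then gives $(z_{j,h}^e-z_{0,j,h},\theta_h)_{W^s(\mathbb R^d)}=(z_{j,h}^e-z_{0,j,h}-Q_\gamma v_{j,h}^e,\theta_h)_{L^2(\Omega)}$, and $\|\theta_h\|_{L^2}\le\|e_h\|_{L^2}+a_j$. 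Testing with $\theta_h$ rather than $e_h$ is essential, because $z_{0,j,h}$ satisfies only the Galerkin equation.
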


\begin{proof}
Fix $j$ and set $z^e=z_{j,h}^e$, $z^\alpha=z_{j,h}^{\alpha_z,e}$, $z_0=z_{0,j,h}$, and $e=z^\alpha-z^e$. For a best approximant $p_h\in V_h^\Omega$, put $a_j=\|p_h-z^e\|_{W^s(\R^{d})}$. Minimality and $a_j^2/\alpha_z\to0$ bound $\|e\|_{W^s(\R^{d})}$ uniformly and give
\[
r_j:=\sup_{0\ne G_h\in V_h^b}\frac{|A_R(e,G_h)|}{\|G_h\|_{W^s(\R^{d})}}\le C(a_j+\sqrt{\alpha_z}).
\]
Choose a ball $W_b'\Subset W_b$. For $\phi\in\widetilde H^s(W_b')$, insert $\mathcal I_h^b\phi$ in the dual pairing; \eqref{eq:local-averaged-interpolation} and the separated kernel \eqref{eq:Lb-continuous} imply
\[
\|\mathcal L_be\|_{H^{-s}(W_b')}\le C(r_j+h^s\|e\|_{L^2(\R^{d})})\le C(a_j+\sqrt{\alpha_z}+h^s).
\]
Since $e$ is supported in $\overline\Omega$, \cite[Prop.~6.1]{Ghosh2020uniqueness}, with $s'=0$, yields
\[
\|e\|_{L^2(\R^{d})}\le C\left[\log\left(2+(a_j+\sqrt{\alpha_z}+h^s)^{-1}\right)\right]^{-\sigma_z}.
\]
Set $q=z^e-z_0\in\widetilde H^s(\Omega)$ and $\theta_h=z^\alpha-p_h\in V_h^\Omega$. Subtracting \eqref{eq:z0-reference-state} from the exact state equation gives $A_R(q,\theta_h)=-(Q_\gamma v_{j,h}^e,\theta_h)_{L^2(\Omega)}$. Because $q$ and $\theta_h$ vanish outside $\Omega$, \eqref{eq:Ws-norm-convention} gives
\[
 (q,\theta_h)_{W^s(\mathbb R^d)} =(q-Q_\gamma v_{j,h}^e,\theta_h)_{L^2(\Omega)}.
\]
Expanding the penalties and using $\|q-Q_\gamma v_{j,h}^e\|_{L^2(\Omega)}\le C$ yield
\[
\|e\|_{W^s(\R^{d})}^2\le C\frac{a_j^2}{\alpha_z}+a_j^2-2(q,\theta_h)_{W^s(\R^{d})}\le C\left(\frac{a_j^2}{\alpha_z}+a_j+\|e\|_{L^2(\R^{d})}\right),
\]
where $\|\theta_h\|_{L^2(\R^{d})}\le\|e\|_{L^2(\R^{d})}+a_j$. Taking square roots and averaging, with $a_j\le\sqrt{J_a}\,a_{z,h}$, proves \eqref{eq:state-logarithmic-bias}.
\end{proof}

Similarly, the variational characterization of \(w_{j,h}^{e,\alpha_w}\) gives, for every \(\xi_{j,h}\in V_h^\Omega\),
\[
    \|w_{j,h}^{e,\alpha_w}-w_{j,h}^e\|_{L^2(\Omega)}^2 \le \|\xi_{j,h}-w_{j,h}^e\|_{L^2(\Omega)}^2 + \alpha_w\|\xi_{j,h}\|_{W^s(\mathbb R^d)}^2.
\]
If $J_a$ is fixed, $F_{j,h}^a\to F_j^a$ in $W^s(\mathbb R^d)$, and $\alpha_w\to0$, then \eqref{eq:VOmega-approximation} implies $\rho_{w,h}(\alpha_w)\to0$. Indeed, forward continuity gives $w_{j,h}^e\to w_j^e:=-Q_\gamma v_j^e$ in $L^2(\Omega)$. For fixed $\xi_j\in C_c^\infty(\Omega)$, choose $\xi_{j,h}\in V_h^\Omega$ converging to $\xi_j$ in $W^s(\mathbb R^d)$. Taking the limsup in the comparison estimate and then approximating each $w_j^e$ by $\xi_j$ in $L^2(\Omega)$ proves convergence.

\begin{lemma}[Consistency of the regularized weak fractional Laplacian]
\label{lem:regularized-weak-laplacian}
There exists a constant $C>0$, independent of $h,\delta,\alpha_z,\alpha_w$, and $J_a$, such that
\begin{equation}\label{eq:laplacian-consistency}
\left( \frac1{J_a} \sum_{j=1}^{J_a} \|w_{j,h}-w_{j,h}^e\|_{L^2(\Omega)}^2
\right)^{1/2} \le \rho_{w,h}(\alpha_w)+ C\alpha_w^{-1/2}\mathcal E_{v,h}.
\end{equation}
\end{lemma}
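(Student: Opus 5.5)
Fix $j$ and split $w_{j,h}-w_{j,h}^e=(w_{j,h}-w_{j,h}^{e,\alpha_w})+(w_{j,h}^{e,\alpha_w}-w_{j,h}^e)$. The second difference is exactly the term averaged in $\rho_{w,h}(\alpha_w)$. The first difference is a pure data-perturbation error of the symmetric positive definite system \eqref{eq:regularized-weak-laplacian}. After the splitting we average over $j$ and apply the discrete Minkowski inequality in $\ell^2(\{1,\dots,J_a\})$. This turns the two per-$j$ bounds into $\rho_{w,h}(\alpha_w)$ plus $C\alpha_w^{-1/2}$ times the root mean square of $\|v_{j,h}-v_{j,h}^e\|_{W^s}$, which is $\mathcal E_{v,h}$.

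For the perturbation term, put $\epsilon_h:=w_{j,h}-w_{j,h}^{e,\alpha_w}\in V_h^\Omega$. Subtracting \eqref{eq:regularized-weak-laplacian-exact} from \eqref{eq:regularized-weak-laplacian} gives
\[
(\epsilon_h,\phi_h)_{L^2(\Omega)}+\alpha_w(\epsilon_h,\phi_h)_{W^s(\mathbb R^d)}=A_R(v_{j,h}-v_{j,h}^e,\phi_h)\qquad\forall\phi_h\in V_h^\Omega .
\]
We test with $\phi_h=\epsilon_h$. On the right we use continuity of $A_R$. Since $v_{j,h}-v_{j,h}^e=z_{j,h}-z^e_{j,h}\in\widetilde H^s(\Omega)\subset\widetilde H^s(\Omega_R)$ (the exterior values coincide), we have $A_R(a,b)=\langle(-\Delta)^sa,b\rangle$, and therefore $|A_R(a,b)|\le\|a\|_{W^s}\|b\|_{W^s}$ by \eqref{eq:Ws-norm-convention}. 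This yields
\[
\|\epsilon_h\|_{L^2(\Omega)}^2+\alpha_w\|\epsilon_h\|_{W^s}^2\le\|v_{j,h}-v_{j,h}^e\|_{W^s}\|\epsilon_h\|_{W^s}.
\]
Young's inequality, $ab\le \tfrac{\alpha_w}{2}b^2+\tfrac1{2\alpha_w}a^2$, absorbs the $W^s$ term on the left and gives $\|\epsilon_h\|_{L^2(\Omega)}\le(2\alpha_w)^{-1/2}\|v_{j,h}-v_{j,h}^e\|_{W^s}$. The constant $C=2^{-1/2}$ is independent of all parameters and of $J_a$.

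The only point needing care is the continuity constant of $A_R$ for the second argument $\epsilon_h$. We need $\epsilon_h$ to be supported in $\overline{\Omega_R}$, and it is, since $\epsilon_h\in V_h^\Omega$. With that, the identity $A_R(a,b)=\langle(-\Delta)^s a,b\rangle$ stated after \eqref{eq:AR-standard-form} applies without approximating any tail. Summing the squared per-$j$ bounds and taking the root mean square then finishes \eqref{eq:laplacian-consistency}. No step looks like a genuine obstacle; the estimate is a standard Tikhonov data-stability bound, which could alternatively be read off directly from Lemma~\ref{lem:abstract-shifted-tikhonov}.
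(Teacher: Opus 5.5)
Your proof is correct and follows the paper's own argument: the same splitting through $w_{j,h}^{e,\alpha_w}$, the same perturbation equation tested with the difference, and the same triangle/root-mean-square step over $j$. The only difference is that you do the Young's-inequality absorption by hand and get the explicit constant $C=2^{-1/2}$, where the paper cites the load-stability estimate \eqref{eq:abstract-tikhonov-load-stability} of Lemma~\ref{lem:abstract-shifted-tikhonov} with $T$ the embedding into $L^2(\Omega)$, which gives $\|w_{j,h}-w_{j,h}^{e,\alpha_w}\|_{L^2(\Omega)}\le(2\sqrt{\alpha_w})^{-1}\|v_{j,h}-v_{j,h}^e\|_{W^s(\mathbb R^d)}$.
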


\begin{proof}
Set $\theta_{j,h}=w_{j,h}-w_{j,h}^{e,\alpha_w}$. Equations \eqref{eq:regularized-weak-laplacian} and \eqref{eq:regularized-weak-laplacian-exact} give
\[
(\theta_{j,h},\phi_h)_{L^2(\Omega)}+\alpha_w(\theta_{j,h},\phi_h)_{W^s(\mathbb R^d)}=A_R(v_{j,h}-v_{j,h}^e,\phi_h),\qquad\phi_h\in V_h^\Omega.
\]
For the embedding $T:V_h^\Omega\hookrightarrow L^2(\Omega)$, continuity of $A_R$ and \eqref{eq:abstract-tikhonov-load-stability} imply $\|\theta_{j,h}\|_{L^2(\Omega)}\le C\alpha_w^{-1/2}\|v_{j,h}-v_{j,h}^e\|_{W^s(\mathbb R^d)}$. The triangle inequality and root-mean-square over $j$ prove \eqref{eq:laplacian-consistency}.
\end{proof}

Let \(\mathcal T_h^\Omega := \{K\in\mathcal T_h:K\subset\Omega\}\). Since the mesh is fitted to \(\Omega\), these elements form a partition of \(\Omega\). We use the elementwise constant coefficient space
\begin{equation}\label{eq:PhOmega-definition}
    P_h^\Omega := \left\{ p_h\in L^2(\Omega): p_h|_K\in\mathbb P_0(K) \quad\forall K\in\mathcal T_h^\Omega \right\}.
\end{equation}

For \(\alpha_Q>0\), define the discrete reconstruction of \(Q_\gamma\) by
\begin{equation}\label{eq:Q-reconstruction}
    Q_h^\Omega := \operatorname*{argmin}_{p_h\in P_h^\Omega}
    \left\{ \frac{1}{2J_a} \sum_{j=1}^{J_a}  \|w_{j,h}+p_hv_{j,h}\|_{L^2(\Omega)}^2
        + \frac{\alpha_Q}{2} \|p_h\|_{L^2(\Omega)}^2
    \right\}.
\end{equation}
The factor \(J_a^{-1}\) keeps the scale of the data term independent of the number of sources. If \(J_a\) is fixed, it may equivalently be absorbed into the definition of \(\alpha_Q\). The functional \eqref{eq:Q-reconstruction} is a stabilized version of the formal identity \( Q_\gamma=-\frac{(-\Delta)^sv}{v}\). It avoids pointwise division and permits several states to contribute to the coefficient recovery; compare \cite[Eq.~(3.43) and Thm.~3.12]{DwivediRailoRuppFCP}.

Apply Lemma~\ref{lem:abstract-shifted-tikhonov} with $\mathsf X=E=P_h^\Omega$ in the $L^2(\Omega)$-norm, $\mathsf Y=(L^2(\Omega))^{J_a}$ with $\|(y_j)_j\|_{\mathsf Y}^2=J_a^{-1}\sum_j\|y_j\|_{L^2(\Omega)}^2$, $T p_h=(p_hv_{j,h})_j$, $b=(-w_{j,h})_j$, $x_c=0$, and $\alpha=\alpha_Q$. The unique minimizer in \eqref{eq:Q-reconstruction} satisfies
\begin{equation}\label{eq:Q-normal-equation}
\frac1{J_a}\sum_{j=1}^{J_a}\int_\Omega(w_{j,h}+Q_h^\Omega v_{j,h})v_{j,h}\psi_h\,dx+\alpha_Q\int_\Omega Q_h^\Omega\psi_h\,dx=0,\qquad\psi_h\in P_h^\Omega.
\end{equation}
The matrix of \eqref{eq:Q-normal-equation} is symmetric positive definite in every basis of $P_h^\Omega$.

We now turn to convergence and separate the errors arising from state recovery, weak-Laplacian regularization, coefficient approximation, and coefficient visibility. For the coefficient error analysis, let \(\Pi_h^Q:L^2(\Omega)\to P_h^\Omega\) and \(\Pi_h^\Omega:L^2(\Omega)\to V_h^\Omega\) denote the \(L^2(\Omega)\)-orthogonal projections, and set \(Q_{I,h}:=\Pi_h^Q Q_\gamma\). Since \(P_h^\Omega\) consists of elementwise constants,  \(\|Q_{I,h}\|_{L^\infty(\Omega)}\le\|Q_\gamma\|_{L^\infty(\Omega)}\) and \(\|Q_{I,h}\|_{L^2(\Omega)}  \le\|Q_\gamma\|_{L^2(\Omega)}\). Define
\begin{align*}
M_{v,h} := \left(\frac1{J_a}\sum_{j=1}^{J_a}\|v_{j,h}^{e}\|_{W^s(\mathbb R^d)}^2\right)^{1/2}, \qquad \rho_{Q,h}^{(p)} := \|Q_{I,h}-Q_\gamma\|_{L^p(\Omega)}, \quad p\in\{2,d/s\}. 
\end{align*}
All these quantities may depend on \(h,\delta,\alpha_z\), and the source family; this dependence is suppressed to keep the notation readable.

The stability of the unclipped reconstruction \(Q_h^\Omega\) depends on how strongly the states illuminate each coefficient cell. Since \(P_h^\Omega\) is piecewise constant, define
\begin{equation}\label{eq:Q-visibility}
c_{Q,h}:=\inf_{0\ne p_h\in P_h^\Omega}\frac{J_a^{-1}\sum_{j=1}^{J_a}\|p_hv_{j,h}\|_{L^2(\Omega)}^2}{\|p_h\|_{L^2(\Omega)}^2}=\min_{K\in\mathcal T_h^\Omega}\frac{1}{J_a|K|}\sum_{j=1}^{J_a}\int_K|v_{j,h}|^2\,dx.
\end{equation}
Thus, for every \(p_h\in P_h^\Omega\),
\[
J_a^{-1}\sum_{j=1}^{J_a}\|p_hv_{j,h}\|_{L^2(\Omega)}^2+\alpha_Q\|p_h\|_{L^2(\Omega)}^2\ge(c_{Q,h}+\alpha_Q)\|p_h\|_{L^2(\Omega)}^2.
\]
An exact state generated by nonzero exterior data cannot vanish on an open coefficient cell: its equation would also give \((-\Delta)^sv=0\) there, contradicting fractional unique continuation \cite[Thm.~1.2]{Ghosh2020calderon}. This qualitative fact gives no mesh-uniform lower bound for the reconstructed states, so Theorem~\ref{thm:Q-whole-domain-error-regularized} retains \(c_{Q,h}\) explicitly.

\begin{theorem}[Whole-domain error estimate for $Q_\gamma$]
\label{thm:Q-whole-domain-error-regularized}
Under Assumption~\ref{ass:liouville-regularity}, let \(Q_h^\Omega\in P_h^\Omega\) be the minimizer of \eqref{eq:Q-reconstruction}, and let
$w_{j,h}$ be defined by
\eqref{eq:regularized-weak-laplacian}. Then
\begin{align}\label{eq:Q-whole-domain-error}
\|Q_h^\Omega-Q_\gamma\|_{L^2(\Omega)} \le{}& \rho_{Q,h}^{(2)} +
\frac{C}{\sqrt{c_{Q,h}+\alpha_Q}}
\Big[ \rho_{w,h}(\alpha_w) +\alpha_w^{-1/2}\mathcal E_{v,h}
 +\|Q_\gamma\|_{L^\infty(\Omega)}\mathcal E_{v,h}
 +M_{v,h}\rho_{Q,h}^{(d/s)}\Big]\nonumber
\\
&+ \frac{\alpha_Q}{c_{Q,h}+\alpha_Q} \|Q_\gamma\|_{L^2(\Omega)}.
\end{align}
The constant $C$ is independent of $h,\delta,\alpha_z,\alpha_w,\alpha_Q$, and $J_a$. Suppose, in addition, that \(\sup_h M_{v,h}<\infty\). Then \(Q_h^\Omega\to Q_\gamma\) in \(L^2(\Omega)\), provided
\begin{equation}\label{eq:Q-convergence}
 \begin{aligned}
\alpha_Q\to0,\quad
     \alpha_w\to0,\quad
     \rho_{Q,h}^{(2)}\to0,\quad \frac{
        \rho_{w,h}(\alpha_w)
        +\alpha_w^{-1/2}\mathcal E_{v,h}
        +\rho_{Q,h}^{(d/s)}
    }{\sqrt{c_{Q,h}+\alpha_Q}}
    \to0,\quad \frac{\alpha_Q}{c_{Q,h}+\alpha_Q}\to0.
\end{aligned}
\end{equation}
\end{theorem}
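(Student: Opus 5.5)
The approach is to treat \eqref{eq:Q-reconstruction} as a coercive quadratic problem on \(P_h^\Omega\) and compare its minimizer with the projected coefficient \(Q_{I,h}=\Pi_h^QQ_\gamma\). First split
\[
\|Q_h^\Omega-Q_\gamma\|_{L^2(\Omega)}\le\rho_{Q,h}^{(2)}+\|Q_h^\Omega-Q_{I,h}\|_{L^2(\Omega)}.
\]
Set \(e_h:=Q_h^\Omega-Q_{I,h}\in P_h^\Omega\). Subtract \(\frac1{J_a}\sum_j\int_\Omega(w_{j,h}+Q_{I,h}v_{j,h})v_{j,h}\psi_h+\alpha_Q\int Q_{I,h}\psi_h\) from both sides of the normal equation \eqref{eq:Q-normal-equation}. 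This gives, for all \(\psi_h\in P_h^\Omega\),
\[
\frac1{J_a}\sum_j\int_\Omega e_hv_{j,h}\,\psi_hv_{j,h}+\alpha_Q\int_\Omega e_h\psi_h=-\frac1{J_a}\sum_j\int_\Omega r_{j,h}\,\psi_hv_{j,h}-\alpha_Q\int_\Omega Q_{I,h}\psi_h,
\]
where \(r_{j,h}:=w_{j,h}+Q_{I,h}v_{j,h}\).

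Next, test with \(\psi_h=e_h\) and use the visibility bound below \eqref{eq:Q-visibility}. The left side dominates \((c_{Q,h}+\alpha_Q)\|e_h\|^2\), and it also dominates the semi-norm \(\mathcal S(e_h)^2:=J_a^{-1}\sum_j\|e_hv_{j,h}\|^2+\alpha_Q\|e_h\|^2\). Apply Cauchy--Schwarz to the residual term in the \(\mathsf Y\)-norm; it is bounded by \(\rho_r\,\mathcal S(e_h)\) with \(\rho_r:=(J_a^{-1}\sum_j\|r_{j,h}\|^2)^{1/2}\). Then \(\mathcal S(e_h)\le\rho_r+\ldots\), and so \(\|e_h\|\le\rho_r/\sqrt{c_{Q,h}+\alpha_Q}\). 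For the penalty term, split it more carefully: write \(\alpha_Q\int Q_{I,h}e_h\le\alpha_Q\|Q_{I,h}\|\|e_h\|\) and divide by \((c_{Q,h}+\alpha_Q)\|e_h\|\). This gives exactly \(\frac{\alpha_Q}{c_{Q,h}+\alpha_Q}\|Q_\gamma\|_{L^2}\), using \(\|Q_{I,h}\|\le\|Q_\gamma\|\). Concretely, I would handle the two right-hand terms via the bound \((c+\alpha_Q)\|e\|^2\le\rho_r\mathcal S(e)+\alpha_Q\|Q_\gamma\|\|e\|\). I would either use Young's inequality on the first term, or argue by linearity: split \(e_h\) into the parts driven by the residual and by the penalty, and bound each separately. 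The latter is cleaner and gives the stated constants.

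The key step is bounding the residual \(\rho_r\). Using \(w_{j,h}^e=-Q_\gamma v_{j,h}^e\), decompose
\[
r_{j,h}=(w_{j,h}-w_{j,h}^e)+Q_\gamma(v_{j,h}^e-v_{j,h})+(Q_{I,h}-Q_\gamma)v_{j,h}^e+(Q_{I,h}-Q_\gamma)(v_{j,h}-v_{j,h}^e).
\]
The first term is controlled in root-mean-square by Lemma~\ref{lem:regularized-weak-laplacian}, which gives \(\rho_{w,h}(\alpha_w)+C\alpha_w^{-1/2}\mathcal E_{v,h}\). The second term is at most \(\|Q_\gamma\|_{L^\infty}\|v_{j,h}-v_{j,h}^e\|_{L^2}\le\|Q_\gamma\|_{L^\infty}\|\cdot\|_{W^s}\); averaged, this gives \(\|Q_\gamma\|_{L^\infty}\mathcal E_{v,h}\). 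The third term uses Hölder with exponents \(d/s\) and \(p_s^*=2d/(d-2s)\), since \(1/2=s/d+1/p_s^*\). Combined with the Sobolev embedding \(H^s\hookrightarrow L^{p_s^*}\), this gives \(C\rho_{Q,h}^{(d/s)}\|v_{j,h}^e\|_{W^s}\), whose average is \(CM_{v,h}\rho_{Q,h}^{(d/s)}\). The fourth term is at most \(2\|Q_\gamma\|_{L^\infty}\|v_{j,h}-v_{j,h}^e\|_{L^2}\) by the \(L^\infty\)-stability of \(\Pi_h^Q\), so it is absorbed into the \(\mathcal E_{v,h}\) term. Collecting these bounds gives \eqref{eq:Q-whole-domain-error}.

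For convergence, \(\sup_hM_{v,h}<\infty\) and the conditions \eqref{eq:Q-convergence} make every term vanish. The one remaining point is \(\|Q_\gamma\|_{L^\infty}\mathcal E_{v,h}/\sqrt{c_{Q,h}+\alpha_Q}\). Since \(\alpha_w\to0\), we have \(\alpha_w^{-1/2}\ge1\) eventually, so this term is dominated by \(\alpha_w^{-1/2}\mathcal E_{v,h}/\sqrt{c+\alpha_Q}\to0\). I expect the main obstacle to be the bookkeeping of the penalty/visibility split, so that the bias term appears with the factor \(\alpha_Q/(c_{Q,h}+\alpha_Q)\) and not with its square root.
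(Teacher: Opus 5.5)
Your proposal is correct and follows the paper's proof. The paper obtains the same bound $\|e_h\|_{L^2(\Omega)}\le R_h/\sqrt{c_{Q,h}+\alpha_Q}+\frac{\alpha_Q}{c_{Q,h}+\alpha_Q}\|Q_{I,h}\|_{L^2(\Omega)}$ by citing the abstract comparator estimate \eqref{eq:abstract-tikhonov-comparator} with $\sigma^2=c_{Q,h}$, which your energy and linearity argument re-derives; it then splits the residual into three terms, using $Q_{I,h}(v_{j,h}-v_{j,h}^e)$ with $\|Q_{I,h}\|_{L^\infty(\Omega)}\le\|Q_\gamma\|_{L^\infty(\Omega)}$ where you use your second and fourth terms. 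One small slip: in your four-term decomposition the second term should be $Q_\gamma(v_{j,h}-v_{j,h}^e)$, not $Q_\gamma(v_{j,h}^e-v_{j,h})$, but this is harmless since you only use its norm.
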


\begin{proof}
Set $e_h=Q_h^\Omega-Q_{I,h}$, $r_{j,h}=w_{j,h}+Q_{I,h}v_{j,h}$ and $R_h=(J_a^{-1}\sum_j\|r_{j,h}\|_{L^2(\Omega)}^2)^{1/2}$. For the multiplication operator in \eqref{eq:Q-reconstruction}, \eqref{eq:Q-visibility} gives $\sigma^2=c_{Q,h}$. Estimate \eqref{eq:abstract-tikhonov-comparator} with comparison function $Q_{I,h}$ therefore yields
\begin{equation}\label{eq:coefficient-energy-bound}
\|e_h\|_{L^2(\Omega)}\le\frac{R_h}{\sqrt{c_{Q,h}+\alpha_Q}}+\frac{\alpha_Q}{c_{Q,h}+\alpha_Q}\|Q_{I,h}\|_{L^2(\Omega)}.
\end{equation}
The exact identity $w_{j,h}^e+Q_\gamma v_{j,h}^e=0$ yields
\begin{equation}\label{eq:decom_rh}
r_{j,h}=(w_{j,h}-w_{j,h}^e)+Q_{I,h}(v_{j,h}-v_{j,h}^e)+(Q_{I,h}-Q_\gamma)v_{j,h}^e.
\end{equation}
Lemma~\ref{lem:regularized-weak-laplacian}, $\|Q_{I,h}\|_{L^\infty(\Omega)}\le\|Q_\gamma\|_{L^\infty(\Omega)}$, and H\"older--Sobolev with $1/2=s/d+1/p_s^*$ give
\[
R_h\le\rho_{w,h}(\alpha_w)+C\alpha_w^{-1/2}\mathcal E_{v,h}+\|Q_\gamma\|_{L^\infty(\Omega)}\mathcal E_{v,h}+CM_{v,h}\rho_{Q,h}^{(d/s)};
\]
see \cite[Thm.~6.5]{Di2012Hitchhiker}. Substitution into \eqref{eq:coefficient-energy-bound}, followed by $\|Q_{I,h}\|_{L^2(\Omega)}\le\|Q_\gamma\|_{L^2(\Omega)}$ and the triangle inequality, proves \eqref{eq:Q-whole-domain-error}. For $\alpha_w\le1$, $\mathcal E_{v,h}\le\alpha_w^{-1/2}\mathcal E_{v,h}$, so \eqref{eq:Q-convergence} and $\sup_hM_{v,h}<\infty$ give convergence.
\end{proof}

The convergence conditions couple the preceding reconstruction errors and the regularization parameters to the visibility constant \(c_{Q,h}\), which may decrease as \(h\to0\). When \(c_{Q,h}>0\), sufficient conditions, in addition to
\(\alpha_Q\to0\), \(\alpha_w\to0\), and \(\rho_{Q,h}^{(2)}\to0\), are
\[
    \frac{\alpha_Q}{c_{Q,h}}\to0,
    \qquad \frac{
        \rho_{w,h}(\alpha_w)
        +\alpha_w^{-1/2} \mathcal E_{v,h}
        +\rho_{Q,h}^{(d/s)}
    }{\sqrt{c_{Q,h}}}\to0.
\]
Since \(Q_\gamma\in L^\infty(\Omega)\), the cell-average projection satisfies \(\rho_{Q,h}^{(2)}+\rho_{Q,h}^{(d/s)}\to0\). If \(Q_\gamma\in W^{m,d/s}(\Omega)\), \(0<m\le1\), then
\[
    \rho_{Q,h}^{(d/s)}
    \le Ch^m |Q_\gamma|_{W^{m,d/s}(\Omega)}.
\]
The bound \(\sup_h M_{v,h}<\infty\) follows from forward stability when the exterior sources are uniformly bounded in \(W^s(\mathbb R^d)\). The required decay of the errors relative to the visibility remains an additional condition.

\subsection{Recovery of the exterior Liouville tail}
\label{subsec:exterior-tail}
Subsection~\ref{subsec:discrete-Qgamma} provides the approximation
$Q_h^\Omega$ of the interior Liouville potential $Q_\gamma=q_\gamma|_\Omega$. We now recover the remaining part $q_\gamma|_{W_c}$.

In \eqref{eq:overlap-identity}, the first pairing is measured and the second is approximated by solving the Schr\"odinger equation with the reconstructed interior potential. Their difference provides moments of \(q_\gamma\) on \(W_c\).

Let \( \mathcal T_h^c := \{K\in\mathcal T_h:K\subset W_c\}\), and assume that these cells form a partition of $W_c$. As in \eqref{eq:PhOmega-definition}, define
\begin{equation*} 
P_h^c := \left\{p_h\in L^2(W_c):p_h|_K\in\mathbb P_0(K)\quad\text{for every }K\in\mathcal T_h^c\right\}. 
\end{equation*}
For $1\le p<\infty$, the cell-average projection of $q_\gamma\vert _{W_c}\in L^\infty(W_c)$ onto $P_h^c$ converges in $L^p(W_c)$ as the fitted mesh size tends to zero; no positive algebraic rate is asserted under this assumption.

Fix a number $Q_*>\|Q_\gamma\|_{L^\infty(\Omega)}$ and define
\begin{equation}\label{eq:tail-clipped-potential}
 \widehat Q_h^\Omega := \min\{Q_*,\max\{-Q_*,Q_h^\Omega\}\}.
\end{equation}
The clipping map is one-Lipschitz and fixes $Q_\gamma$. Hence $\widehat Q_h^\Omega\to Q_\gamma$ in $L^2(\Omega)$ whenever $Q_h^\Omega\to Q_\gamma$ in $L^2(\Omega)$. Set \(
\varepsilon_{Q,h} := \|\widehat Q_h^\Omega-Q_\gamma\|_{L^{\frac{d}{2s}}(\Omega)}\). Note that \(\|\widehat Q_h^\Omega- Q_\gamma\|_{L^\infty(\Omega)}\le 2Q_*\). If \(d/(2s)\le2\), the finite measure of \(\Omega\) gives \( \varepsilon_{Q,h} \le C\|\widehat Q_h^\Omega-Q_\gamma\|_{L^2(\Omega)}.
\) If \(d/(2s)>2\), interpolation between \(L^2(\Omega)\) and
\(L^\infty(\Omega)\) gives \(\varepsilon_{Q,h} \le C \|\widehat Q_h^\Omega-Q_\gamma\|_{L^2(\Omega)}^{\frac{4s}{d}}\).  Consequently,
\begin{equation}\label{eq:tail-potential-error-from-L2}
 \varepsilon_{Q,h}
 \le
 C\|Q_h^\Omega-Q_\gamma\|_{L^2(\Omega)}^{\theta_Q}, \qquad
 \theta_Q:=\min\left\{1,\frac{4s}{d}\right\}.
\end{equation}
Thus, under the convergence conditions of Theorem~\ref{thm:Q-whole-domain-error-regularized}, we have \(\varepsilon_{Q,h}\to0\).

\begin{proposition}[Clipped-potential convergence for fixed sources]\label{prop:Q-clipped-fixed-sources}
Under Assumption~\ref{ass:liouville-regularity}, let $J_a=J$ be fixed and let $v_j$ be the exact Schr\"odinger states for fixed sources $F_j\in\widetilde H^s(W_a)$. Assume that one $F_j$ is nonnegative and nonzero, and let $Q_*>\|Q_\gamma\|_{L^\infty(\Omega)}$. With $Q_{I,h}=\Pi_h^Q Q_\gamma$, define
\[
\begin{aligned}
V&:=\left(J^{-1}\sum_j|v_j|^2\right)^{1/2},\qquad 
d_{v,h}:=\left(J^{-1}\sum_j\|v_{j,h}-v_j\|_{L^2(\Omega)}^2\right)^{1/2},\\
R_h&:=\left(J^{-1}\sum_j\|w_{j,h}+Q_{I,h}v_{j,h}\|_{L^2(\Omega)}^2\right)^{1/2},
A_h:=\left(J^{-1}\sum_j\|(Q_{I,h}-Q_\gamma)v_j\|_{L^2(\Omega)}^2\right)^{1/2},
\end{aligned}
\]
\[
B_h:=2R_h+\sqrt{\alpha_Q}\|Q_\gamma\|_{L^2(\Omega)}+2Q_*d_{v,h}+A_h.
\]
The clipped minimizer \eqref{eq:tail-clipped-potential} satisfies
\begin{equation}\label{eq:Q-clipped-weighted-levelset}
\|\widehat Q_h^\Omega-Q_\gamma\|_{L^2(\Omega)}\le B_h/t+2Q_*|\{x\in\Omega:V(x)\le t\}|^{1/2},\qquad t>0.
\end{equation}
For $B_h\le1$,
\begin{equation}\label{eq:Q-clipped-fixed-source-rate}
\begin{aligned}
\|\widehat Q_h^\Omega-Q_\gamma\|_{L^2(\Omega)}\le C B_h^{1/(1+4s)},\qquad
\varepsilon_{Q,h}\le C B_h^{\theta_Q/(1+4s)},\qquad\theta_Q=\min\{1,4s/d\}.
\end{aligned}
\end{equation}
The constant $C$ is independent of $h,\delta$ and the regularization parameters, but may depend on the fixed sources, $Q_*$, the ellipticity bounds and the domains. In particular, $R_h+d_{v,h}+A_h+\sqrt{\alpha_Q}\to0$ implies strong $L^2(\Omega)$ convergence of $\widehat Q_h^\Omega$ without a condition involving $c_{Q,h}$.
\end{proposition}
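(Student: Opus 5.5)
The plan is a weighted estimate followed by a level-set splitting. The weight is the root-mean-square of the exact states, $V$. We never invert the visibility constant $c_{Q,h}$; instead we use a quantitative lower bound for $V$ near $\partial\Omega$.

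\emph{Step 1 (weighted residual bound).} I would compare the minimizer of \eqref{eq:Q-reconstruction} with the competitor $Q_{I,h}$. Minimality gives
\[
\Bigl(J^{-1}\sum_j\|w_{j,h}+Q_h^\Omega v_{j,h}\|_{L^2(\Omega)}^2\Bigr)^{1/2}\le R_h+\sqrt{\alpha_Q}\|Q_{I,h}\|_{L^2(\Omega)}\le R_h+\sqrt{\alpha_Q}\|Q_\gamma\|_{L^2(\Omega)}.
\]
Subtracting $r_{j,h}=w_{j,h}+Q_{I,h}v_{j,h}$ then yields
\[
\Bigl(J^{-1}\sum_j\|(Q_h^\Omega-Q_{I,h})v_{j,h}\|_{L^2(\Omega)}^2\Bigr)^{1/2}\le 2R_h+\sqrt{\alpha_Q}\|Q_\gamma\|_{L^2(\Omega)}.
\]
Next I pass to the clipped potential. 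Since $|Q_\gamma|<Q_*$ and the clipping map is one-Lipschitz and fixes $Q_\gamma$, we have pointwise
\[
|\widehat Q_h^\Omega-Q_\gamma|\le\min\{|Q_h^\Omega-Q_\gamma|,\,2Q_*\}.
\]
I then replace $v_{j,h}$ by $v_j$. The bounded factor $|\widehat Q_h^\Omega-Q_\gamma|\le2Q_*$ costs $2Q_*d_{v,h}$. The term $(Q_{I,h}-Q_\gamma)v_j$ is precisely $A_h$. The term $(Q_h^\Omega-Q_{I,h})v_{j,h}$ is controlled by the previous display. The result is $\|(\widehat Q_h^\Omega-Q_\gamma)V\|_{L^2(\Omega)}\le B_h$.

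This step needs some care in the ordering of triangle inequalities, so that the state perturbation is paid only once with the bounded factor. If that bookkeeping fails, the constant in front of $d_{v,h}$ simply enlarges, which is harmless for \eqref{eq:Q-clipped-fixed-source-rate}.

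\emph{Step 2 (level sets).} Fix $t>0$ and split $\Omega$ into $\{V>t\}$ and $\{V\le t\}$. On $\{V>t\}$,
\[
|\widehat Q_h^\Omega-Q_\gamma|\le|\widehat Q_h^\Omega-Q_\gamma|\,V/t,
\]
so that part contributes at most $B_h/t$. On $\{V\le t\}$ the bound $2Q_*$ contributes $2Q_*|\{V\le t\}|^{1/2}$. Together these give \eqref{eq:Q-clipped-weighted-levelset}.

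\emph{Step 3 (lower bound for $V$; main obstacle).} To obtain the rate I need $|\{V\le t\}|\le Ct^{1/(2s)}$. It suffices to prove $v_{j_0}\ge c\,\operatorname{dist}(x,\partial\Omega)^{2s}$ in $\Omega$ for the nonnegative nonzero source $F_{j_0}$.

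I would try this through the Liouville transform, $v_{j_0}=\sqrt\gamma\,u$, where $u$ solves the conductivity problem with exterior value $a_\gamma F_{j_0}\ge0$. The weak maximum principle for $B_\gamma$ gives $u\ge0$. Strict positivity comes from the nonlocal interaction with the separated support of $F_{j_0}$, in the spirit of the nonlocal strong maximum principle. For a quantitative version I would build a barrier: a multiple of a truncated $\operatorname{dist}^{s}$-type function, or a torsion-type function, as a subsolution. The exterior forcing $\int_{W_a}F_{j_0}(y)|x-y|^{-d-2s}\,dy\ge c>0$ on $\Omega$ pays for the bounded coefficient $\gamma$. A bound like $c\,\operatorname{dist}^s$ would imply the weaker $\operatorname{dist}^{2s}$ near $\partial\Omega$. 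This is the step I expect to resist most, since the coefficient is only $L^\infty$; if the barrier argument fails I would settle for the weaker power, which is all the stated exponent requires.

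With this bound, the Lipschitz geometry of $\Omega$ gives
\[
|\{V\le t\}|\le|\{\operatorname{dist}(\cdot,\partial\Omega)\le(t/c)^{1/(2s)}\}|\le Ct^{1/(2s)}.
\]

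\emph{Step 4 (rates).} Inserting this into \eqref{eq:Q-clipped-weighted-levelset} gives the bound $B_h/t+Ct^{1/(4s)}$. Balancing with $t=B_h^{4s/(1+4s)}$, which is admissible for $B_h\le1$, yields the first estimate in \eqref{eq:Q-clipped-fixed-source-rate}. The $L^{d/(2s)}$ bound follows exactly as in \eqref{eq:tail-potential-error-from-L2}, using the uniform bound $2Q_*$ and $L^2$--$L^\infty$ interpolation. The final convergence claim is then immediate, since nowhere does the argument invoke $c_{Q,h}$.
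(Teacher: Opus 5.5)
Your Steps 1, 2 and 4 coincide with the paper's argument, and so does the nonnegativity $u\ge0$ obtained by testing with the negative part. One bookkeeping fix is needed in Step 1. To obtain exactly the stated $B_h$, contract toward $Q_{I,h}$, not toward $Q_\gamma$. Since $|Q_{I,h}|\le\|Q_\gamma\|_{L^\infty(\Omega)}<Q_*$, clipping fixes $Q_{I,h}$. Hence $|\widehat Q_h^\Omega-Q_{I,h}|\le\min\{|Q_h^\Omega-Q_{I,h}|,2Q_*\}$, and the splitting
\[
(\widehat Q_h^\Omega-Q_\gamma)v_j=(\widehat Q_h^\Omega-Q_{I,h})v_{j,h}+(\widehat Q_h^\Omega-Q_{I,h})(v_j-v_{j,h})+(Q_{I,h}-Q_\gamma)v_j
\]
gives the three pieces of $B_h$ directly. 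Your route through $|\widehat Q_h^\Omega-Q_\gamma|\le|Q_h^\Omega-Q_\gamma|$ evaluates the projection error at $v_{j,h}$ instead of $v_j$. It therefore costs an extra $2\|Q_\gamma\|_{L^\infty(\Omega)}d_{v,h}$.

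The genuine gap is Step~3. It carries all of the quantitative content, and you leave it as a plan whose obstacle you name but do not resolve.
\begin{itemize}
\item Your primary target $v_{j_0}\ge c\operatorname{dist}(\cdot,\partial\Omega)^s$ is a Hopf-type property that fails on general Lipschitz domains. Near a convex corner, such as those of the square in Example~\ref{ex:examp2_2d}, positive solutions vanish faster than $\operatorname{dist}^s$, so no barrier can produce this bound.
\item For the fallback $\operatorname{dist}^{2s}$ you give no mechanism.
\item A barrier for the conductivity form, whose coefficient you treat as merely bounded, cannot be verified pointwise.
\end{itemize}

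The missing idea is to drop the conductivity form and use the Schr\"odinger equation. Its principal part is the constant-coefficient $(-\Delta)^s$, and its potential is bounded by Assumption~\ref{ass:liouville-regularity}(ii). Put $w=v_{j_0}-F_{j_0}\in\widetilde H^s(\Omega)$, which is nonnegative by your maximum-principle step. Because $\operatorname{dist}(W_a,\Omega)>0$, one has in $\Omega$
\[
((-\Delta)^s+Q_*)w=h_F+(Q_*-Q_\gamma)v_{j_0}\ge h_F\ge c_F>0,\qquad h_F(x)=C_{d,s}\int_{W_a}\frac{F_{j_0}(y)}{|x-y|^{d+2s}}\,dy.
\]
Shifting the potential to $Q_*$ and using $v_{j_0}\ge0$ removes the sign-indefinite $Q_\gamma$.

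Now take $B_r(x)\Subset\Omega$ and $\phi\in C_c^\infty(B_1)$ with $\phi\ge0$ and $\phi\ge1$ on $B_{1/2}$. The rescaled bump $b_{x,r}=r^{2s}\phi((\cdot-x)/r)$ satisfies $((-\Delta)^s+Q_*)b_{x,r}\le M$, uniformly in $r\le\operatorname{diam}\Omega$, by scaling. Weak comparison for the coercive operator $(-\Delta)^s+Q_*$ on $\widetilde H^s(\Omega)$ gives $w\ge(c_F/M)r^{2s}$ on $B_{r/2}(x)$. The choice $r=\operatorname{dist}(x,\partial\Omega)/2$ then yields $v_{j_0}\ge c\operatorname{dist}(\cdot,\partial\Omega)^{2s}$ without any boundary regularity. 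This is exactly the power behind $|\{V\le t\}|\le Ct^{1/(2s)}$ and the exponent $1/(1+4s)$.
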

\begin{proof}
Write $Q=Q_\gamma$, $\widehat Q_h=\widehat Q_h^\Omega$ and $\|\cdot\|_2=\|\cdot\|_{L^2(\Omega)}$. Minimality in \eqref{eq:Q-reconstruction} with competitor $Q_{I,h}$ gives
\[
\left(J^{-1}\sum_j\|(Q_h^\Omega-Q_{I,h})v_{j,h}\|_2^2\right)^{1/2}\le2R_h+\sqrt{\alpha_Q}\|Q\|_2.
\]
Since $|Q_{I,h}|\le\|Q\|_\infty<Q_*$, clipping contracts the pointwise distance to $Q_{I,h}$, so the inequality holds with $Q_h^\Omega$ replaced by $\widehat Q_h$. The bound $|\widehat Q_h-Q_{I,h}|\le2Q_*$ then yields
\[
\|(\widehat Q_h-Q)V\|_2
=\left(J^{-1}\sum_j\|(\widehat Q_h-Q)v_j\|_2^2\right)^{1/2}\le B_h.
\]
Splitting $\Omega$ into $\{V>t\}$ and $\{V\le t\}$ proves \eqref{eq:Q-clipped-weighted-levelset}.

For a fixed nonnegative nonzero source $F$, the conductivity datum $a_\gamma F$ is nonnegative. Testing the conductivity equation with the negative part gives $u\ge0$, hence $v=\sqrt\gamma\,u\ge0$. Consequently, $w=v-F\in\widetilde H^s(\Omega)$ is nonnegative and satisfies
\[
(( -\Delta)^s+Q_*)w=h_F+(Q_*-Q)v\ge c_F>0\quad\text{in }\Omega,\qquad
h_F(x)=C_{d,s}\int_{W_a}\frac{F(y)}{|x-y|^{d+2s}}\,dy.
\]
The uniform lower bound follows from the boundedness of $\Omega$ and $W_a$ and from $\int_{W_a}F>0$. Choose $\phi\in C_c^\infty(B_1)$ with $\phi\ge0$ and $\phi\ge1$ on $B_{1/2}$. For $B_r(x)\Subset\Omega$, the function $b_{x,r}(y)=r^{2s}\phi((y-x)/r)$ satisfies
\[
\sup_{\mathbb R^d}(( -\Delta)^s+Q_*)b_{x,r}\le\|(-\Delta)^s\phi\|_\infty+Q_*\operatorname{diam}(\Omega)^{2s}\|\phi\|_\infty=:M.
\]
Since \(b_{x,r}\ge r^{2s}\) on \(B_{r/2}(x)\), weak comparison gives \(v=w\ge(c_F/M)r^{2s}\) almost everywhere on that ball. Choose \(r=\operatorname{dist}(x,\partial\Omega)/2\); the balls \(B_{\operatorname{dist}(x,\partial\Omega)/4}(x)\) admit a countable subcover of \(\Omega\), and \(d(y):=\operatorname{dist}(y,\partial\Omega)\asymp \operatorname{dist}(x,\partial\Omega)\) within each ball, so \(v(y)\ge c\,d(y)^{2s}\) almost everywhere in \(\Omega\). Since \(V\ge J^{-1/2}v\), the Lipschitz boundary-collar estimate yields \(|\{V\le t\}|\le Ct^{1/(2s)}\) for small \(t\). Choosing $t=B_h^{4s/(1+4s)}$ in \eqref{eq:Q-clipped-weighted-levelset} proves the $L^2$-bound in \eqref{eq:Q-clipped-fixed-source-rate}; let $t\downarrow0$ when $B_h=0$. Enlarge $C$ to cover $B_h\le1$. Boundedness of $\widehat Q_h$ and $Q$ in $L^\infty(\Omega)$ gives the $L^{d/(2s)}$-bound by the finite measure of $\Omega$ and interpolation.
\end{proof}
If $F_{j,h}\to F_j$ in $W^s(\mathbb R^d)$ for every $j$, forward stability gives
\[
\Delta_h:=\left(J^{-1}\sum_j\|v_{j,h}^e-v_j\|_{L^2(\Omega)}^2\right)^{1/2}\to0,\qquad d_{v,h}\le\mathcal E_{v,h}+\Delta_h.
\]
Sobolev embedding, \eqref{eq:decom_rh} and Lemma~\ref{lem:regularized-weak-laplacian} give
\[
\begin{aligned}
A_h&\le C\rho_{Q,h}^{(d/s)}\left(J^{-1}\sum_j\|v_j\|_{W^s(\mathbb R^d)}^2\right)^{1/2},\\
R_h&\le\rho_{w,h}(\alpha_w)+C\alpha_w^{-1/2}\mathcal E_{v,h}+\|Q_\gamma\|_\infty\mathcal E_{v,h}+CM_{v,h}\rho_{Q,h}^{(d/s)}.
\end{aligned}
\]
Thus strong convergence of the sources, $\alpha_Q\to0$, $\alpha_w\to0$, $\rho_{Q,h}^{(d/s)}\to0$, and $\rho_{w,h}(\alpha_w)+\alpha_w^{-1/2}\mathcal E_{v,h}\to0$ imply $B_h\to0$.

Choose nonzero exterior sources $\{F_{r,h}^c\}_{r=1}^{J_c}\subset V_h^c$. For each source, find $z_{r,h}^c\in V_h^\Omega$ such that
\begin{equation}\label{eq:tail-forward-state}
 A_R(F_{r,h}^c+z_{r,h}^c,\phi_h) + \int_\Omega \widehat Q_h^\Omega
 (F_{r,h}^c+z_{r,h}^c)\phi_h\,dx =0 \qquad\forall\phi_h\in V_h^\Omega,
\end{equation}
and set $v_{r,h}^c:=F_{r,h}^c+z_{r,h}^c$. For each $r=1,\ldots,J_c$, define $T_{r,h}^\delta\in(V_h^c)^*$ by
\begin{equation}\label{eq:tail-data-functional}
 T_{r,h}^\delta(G_h) := \mathcal D_h^\delta\!\left( I_h^c(a_h^\delta F_{r,h}^c),
 I_h^c(a_h^\delta G_h)\right) - A_R(v_{r,h}^c,G_h), \qquad G_h\in V_h^c.
\end{equation}

Let $v_{r,h}^e$ denote the exact Schr\"odinger state for $Q_\gamma$ with exterior value $F_{r,h}^c$. In the exact-data case, \eqref{eq:overlap-identity} gives
\begin{equation}\label{eq:tail-exact-moment}
\begin{aligned}
 \big\langle \Lambda_\gamma(a_\gamma F_{r,h}^c),a_\gamma G_h
 \big\rangle - A_R(v_{r,h}^e,G_h) = (q_\gamma F_{r,h}^c,G_h)_{L^2(W_c)}
 = \bigl(\Pi_h^c(q_\gamma F_{r,h}^c),G_h \bigr)_{L^2(W_c)}.
\end{aligned}
\end{equation}
The last equality uses the projection $\Pi_h^c$ defined in \eqref{eq:Pi-h-c}. It shows that the discrete data determine the $V_h^c$-moments of $q_\gamma F_{r,h}^c$. Therefore, for a trial coefficient $p_h\in P_h^c$, define the moments by
\begin{equation}\label{eq:tail-predicted-functional}
 T_{r,h}(p_h)(G_h) := \bigl( \Pi_h^c(p_hF_{r,h}^c),G_h\bigr)_{L^2(W_c)} =
 (p_hF_{r,h}^c,G_h)_{L^2(W_c)}, \qquad G_h\in V_h^c.
\end{equation}
We use the discrete dual norm from \eqref{eq:discrete-negative-norm}, with $V_h^b$ replaced by $V_h^c$, and denote it by $\|\cdot\|_{-s,c,h}$. 

For $\alpha_t>0$, define
\begin{equation*} 
J_{t,h}^\delta(p_h) := \frac1{2J_c}\sum_{r=1}^{J_c}\|T_{r,h}(p_h)-T_{r,h}^\delta\|_{-s,c,h}^2+\frac{\alpha_t}{2}\|p_h\|_{L^2(W_c)}^2, \qquad p_h\in P_h^c. 
\end{equation*}
The exterior Liouville tail is reconstructed by
\begin{equation}\label{eq:tail-reconstruction}
 q_h^c \in \operatorname*{argmin}_{p_h\in P_h^c} J_{t,h}^\delta(p_h).
\end{equation}

Equip $P_h^c$ with the $L^2(W_c)$-inner product and set
\[
\begin{gathered}
Y_h^c:=((V_h^c)^*)^{J_c},\qquad \|(L_r)_r\|_{Y_h^c}^2:=\frac1{J_c}\sum_{r=1}^{J_c}\|L_r\|_{-s,c,h}^2, \qquad
\mathbb K_hp_h:=(T_{r,h}(p_h))_r,\qquad \mathsf t_h^\delta:=(T_{r,h}^\delta)_r.
\end{gathered}
\]
Then $J_{t,h}^\delta(p_h)=\tfrac12\|\mathbb K_hp_h-\mathsf t_h^\delta\|_{Y_h^c}^2+\tfrac{\alpha_t}{2}\|p_h\|_{L^2(W_c)}^2$. Lemma~\ref{lem:abstract-shifted-tikhonov}, with trial space $P_h^c$, operator $\mathbb K_h$, data $\mathsf t_h^\delta$, center zero, and parameter $\alpha_t$, gives a unique minimizer satisfying
\begin{equation}\label{eq:tail-normal-equation}
\begin{aligned}
\frac1{J_c}\sum_{r=1}^{J_c}\bigl(T_{r,h}(q_h^c),T_{r,h}(\psi_h)\bigr)_{-s,c,h}+\alpha_t(q_h^c,\psi_h)_{L^2(W_c)}=\frac1{J_c}\sum_{r=1}^{J_c}\bigl(T_{r,h}^\delta,T_{r,h}(\psi_h)\bigr)_{-s,c,h},\qquad \psi_h\in P_h^c.
\end{aligned}
\end{equation}
Using the basis \(\{\chi_{\ell,h}^c\}_{\ell=1}^{N_c}\) of \(V_h^c\) and the coefficient basis \(\psi_{m,h}^c:=\mathbf1_{K_m}\), \(m=1,\ldots,N_t\), define
\begin{align} 
(S_c)_{\ell k} &:= (\chi_{k,h}^c,\chi_{\ell,h}^c)_{W^s(\mathbb R^d)}, \qquad &(B_r)_{\ell m} &:= (\psi_{m,h}^cF_{r,h}^c,\chi_{\ell,h}^c)_{L^2(W_c)}, \label{eq:tail-multiplication-matrix} \\ (\mathbf t_r^\delta)_\ell &:= T_{r,h}^\delta(\chi_{\ell,h}^c), \qquad &(M_t)_{mn} &:= (\psi_{n,h}^c,\psi_{m,h}^c)_{L^2(W_c)}, \notag 
\end{align}
where \(r=1,\dots,J_c\), \(k = 1,\dots,N_c\) and \(n=1,\ldots,N_t\). The matrices $S_c$ and $M_t$ are symmetric positive definite. Moreover, \( M_t=\operatorname{diag}(|K_1|,\ldots,|K_{N_t}|)\). If \(
 p_h=\sum_{m=1}^{N_t}p_m\psi_{m,h}^c\) and \(\mathbf p=(p_1,\ldots,p_{N_t})^\top\),
then
\begin{equation*} 
\|T_{r,h}(p_h)-T_{r,h}^\delta\|_{-s,c,h}^2 = (B_r\mathbf p-\mathbf t_r^\delta)^\top S_c^{-1}(B_r\mathbf p-\mathbf t_r^\delta).
\end{equation*}
Indeed, the coefficient vector of the $W^s$-Riesz representative of
the functional $T_{r,h}(p_h)-T_{r,h}^\delta$ is the solution of
\[
 S_c\mathbf y_r=B_r\mathbf p-\mathbf t_r^\delta.
\]
Write \( q_h^c=\sum_{m=1}^{N_t}q_m^c\psi_{m,h}^c\) and \(\mathbf q^c=(q_1^c,\ldots,q_{N_t}^c)^\top\). Then \eqref{eq:tail-normal-equation} is equivalent to
\begin{equation}\label{eq:tail-matrix-system}
 G_t\mathbf q^c=\mathbf b_t,
\end{equation}
where \( G_t := \frac1{J_c}\sum_{r=1}^{J_c}B_r^\top S_c^{-1}B_r
 +\alpha_tM_t\) and  \(\mathbf b_t := \frac1{J_c}\sum_{r=1}^{J_c} B_r^\top S_c^{-1}\mathbf t_r^\delta\). For every nonzero $\mathbf p\in\mathbb R^{N_t}$, we have
\[
 \mathbf p^\top G_t\mathbf p = \frac1{J_c}\sum_{r=1}^{J_c}
 (B_r\mathbf p)^\top S_c^{-1}(B_r\mathbf p)
 +\alpha_t\mathbf p^\top M_t\mathbf p
 >0.
\]
Thus $G_t$ is symmetric positive definite. 

Prior to the error analysis, we distinguish the computed reduced data from their exact counterparts. For each source $F_{r,h}^c$, define $T_{r,h}^e\in(V_h^c)^*$ by
\begin{equation}\label{eq:tail-exact-data}
 T_{r,h}^e(G_h) := \bigl\langle \Lambda_\gamma(a_\gamma F_{r,h}^c),a_\gamma G_h
 \bigr\rangle  - A_R(v_{r,h}^e,G_h), \qquad G_h\in V_h^c .
\end{equation}
By the exact overlapping identity \eqref{eq:tail-exact-moment}, we get
\begin{equation}\label{eq:tail-exact-data-moment}
 T_{r,h}^e(G_h) = (q_\gamma F_{r,h}^c,G_h)_{L^2(W_c)} \qquad\forall G_h\in V_h^c.
\end{equation}
Assumption~\ref{ass:liouville-regularity} gives $q_\gamma|_{W_c}\in L^\infty(W_c)$, so the right-hand side is well-defined and $T_{r,h}^e$ belongs to $(V_h^c)^*$. Using the same regularization parameter $\alpha_t$, define the
exact-data functional
\begin{equation*} 
J_{t,h}^e(p_h) := \frac1{2J_c}\sum_{r=1}^{J_c}\|T_{r,h}(p_h)-T_{r,h}^e\|_{-s,c,h}^2+\frac{\alpha_t}{2}\|p_h\|_{L^2(W_c)}^2, \qquad p_h\in P_h^c, \end{equation*}
and let
\begin{equation*} 
q_h^{c,e} := \operatorname*{argmin}_{p_h\in P_h^c}J_{t,h}^e(p_h). 
\end{equation*}
Write $\mathsf t_h^e:=(T_{r,h}^e)_r$. Lemma~\ref{lem:abstract-shifted-tikhonov} also gives a unique exact-data minimizer $q_h^{c,e}$; dependence on $\alpha_t$ is suppressed. Its a priori and data-stability estimates \eqref{eq:abstract-tikhonov-apriori} and \eqref{eq:abstract-tikhonov-data-stability} give
\begin{equation*} \sqrt{\alpha_t}\|q_h^c\|_{L^2(W_c)}\le\left(\frac1{J_c}\sum_{r=1}^{J_c}\|T_{r,h}^\delta\|_{-s,c,h}^2\right)^{1/2},
\end{equation*}
and
\begin{equation*} 
\|q_h^c-q_h^{c,e}\|_{L^2(W_c)}\le\frac1{2\sqrt{\alpha_t}}\left(\frac1{J_c}\sum_{r=1}^{J_c}\|T_{r,h}^\delta-T_{r,h}^e\|_{-s,c,h}^2\right)^{1/2}. \end{equation*}

We first control the error introduced by using the reconstructed
interior potential in the Schr\"odinger forward problems. For
$Q\in L^{\frac{d}{2s}}(\Omega)$, write
\[
 \mathsf A_Q(z,\phi) := A_R(z,\phi)+\int_\Omega Qz\phi\,dx,
 \qquad z,\phi\in\widetilde H^s(\Omega).
\]
 Since $\frac{2s}{d}+\frac{2}{p_s^*}=1$, H\"older's inequality and then fractional Sobolev embedding give
\begin{equation}\label{eq:tail-potential-form-bound}
 \left|\int_\Omega Qz\phi\,dx\right|
 \le C\|Q\|_{L^{\frac{d}{2s}}(\Omega)}
 \|z\|_{L^{p_s^*}(\mathbb R^d)}
 \|\phi\|_{L^{p_s^*}(\mathbb R^d)}\le 
 C\|Q\|_{L^{\frac{d}{2s}}(\Omega)}
 \|z\|_{W^s(\mathbb R^d)}
 \|\phi\|_{W^s(\mathbb R^d)}.
\end{equation}
Also note that, by the fitted-mesh construction, the family $\{V_h^\Omega\}_{h>0}$ has the approximation property
\begin{equation}\label{eq:VOmega-approximation}
 \lim_{h\to0} \inf_{\zeta_h\in V_h^\Omega} \|z-\zeta_h\|_{W^s(\mathbb R^d)}
 =0 \qquad \forall z\in\widetilde H^s(\Omega).
\end{equation}
Indeed, this follows from the density of $C_c^\infty(\Omega)$ in $\widetilde H^s(\Omega)$, fitted-mesh interpolation of compactly supported smooth functions, and the continuous embedding $H^1(\mathbb R^d)\hookrightarrow H^s(\mathbb R^d)$ for $0<s<1$; see \cite[Lem.~3.6]{DwivediRailoRuppFCP}.

\begin{proposition}[Consistency of the tail forward states]
\label{prop:tail-forward-consistency}
Under Assumption~\ref{ass:liouville-regularity} and \eqref{eq:VOmega-approximation}, suppose that $\varepsilon_{Q,h}\to0$, as supplied by Theorem~\ref{thm:Q-whole-domain-error-regularized} or Proposition~\ref{prop:Q-clipped-fixed-sources} under their respective convergence conditions. Then there exist $h_0,\kappa_c>0$, independent of $h$, such that
\begin{equation}\label{eq:tail-forward-infsup}
 \inf_{0\ne z_h\in V_h^\Omega} \sup_{0\ne\phi_h\in V_h^\Omega}
 \frac{|\mathsf A_{\widehat Q_h^\Omega}(z_h,\phi_h)|}
 {\|z_h\|_{W^s(\mathbb R^d)}\|\phi_h\|_{W^s(\mathbb R^d)}}
 \ge \kappa_c, \qquad 0<h<h_0.
\end{equation}
In particular, \eqref{eq:tail-forward-state} has a unique solution for \(0<h<h_0\). Let $v_{r,h}^e=F_{r,h}^c+z_{r,h}^e$ be the exact state for $Q_\gamma$ and define
\begin{align*} 
E_{v,h}^c &:= \left(\frac1{J_c}\sum_{r=1}^{J_c}\|v_{r,h}^c-v_{r,h}^e\|_{W^s(\mathbb R^d)}^2\right)^{1/2}, \\ \rho_{z,h}^c &:= \left(\frac1{J_c}\sum_{r=1}^{J_c}\inf_{\zeta_h\in V_h^\Omega}\|z_{r,h}^e-\zeta_h\|_{W^s(\mathbb R^d)}^2\right)^{1/2}, \qquad M_{z,h}^c := \left(\frac1{J_c}\sum_{r=1}^{J_c}\|z_{r,h}^e\|_{W^s(\mathbb R^d)}^2\right)^{1/2}. 
\end{align*}
Then
\begin{equation}\label{eq:tail-state-consistency}
 E_{v,h}^c
 \le C\bigl(\rho_{z,h}^c+\varepsilon_{Q,h}M_{z,h}^c\bigr),
\end{equation}
where $C$ is independent of $h$, $J_c$, and the source index.
\end{proposition}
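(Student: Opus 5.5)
The plan has two parts: first the uniform discrete inf--sup condition \eqref{eq:tail-forward-infsup} for the perturbed form, then a Strang-type comparison of $z_{r,h}^c$ with a best approximation of $z_{r,h}^e$.

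\emph{Inf--sup.} We start from the exact potential $Q_\gamma$. The form $\mathsf A_{Q_\gamma}$ is bounded on $\widetilde H^s(\Omega)$ by \eqref{eq:tail-potential-form-bound}. Since the Liouville-transformed Schr\"odinger problem is well posed, as noted after Assumption~\ref{ass:liouville-regularity}, we may write $\mathsf A_{Q_\gamma} = A_R + K$, where $K$ is compact. Compactness follows because $Q_\gamma\in L^\infty(\Omega)$ and the embedding $\widetilde H^s(\Omega)\hookrightarrow L^2(\Omega)$ is compact. The form $A_R$ is coercive by \eqref{eq:sec2-fractional-poincare}. Injectivity then gives a continuous inf--sup constant $\kappa>0$ for $\mathsf A_{Q_\gamma}$.

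The standard Schatz-type argument (G\aa rding inequality plus compact perturbation) for conforming spaces with the approximation property \eqref{eq:VOmega-approximation} transfers this to a discrete inf--sup constant $\kappa/2$ for $\mathsf A_{Q_\gamma}$ on $V_h^\Omega$ for all $h<h_1$. We would argue by contradiction: take normalized $z_h$ with vanishing sup, pass to a weak limit, and use compactness to show the limit is nonzero and solves the homogeneous problem.

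We then perturb the potential. By \eqref{eq:tail-potential-form-bound},
\[
|\mathsf A_{\widehat Q_h^\Omega}(z,\phi)-\mathsf A_{Q_\gamma}(z,\phi)|\le C\varepsilon_{Q,h}\|z\|_{W^s}\|\phi\|_{W^s}.
\]
Choosing $h_0$ so small that $C\varepsilon_{Q,h}\le\kappa/4$ yields \eqref{eq:tail-forward-infsup} with $\kappa_c=\kappa/4$. Unique solvability of \eqref{eq:tail-forward-state} follows because $V_h^\Omega$ is finite-dimensional.

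\emph{Error estimate.} Let $\zeta_h\in V_h^\Omega$ be a best approximation of $z_{r,h}^e$, and set $\theta_h := z_{r,h}^c-\zeta_h$. The exact state satisfies $\mathsf A_{Q_\gamma}(F+z^e,\phi_h)=0$ for all $\phi_h\in V_h^\Omega$, and the discrete state satisfies $\mathsf A_{\widehat Q}(F+z^c,\phi_h)=0$. Subtracting these two identities gives
\[
\mathsf A_{\widehat Q}(\theta_h,\phi_h)=\mathsf A_{\widehat Q}(z^e-\zeta_h,\phi_h)+\int_\Omega (Q_\gamma-\widehat Q)(F+z^e)\phi_h\,dx .
\]
Since $F=F_{r,h}^c$ vanishes on $\Omega$, the last integrand reduces to $(Q_\gamma-\widehat Q)z^e\phi_h$. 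This term is bounded by $C\varepsilon_{Q,h}\|z^e\|_{W^s}\|\phi_h\|_{W^s}$.

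The boundedness of $\mathsf A_{\widehat Q}$ is uniform in $h$, because $\|\widehat Q\|_{L^{d/(2s)}}\le C Q_*$. Combining the inf--sup bound with the triangle inequality gives
\[
\|v^c-v^e\|_{W^s}=\|z^c-z^e\|_{W^s}\le C\bigl(\inf_{\zeta_h}\|z^e-\zeta_h\|_{W^s}+\varepsilon_{Q,h}\|z^e\|_{W^s}\bigr).
\]
Squaring, averaging over $r$, and applying the Minkowski inequality yields \eqref{eq:tail-state-consistency}.

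\emph{Main obstacle.} The delicate step is the uniform discrete inf--sup constant for the indefinite form $\mathsf A_{Q_\gamma}$, since it is obtained by a compactness argument rather than by coercivity. The contradiction argument must use both the well-posedness of the continuous problem and \eqref{eq:VOmega-approximation}.
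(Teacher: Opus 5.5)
Your proof is correct, and your Strang-type error estimate is the same as the paper's. The two arguments differ only in the inf--sup step.

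The paper runs a single contradiction argument directly on the perturbed forms. It takes normalized $z_n\in V_{h_n}^\Omega$ whose discrete residual for $\mathsf A_{\widehat Q_{h_n}^\Omega}$ vanishes. It passes to a weak limit using \eqref{eq:tail-potential-form-bound} and $\varepsilon_{Q,h_n}\to0$, obtaining $\mathsf A_{Q_\gamma}(z,\cdot)=0$ and hence $z=0$. It then tests with $\phi_h=z_n$ and uses $\|\widehat Q_{h_n}^\Omega\|_{L^\infty(\Omega)}\le Q_*$ and $z_n\to0$ in $L^2(\Omega)$ to force $A_R(z_n,z_n)\to0$, which contradicts $\|z_n\|_{W^s(\mathbb R^d)}=1$.

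You instead first prove a uniform discrete inf--sup bound for the fixed exact form $\mathsf A_{Q_\gamma}$ by a Schatz-type argument. You then absorb $\widehat Q_h^\Omega-Q_\gamma$ as an operator-norm perturbation of size $C\varepsilon_{Q,h}$.

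\begin{itemize}
\item \emph{What your route buys.} It separates the two sources of smallness. The discretization threshold depends only on $Q_\gamma$ and the mesh family. The potential reconstruction enters only through an explicit, checkable condition of the form $C\varepsilon_{Q,h}\le\kappa_1/2$. So $\kappa_c$ is tied directly to the discrete stability constant of the exact problem.
\item \emph{What the paper's route buys.} It is shorter and needs no separate Galerkin-stability lemma. However, it gives no quantitative information about $h_0$ or $\kappa_c$.
\end{itemize}

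One small imprecision: a contradiction argument by itself yields only some discrete constant $\kappa_1>0$ for $\mathsf A_{Q_\gamma}$, not specifically $\kappa/2$. This is harmless, since you can take $\kappa_c=\kappa_1/2$. The value $\kappa/2$ can also be justified directly. On $\widetilde H^s(\Omega)$ one has $\mathsf A_{Q_\gamma}(u,v)=(u,v)_{W^s(\mathbb R^d)}+((Q_\gamma-1)u,v)_{L^2(\Omega)}$, which is the identity plus a compact self-adjoint operator $K$. By \eqref{eq:VOmega-approximation}, its Galerkin compressions converge to $K$ in norm, so the discrete spectra converge.
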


\begin{proof}
We first prove \eqref{eq:tail-forward-infsup}. Suppose that it fails.
Then there are $h_n\to0$ and $z_n\in V_{h_n}^\Omega$ such that
$\|z_n\|_{W^s(\mathbb R^d)}=1$ and
\[
 \sup_{0\ne\phi_h\in V_{h_n}^\Omega}
 \frac{|\mathsf A_{\widehat Q_{h_n}^\Omega}(z_n,\phi_h)|}
 {\|\phi_h\|_{W^s(\mathbb R^d)}}
 \longrightarrow0.
\]
After passing to a subsequence, $z_n\rightharpoonup z$ in
$\widetilde H^s(\Omega)$ and $z_n\to z$ in $L^2(\Omega)$. Given
$\phi\in\widetilde H^s(\Omega)$, choose $\phi_n\in V_{h_n}^\Omega$ with $\phi_n\to\phi$ in $W^s(\mathbb R^d)$ by \eqref{eq:VOmega-approximation}. Using \eqref{eq:tail-potential-form-bound} and $\varepsilon_{Q,h_n}\to0$, we may pass to the limit and obtain $\mathsf A_{Q_\gamma}(z,\phi)=0$. The exact homogeneous Schr\"odinger problem has only the zero solution by the Liouville reduction and the well-posed conductivity problem. Hence $z=0$. It follows that $z_n\to0$ in $L^2(\Omega)$. Choosing $\phi_h=z_n$ in the residual estimate gives $\mathsf A_{\widehat Q_{h_n}^\Omega}(z_n,z_n)\to0$. Since
$\|\widehat Q_{h_n}^\Omega\|_{L^\infty(\Omega)}\le Q_*$, we obtain $A_R(z_n,z_n)\to0$. Together with \eqref{eq:Ws-norm-convention} and $z_n\to0$ in $L^2(\Omega)$, this contradicts $\|z_n\|_{W^s(\mathbb R^d)}=1$. Thus
\eqref{eq:tail-forward-infsup} holds.

Fix $r\in\{1,\ldots,J_c\}$ and let $\zeta_h\in V_h^\Omega$. Applying \eqref{eq:tail-forward-infsup} to $z_{r,h}^c-\zeta_h$ gives
\[
 \kappa_c\|z_{r,h}^c-\zeta_h\|_{W^s(\R^{d})}
 \le \sup_{0\ne\phi_h\in V_h^\Omega}
 \frac{ |\mathsf A_{\widehat Q_h^\Omega}(z_{r,h}^c-\zeta_h,\phi_h)|}{\|\phi_h\|_{W^s(\R^{d})}}.
\]
Since $F_{r,h}^c=0$ in $\Omega$, the exact and discrete corrections
satisfy
\[
 \mathsf A_{Q_\gamma}(z_{r,h}^e,\phi_h) = \mathsf A_{\widehat Q_h^\Omega}(z_{r,h}^c,\phi_h) = -A_R(F_{r,h}^c,\phi_h).
\]
Consequently,
\[
 \mathsf A_{\widehat Q_h^\Omega} (z_{r,h}^c-\zeta_h,\phi_h)={} \mathsf A_{\widehat Q_h^\Omega}
 (z_{r,h}^e-\zeta_h,\phi_h)+ \int_\Omega (Q_\gamma-\widehat Q_h^\Omega)
 z_{r,h}^e\phi_h\,dx .
\]
The uniform continuity of $\mathsf A_{\widehat Q_h^\Omega}$ and
\eqref{eq:tail-potential-form-bound} imply
\[
 \|z_{r,h}^c-\zeta_h\|_{W^s(\R^{d})} \le C\left( \|z_{r,h}^e-\zeta_h\|_{W^s(\R^{d})}+ \varepsilon_{Q,h}\|z_{r,h}^e\|_{W^s(\R^{d})}
 \right).
\]
Using the triangle inequality and minimizing over
$\zeta_h\in V_h^\Omega$, we obtain
\[
 \|z_{r,h}^c-z_{r,h}^e\|_{W^s(\R^{d})}
 \le
 C\left(
 \inf_{\zeta_h\in V_h^\Omega}
 \|z_{r,h}^e-\zeta_h\|_{W^s(\R^{d})}
 +
 \varepsilon_{Q,h}\|z_{r,h}^e\|_{W^s(\R^{d})}
 \right).
\]
Since the exact and computed states have the same exterior value,
$v_{r,h}^c-v_{r,h}^e=z_{r,h}^c-z_{r,h}^e$. Taking the
root-mean-square over $r=1,\ldots,J_c$ and using Minkowski's
inequality proves \eqref{eq:tail-state-consistency}.
\end{proof}

For later use, set
\begin{equation*} 
M_{F,h}^c := \left(\frac1{J_c}\sum_{r=1}^{J_c}\|F_{r,h}^c\|_{W^s(\mathbb R^d)}^2\right)^{1/2}. 
\end{equation*}
The well-posedness of the exact Schr\"odinger forward problem gives \(
 \|z_{r,h}^e\|_{W^s(\mathbb R^d)} \le C\|F_{r,h}^c\|_{W^s(\mathbb R^d)}\) for \(r=1,\ldots,J_c\).
Indeed, the exact correction satisfies
\[
\mathsf A_{Q_\gamma}(z_{r,h}^e,\phi) = -A_R(F_{r,h}^c,\phi)
 \qquad
 \forall\phi\in\widetilde H^s(\Omega),
\]
and the conclusion follows from the bounded inverse of the exact Schr\"odinger operator and the continuity of $A_R$. Consequently, \(M_{z,h}^c\le C M_{F,h}^c\). In the convergence statements below we use the source consistency quantities \(M_{F,h}^c\) and \(\rho_{z,h}^c\). For fixed smooth tail sources, one obtains the usual consistency condition as follows: If $J_c$ is fixed and $F_{r,h}^c\to F_r^c$ in $W^s(\mathbb R^d)$ for every $r$, continuity of the exact forward map and \eqref{eq:VOmega-approximation} give $M_{F,h}^c=\mathcal O(1)$ and $\rho_{z,h}^c\to0$.

We now estimate the total perturbation in the reduced tail data. Set
\[
 M_a^c := \sup_{0\ne G\in\widetilde H^s(W_c)} \frac{\|a_\gamma G\|_{W^s(\mathbb R^d)}} {\|G\|_{W^s(\mathbb R^d)}},
 \qquad \varepsilon_c^a:=\varepsilon_{W_c}^a(h,\delta),
\]
where \(\varepsilon_{W_c}^a\) is defined in \eqref{eq:eps-a-def}. By Assumption~\ref{ass:liouville-regularity}(iii), $M_a^c<\infty$. Define
\begin{equation*} 
\eta_{cc}(h,\delta) := \delta(M_a^c+\varepsilon_c^a)^2+C_\Lambda\varepsilon_c^a(2M_a^c+\varepsilon_c^a), 
\end{equation*}
where $C_\Lambda$ is the constant in \eqref{eq:DN-Ws-continuity}.

\begin{lemma}[Consistency of the reduced tail data]
\label{lem:tail-data-consistency}
For every $r=1,\ldots,J_c$,
\begin{equation}\label{eq:tail-data-consistency-sourcewise}
 \|T_{r,h}^\delta-T_{r,h}^e\|_{-s,c,h}
 \le
 \eta_{cc}(h,\delta)
 \|F_{r,h}^c\|_{W^s(\mathbb R^d)}
 +C\|v_{r,h}^c-v_{r,h}^e\|_{W^s(\mathbb R^d)}.
\end{equation}
Consequently, with
\begin{equation}\label{eq:eta-tail-data}
 \eta_{t,h}
 :=
 \left(
 \frac1{J_c}\sum_{r=1}^{J_c}
 \|T_{r,h}^\delta-T_{r,h}^e\|_{-s,c,h}^2
 \right)^{1/2},
\end{equation}
there holds
\begin{equation}\label{eq:eta-tail-data-bound}
 \eta_{t,h}
 \le
 \eta_{cc}(h,\delta)M_{F,h}^c+CE_{v,h}^c
 \le
 C\left[
 \eta_{cc}(h,\delta)M_{F,h}^c
 +\rho_{z,h}^c
 +\varepsilon_{Q,h}M_{z,h}^c
 \right].
\end{equation}
\end{lemma}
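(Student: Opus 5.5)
The argument mirrors Lemma~\ref{lem:reduced-separated-data-consistency}, with one additional term coming from the computed forward state. Fix $r$ and $G_h\in V_h^c$, and set $f_h^\delta:=I_h^c(a_h^\delta F_{r,h}^c)$ and $g_h^\delta:=I_h^c(a_h^\delta G_h)$. Both lie in the bubble-enriched exterior data space $V_h^W+B_h^c\subset\widetilde H^s(W)$, on which the noise model \eqref{eq:dn-noise-discrete} is assumed. Subtracting \eqref{eq:tail-exact-data} from \eqref{eq:tail-data-functional} gives
\[
T_{r,h}^\delta(G_h)-T_{r,h}^e(G_h)=\bigl[\mathcal D_h^\delta(f_h^\delta,g_h^\delta)-\langle\Lambda_\gamma f_h^\delta,g_h^\delta\rangle\bigr]+\bigl[\langle\Lambda_\gamma f_h^\delta,g_h^\delta\rangle-\langle\Lambda_\gamma(a_\gamma F_{r,h}^c),a_\gamma G_h\rangle\bigr]-A_R(v_{r,h}^c-v_{r,h}^e,G_h).
\]
I will bound the three brackets separately and then divide by $\|G_h\|_{W^s(\mathbb R^d)}$ and take the supremum. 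This produces the dual norm $\|\cdot\|_{-s,c,h}$.

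\emph{First bracket.} The noise bound \eqref{eq:dn-noise-discrete} controls it by $\delta\|f_h^\delta\|_{W^s}\|g_h^\delta\|_{W^s}$. Lemma~\ref{lem:discrete-multiplier-consistency} with $U=W_c$, in its $W^s$ form, gives $\|f_h^\delta-a_\gamma F_{r,h}^c\|_{W^s}\le\varepsilon_c^a\|F_{r,h}^c\|_{W^s}$, and the same for $g_h^\delta$. The definition of $M_a^c$ then gives $\|f_h^\delta\|_{W^s}\le(M_a^c+\varepsilon_c^a)\|F_{r,h}^c\|_{W^s}$, and similarly for $g_h^\delta$. This yields the contribution $\delta(M_a^c+\varepsilon_c^a)^2$.

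\emph{Second bracket.} By bilinearity and \eqref{eq:DN-Ws-continuity},
\[
|\cdot|\le C_\Lambda\bigl(\|f_h^\delta-a_\gamma F_{r,h}^c\|\,\|g_h^\delta\|+\|a_\gamma F_{r,h}^c\|\,\|g_h^\delta-a_\gamma G_h\|\bigr)\le C_\Lambda\varepsilon_c^a\bigl((M_a^c+\varepsilon_c^a)+M_a^c\bigr)\|F_{r,h}^c\|\,\|G_h\|,
\]
which is exactly the term $C_\Lambda\varepsilon_c^a(2M_a^c+\varepsilon_c^a)$ in $\eta_{cc}$. So the first two brackets together give $\eta_{cc}(h,\delta)\|F_{r,h}^c\|_{W^s}\|G_h\|_{W^s}$.

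\emph{Third bracket.} Both $v_{r,h}^c$ and $v_{r,h}^e$ have exterior value $F_{r,h}^c$, so their difference lies in $\widetilde H^s(\Omega)\subset\widetilde H^s(\Omega_R)$. Also $G_h\in\widetilde H^s(\Omega_R)$. On $\widetilde H^s(\Omega_R)$ we have $A_R(a,b)=\langle(-\Delta)^sa,b\rangle$, so $A_R$ is bounded in the $W^s$ norm and the bracket is at most $C\|v_{r,h}^c-v_{r,h}^e\|_{W^s}\|G_h\|_{W^s}$. Combining the three bounds proves \eqref{eq:tail-data-consistency-sourcewise}.

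\emph{Averaging.} For \eqref{eq:eta-tail-data-bound}, I take the root-mean-square over $r$ and apply Minkowski's inequality. This gives $\eta_{cc}M_{F,h}^c+CE_{v,h}^c$. Inserting \eqref{eq:tail-state-consistency} from Proposition~\ref{prop:tail-forward-consistency}, which requires $h<h_0$, gives the final form.

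The only delicate point is that the noise model and Lemma~\ref{lem:discrete-multiplier-consistency} must apply to the enriched space $V_h^c$. This is guaranteed by the standing convention on $V_h^W+B_h^c$ and by \eqref{eq:tail-enriched-superapprox}. Everything else is routine bookkeeping.
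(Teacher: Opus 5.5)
Your proposal is correct and follows the paper's proof: the paper also reuses the argument of Lemma~\ref{lem:reduced-separated-data-consistency} with both transformed factors in $W_c$ to get the $\eta_{cc}(h,\delta)$ term, bounds $A_R(v_{r,h}^c-v_{r,h}^e,G_h)$ by continuity of $A_R$, and then obtains \eqref{eq:eta-tail-data-bound} from the root-mean-square triangle inequality and Proposition~\ref{prop:tail-forward-consistency}. Your remark that the last step needs $h<h_0$ is a correct detail that the paper leaves implicit.
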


\begin{proof}
Fix $G_h\in V_h^c$. Repeating the proof of
Lemma~\ref{lem:reduced-separated-data-consistency}, now with both
transformed factors supported in $W_c$, gives
\[
\begin{aligned}
 &\left| \mathcal D_h^\delta\!\left( I_h^c(a_h^\delta F_{r,h}^c), I_h^c(a_h^\delta G_h) \right) - \big\langle
 \Lambda_\gamma(a_\gamma F_{r,h}^c),a_\gamma G_h \big\rangle
 \right| \le \eta_{cc}(h,\delta) \|F_{r,h}^c\|_{W^s(\mathbb R^d)} \|G_h\|_{W^s(\mathbb R^d)}.
\end{aligned}
\]
Subtracting \eqref{eq:tail-data-functional} and
\eqref{eq:tail-exact-data}, and using the continuity of $A_R$, yields
\[
 |(T_{r,h}^\delta-T_{r,h}^e)(G_h)| \le \left[ \eta_{cc}(h,\delta)\|F_{r,h}^c\|_{W^s(\R^{d})} +C\|v_{r,h}^c-v_{r,h}^e\|_{W^s(\R^{d})} \right]\|G_h\|_{W^s(\R^{d})}.
\]
Taking the supremum over $G_h\ne0$ proves \eqref{eq:tail-data-consistency-sourcewise}. The first estimate in \eqref{eq:eta-tail-data-bound} follows from the root-mean-square triangle inequality, and the second follows from
Proposition~\ref{prop:tail-forward-consistency}.
\end{proof}

Let \(q_\gamma^c:=q_\gamma|_{W_c}\), let \(\Pi_{0,h}^c:L^2(W_c)\to P_h^c\) be the elementwise \(L^2(W_c)\)-projection, and set
\begin{equation*}
 q_{I,h}^c:=\Pi_{0,h}^c q_\gamma^c,
 \qquad
 q_{I,h}^c|_K=\frac1{|K|}\int_Kq_\gamma^c(x)\,dx.
\end{equation*}
For $p\in\{2,\frac{d}{2s}\}$, define \( \rho_{q,h}^{(p)} :=
\|q_{I,h}^c-q_\gamma^c\|_{L^p(W_c)}\). The cell-average projection is contractive in $L^p(W_c)$ for every $1\le p\le\infty$. Since $q_\gamma^c\in L^\infty(W_c)$ and $\frac{d}{2s}<\infty$, it follows that $\rho_{q,h}^{(2)}+\rho_{q,h}^{(\frac{d}{2s})}\to0$.

\begin{lemma}[Consistency of the projected tail]
\label{lem:tail-projection-consistency}
For every $r=1,\ldots,J_c$, there holds
\begin{equation*} 
\|T_{r,h}(q_{I,h}^c)-T_{r,h}^e\|_{-s,c,h} \le C\rho_{q,h}^{(\frac{d}{2s})}\|F_{r,h}^c\|_{W^s(\mathbb R^d)}. 
\end{equation*}
\end{lemma}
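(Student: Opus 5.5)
By \eqref{eq:tail-predicted-functional} and \eqref{eq:tail-exact-data-moment}, both functionals are \(L^2(W_c)\)-moments against \(G_h\in V_h^c\). Hence, for every \(G_h\in V_h^c\),
\[
\bigl(T_{r,h}(q_{I,h}^c)-T_{r,h}^e\bigr)(G_h)=\bigl((q_{I,h}^c-q_\gamma^c)F_{r,h}^c,G_h\bigr)_{L^2(W_c)} .
\]
The plan is to bound this trilinear integral with the same H\"older--Sobolev argument that gave \eqref{eq:tail-potential-form-bound}, and then take the supremum defining \(\|\cdot\|_{-s,c,h}\).

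The exponents \(\frac{2s}{d}\), \(\frac1{p_s^*}\), \(\frac1{p_s^*}\) sum to one. H\"older's inequality on \(W_c\) therefore gives
\[
\bigl|\bigl((q_{I,h}^c-q_\gamma^c)F_{r,h}^c,G_h\bigr)_{L^2(W_c)}\bigr|\le\|q_{I,h}^c-q_\gamma^c\|_{L^{\frac{d}{2s}}(W_c)}\,\|F_{r,h}^c\|_{L^{p_s^*}(\mathbb R^d)}\,\|G_h\|_{L^{p_s^*}(\mathbb R^d)} .
\]
The fractional Sobolev embedding \(H^s(\mathbb R^d)\hookrightarrow L^{p_s^*}(\mathbb R^d)\) holds because \(0<s<d/2\), and the \(H^s\)- and \(W^s\)-norms are equivalent. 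Together these bound the right-hand side by \(C\rho_{q,h}^{(\frac d{2s})}\|F_{r,h}^c\|_{W^s(\mathbb R^d)}\|G_h\|_{W^s(\mathbb R^d)}\).

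Dividing by \(\|G_h\|_{W^s(\mathbb R^d)}\) and taking the supremum over \(0\ne G_h\in V_h^c\) yields the claim. The constant depends only on \(d\) and \(s\), through the embedding constant.

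No step is a real obstacle. The only thing to verify is that the enriched space \(V_h^c=V_{h,0}^c+B_h^c\) lies in \(\widetilde H^s(W_c)\subset H^s(\mathbb R^d)\), which holds because the bubbles lie in \(H_0^1(K_m)\). This makes the embedding and the identity \eqref{eq:tail-exact-data-moment} valid on all of \(V_h^c\).
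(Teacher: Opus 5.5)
Your proposal is correct and follows essentially the same route as the paper. It identifies the difference as the moment $\int_{W_c}(q_{I,h}^c-q_\gamma^c)F_{r,h}^cG_h\,dx$, applies H\"older's inequality with $\frac{2s}{d}+\frac{2}{p_s^*}=1$ and the embedding $H^s(\mathbb R^d)\hookrightarrow L^{p_s^*}(\mathbb R^d)$, and then takes the supremum over $0\ne G_h\in V_h^c$.
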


\begin{proof}
For $G_h\in V_h^c$, the identities
\eqref{eq:tail-predicted-functional} and
\eqref{eq:tail-exact-data-moment} give
\[
 (T_{r,h}(q_{I,h}^c)-T_{r,h}^e)(G_h)
 =
 \int_{W_c}(q_{I,h}^c-q_\gamma^c)F_{r,h}^cG_h\,dx.
\]
Since $\frac{2s}{d}+\frac{2}{p_s^*}=1$, H\"older's inequality and
$H^s(\mathbb R^d)\hookrightarrow L^{p_s^*}(\mathbb R^d)$ imply
\[
 |(T_{r,h}(q_{I,h}^c)-T_{r,h}^e)(G_h)|
 \le
 C\rho_{q,h}^{(\frac{d}{2s})}
 \|F_{r,h}^c\|_{W^s(\mathbb R^d)}
 \|G_h\|_{W^s(\mathbb R^d)}.
\]
Taking the supremum over $G_h\ne0$ proves the claim.
\end{proof}

Lemma~\ref{lem:tail-projection-consistency} gives
\begin{equation*} 
\|\mathsf t_h^e-\mathbb K_hq_{I,h}^c\|_{Y_h^c} \le CM_{F,h}^c\rho_{q,h}^{(\frac{d}{2s})}. 
\end{equation*}
Together with \eqref{eq:eta-tail-data}, this yields
\begin{equation}\label{eq:tail-total-perturbation}
\|\mathsf t_h^\delta-\mathbb K_hq_{I,h}^c\|_{Y_h^c} \le \xi_{t,h}, \qquad \xi_{t,h} := \eta_{t,h}+CM_{F,h}^c\rho_{q,h}^{(\frac{d}{2s})}.
\end{equation}

The term $\alpha_t\|p_h\|_{L^2(W_c)}^2$ makes \eqref{eq:tail-matrix-system} symmetric positive definite for every $\alpha_t>0$. To quantify the unregularized moment map on $P_h^c$, define
\begin{equation}\label{eq:tail-smallest-singular-value}
    \sigma_{t,h} := \inf_{0\ne p_h\in P_h^c} \frac{\|\mathbb K_h p_h\|_{Y_h^c}}{\|p_h\|_{L^2(W_c)}} .
\end{equation}
Thus the chosen tail source family \(\mathcal F_h^c:=\{F_{r,h}^c\}_{r=1}^{J_c}\subset V_h^c\) is identifying on \(P_h^c\) if and only if \(\sigma_{t,h}>0\), equivalently
\(\mathbb K_h:P_h^c\to Y_h^c\) is injective.

In the matrix notation of \eqref{eq:tail-multiplication-matrix}, this singular
value \(\sigma_{t,h}\) is the smallest generalized singular value of the stacked tail moment matrix. More precisely, if \(M_t\) is the \(L^2(W_c)\)-mass matrix on \(P_h^c\),
then
\begin{equation}\label{eq:tail-singular-value-matrix}
    \sigma_{t,h}^2 = \lambda_{\min} \left[M_t^{-1/2}\left(\frac1{J_c}
    \sum_{r=1}^{J_c}B_r^\top S_c^{-1}B_r\right)M_t^{-1/2}
    \right].
\end{equation}
Here \(\lambda_{\min}(H)\) denotes the smallest eigenvalue of the symmetric
matrix \(H\).  The following elementary construction gives an explicit identifying family.
\begin{lemma}[Nonvanishing bubble moments and tail identifiability]
\label{lem:bubble-tail-identifiability}
Let \(\mathcal T_h^c=\{K_1,\ldots,K_{N_t}\}\), and let \(P_h^c\) be the elementwise constant space on \(\mathcal T_h^c\). Assume that the tail source/test space \(V_h^c\) contains cell bubbles \(b_{m,h}\in H_0^1(K_m)\) with \(b_{m,h}>0\) in the interior of \(K_m\), \(m=1,\ldots,N_t \). Suppose that the chosen tail source family \( \mathcal F_h^c=\{F_{r,h}^c\}_{r=1}^{J_c}\subset V_h^c\) contains at least one source, say \(F_{1,h}^c\), such that
\begin{equation}\label{eq:positive-tail-source-condition}
    \int_{K_m} F_{1,h}^c\, b_{m,h}\,dx \neq 0,
    \qquad m=1,\ldots,N_t .
\end{equation}
Then \(\mathbb K_h:P_h^c\to Y_h^c\) is injective. Equivalently,
\(\sigma_{t,h}>0\).
\end{lemma}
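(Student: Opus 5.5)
The plan is to prove injectivity of \(\mathbb K_h\) directly, by testing the functional \(T_{1,h}(p_h)\) against the bubbles. Since \(\sigma_{t,h}>0\) is equivalent to injectivity on the finite-dimensional space \(P_h^c\) (the infimum in \eqref{eq:tail-smallest-singular-value} is attained on the unit sphere of \(P_h^c\) by compactness), it suffices to show: if \(p_h\in P_h^c\) and \(\mathbb K_hp_h=0\), then \(p_h=0\).

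Suppose \(\|\mathbb K_hp_h\|_{Y_h^c}=0\). By the definition of the \(Y_h^c\)-norm, every component satisfies \(\|T_{r,h}(p_h)\|_{-s,c,h}=0\). In particular, \(T_{1,h}(p_h)\) is the zero functional on \(V_h^c\), since the discrete dual norm is a genuine norm on \((V_h^c)^*\). By \eqref{eq:tail-predicted-functional},
\[
(p_hF_{1,h}^c,G_h)_{L^2(W_c)}=0 \qquad\text{for all } G_h\in V_h^c.
\]

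Next, test with \(G_h=b_{m,h}\in V_h^c\). Write \(p_h=\sum_n p_n\mathbf 1_{K_n}\), and note that \(b_{m,h}\) is supported in \(\overline{K_m}\) and the cells overlap only on null sets. Then
\[
0=\sum_n p_n\int_{K_n}F_{1,h}^c\,b_{m,h}\,dx=p_m\int_{K_m}F_{1,h}^c\,b_{m,h}\,dx .
\]
Condition \eqref{eq:positive-tail-source-condition} says the last integral is nonzero, so \(p_m=0\). Since \(m\) was arbitrary, \(p_h=0\), and \(\mathbb K_h\) is injective. Equivalently, in the matrix form \eqref{eq:tail-singular-value-matrix}, \(B_1\) has full column rank, so the bracketed symmetric positive semidefinite matrix is in fact definite and its smallest eigenvalue is positive.

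The argument has no real obstacle. The only points needing care are the diagonal structure of the bubble tests, which uses that each \(b_{m,h}\) lives on a single cell, and the passage from a zero dual norm to a vanishing functional.
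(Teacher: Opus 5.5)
Your proposal is correct and follows essentially the same argument as the paper: assume \(\mathbb K_hp_h=0\), use that the first component \(T_{1,h}(p_h)\) vanishes on \(V_h^c\), test with each bubble \(b_{m,h}\) to isolate \(p_m\int_{K_m}F_{1,h}^c b_{m,h}\,dx=0\), and conclude from \eqref{eq:positive-tail-source-condition}. Your added remarks, the compactness argument for the equivalence with \(\sigma_{t,h}>0\) and the full column rank of \(B_1\) in \eqref{eq:tail-singular-value-matrix}, are valid supplements that the paper leaves implicit.
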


\begin{proof}
Let \(p_h\in P_h^c\) and suppose that \(\mathbb K_hp_h=0\). Since \(p_h\) is
elementwise constant, write \(p_h|_{K_m}=p_m\), \(m=1,\ldots,N_t \). Because \(\mathbb K_hp_h=0\), the first component \(T_{1,h}(p_h)\) is the zero functional on \(V_h^c\). Testing this functional with \(G_h=b_{m,h}\in V_h^c\) and using \eqref{eq:tail-predicted-functional}, we get
\[
    0 =  T_{1,h}(p_h)(b_{m,h}) = \int_{W_c}p_hF_{1,h}^c b_{m,h}\,dx
    =  p_m\int_{K_m}F_{1,h}^c b_{m,h}\,dx .
\]
By \eqref{eq:positive-tail-source-condition}, the last integral is nonzero.
Hence \(p_m=0\). Since this holds for every \(m=1,\ldots,N_t\), we obtain
\(p_h=0\). Thus \(\mathbb K_h\) is injective.
\end{proof}
In particular, condition \eqref{eq:positive-tail-source-condition} is satisfied
by the explicit choice \(F_{1,h}^c := \frac{b_h^+}{\|b_h^+\|_{W^s(\mathbb R^d)}} \) where \( b_h^+ := \sum_{m=1}^{N_t} b_{m,h}\).
\begin{remark}[Computable source-selection criterion]
\label{rem:computable-source-selection}
Lemma~\ref{lem:bubble-tail-identifiability} gives a sufficient identifying family. The numerical experiments instead use finite sine-source families and check \(\operatorname{rank}(\mathbb K_h)=\dim P_h^c\), equivalently \(\sigma_{t,h}>0\), through \eqref{eq:tail-singular-value-matrix}. This verifies identifiability for each tested discretization.
\end{remark}

\begin{theorem}[Error estimate for the exterior Liouville tail]
\label{thm:tail-reconstruction-error}
Assume that the chosen tail source family satisfies \(\sigma_{t,h}>0\), where \(\sigma_{t,h}\) is defined by \eqref{eq:tail-smallest-singular-value}. Then, for every \(\alpha_t>0\), the reconstruction \eqref{eq:tail-reconstruction} satisfies
\begin{equation}\label{eq:tail-reconstruction-error}
    \|q_h^c-q_\gamma^c\|_{L^2(W_c)}
    \le
    \rho_{q,h}^{(2)}
    +
    \frac{\xi_{t,h}}{2\sqrt{\alpha_t}}
    +
    \frac{\alpha_t}{\sigma_{t,h}^2+\alpha_t}
    \|q_{I,h}^c\|_{L^2(W_c)} .
\end{equation}
Consequently, \(q_h^c\to q_\gamma^c\) in \(L^2(W_c)\) if
\begin{equation}\label{eq:tail-convergence-parameter-rule}
    \rho_{q,h}^{(2)}\to0,\qquad
    \alpha_t\to0,\qquad
    \frac{\xi_{t,h}^2}{\alpha_t}\to0,\qquad
    \frac{\alpha_t}{\sigma_{t,h}^2}\to0 .
\end{equation}
\end{theorem}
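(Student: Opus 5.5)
The plan is to split the error around the cell-average projection,
\[
\|q_h^c-q_\gamma^c\|_{L^2(W_c)}\le\|q_h^c-q_h^{c,\ast}\|_{L^2(W_c)}+\|q_h^{c,\ast}-q_{I,h}^c\|_{L^2(W_c)}+\rho_{q,h}^{(2)},
\]
where $q_h^{c,\ast}$ is the auxiliary Tikhonov minimizer on $P_h^c$ for the consistent data $\mathbb K_hq_{I,h}^c$. In other words, $q_h^{c,\ast}$ minimizes $\tfrac12\|\mathbb K_hp_h-\mathbb K_hq_{I,h}^c\|_{Y_h^c}^2+\tfrac{\alpha_t}{2}\|p_h\|_{L^2(W_c)}^2$. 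Both minimizers exist and are unique by Lemma~\ref{lem:abstract-shifted-tikhonov} with trial space $P_h^c$, operator $\mathbb K_h$, center zero and parameter $\alpha_t$. The three resulting terms correspond exactly to the three terms in \eqref{eq:tail-reconstruction-error}.

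For the data term, both $q_h^c$ and $q_h^{c,\ast}$ are minimizers with the same operator and center, differing only in the data. The data-stability estimate \eqref{eq:abstract-tikhonov-data-stability} therefore gives
\[
\|q_h^c-q_h^{c,\ast}\|_{L^2(W_c)}\le\frac{\|\mathsf t_h^\delta-\mathbb K_hq_{I,h}^c\|_{Y_h^c}}{2\sqrt{\alpha_t}}\le\frac{\xi_{t,h}}{2\sqrt{\alpha_t}},
\]
where the last step uses the total perturbation bound \eqref{eq:tail-total-perturbation}. That bound already combines Lemma~\ref{lem:tail-data-consistency} and Lemma~\ref{lem:tail-projection-consistency}.

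For the regularization bias, I will write $\mathbb K_h^\ast$ for the adjoint with respect to the $L^2(W_c)$ and $Y_h^c$ inner products. The normal equation gives $q_h^{c,\ast}=(\mathbb K_h^\ast\mathbb K_h+\alpha_t)^{-1}\mathbb K_h^\ast\mathbb K_hq_{I,h}^c$, so
\[
q_h^{c,\ast}-q_{I,h}^c=-\alpha_t(\mathbb K_h^\ast\mathbb K_h+\alpha_t)^{-1}q_{I,h}^c .
\]
The operator $\mathbb K_h^\ast\mathbb K_h$ is self-adjoint and positive on the finite-dimensional space $P_h^c$, and by \eqref{eq:tail-smallest-singular-value} its spectrum lies in $[\sigma_{t,h}^2,\infty)$. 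The function $\lambda\mapsto\alpha_t/(\lambda+\alpha_t)$ is decreasing, so the spectral theorem gives the operator bound $\alpha_t/(\sigma_{t,h}^2+\alpha_t)$. This is the comparator estimate \eqref{eq:abstract-tikhonov-comparator} with comparison function $q_{I,h}^c$, the same estimate used in the proof of Theorem~\ref{thm:Q-whole-domain-error-regularized}. Summing the three bounds proves \eqref{eq:tail-reconstruction-error}.

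For convergence, the cell-average projection is $L^2$-contractive, so $\|q_{I,h}^c\|_{L^2(W_c)}\le\|q_\gamma^c\|_{L^2(W_c)}$ is uniformly bounded. The bias factor satisfies $\alpha_t/(\sigma_{t,h}^2+\alpha_t)\le\alpha_t/\sigma_{t,h}^2\to0$. The remaining terms $\rho_{q,h}^{(2)}$ and $\xi_{t,h}/\sqrt{\alpha_t}$ tend to zero directly by the parameter rule \eqref{eq:tail-convergence-parameter-rule}.

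The only delicate point is the bias step. The appendix lemma must be applied with the $Y_h^c$ Hilbert structure, which is induced by Riesz representatives in $V_h^c$, so that $\mathbb K_h^\ast\mathbb K_h$ is genuinely self-adjoint on $P_h^c$. Because everything is finite-dimensional, this holds automatically once $\sigma_{t,h}>0$. One alternative would be to take the exact-data minimizer $q_h^{c,e}$ as the intermediate instead of $q_h^{c,\ast}$. I avoid it because its bias would pick up the projection mismatch $\mathsf t_h^e-\mathbb K_hq_{I,h}^c$, and routing everything through \eqref{eq:tail-total-perturbation} keeps the constants exactly as stated.
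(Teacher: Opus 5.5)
Your proof is correct and is essentially the paper's argument. The paper applies the comparator estimate \eqref{eq:abstract-tikhonov-comparator} directly, with comparator $q_{I,h}^c$, center zero and singular value $\sigma_{t,h}$, inserts \eqref{eq:tail-total-perturbation}, and finishes with the triangle inequality and $\|q_{I,h}^c\|_{L^2(W_c)}\le\|q_\gamma^c\|_{L^2(W_c)}$; your intermediate minimizer $q_h^{c,\ast}$ just makes explicit the split $(B+\alpha I)^{-1}T^*(b-Tx^\dagger)-\alpha(B+\alpha I)^{-1}x^\dagger$ from the appendix proof of that estimate (as a minor aside, self-adjointness of $\mathbb K_h^\ast\mathbb K_h$ does not need $\sigma_{t,h}>0$, which matters only for the convergence rule).
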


\begin{proof}
Apply \eqref{eq:abstract-tikhonov-comparator} with comparator $q_{I,h}^c$, center zero, and smallest singular value $\sigma_{t,h}$. Using \eqref{eq:tail-total-perturbation} yields
\[
\|q_h^c-q_{I,h}^c\|_{L^2(W_c)}\le\frac{\xi_{t,h}}{2\sqrt{\alpha_t}}+\frac{\alpha_t}{\sigma_{t,h}^2+\alpha_t}\|q_{I,h}^c\|_{L^2(W_c)}.
\]
The triangle inequality with $\|q_{I,h}^c-q_\gamma^c\|_{L^2(W_c)}=\rho_{q,h}^{(2)}$ proves \eqref{eq:tail-reconstruction-error}. Conditions \eqref{eq:tail-convergence-parameter-rule} and $\|q_{I,h}^c\|_{L^2(W_c)}\le\|q_\gamma^c\|_{L^2(W_c)}$ imply convergence.
\end{proof}

Combining \eqref{eq:eta-tail-data-bound} and
\eqref{eq:tail-total-perturbation} gives
\begin{equation*} 
\xi_{t,h} \le C\left[\eta_{cc}(h,\delta)M_{F,h}^c+\rho_{z,h}^c+\varepsilon_{Q,h}M_{z,h}^c+M_{F,h}^c\rho_{q,h}^{(\frac d{2s})}\right]. 
\end{equation*}
If \(q_\gamma^c\in W^{m,p}(W_c)\), \(0<m\le1\), and \(p\in\{2,d/(2s)\}\), then \(\rho_{q,h}^{(p)} \le Ch^m |q_\gamma^c|_{W^{m,p}(W_c)}\). The two parameter requirements \( \frac{\xi_{t,h}^2}{\alpha_t}\to0\) and  \(\frac{\alpha_t}{\sigma_{t,h}^2}\to0\) are compatible whenever \(\xi_{t,h}\to0\) and \(\frac{\xi_{t,h}}{\sigma_{t,h}}\to0\). For \(\xi_{t,h}>0\), one may choose \(\alpha_t = \xi_{t,h}\min\{\sigma_{t,h},1\}\). Then \(\alpha_t\to0\), and
\[
\frac{\xi_{t,h}^2}{\alpha_t} + \frac{\alpha_t}{\sigma_{t,h}^2}
\le 2\max\left\{ \xi_{t,h}, \frac{\xi_{t,h}}{\sigma_{t,h}}
\right\}
\longrightarrow0.
\]
The bubble criterion gives $\sigma_{t,h}>0$ for each fixed $h$. Consequently $\lVert p_h\rVert_{L^2(W_c)}\le\sigma_{t,h}^{-1}\lVert \mathbb K_hp_h\rVert_{Y_h^c}$ for every $p_h\in P_h^c$. The condition number additionally depends on the largest singular value of $\mathbb K_h$.

\begin{corollary}[Tail convergence for fixed limiting sources]\label{cor:tail-fixed-source-convergence}
Let $J_c$ be fixed, let $F_{r,h}^c\to F_r^c$ in $L^2(W_c)$, and assume
\[
\sum_{r=1}^{J_c}|F_r^c(x)|^2>0\quad\text{for almost every }x\in W_c,\qquad
\rho_{q,h}^{(2)}\to0,\quad\alpha_t\to0,\quad\xi_{t,h}^2/\alpha_t\to0.
\]
For the full fitted observation spaces $V_h^c$, the minimizers in \eqref{eq:tail-reconstruction} satisfy $q_h^c\to q_\gamma^c$ strongly in $L^2(W_c)$, without a coupling involving $\sigma_{t,h}$.
\end{corollary}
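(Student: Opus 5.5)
The plan is a compactness argument. The minimality of \(q_h^c\) with the cell average \(q_{I,h}^c\) as comparator gives uniform \(L^2\) bounds and a vanishing moment residual. A weak limit is then identified through the nondegeneracy of the limiting sources, and the norm bound upgrades weak to strong convergence. This replaces the quantitative use of \(\sigma_{t,h}\) in Theorem~\ref{thm:tail-reconstruction-error}.

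\emph{Step 1 (energy bounds).} Compare \(J_{t,h}^\delta(q_h^c)\le J_{t,h}^\delta(q_{I,h}^c)\). By \eqref{eq:tail-total-perturbation} the right side is at most \(\tfrac12\xi_{t,h}^2+\tfrac{\alpha_t}{2}\|q_{I,h}^c\|_{L^2(W_c)}^2\). Since \(\mathbb K_h\) is linear, the triangle inequality in \(Y_h^c\) then gives
\[
\|\mathbb K_h(q_h^c-q_{I,h}^c)\|_{Y_h^c}\le 2\xi_{t,h}+\sqrt{\alpha_t}\,\|q_\gamma^c\|_{L^2(W_c)}\to0 .
\]
Dropping the data term gives
\[
\|q_h^c\|_{L^2(W_c)}^2\le \xi_{t,h}^2/\alpha_t+\|q_{I,h}^c\|_{L^2(W_c)}^2 .
\]
Here \(\|q_{I,h}^c\|_{L^2(W_c)}\le\|q_\gamma^c\|_{L^2(W_c)}\), and \(\xi_{t,h}^2/\alpha_t\to0\) by assumption. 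Hence \(\limsup_h\|q_h^c\|_{L^2(W_c)}\le\|q_\gamma^c\|_{L^2(W_c)}\). In particular \(q_h^c\) is bounded, and along any subsequence \(q_h^c\rightharpoonup q\) weakly in \(L^2(W_c)\). Also \(q_{I,h}^c\to q_\gamma^c\) strongly, because \(\rho_{q,h}^{(2)}\to0\).

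\emph{Step 2 (identification of the limit).} Fix \(r\le J_c\) and \(G\in C_c^\infty(W_c)\), and let \(G_h:=I_{h,0}^cG\in V_h^c\).
\begin{itemize}
\item Nodal interpolation of a smooth compactly supported function gives \(G_h\to G\) in \(W^{1,\infty}\). Hence \(G_h\to G\) uniformly, and \(\|G_h\|_{W^s(\mathbb R^d)}\le C\).
\item Since \(J_c\) is fixed, \(\|T_{r,h}(p)\|_{-s,c,h}\le \sqrt{J_c}\,\|\mathbb K_hp\|_{Y_h^c}\). Combining this with \eqref{eq:tail-predicted-functional} and Step~1 yields
\[
\Bigl|\int_{W_c}(q_h^c-q_{I,h}^c)F_{r,h}^cG_h\,dx\Bigr|\le \sqrt{J_c}\,\|\mathbb K_h(q_h^c-q_{I,h}^c)\|_{Y_h^c}\,\|G_h\|_{W^s(\mathbb R^d)}\to0 .
\]
\item The product \(F_{r,h}^cG_h\) converges to \(F_r^cG\) strongly in \(L^2(W_c)\), by the bound
\[
\|F_{r,h}^cG_h-F_r^cG\|_{L^2}\le \|F_{r,h}^c-F_r^c\|_{L^2}\|G_h\|_{L^\infty}+\|F_r^c\|_{L^2}\|G_h-G\|_{L^\infty}.
\]
\item Meanwhile \(q_h^c-q_{I,h}^c\rightharpoonup q-q_\gamma^c\) weakly. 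Passing to the limit gives \(\int_{W_c}(q-q_\gamma^c)F_r^cG\,dx=0\) for all such \(G\).
\end{itemize}
This forces \((q-q_\gamma^c)F_r^c=0\) a.e. in \(W_c\) for each \(r\). Since \(\sum_r|F_r^c|^2>0\) a.e., we conclude \(q=q_\gamma^c\).

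\emph{Step 3 (strong convergence).} Weak lower semicontinuity and the \(\limsup\) bound from Step~1 give \(\|q_h^c\|_{L^2(W_c)}\to\|q_\gamma^c\|_{L^2(W_c)}\). Together with weak convergence in the Hilbert space \(L^2(W_c)\), this gives strong convergence along the subsequence. The limit is unique, so the whole family converges.

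The main obstacle is Step~2. The only information on \(q_h^c\) is through discrete dual norms on \(V_h^c\), and the sources converge only in \(L^2\). Therefore the test functions must be uniformly bounded in \(L^\infty\) while remaining bounded in \(W^s\), so that weak--strong products converge. Interpolants of smooth compactly supported functions provide exactly this. No lower bound on \(\sigma_{t,h}\) enters anywhere.
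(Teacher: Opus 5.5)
Your proof is correct and follows the paper's argument. The main steps match: comparison with $q_{I,h}^c$ for boundedness, the $\limsup$ norm bound and a vanishing residual; identification of the weak limit by testing with nodal interpolants $G_h\in V_{h,0}^c$ of $G\in C_c^\infty(W_c)$, which converge in $W^s(\mathbb R^d)$ and $L^\infty(W_c)$, together with the source-coverage condition; and weak lower semicontinuity for strong convergence of the whole sequence. The only difference is cosmetic: you control $\mathbb K_h(q_h^c-q_{I,h}^c)$ via \eqref{eq:tail-total-perturbation} and use $q_{I,h}^c\to q_\gamma^c$, whereas the paper controls $\mathbb K_hq_h^c-\mathsf t_h^e$ and invokes the exact moment identity \eqref{eq:tail-exact-data-moment}.
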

\begin{proof}
Minimality and \eqref{eq:tail-total-perturbation} give
\[
\|\mathbb K_hq_h^c-\mathsf t_h^\delta\|_{Y_h^c}^2+\alpha_t\|q_h^c\|_{L^2(W_c)}^2
\le\xi_{t,h}^2+\alpha_t\|q_{I,h}^c\|_{L^2(W_c)}^2.
\]
Since $q_{I,h}^c\to q_\gamma^c$ in $L^2(W_c)$ and $\|\mathsf t_h^\delta-\mathsf t_h^e\|_{Y_h^c}=\eta_{t,h}\le\xi_{t,h}$, the sequence $q_h^c$ is bounded, $\|\mathbb K_hq_h^c-\mathsf t_h^e\|_{Y_h^c}\to0$, and
\[
\limsup_{h\to0}\|q_h^c\|_{L^2(W_c)}^2\le\|q_\gamma^c\|_{L^2(W_c)}^2.
\]
Let $q_h^c\rightharpoonup\overline q$ along a subsequence. For $G\in C_c^\infty(W_c)$, nodal interpolation supplies $G_h\in V_{h,0}^c\subset V_h^c$ converging to $G$ in $W^s(\mathbb R^d)$ and $L^\infty(W_c)$. Hence $F_{r,h}^cG_h\to F_r^cG$ in $L^2(W_c)$, and the vanishing exact-data residual and \eqref{eq:tail-exact-data-moment} imply
\[
\int_{W_c}(\overline q-q_\gamma^c)F_r^cG\,dx=0,\qquad r=1,\ldots,J_c.
\]
The limiting source coverage gives $\overline q=q_\gamma^c$ almost everywhere. Weak lower semicontinuity and the norm bound yield convergence of the norms, hence strong convergence of the entire sequence.
\end{proof}
The limiting source coverage is an additional assumption; the finite-mesh bubble criterion in Lemma~\ref{lem:bubble-tail-identifiability} does not imply it. Estimate \eqref{eq:tail-reconstruction-error} retains the quantitative dependence on $\sigma_{t,h}$.

Finally, the recovered tail gives the second exterior Cauchy datum for \(m_\gamma=\sqrt{\gamma}-1\). Define
\begin{equation}\label{eq:h-c-definition}
 \tau_\gamma :=(-\Delta)^s m_\gamma|_{W_c} =-\sqrt{\gamma}\,q_\gamma^c \in L^2(W_c), \qquad \tau_h^c :=-\Pi_h^c\!\left(\sqrt{\Gamma_h^\delta}\,q_h^c\right) \in V_h^c. 
\end{equation}
Let \( \rho_{\tau,h} := \|\Pi_h^c\tau_\gamma-\tau_\gamma\|_{L^2(W_c)}\).
Since $\Gamma_h^\delta\in[\gamma_0/2,2\gamma_1]$ and the square-root
map is Lipschitz on this interval, the $L^2$-contractivity of
$\Pi_h^c$ gives
\begin{equation}\label{eq:tail-error}
 \|\tau_h^c-\tau_\gamma\|_{L^2(W_c)}
 \le
 \rho_{\tau,h}
 +C\|q_h^c-q_\gamma^c\|_{L^2(W_c)}
 +Ce_h^\delta\|q_\gamma^c\|_{L^2(W_c)}.
\end{equation}
Therefore $\tau_h^c\to\tau_\gamma$ in $L^2(W_c)$ whenever $q_h^c\to q_\gamma^c$ in $L^2(W_c)$, $e_h^\delta\to0$, and $\rho_{\tau,h}\to0$.

\subsection{Regularized Cauchy--Liouville continuation}
\label{subsec:regularized-cauchy}

The final continuation on \(\Omega_R\) uses \(\widehat Q_h^\Omega\), \(m_h^W=\sqrt{\Gamma_h^\delta}-1\), and \(\tau_h^c\), defined in \eqref{eq:tail-clipped-potential}, \eqref{eq:ah-mh}, and \eqref{eq:h-c-definition}, respectively. Their errors are controlled by \(\varepsilon_{Q,h}\), \(e_h^\delta:=\|\Gamma_h^\delta-\gamma\|_{L^\infty(W_0)}\), and \eqref{eq:tail-error}. Dependence on the noise and regularization parameters of the reconstructed inputs is suppressed in the notation.

By \eqref{eq:fse2}, the continuous continuation problem is to determine \(m\in\widetilde H^s(\Omega_R)\) such that
\begin{equation}
\label{eq:continuous-cauchy-liouville-problem}
\left\{
\begin{aligned}
(-\Delta)^s m+Q_\gamma(1+m)&=0 &&\text{in }\mathcal D'(\Omega),\\
m&=m_\gamma|_{W_c} &&\text{almost everywhere in }W_c,\\ (-\Delta)^s m&=\tau_\gamma
&&\text{in }\mathcal D'(W_c).
\end{aligned}
\right.
\end{equation}
Assumption~\ref{ass:liouville-regularity} guarantees that \(m_\gamma\) is a solution of \eqref{eq:continuous-cauchy-liouville-problem}. The Cauchy data in \eqref{eq:continuous-cauchy-liouville-problem} determine the continuation uniquely. Indeed, if \(m_1,m_2\in\widetilde H^s(\Omega_R)\) have the same Cauchy data on \(W_c\), then \(e:=m_1-m_2\in H^s(\mathbb R^d)\) satisfies
\[ e=0 \qquad \text{ and } \qquad (-\Delta)^s e=0 \quad\text{in }W_c
\]
in the distributional sense. Since $W_c$ is a nonempty open set, the unique continuation property implies $e\equiv0$ in $\mathbb R^d$; see \cite[Thm.~1.2]{Ghosh2020calderon}. Proposition~\ref{prop:discrete-cauchy-logarithmic} below quantifies this continuation using the discrete observations. We approximate \eqref{eq:continuous-cauchy-liouville-problem} by a finite-dimensional Tikhonov problem; see \cite[Ch.~4]{Engl1996Regularization}.

The nodal subspace \(V_{h,0}^c\subset V_h^c\) gives the approximation properties
\begin{equation}
\label{eq:cauchy-space-density}
\inf_{\eta_h\in V_h^c}
\|\eta-\eta_h\|_{W^s(\mathbb R^d)}
\longrightarrow0
\qquad
\forall\,\eta\in C_c^\infty(W_c),
\end{equation}
and
\begin{equation}
\label{eq:cauchy-projection-consistency}
\|\Pi_h^c w-w\|_{L^2(W_c)}
\longrightarrow0
\qquad
\forall\,w\in L^2(W_c).
\end{equation}
Indeed, let \(\eta\in C_c^\infty(W_c)\). Since \(\operatorname{supp}\eta\) has positive distance from \(\mathbb R^d\setminus W_c\), the global Lagrange interpolant \(I_h\eta\) is supported in \(W_c\) for all sufficiently small \(h\). Hence \(I_h\eta\in V_{h,0}^c\subset V_h^c \). Standard Lagrange interpolation gives \(I_h\eta\to \eta\) in \(H^1(\mathbb R^d)\), and since \(H^1(\mathbb R^d)\hookrightarrow H^s(\mathbb R^d)\) for \(0<s<1\), equivalently in the \(W^s(\mathbb R^d)\)-norm, this proves \eqref{eq:cauchy-space-density}; see \cite[Ch.~3--4]{brenner2008mathematical}. To prove \eqref{eq:cauchy-projection-consistency}, let \(w\in L^2(W_c)\) and choose \(\eta\in C_c^\infty(W_c)\) with \(\|w-\eta\|_{L^2(W_c)}\) arbitrarily small. By \eqref{eq:cauchy-space-density}, there exists \(\eta_h\in V_h^c\) with \(\|\eta-\eta_h\|_{L^2(W_c)}\to0\). Since \(\Pi_h^c\) is the \(L^2(W_c)\)-orthogonal projection onto \(V_h^c\),
\[
    \|\Pi_h^c w-w\|_{L^2(W_c)}
    \le
    \|w-\eta_h\|_{L^2(W_c)}
    \le
    \|w-\eta\|_{L^2(W_c)}
    +
    \|\eta-\eta_h\|_{L^2(W_c)} .
\]
Letting first \(h\to0\) and then using the density of
\(C_c^\infty(W_c)\) in \(L^2(W_c)\) proves \eqref{eq:cauchy-projection-consistency}.

Define the projection error of the exterior value datum by
\begin{equation*}
\rho_{m,h} := \|\Pi_h^c(m_\gamma|_{W_c})-m_\gamma|_{W_c}\|_{L^2(W_c)}. \end{equation*}
Since both \(\Gamma_h^\delta\) and \(\gamma\) take values in the compact interval \([\gamma_0/2,2\gamma_1]\), the square-root map is Lipschitz on this interval. Hence the \(L^2(W_c)\)-contractivity of \(\Pi_h^c\), followed by the embedding \(L^\infty(W_c)\hookrightarrow L^2(W_c)\), gives
\begin{equation*} \begin{aligned} 
\|\Pi_h^c(m_h^W|_{W_c})-m_\gamma|_{W_c}\|_{L^2(W_c)} \le \rho_{m,h}+\|\Pi_h^c(m_h^W-m_\gamma|_{W_c})\|_{L^2(W_c)} \le \rho_{m,h}+C|W_c|^{1/2}e_h^\delta \longrightarrow0.
\end{aligned} \end{equation*}

Since both equations involving \((-\Delta)^s\) are weak equations, we measure their discrete residuals in the corresponding dual norms. For \(L_h\in(V_h^\Omega)^*\) and \(T_h\in(V_h^c)^*\), define \(\|L_h\|_{-s,\Omega,h}\) and \(\| T_h\|_{-s,c,h}\) in analogy with \eqref{eq:discrete-negative-norm}.

For \(Q\in L^\infty(\Omega)\) and \(\mathsf m_h\in V_h^R\), define the interior Liouville residual functional
\begin{equation}
\label{eq:R-omega-h-functional}
\mathcal R_{\Omega,h}(\mathsf m_h;Q)\in(V_h^\Omega)^*,
\qquad
\big\langle
\mathcal R_{\Omega,h}(\mathsf m_h;Q),\phi_h
\big\rangle
:=
A_R(\mathsf m_h,\phi_h)
+
\int_\Omega Q(1+\mathsf m_h)\phi_h\,dx
\end{equation}
for every \(\phi_h\in V_h^\Omega\). This functional is well defined. Indeed, the boundedness of \(A_R\), Hölder's inequality with exponents \((\infty,2,2)\), and the Cauchy--Schwarz inequality on the bounded set \(\Omega\) give
\begin{equation*} 
\big|\langle\mathcal R_{\Omega,h}(\mathsf m_h;Q),\phi_h\rangle\big| \le C(1+\|Q\|_{L^\infty(\Omega)})(1+\|\mathsf m_h\|_{W^s(\mathbb R^d)})\|\phi_h\|_{W^s(\mathbb R^d)}. 
\end{equation*}

The discrepancy in the second Cauchy datum is represented by the affine functional
\begin{equation}
\label{eq:C-c-h-functional}
\mathcal C_{c,h}^\delta(\mathsf m_h)\in(V_h^c)^*,
\qquad
\big\langle
\mathcal C_{c,h}^\delta(\mathsf m_h),\eta_h
\big\rangle
:=
A_R(\mathsf m_h,\eta_h)
-
(\tau_h^c,\eta_h)_{L^2(W_c)}
\end{equation}
for every \(\eta_h\in V_h^c\). By the boundedness of \(A_R\) and the Cauchy--Schwarz inequality,
\begin{equation*} 
\big|\langle\mathcal C_{c,h}^\delta(\mathsf m_h),\eta_h\rangle\big| \le C\left(\|\mathsf m_h\|_{W^s(\mathbb R^d)}+\|\tau_h^c\|_{L^2(W_c)}\right)\|\eta_h\|_{W^s(\mathbb R^d)}. \end{equation*}

Let \( \alpha_v>0\), \(\alpha_\tau>0\), and \(\alpha_m>0\). Here \(\alpha_v\) and \(\alpha_\tau\) weight the two exterior Cauchy residuals, while \(\alpha_m\) is the coercive Tikhonov parameter. For \(\mathsf m_h\in V_h^R\), define
\begin{align}
\nonumber J_{m,h}^\delta(\mathsf m_h)&:={} \frac12\big\|\mathcal R_{\Omega,h} (\mathsf m_h;\widehat Q_h^\Omega) \big\|_{-s,\Omega,h}^2 + \frac{\alpha_v}{2} \big\| \Pi_h^c(\mathsf m_h|_{W_c})-\Pi_h^c(m_h^W|_{W_c})
\big\|_{L^2(W_c)}^2\\& \qquad + \frac{\alpha_\tau}{2} \big\| \mathcal C_{c,h}^\delta(\mathsf m_h)
\big\|_{-s,c,h}^2 + \frac{\alpha_m}{2} \|\mathsf m_h\|_{W^s(\mathbb R^d)}^2 .
\label{eq:m-cauchy-functional}
\end{align}
The discrete Cauchy--Liouville reconstruction is then defined by
\begin{equation}
\label{eq:m-cauchy-minimization}
m_h \in \operatorname*{argmin}_{\mathsf m_h\in V_h^R} J_{m,h}^\delta(\mathsf m_h).
\end{equation}

Let $(\cdot,\cdot)_{-s,\Omega,h}$ and $(\cdot,\cdot)_{-s,c,h}$ be the Hilbert inner products associated with the respective dual norms. Equip $V_h^R$ with the $W^s(\mathbb R^d)$-inner product and introduce the product Hilbert space
\[
\begin{gathered}
Y_{m,h}:=(V_h^\Omega)^*\times V_h^c\times(V_h^c)^*,\qquad
\|(r,d,t)\|_{Y_{m,h}}^2:=\|r\|_{-s,\Omega,h}^2+\|d\|_{L^2(W_c)}^2+\|t\|_{-s,c,h}^2.
\end{gathered}
\]
For $Q\in L^\infty(\Omega)$ and $\tau\in L^2(W_c)$, define
\[
\begin{gathered}
\mathcal L_{Q,h}v:=\mathcal R_{\Omega,h}(v;Q)-\mathcal R_{\Omega,h}(0;Q), \qquad
\langle\mathcal A_{c,h}v,\eta_h\rangle:=A_R(v,\eta_h),\qquad
\ell_{\tau,h}(\eta_h):=(\tau,\eta_h)_{L^2(W_c)}.
\end{gathered}
\]
Set
\begin{equation}\label{eq:m-cauchy-stacked-form}
\begin{aligned}
\mathbb L_{Q,h}v&:=\bigl(\mathcal L_{Q,h}v,\sqrt{\alpha_v}\Pi_h^c(v|_{W_c}),\sqrt{\alpha_\tau}\mathcal A_{c,h}v\bigr),\\
\mathsf b_{Q,h}^\delta&:=\bigl(-\mathcal R_{\Omega,h}(0;Q),\sqrt{\alpha_v}\Pi_h^c(m_h^W|_{W_c}),\sqrt{\alpha_\tau}\ell_{\tau_h^c,h}\bigr).
\end{aligned}
\end{equation}
Both the operator and the first data component depend on $Q$. For $\widehat Q=\widehat Q_h^\Omega$, the functional \eqref{eq:m-cauchy-functional} equals $\tfrac12\|\mathbb L_{\widehat Q,h}m-\mathsf b_{\widehat Q,h}^\delta\|_{Y_{m,h}}^2+\tfrac{\alpha_m}{2}\|m\|_{W^s(\R^{d})}^2$. Lemma~\ref{lem:abstract-shifted-tikhonov}, with center zero and parameter $\alpha_m$, therefore gives a unique minimizer characterized by
\begin{equation}\label{eq:m-cauchy-normal-equation}
(\mathbb L_{\widehat Q,h}m_h-\mathsf b_{\widehat Q,h}^\delta,\mathbb L_{\widehat Q,h}\mathsf v_h)_{Y_{m,h}}+\alpha_m(m_h,\mathsf v_h)_{W^s(\mathbb R^d)}=0,\qquad \mathsf v_h\in V_h^R.
\end{equation}
The matrix of \eqref{eq:m-cauchy-normal-equation} is symmetric positive definite in any basis of $V_h^R$. Estimate \eqref{eq:abstract-tikhonov-apriori} gives
\begin{equation*} \begin{aligned} 
\sqrt{\alpha_m}\|m_h\|_{W^s(\mathbb R^d)} &\le\|\mathsf b_{\widehat Q,h}^\delta\|_{Y_{m,h}}\\ &=\Bigl(\|\mathcal R_{\Omega,h}(0;\widehat Q)\|_{-s,\Omega,h}^2+\alpha_v\|\Pi_h^c(m_h^W|_{W_c})\|_{L^2(W_c)}^2+\alpha_\tau\|\mathcal C_{c,h}^\delta(0)\|_{-s,c,h}^2\Bigr)^{1/2}. 
\end{aligned} \end{equation*}
The same strict-convexity argument gives a unique minimizer on any prescribed reduced affine trial space, with the full lifted function retained in the residuals and energy penalty. Finite-dimensional solvability does not require density; the consistency and logarithmic results retain their stated approximation and observation-space hypotheses.

For stability, compare the computed Cauchy data with their exact projections onto the same observation space. Define
\begin{equation*} 
\eta_{m,h}^c := \|\Pi_h^c(m_h^W|_{W_c})-\Pi_h^c m_\gamma|_{W_c}\|_{L^2(W_c)}, \qquad \eta_{\tau,h}^c := \|\tau_h^c-\Pi_h^c\tau_\gamma\|_{L^2(W_c)}.
\end{equation*}
 Since \(W_c\subset W_0\), the definition \(e_h^\delta=\|\Gamma_h^\delta-\gamma\|_{L^\infty(W_0)}\), the \(L^2(W_c)\)-contractivity of \(\Pi_h^c\), and the Lipschitz continuity of \(t\mapsto\sqrt t\) on \([\gamma_0/2,2\gamma_1]\) give
\begin{equation}
\label{eq:cauchy-value-propagated-error}
\begin{aligned}
\eta_{m,h}^c =
\|\Pi_h^c((m_h^W-m_\gamma)|_{W_c})\|_{L^2(W_c)} \le C|W_c|^{1/2} \|\Gamma_h^\delta-\gamma\|_{L^\infty(W_c)} \le
C|W_c|^{1/2}e_h^\delta .
\end{aligned}
\end{equation}
For the fractional-Laplacian datum, \eqref{eq:h-c-definition} and
\(\tau_\gamma=-\sqrt\gamma\,q_\gamma^c\) imply \(
    \tau_h^c-\Pi_h^c\tau_\gamma  = -\Pi_h^c\!\left(\sqrt{\Gamma_h^\delta}\,q_h^c\right)
    + \Pi_h^c\!\left(\sqrt\gamma\,q_\gamma^c\right)
\). Therefore,
\begin{equation*} \begin{aligned} 
\eta_{\tau,h}^c \le \left\|\sqrt{\Gamma_h^\delta}\,q_h^c-\sqrt\gamma\,q_\gamma^c\right\|_{L^2(W_c)} \le C\|q_h^c-q_\gamma^c\|_{L^2(W_c)} + Ce_h^\delta\|q_\gamma^c\|_{L^2(W_c)}. \end{aligned}
\end{equation*}

Let \(\mathcal C_{c,h}^e\) denote the functional in \eqref{eq:C-c-h-functional} with \(\tau_h^c\) replaced by \(\Pi_h^c\tau_\gamma\). Using the same regularization parameters as in \eqref{eq:m-cauchy-functional}, define
\begin{align*} 
J_{m,h}^e(\mathsf m_h) :={}& \frac12 \|\mathcal R_{\Omega,h}(\mathsf m_h;Q_\gamma)\|_{-s,\Omega,h}^2 + \frac{\alpha_v}{2}\|\Pi_h^c(\mathsf m_h|_{W_c})-\Pi_h^c m_\gamma|_{W_c}\|_{L^2(W_c)}^2 \\ &+ \frac{\alpha_\tau}{2}\|\mathcal C_{c,h}^e(\mathsf m_h)\|_{-s,c,h}^2 + \frac{\alpha_m}{2}\|\mathsf m_h\|_{W^s(\mathbb R^d)}^2, \qquad \mathsf m_h\in V_h^R,
\end{align*}
and let
\begin{equation}
\label{eq:m-cauchy-exact-minimizer}
m_h^e
:=
\operatorname*{argmin}_{\mathsf m_h\in V_h^R}
J_{m,h}^e(\mathsf m_h).
\end{equation}
Define $\mathsf b_{Q,h}^e$ by \eqref{eq:m-cauchy-stacked-form} with $\Pi_h^c(m_h^W|_{W_c})$ and $\tau_h^c$ replaced by $\Pi_h^c(m_\gamma|_{W_c})$ and $\Pi_h^c\tau_\gamma$, respectively. Lemma~\ref{lem:abstract-shifted-tikhonov}, with operator $\mathbb L_{Q_\gamma,h}$ and data $\mathsf b_{Q_\gamma,h}^e$, gives existence and uniqueness of $m_h^e$.

\begin{lemma}[Perturbation of the interior Liouville residual]
\label{lem:m-cauchy-potential-perturbation}
Let \(Q_1,Q_2\in L^\infty(\Omega)\). There exists a constant \(C>0\), independent of \(h\), such that for every \(\mathsf m_h,\mathsf v_h\in V_h^R\),
\begin{align}
& \|\mathcal R_{\Omega,h}(\mathsf m_h;Q_1)
- \mathcal R_{\Omega,h}(\mathsf m_h;Q_2)\|_{-s,\Omega,h}
\le C\|Q_1-Q_2\|_{L^{\frac d{2s}}(\Omega)} \bigl(1+\|\mathsf m_h\|_{W^s(\mathbb R^d)}\bigr),
\label{eq:m-cauchy-affine-potential-perturbation}
\\
& \left\| \left[\mathcal R_{\Omega,h}(\mathsf v_h;Q_1) - \mathcal R_{\Omega,h}(0;Q_1) \right]
- \left[ \mathcal R_{\Omega,h}(\mathsf v_h;Q_2) - \mathcal R_{\Omega,h}(0;Q_2) \right]
\right\|_{-s,\Omega,h}\nonumber
\\& \hspace{8cm}\le C\|Q_1-Q_2\|_{L^{\frac d{2s}}(\Omega)}
\|\mathsf v_h\|_{W^s(\mathbb R^d)}.
\label{eq:m-cauchy-linear-potential-perturbation}
\end{align}
Moreover,
\begin{align}
\|\mathcal R_{\Omega,h}(\mathsf m_h;Q)\|_{-s,\Omega,h}
&\le
C\bigl(1+\|Q\|_{L^{\frac d{2s}}(\Omega)}\bigr)
\bigl(1+\|\mathsf m_h\|_{W^s(\mathbb R^d)}\bigr),
\label{eq:m-cauchy-residual-uniform-bound}\\
\|\mathcal R_{\Omega,h}(\mathsf v_h;Q)
-
\mathcal R_{\Omega,h}(0;Q)\|_{-s,\Omega,h}
&\le
C\bigl(1+\|Q\|_{L^{\frac d{2s}}(\Omega)}\bigr)
\|\mathsf v_h\|_{W^s(\mathbb R^d)}.
\label{eq:m-cauchy-linear-residual-bound}
\end{align}
\end{lemma}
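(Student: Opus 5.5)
The residual \(\mathcal R_{\Omega,h}(\mathsf m_h;Q)\) is affine in \(\mathsf m_h\) and linear in \(Q\), so all four estimates reduce to one bound for the trilinear term \(\int_\Omega Q\,w\,\phi_h\,dx\), together with continuity of \(A_R\). Since \(\|\cdot\|_{-s,\Omega,h}\) is a supremum over \(\phi_h\in V_h^\Omega\) normalized in \(W^s(\mathbb R^d)\), it suffices to bound each pairing by the right-hand side times \(\|\phi_h\|_{W^s(\mathbb R^d)}\).

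\textbf{Step 1: differences in the potential.} From \eqref{eq:R-omega-h-functional},
\[
\langle\mathcal R_{\Omega,h}(\mathsf m_h;Q_1)-\mathcal R_{\Omega,h}(\mathsf m_h;Q_2),\phi_h\rangle=\int_\Omega(Q_1-Q_2)(1+\mathsf m_h)\phi_h\,dx .
\]
Split \(1+\mathsf m_h\) into its two pieces.
\begin{itemize}
\item For the \(\mathsf m_h\)-piece, apply \eqref{eq:tail-potential-form-bound}, i.e.\ H\"older with \(\frac{2s}{d}+\frac2{p_s^*}=1\) followed by \(H^s\hookrightarrow L^{p_s^*}\). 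This gives \(C\|Q_1-Q_2\|_{L^{d/(2s)}(\Omega)}\|\mathsf m_h\|_{W^s}\|\phi_h\|_{W^s}\). The bound applies to \(\mathsf m_h\in V_h^R\) because only the factor \(\phi_h\), supported in \(\Omega\), restricts the integral, and the Sobolev embedding holds on all of \(H^s(\mathbb R^d)\).
\item For the constant piece, use H\"older with exponents \(\frac d{2s}\), \(r\), \(p_s^*\), where \(\frac1r=1-\frac{2s}d-\frac1{p_s^*}=\frac12-\frac sd>0\). Since \(\Omega\) is bounded, \(\|1\|_{L^r(\Omega)}=|\Omega|^{1/r}\), and this yields \(C\|Q_1-Q_2\|_{L^{d/(2s)}}\|\phi_h\|_{W^s}\).
\end{itemize}
Summing the two pieces gives \eqref{eq:m-cauchy-affine-potential-perturbation}. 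For \eqref{eq:m-cauchy-linear-potential-perturbation}, note that \(\mathcal R_{\Omega,h}(\mathsf v_h;Q)-\mathcal R_{\Omega,h}(0;Q)\) pairs to \(A_R(\mathsf v_h,\phi_h)+\int_\Omega Q\mathsf v_h\phi_h\,dx\). In the double difference the \(A_R\)-terms cancel, leaving only the \(\mathsf v_h\)-piece, so the first bullet applies with \(\mathsf m_h=\mathsf v_h\).

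\textbf{Step 2: uniform bounds.} For \eqref{eq:m-cauchy-residual-uniform-bound} and \eqref{eq:m-cauchy-linear-residual-bound}, use \(|A_R(\mathsf m_h,\phi_h)|\le\|\mathsf m_h\|_{W^s}\|\phi_h\|_{W^s}\). This follows because \(A_R(a,b)=\langle(-\Delta)^sa,b\rangle\) on \(\widetilde H^s(\Omega_R)\), which contains \(V_h^R\) and \(V_h^\Omega\), combined with Cauchy--Schwarz and \eqref{eq:Ws-norm-convention}. The potential term is then handled exactly as in Step 1, with \(Q\) in place of \(Q_1-Q_2\). The constant piece is needed for the affine estimate but drops out of the linear one.

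\textbf{Main obstacle.} The only delicate point is the constant term \(\int_\Omega Q\phi_h\,dx\) in the affine estimates, which requires the finite measure of \(\Omega\) and the strict inequality \(r<\infty\). That inequality holds because \(s<d/2\). The constants depend only on \(d\), \(s\), \(|\Omega|\) and the Sobolev constant, hence are independent of \(h\).
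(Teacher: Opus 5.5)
Your proposal is correct and is essentially the paper's argument: H\"older with \(\frac{2s}{d}+\frac{2}{p_s^*}=1\) and the embedding \(H^s(\mathbb R^d)\hookrightarrow L^{p_s^*}(\mathbb R^d)\), applied to \(z=1+\mathsf m_h\) or \(z=\mathsf v_h\), together with continuity of \(A_R\) for the two uniform bounds. Your auxiliary exponent satisfies \(\frac1r=\frac12-\frac sd=\frac1{p_s^*}\), so your separate treatment of the constant piece is exactly the paper's bound \(\|1\|_{L^{p_s^*}(\Omega)}=|\Omega|^{1/p_s^*}\).
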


\begin{proof}
For \(q\in L^{d/(2s)}(\Omega)\), \(z\in L^{p_s^*}(\Omega)\), and \(\phi_h\in V_h^\Omega\), H\"older's inequality and \(H^s(\mathbb R^d)\hookrightarrow L^{p_s^*}(\mathbb R^d)\) give
\[
 \left|\int_\Omega qz\phi_h\,dx\right|
 \le C\|q\|_{L^{d/(2s)}(\Omega)}\|z\|_{L^{p_s^*}(\Omega)}\|\phi_h\|_{W^s(\mathbb R^d)},
 \qquad \frac{2s}{d}+\frac2{p_s^*}=1.
\]
By \eqref{eq:R-omega-h-functional}, take \(q=Q_1-Q_2\) and \(z=1+\mathsf m_h\) or \(z=\mathsf v_h\), and then take the supremum over \(\phi_h\), to obtain \eqref{eq:m-cauchy-affine-potential-perturbation} and \eqref{eq:m-cauchy-linear-potential-perturbation}. The same estimate with \(q=Q\), together with continuity of \(A_R\), proves \eqref{eq:m-cauchy-residual-uniform-bound} and \eqref{eq:m-cauchy-linear-residual-bound}.
\end{proof}

\begin{proposition}[Stability of the regularized continuation]
\label{prop:m-cauchy-stability}
Let \(m_h\) and \(m_h^e\) be defined by
\eqref{eq:m-cauchy-minimization} and
\eqref{eq:m-cauchy-exact-minimizer}, respectively. Then
\begin{align}
\|m_h-m_h^e\|_{W^s(\mathbb R^d)}
\le{}&
\frac{1}{2\sqrt{\alpha_m}}
\left[ \alpha_v(\eta_{m,h}^c)^2
+ \alpha_\tau(\eta_{\tau,h}^c)^2
\right]^{1/2}+
\frac{C\varepsilon_{Q,h}}{\sqrt{\alpha_m}} \left(
1+\|m_h^e\|_{W^s(\mathbb R^d)}
\right)
\nonumber\\
&+
\frac{C\varepsilon_{Q,h}}{\alpha_m}
\|\mathcal R_{\Omega,h}(m_h^e;Q_\gamma)\|_{-s,\Omega,h},
\label{eq:m-cauchy-stability}
\end{align}
where \(C>0\) is independent of \(h\), \(\delta\), and the
regularization parameters. The constant may depend on \(d,s,\Omega,Q_*\), and \(\|Q_\gamma\|_{L^\infty(\Omega)}\). Furthermore, there is a constant \(C_\gamma>0\) such that
\begin{equation}
\label{eq:m-cauchy-exact-apriori}
\sqrt{\alpha_m}\,
\|m_h^e\|_{W^s(\mathbb R^d)}
+
\|\mathcal R_{\Omega,h}(m_h^e;Q_\gamma)\|_{-s,\Omega,h}
\le
C_\gamma .
\end{equation}
Here \(C_\gamma\) is independent of \(h\), \(\delta\), and \(\alpha_m\);
it may depend on the fixed weights \(\alpha_v,\alpha_\tau\), on
\(Q_\gamma\), \(m_\gamma|_{W_c}\), and \(\tau_\gamma\). Consequently, for \(0<\alpha_m\le1\),
\begin{equation}
\label{eq:m-cauchy-stability-simplified}
\|m_h-m_h^e\|_{W^s(\mathbb R^d)}
\le
\frac{1}{2\sqrt{\alpha_m}}
\left[
\alpha_v(\eta_{m,h}^c)^2
+
\alpha_\tau(\eta_{\tau,h}^c)^2
\right]^{1/2}
+
\frac{C_\gamma\varepsilon_{Q,h}}{\alpha_m}.
\end{equation}
\end{proposition}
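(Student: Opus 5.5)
The plan is to compare the two minimizers through their normal equations \eqref{eq:m-cauchy-normal-equation} and an intermediate problem. I introduce an auxiliary minimizer \(\widetilde m_h\) with the exact operator \(\mathbb L_{Q_\gamma,h}\) but the perturbed Cauchy data \(\Pi_h^c(m_h^W|_{W_c})\) and \(\tau_h^c\). The data vector of this problem differs from \(\mathsf b_{Q_\gamma,h}^e\) only in the second and third components. The second component differs by \(\sqrt{\alpha_v}\,\Pi_h^c((m_h^W-m_\gamma)|_{W_c})\), of \(L^2\)-norm \(\sqrt{\alpha_v}\,\eta_{m,h}^c\). The third differs by \(\sqrt{\alpha_\tau}\,\ell_{\tau_h^c-\Pi_h^c\tau_\gamma,h}\); its dual norm is at most \(\sqrt{\alpha_\tau}\,\eta_{\tau,h}^c\), since \(\|\eta_h\|_{L^2}\le\|\eta_h\|_{W^s}\). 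The data-stability estimate \eqref{eq:abstract-tikhonov-data-stability} of Lemma~\ref{lem:abstract-shifted-tikhonov} (same operator, same center zero) then gives the first term \(\frac{1}{2\sqrt{\alpha_m}}[\alpha_v(\eta_{m,h}^c)^2+\alpha_\tau(\eta_{\tau,h}^c)^2]^{1/2}\) as a bound for \(\|\widetilde m_h-m_h^e\|_{W^s}\).

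Next I compare \(m_h\) with \(\widetilde m_h\). Both have the same data components two and three, but the operators differ in the first component, and so does \(-\mathcal R_{\Omega,h}(0;Q)\). Subtracting the two normal equations and testing with \(\mathsf v_h=m_h-\widetilde m_h\) gives a coercivity identity. Let \(\mathsf e_h\) denote this difference. The left-hand side is \(\|\mathbb L_{\widehat Q,h}\mathsf e_h\|^2+\alpha_m\|\mathsf e_h\|^2\). The right-hand side consists of two perturbation pairings:
\[
\bigl(\mathcal R_{\Omega,h}(\widetilde m_h;Q_\gamma)-\mathcal R_{\Omega,h}(\widetilde m_h;\widehat Q),\mathcal L_{\widehat Q,h}\mathsf e_h\bigr)_{-s,\Omega,h}
\]
and
\[
\bigl(\mathcal R_{\Omega,h}(\widetilde m_h;Q_\gamma),(\mathcal L_{Q_\gamma,h}-\mathcal L_{\widehat Q,h})\mathsf e_h\bigr)_{-s,\Omega,h}.
\]
Lemma~\ref{lem:m-cauchy-potential-perturbation} bounds the first by \(C\varepsilon_{Q,h}(1+\|\widetilde m_h\|)\|\mathcal L_{\widehat Q,h}\mathsf e_h\|\), which Young absorbs into the residual term. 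By \eqref{eq:m-cauchy-linear-potential-perturbation}, the second is at most \(C\varepsilon_{Q,h}\|\mathcal R_{\Omega,h}(\widetilde m_h;Q_\gamma)\|\,\|\mathsf e_h\|\), which is absorbed by \(\alpha_m\|\mathsf e_h\|^2\). Together these give
\[
\|\mathsf e_h\|\le C\varepsilon_{Q,h}\alpha_m^{-1/2}(1+\|\widetilde m_h\|)+C\varepsilon_{Q,h}\alpha_m^{-1}\|\mathcal R_{\Omega,h}(\widetilde m_h;Q_\gamma)\|.
\]
Finally I replace \(\widetilde m_h\) by \(m_h^e\) in these quantities. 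The first-stage bound and the \(Q_\gamma\)-Lipschitz bound \eqref{eq:m-cauchy-linear-residual-bound} give such a replacement, but it generates cross terms of the form \(\varepsilon_{Q,h}\alpha_m^{-1}\times\)(data error). The cleaner route, which I would try first, is to run the second comparison directly between \(m_h\) and \(m_h^e\) while carrying the data difference as an extra right-hand-side term. The identity then contains the three pieces separately, and the data piece retains the sharp constant \(1/(2\sqrt{\alpha_m})\) because the data enter only through \(\mathsf b\). I expect the main obstacle to be keeping the constant exactly \(1/2\) on the data term while absorbing the potential terms. This needs a careful Young split of the mixed residual, and I would accept enlarging \(C\) on the \(\varepsilon_{Q,h}\) terms. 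The uniform bound \(\|\widehat Q\|_{L^\infty}\le Q_*\) from \eqref{eq:tail-clipped-potential}, together with \(\|\cdot\|_{L^{d/2s}}\le C\|\cdot\|_{L^\infty}\) on \(\Omega\), makes all constants independent of \(h\).

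For \eqref{eq:m-cauchy-exact-apriori}, I compare \(J_{m,h}^e(m_h^e)\) with \(J_{m,h}^e(0)\), which is the a priori bound \eqref{eq:abstract-tikhonov-apriori}. This gives \(\sqrt{\alpha_m}\|m_h^e\|+\|\mathcal R_{\Omega,h}(m_h^e;Q_\gamma)\|\le\sqrt2\,\|\mathsf b_{Q_\gamma,h}^e\|_{Y_{m,h}}\). The right-hand side is controlled uniformly in \(h\) by \(\|\mathcal R_{\Omega,h}(0;Q_\gamma)\|\le C\|Q_\gamma\|_{L^2}\), by \(\sqrt{\alpha_v}\|m_\gamma\|_{L^2(W_c)}\), and by \(\sqrt{\alpha_\tau}\|\tau_\gamma\|_{L^2(W_c)}\), using the contractivity of \(\Pi_h^c\). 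Inserting this into \eqref{eq:m-cauchy-stability} with \(\alpha_m\le1\) turns both \(\varepsilon_{Q,h}\) terms into \(C_\gamma\varepsilon_{Q,h}/\alpha_m\), since \(\alpha_m^{-1/2}(1+\|m_h^e\|)\le\alpha_m^{-1/2}+C_\gamma\alpha_m^{-1}\le(1+C_\gamma)\alpha_m^{-1}\). This yields \eqref{eq:m-cauchy-stability-simplified}.
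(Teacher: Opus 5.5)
Your argument goes through along the route you say you would try first. Your primary decomposition, however, is ordered opposite to the paper's and, as you noticed, does not by itself give \eqref{eq:m-cauchy-stability}. Basing the operator perturbation at $\widetilde m_h$ (exact potential, noisy Cauchy data) produces residuals at $\widetilde m_h$. Transferring them to $m_h^e$ via \eqref{eq:abstract-tikhonov-data-stability} costs cross terms $C\varepsilon_{Q,h}\alpha_m^{-1}[\alpha_v(\eta_{m,h}^c)^2+\alpha_\tau(\eta_{\tau,h}^c)^2]^{1/2}$, which the statement does not contain.

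The paper uses the other intermediate problem: $\overline m_h$ with the reconstructed operator $\mathbb L_{\widehat Q,h}$ and exact data $\mathsf b_{\widehat Q,h}^e$. Then $m_h$ and $\overline m_h$ share the operator and the first data component $-\mathcal R_{\Omega,h}(0;\widehat Q_h^\Omega)$, so \eqref{eq:abstract-tikhonov-data-stability} gives the Cauchy-data term with constant $1/2$. Moreover, \eqref{eq:abstract-tikhonov-operator-perturbation} for the pair $(\overline m_h,m_h^e)$ is based at $m_h^e$, so $\mathcal R_{\Omega,h}(m_h^e;Q_\gamma)$ appears directly.

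Your one-shot comparison between $m_h$ and $m_h^e$ is equally valid and slightly shorter, and the obstacle you anticipate does not arise. Applying \eqref{eq:abstract-tikhonov-operator-perturbation} to $(m_h,m_h^e)$ gives $\|m_h-m_h^e\|_{W^s(\mathbb R^d)}\le\|D\|_{Y_{m,h}}/(2\sqrt{\alpha_m})+\|\ell_r\|/\alpha_m$, where $\ell_r(\mathsf v_h)=(\mathcal R_{\Omega,h}(m_h^e;Q_\gamma),(\mathcal L_{\widehat Q,h}-\mathcal L_{Q_\gamma,h})\mathsf v_h)_{-s,\Omega,h}$. The effective data change $D$ is the sum of $(\mathcal R_{\Omega,h}(m_h^e;Q_\gamma)-\mathcal R_{\Omega,h}(m_h^e;\widehat Q_h^\Omega),0,0)$ and a pure Cauchy-data change $(0,\cdot,\cdot)$. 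The triangle inequality in $Y_{m,h}$ therefore separates the two with the factor $1/2$ intact. No Young splitting is needed beyond $\alpha_m\|\mathsf e_h\|^2\le\|D\|^2/4+\|\ell_r\|\,\|\mathsf e_h\|$.

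Your treatment of \eqref{eq:m-cauchy-exact-apriori} and \eqref{eq:m-cauchy-stability-simplified} matches the paper's.
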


\begin{proof}
Write $Q=Q_\gamma$ and $\widehat Q=\widehat Q_h^\Omega$. Let $\overline m_h$ be the minimizer with operator $\mathbb L_{\widehat Q,h}$ and exact projected data $\mathsf b_{\widehat Q,h}^e$. Since $\|\ell_{\tau_h^c-\Pi_h^c\tau_\gamma,h}\|_{-s,c,h}\le\eta_{\tau,h}^c$, estimate \eqref{eq:abstract-tikhonov-data-stability} yields
\begin{equation*} 
\|m_h-\overline m_h\|_{W^s(\mathbb R^d)}\le\frac1{2\sqrt{\alpha_m}}\left[\alpha_v(\eta_{m,h}^c)^2+\alpha_\tau(\eta_{\tau,h}^c)^2\right]^{1/2}. 
\end{equation*}
For $e_2:=\overline m_h-m_h^e$, subtracting the normal equations gives
\begin{align*} 
\|\mathbb L_{\widehat Q,h}e_2\|_{Y_{m,h}}^2+\alpha_m\|e_2\|_{W^s(\R^{d})}^2&=\bigl(\mathcal R_{\Omega,h}(m_h^e;Q)-\mathcal R_{\Omega,h}(m_h^e;\widehat Q),\mathcal L_{\widehat Q,h}e_2\bigr)_{-s,\Omega,h}\\ &\qquad+\bigl(\mathcal R_{\Omega,h}(m_h^e;Q),(\mathcal L_{Q,h}-\mathcal L_{\widehat Q,h})e_2\bigr)_{-s,\Omega,h}. 
\end{align*}
In \eqref{eq:abstract-tikhonov-operator-perturbation}, the effective data change $(\mathsf b_{\widehat Q,h}^e-\mathsf b_{Q,h}^e)-(\mathbb L_{\widehat Q,h}-\mathbb L_{Q,h})m_h^e$ has only the first component $\mathcal R_{\Omega,h}(m_h^e;Q)-\mathcal R_{\Omega,h}(m_h^e;\widehat Q)$. The adjoint perturbation likewise pairs only with the interior residual. Lemma~\ref{lem:m-cauchy-potential-perturbation} therefore gives
\begin{align*} 
\|\overline m_h-m_h^e\|_{W^s(\mathbb R^d)} \le\frac{C\varepsilon_{Q,h}}{\sqrt{\alpha_m}}(1+\|m_h^e\|_{W^s(\mathbb R^d)})+\frac{C\varepsilon_{Q,h}}{\alpha_m}\|\mathcal R_{\Omega,h}(m_h^e;Q_\gamma)\|_{-s,\Omega,h}. \end{align*}
The triangle inequality proves \eqref{eq:m-cauchy-stability}. The comparison estimate \eqref{eq:abstract-tikhonov-comparison}, with comparator zero, bounds the exact-data residual norm and $\sqrt{\alpha_m}\|m_h^e\|_{W^s(\R^{d})}$ by $C\|\mathsf b_{Q,h}^e\|_{Y_{m,h}}$. Contractivity of $\Pi_h^c$, $\|\ell_{\Pi_h^c\tau_\gamma,h}\|_{-s,c,h}\le\|\tau_\gamma\|_{L^2(W_c)}$, and \eqref{eq:m-cauchy-residual-uniform-bound} bound $\|\mathsf b_{Q,h}^e\|_{Y_{m,h}}$ independently of $h$, proving \eqref{eq:m-cauchy-exact-apriori}. Substitution into \eqref{eq:m-cauchy-stability}, for $0<\alpha_m\le1$, gives \eqref{eq:m-cauchy-stability-simplified}.
\end{proof}

Proposition~\ref{prop:m-cauchy-stability} controls the perturbation
caused by the three reconstructed inputs. The remaining error is the
exact-input discretization and regularization error. More precisely,
\begin{equation*} \|m_h-m_\gamma\|_{W^s(\mathbb R^d)} \le \|m_h-m_h^e\|_{W^s(\mathbb R^d)} + \|m_h^e-m_\gamma\|_{W^s(\mathbb R^d)}. \end{equation*}
The first term is bounded by
\eqref{eq:m-cauchy-stability}. The second term will be estimated by
comparing \(m_h^e\) with an appropriate finite element approximation
of \(m_\gamma\), separating the interior Liouville residual, the two
projected Cauchy residuals, the finite element approximation error,
and the regularization bias.

For the convergence analysis below, the residual weights \(\alpha_v>0\) and \(\alpha_\tau>0\) are kept fixed, independently of \(h\) and \(\delta\). Only the coercive Tikhonov parameter \(\alpha_m=\alpha_m(h)\) is allowed to tend to zero. We also use the density property
\begin{equation}\label{eq:VR-density}
\lim_{h\to0} \inf_{v_h\in V_h^R} \|v-v_h\|_{W^s(\mathbb R^d)} =0
\qquad \forall v\in\widetilde H^s(\Omega_R).
\end{equation}
Indeed, \(C_c^\infty(\Omega_R)\) is dense in \(\widetilde H^s(\Omega_R)\). A compactly supported smooth function can be approximated by fitted-mesh Lagrange interpolation in \(H^1(\mathbb R^d)\), and hence also in \(H^s(\mathbb R^d)\).

Let us take \(m_{\gamma,h}\) to be a near-best approximation of \(m_\gamma\) in \(V_h^R\). The condition \(\|m_{\gamma,h}-m_\gamma\|_{W^s(\mathbb R^d)}^2=o(\alpha_m)\)
is therefore a standard coupling between the finite element approximation error and the final Tikhonov parameter.

\begin{proposition}
\label{prop:m-cauchy-exact-consistency}
Assume Assumption~\ref{ass:liouville-regularity},
\eqref{eq:cauchy-space-density}, \eqref{eq:cauchy-projection-consistency}, and
\eqref{eq:VR-density}. Let \(m_h^e\) be defined by
\eqref{eq:m-cauchy-exact-minimizer}. Then, for every
\(m_{\gamma,h}\in V_h^R\), there holds
\begin{align}
&\|\mathcal R_{\Omega,h}(m_h^e;Q_\gamma)\|_{-s,\Omega,h}^2
+
\alpha_v
\|\Pi_h^c((m_h^e-m_\gamma)|_{W_c})\|_{L^2(W_c)}^2
+
\alpha_\tau
\|\mathcal C_{c,h}^e(m_h^e)\|_{-s,c,h}^2
+
\alpha_m\|m_h^e\|_{W^s(\mathbb R^d)}^2
\nonumber\\
&\qquad\le
C\|m_{\gamma,h}-m_\gamma\|_{W^s(\mathbb R^d)}^2
+
\alpha_m\|m_{\gamma,h}\|_{W^s(\mathbb R^d)}^2 .
\label{eq:m-cauchy-exact-comparison-bound}
\end{align}
If, in addition, \(\alpha_m(h)\to0\) and there exists
\(m_{\gamma,h}\in V_h^R\) such that
\begin{equation}
\label{eq:m-gamma-comparison-rule}
\|m_{\gamma,h}-m_\gamma\|_{W^s(\mathbb R^d)}
\longrightarrow0,
\qquad
\frac{\|m_{\gamma,h}-m_\gamma\|_{W^s(\mathbb R^d)}^2}{\alpha_m}
\longrightarrow0,
\end{equation}
then \( m_h^e\longrightarrow m_\gamma\) strongly in \(W^s(\mathbb R^d)\).
\end{proposition}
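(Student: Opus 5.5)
The plan is to compare $m_h^e$ with the competitor $m_{\gamma,h}$ in the exact-data functional $J_{m,h}^e$, and then to pass to the limit using weak compactness together with the uniqueness of the Cauchy--Liouville continuation.

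\textbf{Comparison bound.} By minimality, $2J_{m,h}^e(m_h^e)\le 2J_{m,h}^e(m_{\gamma,h})$. The left-hand side is exactly the left-hand side of \eqref{eq:m-cauchy-exact-comparison-bound}, because $\Pi_h^c(\mathsf m_h|_{W_c})-\Pi_h^c m_\gamma|_{W_c}=\Pi_h^c((\mathsf m_h-m_\gamma)|_{W_c})$. On the right-hand side, each residual vanishes at $m_\gamma$ and is affine, so its value at $m_{\gamma,h}$ is controlled by $\|m_{\gamma,h}-m_\gamma\|_{W^s}$:
\begin{itemize}
\item Since $m_\gamma$ solves \eqref{eq:fse2} weakly and $V_h^\Omega\subset\widetilde H^s(\Omega)$, we have $\mathcal R_{\Omega,h}(m_\gamma;Q_\gamma)=0$. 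Then \eqref{eq:m-cauchy-linear-residual-bound} gives $\|\mathcal R_{\Omega,h}(m_{\gamma,h};Q_\gamma)\|\le C\|m_{\gamma,h}-m_\gamma\|_{W^s}$.
\item The value residual is bounded by the $L^2$-contractivity of $\Pi_h^c$.
\item For the fractional-Laplacian residual, $A_R(m_\gamma,\eta_h)=\langle(-\Delta)^s m_\gamma,\eta_h\rangle=(\tau_\gamma,\eta_h)_{L^2(W_c)}=(\Pi_h^c\tau_\gamma,\eta_h)_{L^2(W_c)}$ for $\eta_h\in V_h^c$. Hence $\mathcal C^e_{c,h}(m_\gamma)=0$, and continuity of $A_R$ bounds $\mathcal C^e_{c,h}(m_{\gamma,h})$.
\end{itemize}
Since $\alpha_v,\alpha_\tau$ are fixed, this proves \eqref{eq:m-cauchy-exact-comparison-bound}.

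\textbf{Convergence.} Under \eqref{eq:m-gamma-comparison-rule}, divide \eqref{eq:m-cauchy-exact-comparison-bound} by $\alpha_m$. This gives $\limsup\|m_h^e\|_{W^s}^2\le\|m_\gamma\|_{W^s}^2$, so $(m_h^e)$ is bounded in $\widetilde H^s(\Omega_R)$. Take a subsequence with $m_h^e\rightharpoonup\bar m$ weakly in $\widetilde H^s(\Omega_R)$; by compactness, $m_h^e\to\bar m$ in $L^2(\Omega_R)$. The three residuals in \eqref{eq:m-cauchy-exact-comparison-bound} tend to zero, because the right-hand side tends to zero. It remains to identify $\bar m$ by showing that it solves \eqref{eq:continuous-cauchy-liouville-problem}.
\begin{itemize}
\item \emph{Interior equation.} For $\phi\in C_c^\infty(\Omega)$, pick $\phi_h\in V_h^\Omega$ with $\phi_h\to\phi$ in $W^s$, using \eqref{eq:VOmega-approximation}. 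Then $\langle\mathcal R_{\Omega,h}(m_h^e;Q_\gamma),\phi_h\rangle\to0$, while by weak convergence (and strong convergence of $\phi_h$) this pairing converges to $A_R(\bar m,\phi)+\int_\Omega Q_\gamma(1+\bar m)\phi$. Hence $\bar m$ satisfies the interior equation.
\item \emph{Fractional-Laplacian datum.} For $\eta\in C_c^\infty(W_c)$, pick $\eta_h\to\eta$ by \eqref{eq:cauchy-space-density}. Using \eqref{eq:cauchy-projection-consistency} for $\Pi_h^c\tau_\gamma$, we obtain $A_R(\bar m,\eta)=(\tau_\gamma,\eta)$.
\item \emph{Value datum.} For $w\in L^2(W_c)$, write $(\Pi_h^c(m_h^e-m_\gamma),w)=(m_h^e-m_\gamma,\Pi_h^c w)$. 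This tends to $(\bar m-m_\gamma,w)$, by strong $L^2$ convergence of $m_h^e$ and \eqref{eq:cauchy-projection-consistency}. Since the residual tends to zero, $\bar m=m_\gamma$ on $W_c$.
\end{itemize}
Uniqueness of the Cauchy continuation (the unique continuation property \eqref{eq:sec2-ucp}, as argued after \eqref{eq:continuous-cauchy-liouville-problem}) then gives $\bar m=m_\gamma$. So the whole sequence converges weakly to $m_\gamma$.

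\textbf{Strong convergence.} Weak lower semicontinuity together with $\limsup\|m_h^e\|_{W^s}\le\|m_\gamma\|_{W^s}$ gives convergence of the norms, and hence strong convergence in $W^s(\mathbb R^d)$. The main obstacle I expect is the limit passage in the weak residuals: one must check that the dual-norm residuals tested against approximants of fixed smooth test functions converge. This needs the uniform $W^s$-boundedness of $m_h^e$ (from the $o(\alpha_m)$ coupling) and the $L^{d/(2s)}$ potential estimate \eqref{eq:tail-potential-form-bound} to handle the term $Q_\gamma\, m_h^e\,\phi_h$.
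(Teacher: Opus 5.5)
Your proposal is correct and follows the paper's proof essentially step by step. Both arguments use minimality against the competitor \(m_{\gamma,h}\), with the three residuals vanishing at \(m_\gamma\), and then the \(o(\alpha_m)\) coupling to get the \(\limsup\) norm bound. Both then identify the weak limit through the projected value and fractional-Laplacian residuals on \(W_c\), apply unique continuation, and conclude by norm convergence. The only deviation is that you also verify the interior Liouville equation for \(\bar m\); this is harmless but unnecessary, since uniqueness of the continuation uses only the two Cauchy data on \(W_c\).
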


\begin{proof}
Set \(e_{I,h}=m_{\gamma,h}-m_\gamma\). Since \(m_\gamma\) satisfies \eqref{eq:continuous-cauchy-liouville-problem}, continuity of \(A_R\), H\"older--Sobolev estimates, and contractivity of \(\Pi_h^c\) give
\[
 \|\mathcal R_{\Omega,h}(m_{\gamma,h};Q_\gamma)\|_{-s,\Omega,h}
 +\|\Pi_h^c(e_{I,h}|_{W_c})\|_{L^2(W_c)}
 +\|\mathcal C_{c,h}^e(m_{\gamma,h})\|_{-s,c,h}
 \le C\|e_{I,h}\|_{W^s(\mathbb R^d)}.
\]
Minimality, \(J_{m,h}^e(m_h^e)\le J_{m,h}^e(m_{\gamma,h})\), proves \eqref{eq:m-cauchy-exact-comparison-bound}. Under \eqref{eq:m-gamma-comparison-rule} and \(\alpha_m\to0\), this comparison makes all three residuals vanish and gives
\[
 \|m_h^e\|_{W^s(\R^{d})}^2\le C\frac{\|e_{I,h}\|_{W^s(\R^{d})}^2}{\alpha_m}+\|m_{\gamma,h}\|_{W^s(\R^{d})}^2,
 \qquad
 \limsup_{h\to0}\|m_h^e\|_{W^s(\R^{d})}^2\le\|m_\gamma\|_{W^s(\R^{d})}^2.
\]
Here \(\alpha_v,\alpha_\tau\) are fixed positive weights. Let a subsequence converge weakly to \(\overline m\) in \(\widetilde H^s(\Omega_R)\). For \(\eta\in C_c^\infty(W_c)\), choose \(\eta_h\in V_h^c\) converging to \(\eta\) in \(W^s\) by \eqref{eq:cauchy-space-density}. Orthogonality of \(\Pi_h^c\), the exact Cauchy identity, and the vanishing residuals imply
\[
 \begin{aligned}
 (m_h^e-m_\gamma,\eta_h)_{L^2(W_c)}
 &=\bigl(\Pi_h^c((m_h^e-m_\gamma)|_{W_c}),\eta_h\bigr)_{L^2(W_c)}\longrightarrow0,\\
 A_R(m_h^e-m_\gamma,\eta_h)
 &=\langle\mathcal C_{c,h}^e(m_h^e),\eta_h\rangle\longrightarrow0.
 \end{aligned}
\]
Passing to the limit yields \(\overline m-m_\gamma=(-\Delta)^s(\overline m-m_\gamma)=0\) in \(W_c\). Unique continuation \eqref{eq:sec2-ucp} gives \(\overline m=m_\gamma\); hence the entire sequence converges weakly to \(m_\gamma\). Weak lower semicontinuity and the displayed limsup bound give convergence of the \(W^s\)-norms, and therefore strong convergence in \(W^s(\mathbb R^d)\).
\end{proof}

\begin{corollary}[Perturbation estimate adapted to the exact-input comparison]
\label{cor:m-cauchy-sharp-stability}
Under the hypotheses of Propositions~\ref{prop:m-cauchy-stability} and~\ref{prop:m-cauchy-exact-consistency}, there holds
\begin{align}
\|m_h-m_h^e\|_{W^s(\mathbb R^d)} \le{}&
\frac{1}{2\sqrt{\alpha_m}} \left[ \alpha_v(\eta_{m,h}^c)^2 + \alpha_\tau(\eta_{\tau,h}^c)^2 \right]^{1/2} + \frac{C\varepsilon_{Q,h}}{\sqrt{\alpha_m}} \left( 1+\|m_{\gamma,h}\|_{W^s(\mathbb R^d)} \right)
\nonumber\\ &+ \frac{C\varepsilon_{Q,h}}{\alpha_m} \|m_{\gamma,h}-m_\gamma\|_{W^s(\mathbb R^d)} \qquad  \forall ~m_{\gamma,h}\in V_h^R.
\label{eq:m-cauchy-sharp-stability}
\end{align}
\end{corollary}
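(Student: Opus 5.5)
The plan is to start from the proof of Proposition~\ref{prop:m-cauchy-stability} and change only how the last term is estimated. There, \(\|m_h-m_h^e\|_{W^s}\) was split through the intermediate minimizer \(\overline m_h\), which uses the perturbed operator \(\mathbb L_{\widehat Q,h}\) with exact projected data. This produced three contributions: the data part, the term \(\varepsilon_{Q,h}\alpha_m^{-1/2}(1+\|m_h^e\|_{W^s})\), and the term \(\varepsilon_{Q,h}\alpha_m^{-1}\|\mathcal R_{\Omega,h}(m_h^e;Q_\gamma)\|_{-s,\Omega,h}\). The first contribution can be kept verbatim. The corollary then follows by bounding \(\|m_h^e\|_{W^s}\) and \(\|\mathcal R_{\Omega,h}(m_h^e;Q_\gamma)\|_{-s,\Omega,h}\) through the comparison estimate \eqref{eq:m-cauchy-exact-comparison-bound} of Proposition~\ref{prop:m-cauchy-exact-consistency}, instead of the crude bound \eqref{eq:m-cauchy-exact-apriori}.

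First, \eqref{eq:m-cauchy-exact-comparison-bound} gives, for any \(m_{\gamma,h}\in V_h^R\),
\[
\|\mathcal R_{\Omega,h}(m_h^e;Q_\gamma)\|_{-s,\Omega,h}\le C\|m_{\gamma,h}-m_\gamma\|_{W^s(\mathbb R^d)}+\sqrt{\alpha_m}\,\|m_{\gamma,h}\|_{W^s(\mathbb R^d)} .
\]
Inserting this into the third term of \eqref{eq:m-cauchy-stability} yields \(C\varepsilon_{Q,h}\alpha_m^{-1}\|m_{\gamma,h}-m_\gamma\|_{W^s}\) plus \(C\varepsilon_{Q,h}\alpha_m^{-1/2}\|m_{\gamma,h}\|_{W^s}\). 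The latter is absorbed into the second term of \eqref{eq:m-cauchy-sharp-stability}.

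Second, the same comparison gives \(\sqrt{\alpha_m}\|m_h^e\|_{W^s}\le C\|m_{\gamma,h}-m_\gamma\|_{W^s}+\sqrt{\alpha_m}\|m_{\gamma,h}\|_{W^s}\). Hence
\[
\frac{\varepsilon_{Q,h}}{\sqrt{\alpha_m}}\bigl(1+\|m_h^e\|_{W^s}\bigr)\le \frac{\varepsilon_{Q,h}}{\sqrt{\alpha_m}}\bigl(1+\|m_{\gamma,h}\|_{W^s}\bigr)+\frac{C\varepsilon_{Q,h}}{\alpha_m}\|m_{\gamma,h}-m_\gamma\|_{W^s}.
\]
Both pieces already appear on the right-hand side of \eqref{eq:m-cauchy-sharp-stability}. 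Adding the three bounds and enlarging \(C\) completes the argument.

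The only point needing care is that the constant in \eqref{eq:m-cauchy-stability} must not depend on \(m_h^e\). It must also be uniform in \(h\), with \(\alpha_v,\alpha_\tau\) fixed. I would therefore check in the proof of Proposition~\ref{prop:m-cauchy-stability} that the operator-perturbation estimate \eqref{eq:abstract-tikhonov-operator-perturbation} uses only Lemma~\ref{lem:m-cauchy-potential-perturbation}, whose constants depend on \(d,s,\Omega,Q_*\). The constant in \eqref{eq:m-cauchy-exact-comparison-bound} depends only on \(A_R\), the Hölder--Sobolev constants and \(\|Q_\gamma\|_{L^\infty}\). This is not really an obstacle; the corollary is a bookkeeping refinement.
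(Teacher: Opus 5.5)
Your proposal is correct and is essentially the paper's own proof. Like the paper, you bound \(\|m_h^e\|_{W^s(\mathbb R^d)}\) and \(\|\mathcal R_{\Omega,h}(m_h^e;Q_\gamma)\|_{-s,\Omega,h}\) through the comparison estimate \eqref{eq:m-cauchy-exact-comparison-bound} instead of \eqref{eq:m-cauchy-exact-apriori}, substitute these bounds into \eqref{eq:m-cauchy-stability}, and collect terms.
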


\begin{proof}
Estimate \eqref{eq:m-cauchy-exact-comparison-bound} implies
\[
\|m_h^e\|_{W^s(\R^{d})} \le C\frac{ \|m_{\gamma,h}-m_\gamma\|_{W^s(\R^{d})}}{\sqrt{\alpha_m}}
+ \|m_{\gamma,h}\|_{W^s(\R^{d})}
\]
and
\[
\|\mathcal R_{\Omega,h}(m_h^e;Q_\gamma)\|_{-s,\Omega,h} \le C\|m_{\gamma,h}-m_\gamma\|_{W^s(\R^{d})} + \sqrt{\alpha_m}\, \|m_{\gamma,h}\|_{W^s(\R^{d})}.
\]
Substitution into \eqref{eq:m-cauchy-stability}, followed by the triangle inequality, gives \eqref{eq:m-cauchy-sharp-stability}.
\end{proof}

We define the admissible interior conductivity reconstruction by
\begin{equation}\label{eq:gamma-interior-reconstruction}
\gamma_h^\Omega(x) := \left[ \min\left\{ \sqrt{2\gamma_1}, \max\left\{\sqrt{\gamma_0/2},
1+m_h(x) \right\} \right\} \right]^2, \qquad x\in\Omega.
\end{equation}
Then \(\frac{\gamma_0}{2} \le \gamma_h^\Omega \le 2\gamma_1\) almost everywhere in \(\Omega\).

With the potential error \(\varepsilon_{Q,h}\) and projected Cauchy-data errors \(\eta_{m,h}^c\) and \(\eta_{\tau,h}^c\), the combined input error \eqref{eq:final-combined-input-error} takes the form
\[
E_h^{\rm in}=\varepsilon_{Q,h}+\sqrt{\alpha_v}\,\eta_{m,h}^c+\sqrt{\alpha_\tau}\,\eta_{\tau,h}^c.
\]

\begin{theorem}[Final interior error decomposition and convergence]
\label{thm:final-interior-conductivity-error}
Let \(\gamma_h^\Omega\) be defined by \eqref{eq:gamma-interior-reconstruction}. Under the hypotheses of Proposition~\ref{prop:m-cauchy-exact-consistency} and Corollary~\ref{cor:m-cauchy-sharp-stability}, there holds
\begin{align}
\|\gamma_h^\Omega-\gamma\|_{L^2(\Omega)}
\le C\Big[ \|m_h^e-m_\gamma\|_{L^2(\mathbb R^d)} +\frac{E_h^{\rm in}}{\sqrt{\alpha_m}}+
\frac{\varepsilon_{Q,h}}{\sqrt{\alpha_m}} \|m_{\gamma,h}\|_{W^s(\mathbb R^d)}
+ \frac{\varepsilon_{Q,h}}{\alpha_m} \|m_{\gamma,h}-m_\gamma\|_{W^s(\mathbb R^d)} \Big].
\label{eq:final-interior-conductivity-error}
\end{align}
In particular, suppose that \(\alpha_m\to0\), that \(m_{\gamma,h}\in V_h^R\) satisfies
\begin{equation}\label{eq:m-gamma-comparison-rule-compact}
    \|m_{\gamma,h}-m_\gamma\|_{W^s(\mathbb R^d)}\to0, \qquad \frac{\|m_{\gamma,h}-m_\gamma\|_{W^s(\mathbb R^d)}^2}{\alpha_m} \to0,
\end{equation}
and that \(\frac{E_h^{\rm in}}{\sqrt{\alpha_m}}\to0 \). Then
\[
    m_h\longrightarrow m_\gamma \quad\text{in }W^s(\mathbb R^d), \qquad \gamma_h^\Omega\longrightarrow\gamma \quad\text{in }L^2(\Omega).
\]
\end{theorem}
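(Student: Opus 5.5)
The plan is to reduce the conductivity error to the $W^s$-error of $m_h$, and then split that error into an exact-input part and a perturbation part.

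\textbf{Step 1: from $\gamma_h^\Omega$ to $m_h$.} Write $T(t):=\min\{\sqrt{2\gamma_1},\max\{\sqrt{\gamma_0/2},t\}\}$. Then $T$ is one-Lipschitz, and $T(1+m_\gamma)=\sqrt\gamma$ because $\gamma_0\le\gamma\le\gamma_1$. Pointwise we have
\[
|\gamma_h^\Omega-\gamma|=|T(1+m_h)^2-T(1+m_\gamma)^2|\le 2\sqrt{2\gamma_1}\,|m_h-m_\gamma|.
\]
Hence
\[
\|\gamma_h^\Omega-\gamma\|_{L^2(\Omega)}\le C\|m_h-m_\gamma\|_{L^2(\mathbb R^d)}.
\]
By the triangle inequality this is at most
\[
C\bigl(\|m_h^e-m_\gamma\|_{L^2(\mathbb R^d)}+\|m_h-m_h^e\|_{W^s(\mathbb R^d)}\bigr),
\]
using $\|\cdot\|_{L^2}\le\|\cdot\|_{W^s}$.

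\textbf{Step 2: the perturbation term.} I bound $\|m_h-m_h^e\|_{W^s}$ by Corollary~\ref{cor:m-cauchy-sharp-stability}. Its first term satisfies
\[
\tfrac{1}{2\sqrt{\alpha_m}}\bigl[\alpha_v(\eta_{m,h}^c)^2+\alpha_\tau(\eta_{\tau,h}^c)^2\bigr]^{1/2}\le \frac{E_h^{\rm in}}{\sqrt{\alpha_m}}.
\]
The part $C\varepsilon_{Q,h}/\sqrt{\alpha_m}$ is also at most $CE_h^{\rm in}/\sqrt{\alpha_m}$. The remaining terms, $C\varepsilon_{Q,h}\|m_{\gamma,h}\|_{W^s}/\sqrt{\alpha_m}$ and $C\varepsilon_{Q,h}\|m_{\gamma,h}-m_\gamma\|_{W^s}/\alpha_m$, appear exactly as they do in \eqref{eq:final-interior-conductivity-error}. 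Combining with Step 1 proves the estimate.

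\textbf{Step 3: convergence.} Assume \eqref{eq:m-gamma-comparison-rule-compact}, $\alpha_m\to0$ and $E_h^{\rm in}/\sqrt{\alpha_m}\to0$.
\begin{enumerate}[label=(\alph*)]
\item Proposition~\ref{prop:m-cauchy-exact-consistency} gives $m_h^e\to m_\gamma$ in $W^s(\mathbb R^d)$.
\item Since $\|m_{\gamma,h}\|_{W^s}$ is bounded, $\varepsilon_{Q,h}\|m_{\gamma,h}\|_{W^s}/\sqrt{\alpha_m}\le CE_h^{\rm in}/\sqrt{\alpha_m}\to0$.
\item The last term is the only delicate point. I write
\[
\frac{\varepsilon_{Q,h}}{\alpha_m}\|m_{\gamma,h}-m_\gamma\|_{W^s}=\frac{\varepsilon_{Q,h}}{\sqrt{\alpha_m}}\cdot\frac{\|m_{\gamma,h}-m_\gamma\|_{W^s}}{\sqrt{\alpha_m}}.
\]
The first factor tends to zero because $\varepsilon_{Q,h}\le E_h^{\rm in}$. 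The second tends to zero by \eqref{eq:m-gamma-comparison-rule-compact}.
\end{enumerate}
Thus $\|m_h-m_h^e\|_{W^s}\to0$. Together with (a) this gives $m_h\to m_\gamma$ in $W^s(\mathbb R^d)$, and Step 1 then gives $\gamma_h^\Omega\to\gamma$ in $L^2(\Omega)$.

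I expect no genuine obstacle. The only care needed is the product splitting in (c) and checking that each input term is dominated by $E_h^{\rm in}$.
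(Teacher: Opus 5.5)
Your proposal is correct and follows the paper's proof essentially step for step. The one-Lipschitz clipping map $T$ reduces the conductivity error to $\|m_h-m_\gamma\|_{L^2}$, and the triangle inequality splits off $m_h^e$. Corollary~\ref{cor:m-cauchy-sharp-stability} then gives the estimate, with $E_h^{\rm in}$ absorbing both the Cauchy-data term and $C\varepsilon_{Q,h}/\sqrt{\alpha_m}$. Convergence follows from Proposition~\ref{prop:m-cauchy-exact-consistency}, and your factorization of the last term is simply a spelled-out version of the paper's remark that the coupling conditions make all perturbation terms vanish.
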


\begin{proof}
Let \(T(r):=\min\left\{\sqrt{2\gamma_1},    \max\left\{\sqrt{\gamma_0/2},r\right\}\right\}
\). Then \(T\) is one-Lipschitz and \(T(1+m_\gamma)=\sqrt\gamma\). Hence
\[
 |\gamma_h^\Omega-\gamma| = |T(1+m_h)^2-\gamma|  \le  C|m_h-m_\gamma|
    \qquad\text{almost everywhere in }\Omega.
\]
Therefore \(\|\gamma_h^\Omega-\gamma\|_{L^2(\Omega)}\leq C\|m_h-m_\gamma\|_{L^2(\Omega)}\leq C\bigl(\|m_h-m_h^e\|_{W^s(\mathbb R^d)}+\|m_h^e-m_\gamma\|_{L^2(\mathbb R^d)}\bigr)\). Applying \eqref{eq:m-cauchy-sharp-stability} and \eqref{eq:final-combined-input-error} gives \eqref{eq:final-interior-conductivity-error}. Proposition~\ref{prop:m-cauchy-exact-consistency} gives \(m_h^e\to m_\gamma\) in \(W^s(\mathbb R^d)\). Conditions \eqref{eq:m-gamma-comparison-rule-compact} and \(E_h^{\rm in}/\sqrt{\alpha_m}\to0\) make all perturbation terms tend to zero. Thus \(m_h\to m_\gamma\) in \(W^s(\mathbb R^d)\), and the conductivity convergence follows.
\end{proof}

A sufficient concrete form of \(\frac{E_h^{\rm in}}{\sqrt{\alpha_m}}\to0 \) is obtained from \eqref{eq:cauchy-value-propagated-error}, \eqref{eq:tail-reconstruction-error}, \eqref{eq:tail-potential-error-from-L2}, and \eqref{eq:linfty_rate_sim}. Namely, it is enough that
\[
    \frac{1}{\sqrt{\alpha_m}}
    \left[ \mathfrak c_h+\delta+h^{\beta_c} +\rho_{q,h}^{(2)} +\frac{\xi_{t,h}}{2\sqrt{\alpha_t}}+ \frac{\alpha_t}{\sigma_{t,h}^2+\alpha_t} \|q_{I,h}^c\|_{L^2(W_c)}
        +\|Q_h^\Omega-Q_\gamma\|_{L^2(\Omega)}^{\theta_Q} \right] \longrightarrow0.
\]

\begin{proposition}[Logarithmic estimate from discrete Cauchy data]\label{prop:discrete-cauchy-logarithmic}
Use the full fitted space $V_h^c$ and projection \eqref{eq:Pi-h-c} on shape-regular, quasi-uniform meshes. For $e\in\widetilde H^s(\Omega_R)$, $\|e\|_{W^s(\mathbb R^d)}\le M$, $M>0$, set
\[
\begin{aligned}
d_h(e):=\|\Pi_h^c(e|_{W_c})\|_{L^2(W_c)}, \qquad
\mathrm{r}_h(e):=\sup_{0\ne\eta_h\in V_h^c}\frac{|A_R(e,\eta_h)|}{\|\eta_h\|_{W^s(\mathbb R^d)}},\qquad
\vartheta_h(e)&:=\frac{d_h(e)+\mathrm{r}_h(e)}{M}+h^s.
\end{aligned}
\]
Constants $C,\sigma,h_0>0$, depending only on the geometry, $s$, polynomial degree, and mesh regularity, satisfy
\begin{equation}\label{eq:discrete-cauchy-logarithmic}
\|e\|_{L^2(\mathbb R^d)}\le CM[\log(2+\vartheta_h(e)^{-1})]^{-\sigma},\qquad 0<h<h_0.
\end{equation}
\end{proposition}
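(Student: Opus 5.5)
Since the inequality is homogeneous in \((e,M)\), it suffices to treat \(M=1\) and apply the result to \(e/M\). The plan mirrors the proof of Proposition~\ref{prop:state-logarithmic-bias}: convert the discrete observations \(d_h(e)\) and \(\mathrm r_h(e)\) into continuous negative-norm Cauchy data on a fixed ball \(W_c'\Subset W_c\), with an additional error of order \(h^s\). We then invoke a quantitative unique continuation estimate for functions supported in \(\overline{\Omega_R}\) with small Cauchy data on \(W_c'\). The natural candidate is the logarithmic Cauchy stability of \cite[Prop.~6.1]{Ghosh2020uniqueness} with \(s'=0\), applied now with \(\Omega_R\) in the role of the support set and \(W_c'\) as the observation set.

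\textbf{Step 1 (value datum).} Fix a ball \(W_c'\Subset W_c\). For \(\phi\in\widetilde H^s(W_c')\), write
\[
(e,\phi)_{L^2}=(\Pi_h^c e,\mathcal I_h^c\phi)+(e,\phi-\mathcal I_h^c\phi),
\]
which uses \(\mathcal I_h^c\phi\in V_{h,0}^c\subset V_h^c\) and the orthogonality of \(\Pi_h^c\). By \eqref{eq:local-averaged-interpolation},
\[
|(e,\phi)|\le C\bigl(d_h(e)+h^s\|e\|_{L^2}\bigr)\|\phi\|_{H^s},
\]
so \(\|e\|_{H^{-s}(W_c')}\le C(d_h(e)+h^s)\).

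\textbf{Step 2 (fractional-Laplacian datum).} Similarly,
\[
\langle(-\Delta)^se,\phi\rangle=A_R(e,\mathcal I_h^c\phi)+A_R(e,\phi-\mathcal I_h^c\phi).
\]
The first term is bounded by \(C\,\mathrm r_h(e)\|\phi\|_{H^s}\). The second needs more care, because \(A_R\) is only continuous on \(H^s\times H^s\). We therefore test against the smoother space \(\phi\in\widetilde H^{2s}(W_c')\), where \(\|\phi-\mathcal I_h^c\phi\|_{H^s}\le Ch^s\|\phi\|_{H^{2s}}\) holds by the third estimate in \eqref{eq:local-averaged-interpolation}. This gives
\[
\|(-\Delta)^se\|_{H^{-2s}(W_c')}\le C\bigl(\mathrm r_h(e)+h^s\bigr).
\]
A weaker negative norm of the Cauchy data still yields logarithmic stability, only with a smaller exponent. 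If the cited estimate needs a different Sobolev index, we would interpolate between this bound and the a priori bound \(\|(-\Delta)^s e\|_{H^{-s}}\le C\).

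\textbf{Step 3 (quantitative continuation).} Apply the logarithmic quantitative unique continuation for \(e\in\widetilde H^s(\Omega_R)\) with \(\|e\|_{H^s}\le1\), observed on \(W_c'\) through both \(e\) and \((-\Delta)^se\) in negative norms. Since \(W_c\subset W_0\subset W\Subset\Omega_e\) is disjoint from \(\Omega\) but lies inside \(\Omega_R\), the observation set is interior to the support of \(e\). This is the main obstacle, because \cite[Prop.~6.1]{Ghosh2020uniqueness} is formulated for exterior observations. The first thing to try is to split \(e=\chi e+(1-\chi)e\) with a cutoff \(\chi\) equal to one near \(W_c'\). The localized fractional Laplacian of \((1-\chi)e\) on a smaller ball differs by a smooth-kernel operator, which is controlled by the value datum. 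The remaining situation is then a genuine Cauchy problem for the Caffarelli--Silvestre extension. There, the three-balls/boundary-bulk logarithmic estimates of R\"uland--Salo \cite{RulandSalo2020low} give
\[
\|e\|_{L^2}\le C[\log(2+\epsilon^{-1})]^{-\sigma},\qquad \epsilon=\|e\|_{H^{-s}(W_c')}+\|(-\Delta)^se\|_{H^{-2s}(W_c')}.
\]
Inserting Steps 1--2 gives \(\epsilon\le C\vartheta_h(e)\). Using the monotonicity of \(t\mapsto[\log(2+t^{-1})]^{-\sigma}\) and absorbing constants, with \(h_0\) chosen so that the interpolation estimates hold, this yields \eqref{eq:discrete-cauchy-logarithmic}.
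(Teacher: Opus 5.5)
Your Steps 1--2 match the paper's first two estimates. The paper tests directly with $\mathcal I_h^c(\chi^2e)$ to get $\|\chi e\|_{L^2}\le CM\vartheta^{1/2}$; your $H^{-s}(W_c')$ bound gives the same after interpolating with $\|\chi e\|_{H^s}\le CM$. The cutoff in your Step 3 is also the paper's reduction.

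The gap is in how you put the data into the form the continuation estimate needs. \cite[Prop.~6.1]{Ghosh2020uniqueness} requires a function that vanishes on the observation ball and whose fractional Laplacian is small in $H^{-s}$ there. Your fallback interpolates $\|(-\Delta)^se\|_{H^{-2s}(W_c')}\le C\vartheta$ against $\|(-\Delta)^se\|_{H^{-s}}\le C$. That gives smallness only strictly between the two indices, never at the bounded endpoint $H^{-s}$. Since $W_c'$ lies inside $\operatorname{supp}e$, nothing better than $H^{-s}$ is available for $(-\Delta)^se$ there. So the claim that a weaker negative norm ``still yields logarithmic stability'' is unjustified, and the mechanism you offer for it fails.

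The missing step is to truncate \emph{before} interpolating. Take $\chi=1$ on $B_{3r}$, $\zeta=1$ on $B_r$ with $\operatorname{supp}\zeta\subset B_{2r}$, and set $w=(1-\chi)e$.
\begin{itemize}
\item Since $(-\Delta)^s:L^2\to H^{-2s}$ is bounded, $\|\zeta(-\Delta)^sw\|_{H^{-2s}}\le C(\mathrm r_h(e)+Mh^s+\|\chi e\|_{L^2})\le CM\vartheta^{1/2}$. Here the value datum controls $(-\Delta)^s(\chi e)$, which is \emph{not} a smooth-kernel term, contrary to how your Step 3 phrases it.
\item Because $w$ vanishes on $B_{3r}$, the kernel is separated, so $\|\zeta(-\Delta)^sw\|_{L^2}\le CM$.
\item Interpolating these two bounds gives $\|(-\Delta)^sw\|_{H^{-s}(B_r)}\le CM\vartheta^{1/4}$.
\end{itemize}
Moreover $w\equiv0$ on $B_r$ and $\operatorname{supp}w\subset D:=B_L\setminus\overline{B_{5r/2}}$. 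The cited estimate then bounds $\|w\|_{L^2}$, and you conclude from $\|e\|_{L^2}\le\|\chi e\|_{L^2}+\|w\|_{L^2}$. This replaces a continuation estimate applied to $e$ itself, as in your final display.

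You also do not address $d=1$. For $d\ge2$, $D$ is a bounded Lipschitz domain disjoint from $B_r$, so \cite[Prop.~6.1]{Ghosh2020uniqueness} with $s'=0$ applies. For $d=1$, $D$ is two disjoint intervals and the proposition cannot be invoked directly. The paper instead carries out the propagation of smallness for the Caffarelli--Silvestre extension by hand:
\begin{itemize}
\item the boundary--bulk estimate \cite[Prop.~5.13]{RulandSalo2020low} on a fixed interior ball;
\item chains of at most $C(1+|\log\varrho|)$ balls reaching both components, via iterated \cite[Prop.~5.4]{RulandSalo2020low};
\item a thin-layer bound $CM\varrho^{1-s}$ and an optimized height $\varrho$;
\item a weighted vertical trace estimate.
\end{itemize}
Your appeal to the R\"uland--Salo three-balls and boundary--bulk estimates points this way, but the argument has to be carried out to cover both components.
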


\begin{proof}
Choose a ball $B_{4r}\Subset W_c$ and smooth cutoffs $0\le\chi,\zeta\le1$ with $\chi=1$ on $B_{3r}$, $\operatorname{supp}\chi\subset B_{4r}$, $\zeta=1$ on $B_r$, and $\operatorname{supp}\zeta\subset B_{2r}$, for small \(r>0\); all balls are concentric. After translation, we take their common center to be the origin. Since \(\operatorname{supp}\chi\cup\operatorname{supp}\zeta\Subset W_c\), fix \(h_0>0\) small enough that, for \(0<h<h_0\), the averaged interpolants \(\mathcal I_h^c(\chi^2e)\) and \(\mathcal I_h^c(\zeta\phi)\) belong to \(V_h^c\) and satisfy \eqref{eq:local-averaged-interpolation}. Write $\vartheta=\vartheta_h(e)$. Orthogonality of $\Pi_h^c$ and \eqref{eq:local-averaged-interpolation} give
\[
\begin{aligned}
\|\chi e\|_{L^2(\R^{d})}^2=(\Pi_h^c e,\mathcal I_h^c(\chi^2e))_{L^2(W_c)}+(e,\chi^2e-\mathcal I_h^c(\chi^2e))_{L^2(W_c)}\le CMd_h(e)+Ch^sM^2\le CM^2\vartheta.
\end{aligned}
\]
For \(\phi\in H^{2s}(\mathbb R^d)\), \(\mathcal I_h^c(\zeta\phi)\in V_h^c\). The zero-extension identity following \eqref{eq:AR-standard-form}, the definition of \(\mathrm r_h(e)\), and \eqref{eq:local-averaged-interpolation} give
\[
\begin{aligned}
|\langle\zeta(-\Delta)^s e,\phi\rangle|
&=|A_R(e,\zeta\phi)|
\le |A_R(e,\mathcal I_h^c(\zeta\phi))|+|A_R(e,\zeta\phi-\mathcal I_h^c(\zeta\phi))|\\
&\le \mathrm{r}_h(e)\|\mathcal I_h^c(\zeta\phi)\|_{W^s(\mathbb R^d)}
+CM\|\zeta\phi-\mathcal I_h^c(\zeta\phi)\|_{H^s(\mathbb R^d)}
\le C(\mathrm{r}_h(e)+Mh^s)\|\phi\|_{H^{2s}(\mathbb R^d)}.
\end{aligned}
\]
Taking the supremum over \(\|\phi\|_{H^{2s}(\mathbb R^d)}=1\) yields \(\|\zeta(-\Delta)^s e\|_{H^{-2s}(\mathbb R^d)}\le C(\mathrm{r}_h(e)+Mh^s)\).
Set $w=(1-\chi)e$. The map $(-\Delta)^s:L^2\to H^{-2s}$ is bounded, and $w$ vanishes on $B_{3r}$. Hence, for $\vartheta\le1$,
\[
\begin{aligned}
\|\zeta(-\Delta)^sw\|_{H^{-2s}}\le C(\mathrm{r}_h(e)+Mh^s+\|\chi e\|_{L^2(\R^{d})})\le CM\vartheta^{1/2},\qquad
\|\zeta(-\Delta)^sw\|_{L^2(\R^{d})}\le CM.
\end{aligned}
\]
The second estimate follows from the separated kernel; interpolation gives $\|(-\Delta)^sw\|_{H^{-s}(B_r)}\le CM\vartheta^{1/4}$. Also, $\|w\|_{H^s(\mathbb R^d)}\le CM$ while $\operatorname{supp}w\Subset B_L$ for a concentric ball $B_L$ containing $\overline{\Omega_R}$. If $d\ge2$, $D:=B_L\setminus\overline{B_{5r/2}}$ is a bounded Lipschitz domain containing $\operatorname{supp}w$ and disjoint from $B_r$. For sufficiently small \(\vartheta\), Proposition~6.1 of \cite{Ghosh2020uniqueness}, with \(s'=0\), yields
\[
\|w\|_{L^2(\R^{d})}\le CM[\log(c\vartheta^{-1/4})]^{-\sigma}\le CM[\log(2+\vartheta^{-1})]^{-\sigma},
\]
since the two logarithms are comparable in this range.
For \(d=1\), the interval \(B_L\setminus\overline{B_{5r/2}}\) has two components. We adapt the continuation argument in the proof of \cite[Prop.~6.1]{Ghosh2020uniqueness} using the local estimates of \cite{RulandSalo2020low}. Choose \(I_-:=(-L-r/2,-2r)\) and \(I_+:=(2r,L+r/2)\), which together contain \(\operatorname{supp}w\), and let \(\widetilde w\) be the Caffarelli--Silvestre extension of the whole function \(w\). By \cite[Lems.~4.1--4.2]{RulandSalo2020low},
\[
\|\widetilde w(\cdot,y)\|_{L^2(\mathbb R)}\le\|w\|_{L^2(\mathbb R)}\le CM,\qquad \|y^{(1-2s)/2}\nabla\widetilde w\|_{L^2(\mathbb R\times(0,\infty))}\le CM.
\]
Its trace vanishes on \(B_r\), and its conormal derivative there has \(H^{-s}(B_r)\) norm at most \(CM\vartheta^{1/4}\). The local boundary-to-bulk estimate \cite[Prop.~5.13]{RulandSalo2020low} therefore bounds the weighted \(L^2\) norm on a fixed interior ball by \(CM\vartheta^c\), for some \(c>0\).
For an auxiliary height \(0<\varrho<1/2\), cover \((I_-\cup I_+)\times(\varrho,1)\) by at most \(C\varrho^{-2}\) balls whose radii are one fifth of their centre heights. Chains from the initial ball first reach a fixed height, move horizontally, and then descend with geometrically decreasing heights. Each chain uses at most \(C(1+|\log\varrho|)\) balls, whose fourfold enlargements remain in the upper half-plane and in a fixed-height strip. Iterating \cite[Prop.~5.4]{RulandSalo2020low}, as in the proof of \cite[Thm.~5.5]{RulandSalo2020low}, bounds the weighted norm on each covering ball by \(CM\vartheta^{c\varrho^\kappa}\), for some \(\kappa>0\). The \(L^2\) contraction above also gives \(\|y^{(1-2s)/2}\widetilde w\|_{L^2(\mathbb R\times(0,\varrho))}\le CM\varrho^{1-s}\). Summing squared norms over the cover and adding the thin-layer bound yields
\[
\sum_{\pm}\|y^{(1-2s)/2}\widetilde w\|_{L^2(I_\pm\times(0,1))}\le CM\bigl(\varrho^{-1}\vartheta^{c\varrho^\kappa}+\varrho^{1-s}\bigr).
\]
Choose \(\varrho=[\log(\vartheta^{-1})]^{-1/(2\kappa)}\) for sufficiently small \(\vartheta\). Since \(\vartheta^{c\varrho^\kappa}=e^{-c\sqrt{\log(\vartheta^{-1})}}\), the preceding estimate and comparison of the logarithms give
\[
\sum_{\pm}\|y^{(1-2s)/2}\widetilde w\|_{L^2(I_\pm\times(0,1))}\le CM[\log(2+\vartheta^{-1})]^{-\mu},\qquad \mu:=\frac{1-s}{2\kappa}>0.
\]
For \(0<t<1/2\), the weighted vertical trace estimate gives
\[
\|w\|_{L^2(I_\pm)}\le C\left(t^sM+t^{-(1-s)}\|y^{(1-2s)/2}\widetilde w\|_{L^2(I_\pm\times(0,2t))}\right).
\]
Taking \(t=[\log(2+\vartheta^{-1})]^{-\mu}\) for sufficiently small \(\vartheta\) and summing over the two intervals yields \(\|w\|_{L^2(\mathbb R)}\le CM[\log(2+\vartheta^{-1})]^{-\mu s}\).
Thus, in both cases, adding $\|\chi e\|_{L^2(\R^{d})}\le CM\vartheta^{1/2}$ proves \eqref{eq:discrete-cauchy-logarithmic} for small $\vartheta$; $\|e\|_{L^2(\R^{d})}\le M$ covers the remaining values.
\end{proof}

\begin{proof}[Proof of Theorem~\ref{cor:final-interior-logarithmic}]
The convergence assertions follow from Theorem~\ref{thm:final-interior-conductivity-error}. To prove the logarithmic estimate, use the comparison function \(m_{\gamma,h}\) and the errors \(a_{m,h}\) and \(E_h^{\rm in}\) from Theorem~\ref{cor:final-interior-logarithmic}.
The exact equation \eqref{eq:fse2} implies, for $\phi_h\in V_h^\Omega$,
\begin{align*}
\langle\mathcal R_{\Omega,h}(m_{\gamma,h};\widehat Q_h^\Omega),\phi_h\rangle
&=A_R(m_{\gamma,h}-m_\gamma,\phi_h)+(\widehat Q_h^\Omega(m_{\gamma,h}-m_\gamma),\phi_h)_{L^2(\Omega)}\\
&\quad+((\widehat Q_h^\Omega-Q_\gamma)(1+m_\gamma),\phi_h)_{L^2(\Omega)}.
\end{align*}
The clipping bound, H\"older--Sobolev inequality, and Cauchy data estimates therefore give
\[
2J_{m,h}^\delta(m_h)\le2J_{m,h}^\delta(m_{\gamma,h})\le C(a_{m,h}^2+(E_h^{\rm in})^2)+\alpha_m\|m_{\gamma,h}\|_{W^s(\R^{d})}^2.
\]
Since $\alpha_v,\alpha_\tau>0$ are fixed, $a_{m,h}/\sqrt{\alpha_m}\to0$, and $E_h^{\rm in}/\sqrt{\alpha_m}\to0$, the comparison inequality bounds $\|m_h-m_\gamma\|_{W^s(\R^{d})}$ uniformly and gives
\[
d_h(m_h-m_\gamma)+\mathrm{r}_h(m_h-m_\gamma)\le C(a_{m,h}+E_h^{\rm in}+\sqrt{\alpha_m}).
\]
Proposition~\ref{prop:discrete-cauchy-logarithmic} and the Lipschitz clipping in \eqref{eq:gamma-interior-reconstruction} prove \eqref{eq:final-interior-logarithmic}.
\end{proof}

\begin{remark}[Dependence on the original measurement error]\label{rem:m-cauchy-logarithmic-bridge}
The logarithmic modulus in \eqref{eq:final-interior-logarithmic} contains every propagated error through $E_h^{\rm in}$. A rate in the original noise requires a bound for the complete quantity $\eta_{{\rm C},h}$. For example, $\eta_{{\rm C},h}\le C[\log(2+\delta_\Lambda^{-1})]^{-\mu}$, $\mu>0$, would give a log--log rate in the continuous DN-map discrepancy $\delta_\Lambda$; such a bound is not proved here. The potential estimate \cite[Thm.~3.16 and Cor.~3.17]{DwivediRailoRuppFCP} assumes common compact support in $\Omega$ and a uniform $C^{0,s}$ bound, neither imposed here. The continuous-data estimates \cite[Thm.~1.2]{RulandSalo2020low} and \cite[Thms.~1.1--1.2]{CoviKujanpaaRailo2027partial} likewise require their additional data and admissibility assumptions.
\end{remark}

\section{Numerical experiments}\label{sec:num}

The experiments illustrate the exterior-to-interior reconstruction of Sections~\ref{sec:ext} and~\ref{sec:interior-cauchy} for manufactured conductivities in one and two dimensions. The exterior measurements implement the determination principle of \cite{covi2022global}; the interior stage uses the fractional Liouville reduction of \cite{Covi2020Inverse} and Galerkin--Tikhonov reconstruction \cite{DwivediRailoRuppFCP,Engl1996Regularization}. For each study we specify the geometry, forward mesh size \(h\), inverse spaces, regularization parameters, and error evaluation. In the experiments, \(\delta\) denotes the imposed relative noise level; the total discrepancy in the analysis additionally includes the forward approximation and assembly errors. Synthetic measurements are generated by the corresponding discrete forward problem. The norm labels in the tables and figures refer to the numerical evaluations specified in each example.

\subsection{Example 5.1. One-dimensional exterior-to-interior reconstruction.}
\label{ex:examp1_1d}
We take \(d=1\), \(s=0.35\), \(\Omega=(-1,1)\), and \(\Omega_R=(-4,4)\). The exterior data and reconstruction sets are
\[
    W=(-3.4,-1.15)\cup(1.15,3.4),\qquad W_0=(-3.05,-1.45)\cup(1.45,3.05).
\]
For the interior stage, we use
\[
\begin{aligned}
    W_a&=(-1.68,-1.50)\cup(1.50,1.68),\\
    W_b&=(-3.00,-1.84)\cup(1.84,3.00),\\
    W_c&=(-1.84,-1.50)\cup(1.50,1.84).
\end{aligned}
\]
Thus \(W_a,W_b,W_c\Subset W_0\), with
\(\operatorname{dist}(W_a,W_b)>0\), while the measurements on \(W_c\) have overlapping input and observation supports.

We use one manufactured conductivity throughout the experiment. Let
\[
\eta(t)=
\begin{cases} \displaystyle \exp\!\left(1-\frac{1}{1-t^2}\right),& |t|<1,\\[1ex]
0,& |t|\geq1,
\end{cases}
\]
and define
\[
\rho(t)=
\begin{cases}
e^{-1/t},&t>0,\\ 0,&t\leq0,
\end{cases}
\qquad
\chi(t)=
\begin{cases}
1,&t\leq0,\\[1mm]
\displaystyle
\frac{\rho(1-t)}{\rho(t)+\rho(1-t)},&0<t<1,\\[2mm]
0,&t\geq1.
\end{cases}
\]
The interior background deviation is
\[
    m_0(x) := 0.06 \left(1-\left(\frac{x}{1.10}\right)^2\right)_{+}^{\,s}
    \chi\!\left(\frac{|x|-1}{0.08}\right),
\]
and the complete manufactured conductivity is
\[
\gamma(x)
:= (1+m_0(x))^2
+0.50\,\eta\!\left(\frac{x+2.30}{0.45}\right)
+0.50\,\eta\!\left(\frac{x-2.30}{0.45}\right).
\]
Consequently,
\[
\gamma(x) = 1
+0.50\,\eta\!\left(\frac{x+2.30}{0.45}\right)
+0.50\,\eta\!\left(\frac{x-2.30}{0.45}\right)
\quad\text{in }W,
\]
whereas
\[
m_\gamma(x) = 0.06 \left(1-\left(\frac{x}{1.10}\right)^2\right)^s,
\qquad \gamma(x)=(1+m_\gamma(x))^2 \quad\text{in }\Omega.
\]
The two exterior bumps test both connected components of \(W_0\), while the interior profile represents one broad symmetric inclusion. Since \(m_0=0\) for \(|x|\geq1.08\), the interior and exterior perturbations are separated by a background collar, and the complete coefficient satisfies the positivity and compact-support requirements of the Liouville reduction.

The forward problem uses continuous \(P_1\) functions on an endpoint-fitted mesh of maximum element size at most \(h\). Both conductivity and reference forms use four-point tensor-product Gaussian quadrature, with coincident quadrature pairs omitted. Exterior samples use the capped nominal radius ratios \(r/h\in\{1.5,2,2.5,3,4,5\}\), with calibration and extrapolation as defined in Appendix~\ref{app:calibrated-dn}. After extrapolation, each sample vector is perturbed in a fixed Gaussian direction normalized in \(\ell^\infty\) to the prescribed relative amplitude \(\delta\). The exterior errors are \(\|\gamma_h^\delta-\gamma\|_{L^2(W_0)}\) and \(\|\gamma_h^\delta-\gamma\|_{L^\infty(W_0)}\), computed with the consistent mass matrix and the nodal maximum, respectively. Interior \(L^2\)-errors are evaluated with four-point Gaussian quadrature on each element, and interior maximum errors are sampled at \(1201\) equally spaced points of \([-1,1]\).

The displayed reconstructions use \(h=0.008\) and the nominal noise levels indicated in Figures~\ref{fig:ex51-complete}(A) and~\ref{fig:ex51-complete}(D). The exterior mesh study uses \(h\in\{0.064,0.032,0.016,0.008,0.004\}\), and the interior mesh study uses \(h\in\{0.032,0.016,0.008,0.004\}\) at \(\delta=10^{-5}\). At \(h=0.008\), both noise studies use \(\delta\in\{10^{-7},10^{-6},\cdots,10^{-2},10^{-1}\}\). Exterior regularization is \(\alpha=0.03h^2\). Each run of the interior study uses the exterior reconstruction computed for the same \(h\) and \(\delta\); interior DN noise is generated separately from the exterior post-extrapolation sample noise. Interior recovery uses \(J_a=J_c=8\) reflection-paired sine sources orthonormalized in the discrete \(W^s\)-inner product. The state correction has eight coarse \(P_1\) degrees of freedom. The interior and tail potentials use even piecewise-constant spaces of dimensions two and nine, and the Cauchy variable uses an even \(P_1\) space of dimension sixteen. The exterior input is reflection averaged. The implemented \(W_c\) space uses nodal \(P_1\) functions without the cell-bubble enrichment of the theoretical sufficient identifiability construction. The remaining regularization parameters are fixed:
\[
\begin{gathered}
\alpha_z=3.30\times10^{-4},\qquad \alpha_w=3\times10^{-4},\qquad \alpha_Q=10^{-8},\qquad \alpha_t=10^{-7},\\
\alpha_v=\alpha_\tau=3,\qquad \alpha_m=10^{-4},\qquad Q_*=1.5.
\end{gathered}
\]

Figure~\ref{fig:ex51-complete} and Table~\ref{tab:ex51-gamma-errors} report the one-dimensional errors. The exterior \(L^2(W_0)\)-error relative to \(I_h^{W_0}\gamma\) decreases from \(3.0722\times10^{-2}\) at \(h=0.064\) to \(3.4030\times10^{-3}\) at \(h=0.004\), alongside the decreasing noiseless calibrated-sample error \(\widehat E_h^0\) of \eqref{eq:def-Eh-cal-delta}. The corresponding error relative to \(\gamma\) is bounded by adding the interpolation error, as in \eqref{eq:recon-error-final}; \(\widehat E_h^0\) does not measure discrepancy from the continuous DN form.  The interior mesh errors are reduced at \(h=0.004\) compared to \(h=0.032\). However, these are  nonmonotone. The interior studies assess accuracy and noise sensitivity for the prescribed inverse spaces and fixed regularization parameters.

\begin{table}[!htbp]
\centering\small
\setlength{\tabcolsep}{3.4pt}\renewcommand{\arraystretch}{1.1}
\begin{tabular}{|c c c||c c c|}
\hline
\multicolumn{6}{|c|}{\textbf{Exterior reconstruction}}\\ \hline
\multicolumn{3}{|c||}{Noise study, \(h=0.008\)} & \multicolumn{3}{c|}{Mesh study, \(\delta=10^{-5}\)}\\ \hline
\(\delta\) & \(\|\gamma_h^\delta-\gamma\|_{L^2(W_0)}\) & \(\|\gamma_h^\delta-\gamma\|_{L^\infty(W_0)}\) & \(h\) & \(\|\gamma_h^\delta-\gamma\|_{L^2(W_0)}\) & \(\|\gamma_h^\delta-\gamma\|_{L^\infty(W_0)}\)\\ \hline
\(10^{-7}\) & \(5.3642\times10^{-3}\) & \(5.3186\times10^{-3}\) & \(0.004\) & \(3.4030\times10^{-3}\) & \(3.3527\times10^{-3}\)\\
\(10^{-6}\) & \(5.3642\times10^{-3}\) & \(5.3193\times10^{-3}\) & \(0.008\) & \(5.3644\times10^{-3}\) & \(5.3254\times10^{-3}\)\\
\(10^{-5}\) & \(5.3644\times10^{-3}\) & \(5.3254\times10^{-3}\) & \(0.016\) & \(8.3565\times10^{-3}\) & \(8.1316\times10^{-3}\)\\
\(10^{-4}\) & \(5.3659\times10^{-3}\) & \(5.3874\times10^{-3}\) & \(0.032\) & \(1.4279\times10^{-2}\) & \(1.7581\times10^{-2}\)\\ \hline\hline
\multicolumn{6}{|c|}{\textbf{Interior reconstruction}}\\ \hline
\multicolumn{3}{|c||}{Noise study, \(h=0.008\)} & \multicolumn{3}{c|}{Mesh study, \(\delta=10^{-5}\)}\\ \hline
\(\delta\) & \(\|\gamma_h^\Omega-\gamma\|_{L^2(\Omega)}\) & \(\|\gamma_h^\Omega-\gamma\|_{L^\infty(\Omega)}\) & \(h\) & \(\|\gamma_h^\Omega-\gamma\|_{L^2(\Omega)}\) & \(\|\gamma_h^\Omega-\gamma\|_{L^\infty(\Omega)}\)\\ \hline
\(10^{-7}\) & \(6.1173\times10^{-3}\) & \(1.1339\times10^{-2}\) & \(0.004\) & \(6.7970\times10^{-3}\) & \(1.2186\times10^{-2}\)\\
\(10^{-6}\) & \(6.1134\times10^{-3}\) & \(1.1335\times10^{-2}\) & \(0.008\) & \(6.0751\times10^{-3}\) & \(1.1301\times10^{-2}\)\\
\(10^{-5}\) & \(6.0751\times10^{-3}\) & \(1.1301\times10^{-2}\) & \(0.016\) & \(5.7008\times10^{-3}\) & \(9.7964\times10^{-3}\)\\
\(10^{-4}\) & \(5.7125\times10^{-3}\) & \(1.0968\times10^{-2}\) & \(0.032\) & \(1.8757\times10^{-2}\) & \(1.7492\times10^{-2}\)\\ \hline
\end{tabular}
\caption{Absolute conductivity errors for Example~\ref{ex:examp1_1d}.}
\label{tab:ex51-gamma-errors}
\end{table}

\begin{figure}[!htbp]
\centering
\begin{subfigure}[t]{0.32\textwidth}
    \centering
    \includegraphics[width=\linewidth]{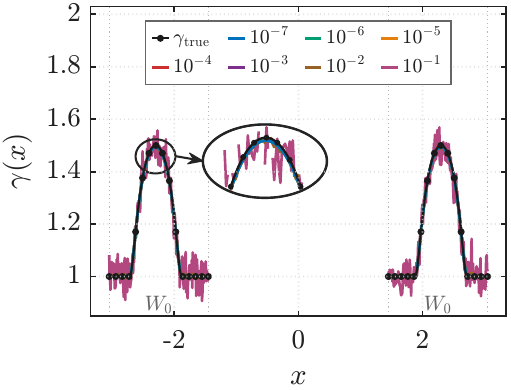}
    \caption{Exterior reconstruction.}
    \label{fig:ex51-exterior}
\end{subfigure}
\hfill
\begin{subfigure}[t]{0.32\textwidth}
    \centering
    \includegraphics[width=\linewidth]{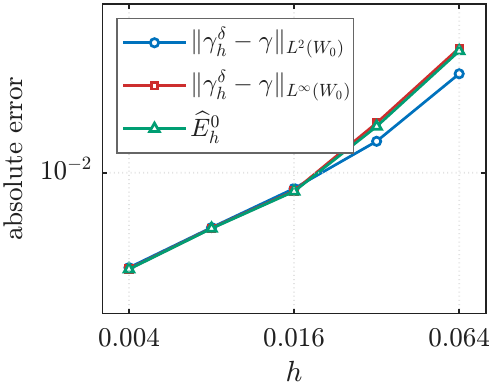}
    \caption{Exterior error versus \(h\).}
    \label{fig:ex51-exterior-error}
\end{subfigure}
\hfill
\begin{subfigure}[t]{0.32\textwidth}
    \centering
    \includegraphics[width=\linewidth]{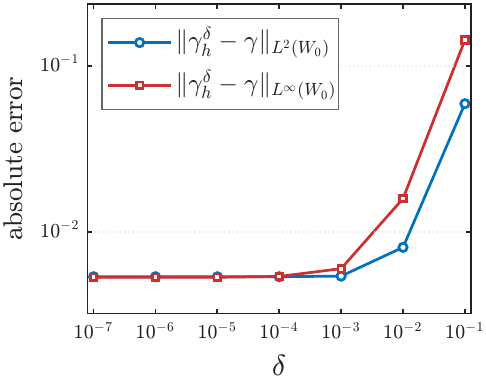}
    \caption{Exterior noise sensitivity.}
    \label{fig:ex51-exterior-noise}
\end{subfigure}

\medskip
\begin{subfigure}[t]{0.32\textwidth}
    \centering
    \includegraphics[width=\linewidth]{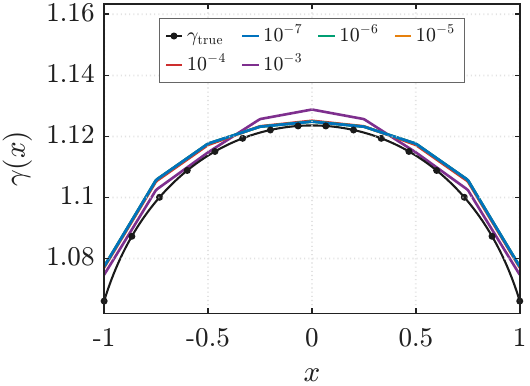}
    \caption{Interior reconstruction.}
    \label{fig:ex51-interior}
\end{subfigure}
\hfill
\begin{subfigure}[t]{0.32\textwidth}
    \centering
    \includegraphics[width=\linewidth]{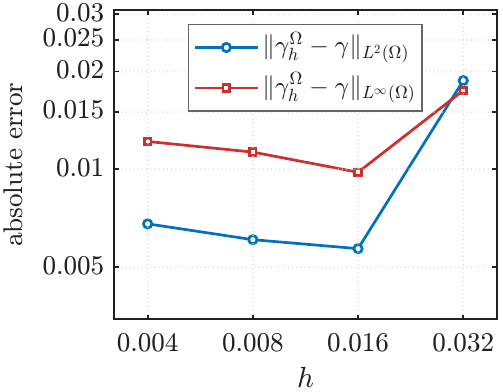}
    \caption{Interior error versus \(h\).}
    \label{fig:ex51-interior-h}
\end{subfigure}
\hfill
\begin{subfigure}[t]{0.32\textwidth}
    \centering
    \includegraphics[width=\linewidth]{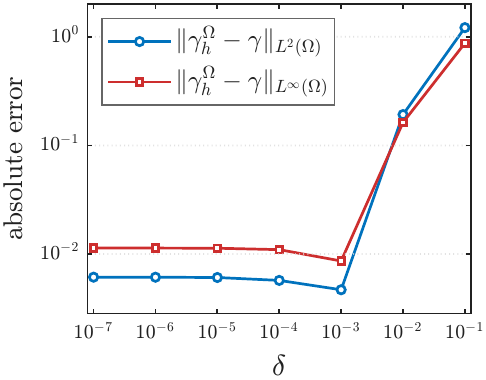}
    \caption{Interior noise sensitivity.}
    \label{fig:ex51-interior-error}
\end{subfigure}
\caption{One-dimensional reconstructions and error studies for Example~\ref{ex:examp1_1d}. The displayed reconstructions use \(h=0.008\) and the noise levels indicated in the legends. Both mesh studies use \(\delta=10^{-5}\). Each interior point receives the exterior result computed at its prescribed \(h\) and \(\delta\).}
\label{fig:ex51-complete}
\end{figure}
\FloatBarrier

\subsection{Example 5.2. Two-dimensional exterior-to-interior reconstruction.}\label{ex:examp2_2d}

We take \(d=2\), \(s=0.60\), \(\Omega=(-1.2,1.2)^2\), and the computational rectangle
\(\Omega_R=(-4,4)\times(-2,2)\). The exterior sets are
\[
\begin{aligned}
W= &(-3.20,-1.35)\times(-1.60,1.60) \cup(1.35,3.20)\times(-1.60,1.60),\\
W_0= &(-3.00,-1.50)\times(-1.25,1.25) \cup(1.50,3.00)\times(-1.25,1.25),\\
W_a=&W_c=(-1.995,-1.510)\times(-1.220,1.220) \cup(1.510,1.995)\times(-1.220,1.220),\\
W_b=&(-2.990,-2.005)\times(-1.220,1.220)\cup(2.005,2.990)\times(-1.220,1.220).
\end{aligned}
\]
Thus \(W_a,W_b,W_c\Subset W_0\) and \(\operatorname{dist}(W_a,W_b)=0.01\). Let \(z=(x,y)\). Using the functions \(\eta\) and \(\chi\) from Example~\ref{ex:examp1_1d}, define
\[
 m_0(z):=(\sqrt{1.50}-1) \left(1-\frac{|z|^2}{1.10^2}\right)_+^s \chi\!\left(\frac{|z|-1}{0.08}\right)
\]
and
\[
 \gamma(z):=(1+m_0(z))^2 +0.35\,\eta\!\left(\frac{|z-(2.25,0)|}{0.40}\right)
 +0.35\,\eta\!\left(\frac{|z+(2.25,0)|}{0.40}\right).
\]
Hence \(1\leq\gamma\leq1.50\) in \(\Omega\), while each exterior bump has peak value \(1.35\).

We use endpoint-inclusive tensor grids and symmetric nodal quadrature with lumped bilinear mass weights. The \(W_a\)-source and \(W_b\)-test vectors have disjoint nodal supports, although the associated \(Q_1\) functions need not have separated supports. The implementation uses the resulting nodal-quadrature analogue of \eqref{eq:separated-identity}, with diagonal kernel pairs omitted. Exterior interactions use midpoint quadrature in \([-10,10]^2\setminus\Omega_R\), supplemented by \((\pi/s)10^{-2s}\). Conductivity and reference forms use the same approximation; their DN matrices are obtained by Schur complementation. The exterior mesh study uses \(h\in\{0.32,0.16,0.08,0.04\}\), \(\delta=10^{-7}\), and \(\alpha=0.02h^2\). Localized sources use the capped nominal radius ratios \(r/h\in\{1.5,2,2.5,3,4,5\}\), calibrated and extrapolated as in Appendix~\ref{app:calibrated-dn}. The reported \(L^2\)-errors use the lumped nodal weights, and the \(L^\infty\)-errors are maxima over the corresponding nodes.

The interior studies use a fixed exterior reconstruction computed separately at \(h=0.16\) and \(\delta=10^{-7}\), transferred to each interior grid before forming the Liouville data. State corrections use a \(7\times7\) tensor-bilinear grid; the interior potential is constant on a \(2\times2\) partition, and the tail potential has one coefficient per component of \(W_c\). The Cauchy space uses eight radial knots with prescribed radial scale \(1.10\) in \(\Omega\), and a \(4\times5\) tensor-bilinear grid on each component of \(W_c\). Radial symmetry and this radial scale are supplied a priori to the inverse Cauchy space. For the final numerical conductivity, we project onto \([0.5,2]\), which contains the exact interior range \([1,1.5]\); this projection cannot increase the pointwise error relative to \(\gamma\). The background deviation is zero outside \(\Omega\cup W_0\).

The interior mesh study uses \(h\in\{0.32,0.16,0.08,0.04\}\) and \(\delta=10^{-7}\), keeping the exterior input and reduced inverse grids fixed. Source counts are selected from the prescribed family by discrete visibility and tail-rank diagnostics, without comparing trial reconstructions to the manufactured solution. These fixed inverse spaces do not test the dense-space limit in Theorem~\ref{thm:final-interior-conductivity-error}. The regularization weights are
\[
\begin{gathered}
\alpha_z(h)=\min\!\left\{5\times10^{-4},\max\!\left\{5\times10^{-5},9\times10^{-5}(h/0.10)^2\right\}\right\},\\
\alpha_w=5\times10^{-4},\quad \alpha_Q=3\times10^{-5},\quad \alpha_t=10^{-7},\quad
\alpha_v=\alpha_\tau=32,\quad \alpha_m=0.2,\quad Q_*=100.
\end{gathered}
\]
The exterior and interior noise studies use \(h=0.04\) and \(h=0.08\), respectively, with nominal levels \(\delta\in\{10^{-7},10^{-5},10^{-4},10^{-2},10^{-1}\}\). 

Figure \ref{fig:ex52-reconstruction} displays the exact conductivity representation. Table~\ref{tab:ex52-gamma-errors} and Figure \ref{fig:ex52-errors} show that refinement from \(h=0.32\) to \(h=0.04\) reduces the computed exterior \(L^2(W_0)\)-error from \(6.2905\times10^{-2}\) to \(8.8340\times10^{-3}\) and the conditional interior \(L^2(\Omega)\)-error from \(2.3709\times10^{-1}\) to \(3.4056\times10^{-2}\). The errors change little over the three smallest noise levels and increase at larger noise, while the exterior \(L^\infty(W_0)\)-errors are nonmonotone under mesh refinement. The interior studies quantify mesh and noise sensitivity for the fixed exterior input and inverse spaces, with regularization chosen as specified above. The reported errors are numerical approximations of the conductivity errors appearing in \eqref{eq:recon-error-final} and \eqref{eq:final-interior-conductivity-error}.

\begin{figure}[!htbp]
\centering
\begin{subfigure}[t]{0.48\textwidth}
    \centering
    \includegraphics[width=\linewidth]{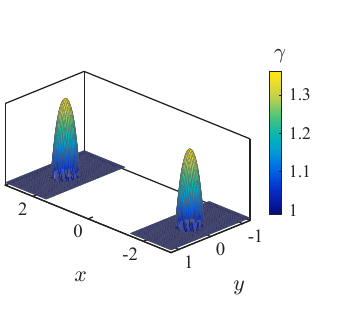}
    \caption{Exact conductivity in \(W_0\).}
    \label{fig:ex52-true-exterior}
\end{subfigure}
\hfill
\begin{subfigure}[t]{0.48\textwidth}
    \centering
    \includegraphics[width=\linewidth]{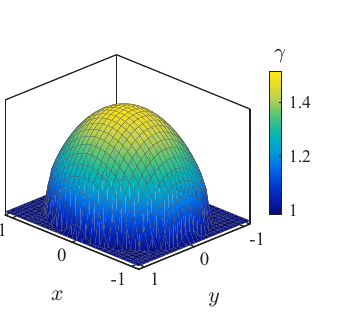}
    \caption{Exact conductivity in \(\Omega\).}
    \label{fig:ex52-true-interior}
\end{subfigure}
\caption{Exact reference conductivities for Example~\ref{ex:examp2_2d}.}
\label{fig:ex52-reconstruction}
\end{figure}

\begin{figure}[!htbp]
\centering
\begin{subfigure}[t]{0.32\textwidth}
    \centering
    \includegraphics[width=\linewidth]{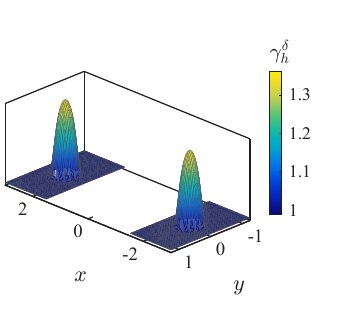}
    \caption{Exterior reconstruction.}
    \label{fig:ex52-reconstructed-exterior}
\end{subfigure}
\hfill
\begin{subfigure}[t]{0.32\textwidth}
    \centering
    \includegraphics[width=\linewidth]{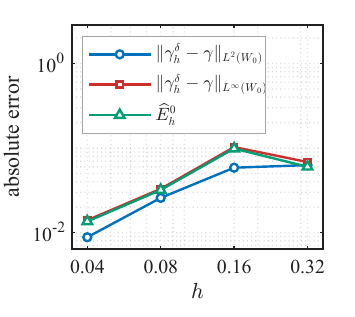}
    \caption{Exterior error versus \(h\).}
    \label{fig:ex52-exterior-h}
\end{subfigure}
\hfill
\begin{subfigure}[t]{0.32\textwidth}
    \centering
    \includegraphics[width=\linewidth]{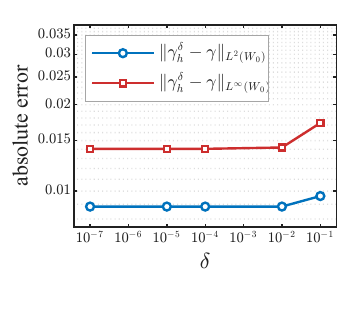}
    \caption{Exterior noise sensitivity.}
    \label{fig:ex52-exterior-noise}
\end{subfigure}

\medskip
\begin{subfigure}[t]{0.32\textwidth}
    \centering
    \includegraphics[width=\linewidth]{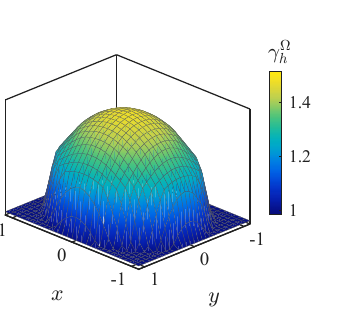}
    \caption{Interior reconstruction.}
    \label{fig:ex52-reconstructed-interior}
\end{subfigure}
\hfill
\begin{subfigure}[t]{0.32\textwidth}
    \centering
    \includegraphics[width=\linewidth]{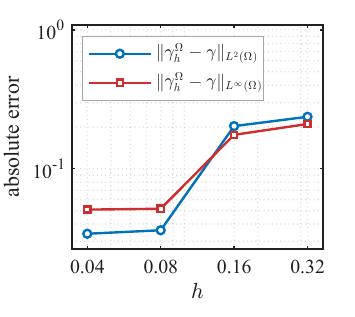}
    \caption{Interior error versus \(h\).}
    \label{fig:ex52-interior-h}
\end{subfigure}
\hfill
\begin{subfigure}[t]{0.32\textwidth}
    \centering
    \includegraphics[width=\linewidth]{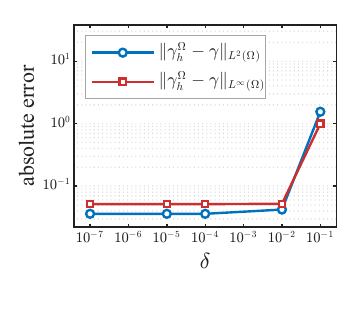}
    \caption{Interior noise sensitivity.}
    \label{fig:ex52-interior-noise}
\end{subfigure}
\caption{Exterior and interior reconstructions and error studies for Example~\ref{ex:examp2_2d}.}
\label{fig:ex52-errors}
\end{figure}
\FloatBarrier

\begin{table}[!htbp]
\centering\small
\setlength{\tabcolsep}{3.4pt}\renewcommand{\arraystretch}{1.1}
\begin{tabular}{|c c c||c c c|}
\hline
\multicolumn{6}{|c|}{\textbf{Exterior reconstruction}}\\ \hline
\multicolumn{3}{|c||}{Noise study, \(h=0.04\)} & \multicolumn{3}{c|}{Mesh study, \(\delta=10^{-7}\)}\\ \hline
\(\delta\) & \(\|\gamma_h^\delta-\gamma\|_{L^2(W_0)}\) & \(\|\gamma_h^\delta-\gamma\|_{L^\infty(W_0)}\) & \(h\) & \(\|\gamma_h^\delta-\gamma\|_{L^2(W_0)}\) & \(\|\gamma_h^\delta-\gamma\|_{L^\infty(W_0)}\)\\ \hline
\(10^{-7}\) & \(8.8340\times10^{-3}\) & \(1.4042\times10^{-2}\) & \(0.04\) & \(8.8340\times10^{-3}\) & \(1.4042\times10^{-2}\)\\
\(10^{-5}\) & \(8.8340\times10^{-3}\) & \(1.4042\times10^{-2}\) & \(0.08\) & \(2.5791\times10^{-2}\) & \(3.2957\times10^{-2}\)\\
\(10^{-4}\) & \(8.8339\times10^{-3}\) & \(1.4044\times10^{-2}\) & \(0.16\) & \(5.8653\times10^{-2}\) & \(1.0276\times10^{-1}\)\\
\(10^{-2}\) & \(8.8353\times10^{-3}\) & \(1.4197\times10^{-2}\) & \(0.32\) & \(6.2905\times10^{-2}\) & \(6.8451\times10^{-2}\)\\ \hline\hline
\multicolumn{6}{|c|}{\textbf{Interior reconstruction}}\\ \hline
\multicolumn{3}{|c||}{Noise study, \(h=0.08\)} & \multicolumn{3}{c|}{Mesh study, \(\delta=10^{-7}\)}\\ \hline
\(\delta\) & \(\|\gamma_h^\Omega-\gamma\|_{L^2(\Omega)}\) & \(\|\gamma_h^\Omega-\gamma\|_{L^\infty(\Omega)}\) & \(h\) & \(\|\gamma_h^\Omega-\gamma\|_{L^2(\Omega)}\) & \(\|\gamma_h^\Omega-\gamma\|_{L^\infty(\Omega)}\)\\ \hline
\(10^{-7}\) & \(3.6094\times10^{-2}\) & \(5.1537\times10^{-2}\) & \(0.04\) & \(3.4056\times10^{-2}\) & \(5.0833\times10^{-2}\)\\
\(10^{-5}\) & \(3.6094\times10^{-2}\) & \(5.1537\times10^{-2}\) & \(0.08\) & \(3.6094\times10^{-2}\) & \(5.1537\times10^{-2}\)\\
\(10^{-4}\) & \(3.6099\times10^{-2}\) & \(5.1541\times10^{-2}\) & \(0.16\) & \(2.0308\times10^{-1}\) & \(1.7574\times10^{-1}\)\\
\(10^{-2}\) & \(4.2220\times10^{-2}\) & \(5.2218\times10^{-2}\) & \(0.32\) & \(2.3709\times10^{-1}\) & \(2.1031\times10^{-1}\)\\ \hline
\end{tabular}
\caption{Absolute conductivity errors for Example~\ref{ex:examp2_2d}.}
\label{tab:ex52-gamma-errors}
\end{table}

\section{Conclusion}
We developed an exterior-to-interior reconstruction for the fractional conductivity equation. The exterior estimates separate concentration, approximation, regularization, and total data discrepancy, including the critical logarithm in the H\"older rate. For full observation spaces, discrete continuation gives logarithmic state-bias and interior conductivity estimates. Fixed-source conditions yield convergence of the clipped potential and exterior tail without couplings involving discrete visibility constants. The complete reconstruction converges under the stated approximation and parameter conditions.

\subsection*{Acknowledgments} 

Dwivedi and Rupp have been supported by the Deutsche Forschungsgemeinschaft (DFG, German Research Foundation) -- 577175348. Railo was supported by the Research Council of Finland through the Flagship of Advanced Mathematics for Sensing, Imaging and Modelling (grant number 359183), the Emil Aaltonen Foundation, and the Jenny and Antti Wihuri Foundation.

The authors used Claude Opus 5 (Anthropic) to assist with English-language drafting, manuscript organization, and literature searches and reference checks. The authors reviewed the mathematical arguments and proofs and take full responsibility for the mathematical content, the accuracy of the references, and the final presentation.

\bibliographystyle{siamdoiurl}
\bibliography{main}

@article{AcostaBorthagarayHeuer2019,
    AUTHOR = {Acosta, Gabriel and Borthagaray, Juan Pablo and Heuer,
              Norbert},
     TITLE = {Finite element approximations of the nonhomogeneous fractional
              {D}irichlet problem},
   JOURNAL = {IMA J. Numer. Anal.},
  FJOURNAL = {IMA Journal of Numerical Analysis},
    VOLUME = {39},
      YEAR = {2019},
    NUMBER = {3},
     PAGES = {1471--1501},
      ISSN = {0272-4979,1464-3642},
   MRCLASS = {65N30},
  MRNUMBER = {4023752},
       DOI = {10.1093/imanum/dry023},
       URL = {https://doi.org/10.1093/imanum/dry023},
}

@article {Li2025numerical,
    AUTHOR = {Li, Xinyan},
     TITLE = {A numerical method for reconstructing the potential in
              fractional {C}alder\'on problem with a single measurement},
   JOURNAL = {Comput. Math. Appl.},
  FJOURNAL = {Computers \& Mathematics with Applications. An International
              Journal},
    VOLUME = {183},
      YEAR = {2025},
     PAGES = {256--270},
      ISSN = {0898-1221,1873-7668},
   MRCLASS = {65M32 (35J10 35J70 35R11 35R30 65F22)},
  MRNUMBER = {4869090},
       DOI = {10.1016/j.camwa.2025.02.018},
       URL = {https://doi.org/10.1016/j.camwa.2025.02.018},
}

@misc{KowNurminenRailo2025Bayesian,
  title={Bayesian inference for the fractional {Calder\'on} problem with a single measurement},
  author={Kow, Pu-Zhao and Nurminen, Janne and Railo, Jesse},
  year          = {2025},
  eprint        = {2511.11068},
  archivePrefix = {arXiv},
  primaryClass  = {math.ST},
}

@article{RinaldoWarma2026Reconstruction,
    AUTHOR = {Rinaldo, Ethan and Warma, Mahamadi},
     TITLE = {Reconstruction algorithms for the fractional {L}aplacian and
              applications to inverse problems},
   JOURNAL = {Inverse Problems},
  FJOURNAL = {Inverse Problems. An International Journal on the Theory and
              Practice of Inverse Problems, Inverse Methods and Computerized
              Inversion of Data},
    VOLUME = {42},
      YEAR = {2026},
    NUMBER = {7},
     PAGES = {Paper No. 075022, 32},
      ISSN = {0266-5611,1361-6420},
   MRCLASS = {35R30 (35R11 35R25)},
  MRNUMBER = {5109907},
       DOI = {10.1088/1361-6420/ae8cb7},
       URL = {https://doi.org/10.1088/1361-6420/ae8cb7},
}

@book{BrezinskiRedivoZaglia1991,
    AUTHOR = {Brezinski, Claude and Redivo Zaglia, Michela},
     TITLE = {Extrapolation methods},
    SERIES = {Studies in Computational Mathematics},
    VOLUME = {2},
 PUBLISHER = {North-Holland Publishing Co., Amsterdam},
      YEAR = {1991},
     PAGES = {x+464},
      ISBN = {0-444-88814-4},
   MRCLASS = {65-02 (65Bxx)},
  MRNUMBER = {1140920},
MRREVIEWER = {Alan\ M.\ Cohen},
}

@article{Uhlmann2009,
    AUTHOR = {Uhlmann, G.},
     TITLE = {Electrical impedance tomography and {C}alder\'on's problem},
   JOURNAL = {Inverse Problems},
  FJOURNAL = {Inverse Problems. An International Journal on the Theory and
              Practice of Inverse Problems, Inverse Methods and Computerized
              Inversion of Data},
    VOLUME = {25},
      YEAR = {2009},
    NUMBER = {12},
     PAGES = {123011, 39},
      ISSN = {0266-5611,1361-6420},
   MRCLASS = {78A48 (35-02 35J25 35R30)},
  MRNUMBER = {3460047},
MRREVIEWER = {Sergey\ G.\ Pyatkov},
       DOI = {10.1088/0266-5611/25/12/123011},
       URL = {https://doi.org/10.1088/0266-5611/25/12/123011},
}

@inproceedings{Calderon1980,
    AUTHOR = {Calder\'on, Alberto-P.},
     TITLE = {On an inverse boundary value problem},
 BOOKTITLE = {Seminar on {N}umerical {A}nalysis and its {A}pplications to
              {C}ontinuum {P}hysics ({R}io de {J}aneiro, 1980)},
     PAGES = {65--73},
 PUBLISHER = {Soc. Brasil. Mat., Rio de Janeiro},
      YEAR = {1980},
   MRCLASS = {35R30 (35K60)},
  MRNUMBER = {590275},
MRREVIEWER = {J.\ R.\ Cannon},
}

@article{Du2012NonlocalDiffusion,
    AUTHOR = {Du, Qiang and Gunzburger, Max and Lehoucq, R. B. and Zhou,
              Kun},
     TITLE = {Analysis and approximation of nonlocal diffusion problems with
              volume constraints},
   JOURNAL = {SIAM Rev.},
  FJOURNAL = {SIAM Review},
    VOLUME = {54},
      YEAR = {2012},
    NUMBER = {4},
     PAGES = {667--696},
      ISSN = {1095-7200,0036-1445},
   MRCLASS = {35R11 (35K20)},
  MRNUMBER = {3023366},
       DOI = {10.1137/110833294},
       URL = {https://doi.org/10.1137/110833294},
}

@article{Du2013NonlocalVectorCalculus,
    AUTHOR = {Du, Qiang and Gunzburger, Max and Lehoucq, R. B. and Zhou,
              Kun},
     TITLE = {A nonlocal vector calculus, nonlocal volume-constrained
              problems, and nonlocal balance laws},
   JOURNAL = {Math. Models Methods Appl. Sci.},
  FJOURNAL = {Mathematical Models and Methods in Applied Sciences},
    VOLUME = {23},
      YEAR = {2013},
    NUMBER = {3},
     PAGES = {493--540},
      ISSN = {0218-2025,1793-6314},
   MRCLASS = {26B12 (26B15 26B20 45P05 46F12)},
  MRNUMBER = {3010838},
MRREVIEWER = {Mikhail\ Korobkov},
       DOI = {10.1142/S0218202512500546},
       URL = {https://doi.org/10.1142/S0218202512500546},
}

@article{Valdinoci2009LongJump,
    AUTHOR = {Valdinoci, Enrico},
     TITLE = {From the long jump random walk to the fractional {L}aplacian},
   JOURNAL = {Bol. Soc. Esp. Mat. Apl. SeMA},
  FJOURNAL = {Bolet\'in de la Sociedad Espa\~nola de Matem\'atica Aplicada.
              SeMA},
    VOLUME = {49},
    NUMBER = {49},
      YEAR = {2009},
     PAGES = {33--44},
      ISSN = {1575-9822},
   MRCLASS = {60G50 (35Q62)},
  MRNUMBER = {2584076},
  URL = {https://arxiv.org/pdf/0901.3261}
}

@article{RailoZimmermann2023counterexamples,
    AUTHOR = {Railo, Jesse and Zimmermann, Philipp},
     TITLE = {Counterexamples to uniqueness in the inverse fractional
              conductivity problem with partial data},
   JOURNAL = {Inverse Probl. Imaging},
  FJOURNAL = {Inverse Problems and Imaging},
    VOLUME = {17},
      YEAR = {2023},
    NUMBER = {2},
     PAGES = {406--418},
      ISSN = {1930-8337,1930-8345},
   MRCLASS = {35R30 (26A33 42B37 46F12)},
  MRNUMBER = {4546955},
       DOI = {10.3934/ipi.2022048},
       URL = {https://doi.org/10.3934/ipi.2022048},
}

@article{CoviKujanpaaRailo2027partial,
    AUTHOR = {Covi, Giovanni and Kujanp\"a\"a, Antti and Railo, Jesse},
     TITLE = {Partial data stability for the inverse fractional conductivity
              problem},
   JOURNAL = {Inverse Probl. Imaging},
  FJOURNAL = {Inverse Problems and Imaging},
    VOLUME = {26},
      YEAR = {2027},
     PAGES = {132--146},
      ISSN = {1930-8337,1930-8345},
   MRCLASS = {35R30 (26A33 42B37)},
  MRNUMBER = {5097243},
       DOI = {10.3934/ipi.2026027},
       URL = {https://doi.org/10.3934/ipi.2026027},
}

@article{BrennerSuperapproximation2020,
    AUTHOR = {Brenner, Susanne C.},
     TITLE = {A general superapproximation result},
   JOURNAL = {Comput. Methods Appl. Math.},
  FJOURNAL = {Computational Methods in Applied Mathematics},
    VOLUME = {20},
      YEAR = {2020},
    NUMBER = {4},
     PAGES = {763--767},
      ISSN = {1609-4840,1609-9389},
   MRCLASS = {65N30},
  MRNUMBER = {4160293},
MRREVIEWER = {Aziz\ Belmiloudi},
       DOI = {10.1515/cmam-2020-0120},
       URL = {https://doi.org/10.1515/cmam-2020-0120},
}

@article{LangerLoescherSteinbachYang2024,
    AUTHOR = {Langer, Ulrich and L\"oscher, Richard and Steinbach, Olaf and
              Yang, Huidong},
     TITLE = {Mass-lumping discretization and solvers for distributed
              elliptic optimal control problems},
   JOURNAL = {Numer. Linear Algebra Appl.},
  FJOURNAL = {Numerical Linear Algebra with Applications},
    VOLUME = {31},
      YEAR = {2024},
    NUMBER = {5},
     PAGES = {Paper No. e2564, 22},
      ISSN = {1070-5325,1099-1506},
   MRCLASS = {65N30 (35J25 49M41 65K10)},
  MRNUMBER = {4804322},
       DOI = {10.1002/nla.2564},
       URL = {https://doi.org/10.1002/nla.2564},
}

@book{ciarlet1978finite,
    AUTHOR = {Ciarlet, Philippe G.},
     TITLE = {The finite element method for elliptic problems},
    SERIES = {Studies in Mathematics and its Applications},
    VOLUME = {4},
 PUBLISHER = {North-Holland Publishing Co., Amsterdam-New York-Oxford},
      YEAR = {1978},
     PAGES = {xix+530},
      ISBN = {0-444-85028-7},
   MRCLASS = {65N30},
  MRNUMBER = {520174},
MRREVIEWER = {Josef\ Nedoma},
}

@misc{DwivediRailoRuppFCP,
  title={A {G}alerkin Finite Element Method for the Fractional {C}alder\'on Problem},
  author={Dwivedi, Mukul and Railo, Jesse and Rupp, Andreas},
  journal={arXiv preprint arXiv:2603.26287},
  year={2026},
  url = {https://arxiv.org/abs/2603.26287}
}

@article{RailoZimmermann2024low,
    AUTHOR = {Railo, Jesse and Zimmermann, Philipp},
     TITLE = {Low regularity theory for the inverse fractional conductivity
              problem},
   JOURNAL = {Nonlinear Anal.},
  FJOURNAL = {Nonlinear Analysis. Theory, Methods \& Applications. An
              International Multidisciplinary Journal},
    VOLUME = {239},
      YEAR = {2024},
     PAGES = {Paper No. 113418, 27},
      ISSN = {0362-546X,1873-5215},
   MRCLASS = {35R30 (26A33 42B37)},
  MRNUMBER = {4658532},
MRREVIEWER = {Enno\ Pais},
       DOI = {10.1016/j.na.2023.113418},
       URL = {https://doi.org/10.1016/j.na.2023.113418},
}

@article{RulandSalo2020low,
    AUTHOR = {R\"uland, Angkana and Salo, Mikko},
     TITLE = {The fractional {C}alder\'on problem: low regularity and
              stability},
   JOURNAL = {Nonlinear Anal.},
  FJOURNAL = {Nonlinear Analysis. Theory, Methods \& Applications. An
              International Multidisciplinary Journal},
    VOLUME = {193},
      YEAR = {2020},
     PAGES = {111529, 56},
      ISSN = {0362-546X,1873-5215},
   MRCLASS = {35R30 (35B35 35B65 35R11 46E35)},
  MRNUMBER = {4062981},
       DOI = {10.1016/j.na.2019.05.010},
       URL = {https://doi.org/10.1016/j.na.2019.05.010},
}

@article{BaersCoviRuland2025instability,
    AUTHOR = {Baers, Hendrik and Covi, Giovanni and R\"uland, Angkana},
     TITLE = {On instability properties of the fractional {C}alder\'on
              problem},
   JOURNAL = {SIAM J. Math. Anal.},
  FJOURNAL = {SIAM Journal on Mathematical Analysis},
    VOLUME = {57},
      YEAR = {2025},
    NUMBER = {5},
     PAGES = {5454--5493},
      ISSN = {0036-1410,1095-7154},
   MRCLASS = {35R30 (35B35)},
  MRNUMBER = {4959954},
MRREVIEWER = {Atsuhide\ Ishida},
       DOI = {10.1137/24M167010X},
       URL = {https://doi.org/10.1137/24M167010X},
}

@book{brenner2008mathematical,
    AUTHOR = {Brenner, Susanne C. and Scott, L. Ridgway},
     TITLE = {The mathematical theory of finite element methods},
    SERIES = {Texts in Applied Mathematics},
    VOLUME = {15},
   EDITION = {Third},
 PUBLISHER = {Springer, New York},
      YEAR = {2008},
     PAGES = {xviii+397},
      ISBN = {978-0-387-75933-3},
   MRCLASS = {65-01 (65-02)},
  MRNUMBER = {2373954},
       DOI = {10.1007/978-0-387-75934-0},
       URL = {https://doi.org/10.1007/978-0-387-75934-0},
}

@article{railo2023fractional,
    AUTHOR = {Railo, Jesse and Zimmermann, Philipp},
     TITLE = {Fractional {C}alder\'on problems and {P}oincar\'e{}
              inequalities on unbounded domains},
   JOURNAL = {J. Spectr. Theory},
  FJOURNAL = {Journal of Spectral Theory},
    VOLUME = {13},
      YEAR = {2023},
    NUMBER = {1},
     PAGES = {63--131},
      ISSN = {1664-039X,1664-0403},
   MRCLASS = {35R30 (26A33 42B37 46F12)},
  MRNUMBER = {4620353},
       DOI = {10.4171/jst/444},
       URL = {https://doi.org/10.4171/jst/444},
}

@article{Covi2020Inverse,
    AUTHOR = {Covi, Giovanni},
     TITLE = {Inverse problems for a fractional conductivity equation},
   JOURNAL = {Nonlinear Anal.},
  FJOURNAL = {Nonlinear Analysis. Theory, Methods \& Applications. An
              International Multidisciplinary Journal},
    VOLUME = {193},
      YEAR = {2020},
     PAGES = {111418, 18},
      ISSN = {0362-546X,1873-5215},
   MRCLASS = {35R11 (35R30)},
  MRNUMBER = {4062967},
       DOI = {10.1016/j.na.2019.01.008},
       URL = {https://doi.org/10.1016/j.na.2019.01.008},
}

@article{covi2022global,
    AUTHOR = {Covi, Giovanni and Railo, Jesse and Zimmermann, Philipp},
     TITLE = {The global inverse fractional conductivity problem},
   JOURNAL = {Calc. Var. Partial Differential Equations},
  FJOURNAL = {Calculus of Variations and Partial Differential Equations},
    VOLUME = {65},
      YEAR = {2026},
    NUMBER = {9},
     PAGES = {Paper No. 258},
      ISSN = {0944-2669,1432-0835},
   MRCLASS = {35R30 (26A33 42B37)},
  MRNUMBER = {5126924},
       DOI = {10.1007/s00526-026-03401-6},
       URL = {https://doi.org/10.1007/s00526-026-03401-6},
}

@article{Covi2024stability,
    AUTHOR = {Covi, Giovanni and Railo, Jesse and Tyni, Teemu and
              Zimmermann, Philipp},
     TITLE = {Stability estimates for the inverse fractional conductivity
              problem},
   JOURNAL = {SIAM J. Math. Anal.},
  FJOURNAL = {SIAM Journal on Mathematical Analysis},
    VOLUME = {56},
      YEAR = {2024},
    NUMBER = {2},
     PAGES = {2456--2487},
      ISSN = {0036-1410,1095-7154},
   MRCLASS = {35R30 (26A33 42B37 46F12)},
       DOI = {10.1137/22M1533542},
       URL = {https://doi.org/10.1137/22M1533542},
}

@incollection{covi2025survey,
    AUTHOR = {Covi, Giovanni},
     TITLE = {The inverse problem for the fractional conductivity equation:
              a survey},
 BOOKTITLE = {Inverse problems: modelling and simulation},
    SERIES = {Trends Math.},
     PAGES = {159--165},
 PUBLISHER = {Birkh\"auser/Springer, Cham},
      YEAR = {2025},
      ISBN = {978-3-031-87212-9; 978-3-031-87213-6},
   MRCLASS = {35R30 (35R11 42B37 46F12)},
  MRNUMBER = {4952816},
      DOI = {10.1007/978-3-031-87213-6_20}
}

@article{Ghosh2020uniqueness,
    AUTHOR = {Ghosh, Tuhin and R\"uland, Angkana and Salo, Mikko and
              Uhlmann, Gunther},
     TITLE = {Uniqueness and reconstruction for the fractional {C}alder\'on
              problem with a single measurement},
   JOURNAL = {J. Funct. Anal.},
  FJOURNAL = {Journal of Functional Analysis},
    VOLUME = {279},
      YEAR = {2020},
    NUMBER = {1},
     PAGES = {108505, 42},
      ISSN = {0022-1236,1096-0783},
   MRCLASS = {35R11},
MRREVIEWER = {Vincenzo\ Ambrosio},
       DOI = {10.1016/j.jfa.2020.108505},
       URL = {https://doi.org/10.1016/j.jfa.2020.108505},
}

@article{Ghosh2020calderon,
    AUTHOR = {Ghosh, Tuhin and Salo, Mikko and Uhlmann, Gunther},
     TITLE = {The {C}alder\'on problem for the fractional {S}chr\"odinger
              equation},
   JOURNAL = {Anal. PDE},
  FJOURNAL = {Analysis \& PDE},
    VOLUME = {13},
      YEAR = {2020},
    NUMBER = {2},
     PAGES = {455--475},
      ISSN = {2157-5045,1948-206X},
   MRCLASS = {35R11 (26A33 35J10 35J70 35R30)},
       DOI = {10.2140/apde.2020.13.455},
       URL = {https://doi.org/10.2140/apde.2020.13.455},
}

@article{Ruland2021single,
    AUTHOR = {R\"uland, Angkana},
     TITLE = {On single measurement stability for the fractional
              {C}alder\'on problem},
   JOURNAL = {SIAM J. Math. Anal.},
  FJOURNAL = {SIAM Journal on Mathematical Analysis},
    VOLUME = {53},
      YEAR = {2021},
    NUMBER = {5},
     PAGES = {5094--5113},
      ISSN = {0036-1410,1095-7154},
   MRCLASS = {35R30 (35B35)},
MRREVIEWER = {Natesan\ Barani Balan},
       DOI = {10.1137/20M1381964},
       URL = {https://doi.org/10.1137/20M1381964},
}

@article{Di2012Hitchhiker,
    AUTHOR = {Di Nezza, Eleonora and Palatucci, Giampiero and Valdinoci,
              Enrico},
     TITLE = {Hitchhiker's guide to the fractional {S}obolev spaces},
   JOURNAL = {Bull. Sci. Math.},
  FJOURNAL = {Bulletin des Sciences Math\'ematiques},
    VOLUME = {136},
      YEAR = {2012},
    NUMBER = {5},
     PAGES = {521--573},
      ISSN = {0007-4497,1952-4773},
   MRCLASS = {46E35 (35A23 35S05 35S30)},
MRREVIEWER = {Lanzhe\ Liu},
       DOI = {10.1016/j.bulsci.2011.12.004},
       URL = {https://doi.org/10.1016/j.bulsci.2011.12.004},
}

@book{Engl1996Regularization,
    AUTHOR = {Engl, Heinz W. and Hanke, Martin and Neubauer, Andreas},
     TITLE = {Regularization of inverse problems},
    SERIES = {Mathematics and its Applications},
    VOLUME = {375},
 PUBLISHER = {Kluwer Academic Publishers Group, Dordrecht},
      YEAR = {1996},
     PAGES = {viii+321},
      ISBN = {0-7923-4157-0},
   MRCLASS = {65J20 (35R30 47A50 47H17)},
MRREVIEWER = {Ulrich\ Tautenhahn},
}

@article{BenningBurger2018,
    AUTHOR = {Benning, Martin and Burger, Martin},
     TITLE = {Modern regularization methods for inverse problems},
   JOURNAL = {Acta Numer.},
  FJOURNAL = {Acta Numerica},
    VOLUME = {27},
      YEAR = {2018},
     PAGES = {1--111},
      ISSN = {0962-4929,1474-0508},
   MRCLASS = {65J22 (35R30 47J25)},
  MRNUMBER = {3826506},
MRREVIEWER = {Anton\ Suhadolc},
       DOI = {10.1017/s0962492918000016},
       URL = {https://doi.org/10.1017/s0962492918000016},
}

@article{AcostaBorthagaray2017,
    AUTHOR = {Acosta, Gabriel and Borthagaray, Juan Pablo},
     TITLE = {A fractional {L}aplace equation: regularity of solutions and
              finite element approximations},
   JOURNAL = {SIAM J. Numer. Anal.},
  FJOURNAL = {SIAM Journal on Numerical Analysis},
    VOLUME = {55},
      YEAR = {2017},
    NUMBER = {2},
     PAGES = {472--495},
      ISSN = {0036-1429,1095-7170},
   MRCLASS = {65N30 (35B65 35R11 65N12 65N15)},
  MRNUMBER = {3620141},
MRREVIEWER = {Hui\ Yu},
       DOI = {10.1137/15M1033952},
       URL = {https://doi.org/10.1137/15M1033952},
}

@article{Acosta2017short,
    AUTHOR = {Acosta, Gabriel and Bersetche, Francisco M. and Borthagaray,
              Juan Pablo},
     TITLE = {A short {FE} implementation for a 2d homogeneous {D}irichlet
              problem of a fractional {L}aplacian},
   JOURNAL = {Comput. Math. Appl.},
  FJOURNAL = {Computers \& Mathematics with Applications. An International
              Journal},
    VOLUME = {74},
      YEAR = {2017},
    NUMBER = {4},
     PAGES = {784--816},
      ISSN = {0898-1221,1873-7668},
   MRCLASS = {65N30 (35R11)},
  MRNUMBER = {3679865},
       DOI = {10.1016/j.camwa.2017.05.026},
       URL = {https://doi.org/10.1016/j.camwa.2017.05.026},
}

\appendix
\section{Quadratic regularization in Hilbert spaces}
\label{app:quadratic-regularization}
We collect the finite-dimensional regularization estimates used in the reconstruction steps; see also \cite[Ch.~4]{Engl1996Regularization}. All adjoints and singular values use the indicated Hilbert inner products.

\begin{lemma}[Shifted Tikhonov regularization]
\label{lem:abstract-shifted-tikhonov}
Let \(\mathsf X,\mathsf Y\) be real Hilbert spaces, let \(E\subset\mathsf X\) be finite dimensional, and let \(T:E\to\mathsf Y\) be linear. For \(b\in\mathsf Y\), \(x_c\in E\), and \(\alpha>0\), the functional
\[
J_b(x):=\tfrac12\|Tx-b\|_{\mathsf Y}^2+\tfrac\alpha2\|x-x_c\|_{\mathsf X}^2,\qquad x\in E,
\]
has a unique minimizer \(x_b\). It satisfies
\begin{equation}\label{eq:abstract-tikhonov-normal}
(Tx_b,Tv)_{\mathsf Y}+\alpha(x_b-x_c,v)_{\mathsf X}=(b,Tv)_{\mathsf Y}\qquad(v\in E).
\end{equation}
For any basis \(\{e_i\}\) of \(E\), the variational matrix \(H_{ij}=(Te_j,Te_i)_{\mathsf Y}+\alpha(e_j,e_i)_{\mathsf X}\) is symmetric positive definite. For every \(x^\dagger\in E\),
\begin{equation}\label{eq:abstract-tikhonov-comparison}
\|Tx_b-b\|_{\mathsf Y}^2+\alpha\|x_b-x_c\|_{\mathsf X}^2
\le\|Tx^\dagger-b\|_{\mathsf Y}^2+\alpha\|x^\dagger-x_c\|_{\mathsf X}^2.
\end{equation}
In particular,
\begin{equation}\label{eq:abstract-tikhonov-apriori}
\|Tx_b-b\|_{\mathsf Y}^2+\alpha\|x_b-x_c\|_{\mathsf X}^2\le\|b-Tx_c\|_{\mathsf Y}^2.
\end{equation}
For fixed \(T,x_c,\alpha\), the minimizers associated with \(b_1,b_2\in\mathsf Y\) satisfy
\begin{equation}\label{eq:abstract-tikhonov-data-stability}
\|x_{b_1}-x_{b_2}\|_{\mathsf X}\le\frac{\|b_1-b_2\|_{\mathsf Y}}{2\sqrt\alpha},\qquad
\|T(x_{b_1}-x_{b_2})\|_{\mathsf Y}\le\|b_1-b_2\|_{\mathsf Y}.
\end{equation}
If \(E\ne\{0\}\), set \(\sigma:=\inf_{0\ne v\in E}\|Tv\|_{\mathsf Y}/\|v\|_{\mathsf X}\ge0\). Then
\begin{align}\label{eq:abstract-tikhonov-comparator}
\|x_b-x^\dagger\|_{\mathsf X}
\le \min\left\{\frac1{2\sqrt\alpha},\frac1{\sqrt{\sigma^2+\alpha}}\right\}\|b-Tx^\dagger\|_{\mathsf Y}+\frac{\alpha}{\sigma^2+\alpha}\|x^\dagger-x_c\|_{\mathsf X}.
\end{align}
For \(\ell\in E^*\), the equation \((Te,Tv)_{\mathsf Y}+\alpha(e,v)_{\mathsf X}=\ell(v)\), \(v\in E\), has a unique solution satisfying
\begin{equation}\label{eq:abstract-tikhonov-load-stability}
\|e\|_{\mathsf X}\le\frac{\|\ell\|_{E^*}}\alpha,\qquad
\|Te\|_{\mathsf Y}\le\frac{\|\ell\|_{E^*}}{2\sqrt\alpha},
\end{equation}
where the dual norm on \(E^*\) is induced by \(\mathsf X\).

Finally, let \(\widetilde x\) be the minimizer with \((T,b)\) replaced by \((\widetilde T,\widetilde b)\), keeping \(E,x_c,\alpha\) fixed. With \(r=Tx_b-b\) and \(\ell_r(v)=(r,(\widetilde T-T)v)_{\mathsf Y}\),
\begin{equation}\label{eq:abstract-tikhonov-operator-perturbation}
\|\widetilde x-x_b\|_{\mathsf X}
\le\frac{\|\widetilde b-b-(\widetilde T-T)x_b\|_{\mathsf Y}}{2\sqrt\alpha}
+\frac{\|\ell_r\|_{E^*}}\alpha.
\end{equation}
\end{lemma}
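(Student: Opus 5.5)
We reduce everything to the centered problem and argue with the normal equations, testing each linear error equation with the error itself. Set \(y:=x-x_c\) and \(b':=b-Tx_c\); then \(J_b(x)=\tfrac12\|Ty-b'\|_{\mathsf Y}^2+\tfrac\alpha2\|y\|_{\mathsf X}^2\) on \(E\).

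\emph{Existence, uniqueness, normal equations and comparison.} The quadratic form \(v\mapsto\|Tv\|_{\mathsf Y}^2+\alpha\|v\|_{\mathsf X}^2\) is bounded below by \(\alpha\|v\|_{\mathsf X}^2\). Hence \(J_b\) is strictly convex and coercive on the finite-dimensional space \(E\), so it has a unique minimizer \(x_b\). The first variation of \(J_b\) at \(x_b\) gives \eqref{eq:abstract-tikhonov-normal}. In any basis, \(\mathbf v^\top H\mathbf v=\|Tv\|^2+\alpha\|v\|^2>0\) for \(v\ne0\), so \(H\) is symmetric positive definite. Minimality \(J_b(x_b)\le J_b(x^\dagger)\) is exactly \eqref{eq:abstract-tikhonov-comparison}, and the choice \(x^\dagger=x_c\) gives \eqref{eq:abstract-tikhonov-apriori}.

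\emph{Load stability.} Existence and uniqueness for the load problem follow from the positive definite matrix \(H\). Testing with \(v=e\) gives
\[
\|Te\|^2+\alpha\|e\|^2=\ell(e)\le\|\ell\|_{E^*}\|e\|.
\]
Dropping \(\|Te\|^2\) yields \(\|e\|\le\|\ell\|_{E^*}/\alpha\). For the second bound, \(\|Te\|^2\le\|\ell\|_{E^*}\|e\|-\alpha\|e\|^2\le\|\ell\|_{E^*}^2/(4\alpha)\), by maximizing the right-hand side over \(\|e\|\); this is \eqref{eq:abstract-tikhonov-load-stability}.

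\emph{Data stability and the comparator bound.} For \(e=x_{b_1}-x_{b_2}\), subtracting the normal equations gives \((Te,Tv)+\alpha(e,v)=(b_1-b_2,Tv)\). Testing with \(v=e\) gives
\[
\|Te\|^2+\alpha\|e\|^2\le\|\delta b\|\,\|Te\|,\qquad \delta b:=b_1-b_2 .
\]
This yields \(\|Te\|\le\|\delta b\|\). It also yields \(\alpha\|e\|^2\le\|\delta b\|\|Te\|-\|Te\|^2\le\|\delta b\|^2/4\), which proves \eqref{eq:abstract-tikhonov-data-stability}.

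For \eqref{eq:abstract-tikhonov-comparator}, let \(e=x_b-x^\dagger\). It satisfies
\[
(Te,Tv)+\alpha(e,v)=(b-Tx^\dagger,Tv)-\alpha(x^\dagger-x_c,v).
\]
By linearity we split \(e=e_1+e_2\), where \(e_1\) solves the equation with only the first right-hand side and \(e_2\) with only the second. For \(e_1\), the previous argument gives the factor \(1/(2\sqrt\alpha)\). The definition of \(\sigma\) gives
\[
(\sigma^2+\alpha)\|e_1\|^2\le\|Te_1\|^2+\alpha\|e_1\|^2\le\|b-Tx^\dagger\|\,\bigl(\|Te_1\|^2+\alpha\|e_1\|^2\bigr)^{1/2}.
\]
Hence \(\|e_1\|\le\|b-Tx^\dagger\|/\sqrt{\sigma^2+\alpha}\), and together the two estimates give the minimum. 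For \(e_2\), testing with \(e_2\) gives \((\sigma^2+\alpha)\|e_2\|^2\le\alpha\|x^\dagger-x_c\|\|e_2\|\), which is the second term.

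\emph{Operator perturbation.} This is the step needing the most care, namely getting the algebra into the right form. Set \(e=\widetilde x-x_b\). Subtracting the original normal equation, rewritten as \(\alpha(x_b-x_c,v)=-(r,Tv)\), from the perturbed one gives
\[
(\widetilde Te,\widetilde Tv)+\alpha(e,v)=(\widetilde b-\widetilde Tx_b,\widetilde Tv)+(r,Tv).
\]
Since \(\widetilde b-\widetilde Tx_b=g-r\) with \(g:=\widetilde b-b-(\widetilde T-T)x_b\), the right-hand side becomes
\[
(g,\widetilde Tv)-(r,\widetilde Tv)+(r,Tv)=(g,\widetilde Tv)-\ell_r(v).
\]
We split \(e\) linearly, as in the comparator bound. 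The data part, with operator \(\widetilde T\), is bounded by \(\|g\|/(2\sqrt\alpha)\) through \eqref{eq:abstract-tikhonov-data-stability}. The load part is bounded by \(\|\ell_r\|_{E^*}/\alpha\) through \eqref{eq:abstract-tikhonov-load-stability}, applied with \(\widetilde T\); those bounds do not depend on the operator. Adding the two parts proves \eqref{eq:abstract-tikhonov-operator-perturbation}.
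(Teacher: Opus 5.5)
Your proof is correct. Apart from one step it follows the paper's argument essentially line by line: the same coercivity argument for existence, uniqueness and positive definiteness, the same test-with-the-error estimates for \eqref{eq:abstract-tikhonov-data-stability} and \eqref{eq:abstract-tikhonov-load-stability}, and the same rewriting of the perturbed error equation as $(g,\widetilde Tv)-\ell_r(v)$ followed by a linear split for \eqref{eq:abstract-tikhonov-operator-perturbation}.

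The genuine difference is the comparator bound \eqref{eq:abstract-tikhonov-comparator}. The paper writes $x_b-x^\dagger=(B+\alpha I)^{-1}T^*(b-Tx^\dagger)-\alpha(B+\alpha I)^{-1}(x^\dagger-x_c)$ with $B=T^*T$ on $E$. It then uses that the spectrum of $B$ lies in $[\sigma^2,\infty)$ and reads off the bound from the scalar filter factors $t/(t^2+\alpha)$ and $\alpha/(t^2+\alpha)$ for $t\ge\sigma$.

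You instead split the error into a data part $e_1$ and a center part $e_2$ and test each with itself, using only $\|Tv\|_{\mathsf Y}\ge\sigma\|v\|_{\mathsf X}$. This avoids adjoints and eigendecompositions. It makes every estimate in the lemma an instance of a single energy template, the same split you and the paper already use for the operator perturbation. Nothing in it depends on finite dimensionality beyond solvability of the linear problems.

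The spectral route is shorter and shows exactly where the constants come from. In passing it gives the sharper factor $\sup_{t\ge\sigma}t/(t^2+\alpha)$, which equals $\sigma/(\sigma^2+\alpha)$ and is strictly smaller than the stated minimum when $\sigma>\sqrt\alpha$. Your chain of inequalities delivers precisely the stated minimum, which is all the lemma claims.
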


\begin{proof}
The Hessian \(a_\alpha(u,v):=(Tu,Tv)_{\mathsf Y}+\alpha(u,v)_{\mathsf X}\) satisfies \(a_\alpha(v,v)\ge\alpha\|v\|_{\mathsf X}^2\). Hence the strictly convex functional has a unique minimizer, its first variation gives \eqref{eq:abstract-tikhonov-normal}, and \(\mathbf c^\top H\mathbf c=a_\alpha(\sum_i c_ie_i,\sum_i c_ie_i)>0\) for \(\mathbf c\ne0\). Minimality proves \eqref{eq:abstract-tikhonov-comparison}; choosing \(x^\dagger=x_c\) gives \eqref{eq:abstract-tikhonov-apriori}.

Subtracting two normal equations and testing with \(e=x_{b_1}-x_{b_2}\) yields \(\|Te\|_{\mathsf Y}^2+\alpha\|e\|_{\mathsf X}^2\le D\|Te\|_{\mathsf Y}\), where \(D=\|b_1-b_2\|_{\mathsf Y}\). The inequalities \(Dt-t^2\le D^2/4\) and \(\|Te\|_{\mathsf Y}^2\le D\|Te\|_{\mathsf Y}\) prove \eqref{eq:abstract-tikhonov-data-stability}. For \(B=T^*T\), the normal equation also gives
\[
x_b-x^\dagger=(B+\alpha I)^{-1}T^*(b-Tx^\dagger)-\alpha(B+\alpha I)^{-1}(x^\dagger-x_c).
\]
The eigenvalues of \(B\) are at least \(\sigma^2\). Moreover, for \(t\ge\sigma\), \(t/(t^2+\alpha)\le\min\{1/(2\sqrt\alpha),1/\sqrt{\sigma^2+\alpha}\}\). Spectral decomposition therefore proves \eqref{eq:abstract-tikhonov-comparator}. The same positive Hessian gives unique solvability of the load equation. Testing with \(e\) gives \(\|Te\|_{\mathsf Y}^2+\alpha\|e\|_{\mathsf X}^2\le\|\ell\|_{E^*}\|e\|_{\mathsf X}\); dropping the first term and completing the square in \(\|e\|_{\mathsf X}\) prove \eqref{eq:abstract-tikhonov-load-stability}.

For \(e=\widetilde x-x_b\), subtraction of the normal equations gives
\[
(\widetilde Te,\widetilde Tv)_{\mathsf Y}+\alpha(e,v)_{\mathsf X}
=(\widetilde b-b-(\widetilde T-T)x_b,\widetilde Tv)_{\mathsf Y}-\ell_r(v).
\]
Split \(e\) into the solutions associated with the two terms on the right. Apply \eqref{eq:abstract-tikhonov-data-stability} to the first and \eqref{eq:abstract-tikhonov-load-stability} to the second, both with operator \(\widetilde T\). Their sum gives \eqref{eq:abstract-tikhonov-operator-perturbation}.
\end{proof}

\section{Calibrated and extrapolated DN samples}\label{app:calibrated-dn}
The experiments use reference calibration and radius extrapolation to form the nodal data in \eqref{eq:def-J-h-lambda}. The consistency estimates in this appendix connect the modified data to Theorems~\ref{thm:recon-error} and~\ref{thm:linfty-regularized-reconstruction}.

For each reconstruction node \(x_i\), let \(g_{i,h,r}\in V_h^W\) satisfy \(\|g_{i,h,r}\|_{W^s(\mathbb R^d)}=1\) and \(\operatorname{supp}g_{i,h,r}\subset B(x_i,Cr)\Subset W\), with \(C\) independent of \(i,h,r\). Let \(\mathcal D_{1,h}\) be the DN form assembled on the same mesh for \(\gamma\equiv1\), and set
\[
D_{i,h}(r):=\langle\Lambda_\gamma g_{i,h,r},g_{i,h,r}\rangle,\qquad D^{(1)}_{i,h}(r):=\mathcal D_{1,h}(g_{i,h,r},g_{i,h,r}).
\]
Whenever \(D^{(1)}_{i,h}(r)>0\), define
\begin{equation}\label{eq:calibrated-sample}
\widehat D_{i,h}^\delta(r):=\frac{\mathcal D_h^\delta(g_{i,h,r},g_{i,h,r})}{D^{(1)}_{i,h}(r)}.
\end{equation}
The reference denominator is computable without knowledge of \(\gamma\).

After merging repeated capped radii, let \(r_{i,h,1},\ldots,r_{i,h,m_{i,h}}\) be the distinct radii retained at \(x_i\). Define
\begin{equation}\label{eq:extrapolated-sample}
\widehat D_{i,h}^{\delta,\mathrm{cal}}:=\sum_{k=1}^{m_{i,h}}\omega_{i,h,k}\widehat D_{i,h}^\delta(r_{i,h,k}),\qquad \sum_{k=1}^{m_{i,h}}\omega_{i,h,k}=1.
\end{equation}
For \(m_{i,h}\ge2\), let \(\mathsf V_{i,h}\) have rows \((1,r_{i,h,k}^{2s})\) and take \((\omega_{i,h,1},\ldots,\omega_{i,h,m_{i,h}})=e_1^\top\mathsf V_{i,h}^\dagger\), where \(e_1=(1,0)^\top\). Here \(\mathsf V_{i,h}^{\dagger}\) denotes the Moore--Penrose pseudoinverse, given for \(m_{i,h}\ge2\) by \((\mathsf V_{i,h}^{\top}\mathsf V_{i,h})^{-1}\mathsf V_{i,h}^{\top}\), since the distinct retained radii make \(\mathsf V_{i,h}\) full column rank. Equation~\eqref{eq:extrapolated-sample} is then the intercept of the least-squares fit in \(\{1,r^{2s}\}\); see \cite[Ch.~2]{BrezinskiRedivoZaglia1991}. Since \(\mathsf V_{i,h}\) has full column rank,
\[
\sum_k\omega_{i,h,k}=1,\qquad \sum_k\omega_{i,h,k}r_{i,h,k}^{2s}=0.
\]
For \(m_{i,h}=1\), set \(\omega_{i,h,1}=1\) and retain the single calibrated value. The superscript \(\mathrm{cal}\) in \(\widehat D_{i,h}^{\delta,\mathrm{cal}}\) denotes reference calibration with optional radius fitting.

\begin{lemma}[Consistency of calibrated and extrapolated samples]\label{lem:calibrated-extrapolated-sample}
Assume \(0<\gamma_0\le\gamma\le\gamma_1\) almost everywhere in \(\mathbb R^d\) and \(\sqrt\gamma\in C(\overline W)\). For every retained node and radius, suppose that
\[
|D_{i,h}(r)-\gamma(x_i)|\le\eta_{c,r}(h),\qquad |D^{(1)}_{i,h}(r)-1|\le\eta_{1,r}(h),\qquad D^{(1)}_{i,h}(r)\ge c_0>0,
\]
and \(|\mathcal D_h^\delta(g_{i,h,r},g_{i,h,r})-D_{i,h}(r)|\le\delta\), with \(c_0\) independent of \(i,h,r\). Then
\[
|\widehat D_{i,h}^\delta(r)-\gamma(x_i)|\le c_0^{-1}\bigl(\eta_{c,r}(h)+\gamma_1\eta_{1,r}(h)+\delta\bigr).
\]
If \(\sup_{i,h}\sum_k|\omega_{i,h,k}|\le C_\omega\), the nodal error
\begin{equation}\label{eq:def-Eh-cal-delta}
\widehat E_h^\delta:=\max_{1\le i\le N_h}|\widehat D_{i,h}^{\delta,\mathrm{cal}}-\gamma(x_i)|
\end{equation}
satisfies
\begin{equation}\label{eq:calibrated-extrapolated-error}
\widehat E_h^\delta\le\frac{C_\omega}{c_0}\max_{i,k}\bigl(\eta_{c,r_{i,h,k}}(h)+\gamma_1\eta_{1,r_{i,h,k}}(h)+\delta\bigr).
\end{equation}
\end{lemma}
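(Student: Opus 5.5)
The plan is to handle the single-radius estimate by an algebraic splitting of the quotient \eqref{eq:calibrated-sample}, and then to pass to the extrapolated sample \eqref{eq:extrapolated-sample} using only the affine constraint \(\sum_k\omega_{i,h,k}=1\) together with the \(\ell^1\)-bound on the weights. I will not use the second moment condition \(\sum_k\omega_{i,h,k}r_{i,h,k}^{2s}=0\) at all. The estimate is therefore a stability statement: any cancellation coming from the extrapolation is absorbed into \(\eta_{c,r}\).

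\textbf{Single radius.} Fix a retained node \(x_i\) and a radius \(r\), and abbreviate \(N:=\mathcal D_h^\delta(g_{i,h,r},g_{i,h,r})\) and \(D^{(1)}:=D^{(1)}_{i,h}(r)\ge c_0\). Then
\[
\widehat D_{i,h}^\delta(r)-\gamma(x_i)=\frac{N-\gamma(x_i)D^{(1)}}{D^{(1)}},
\]
and the numerator splits as
\[
N-\gamma(x_i)D^{(1)}=\bigl(N-D_{i,h}(r)\bigr)+\bigl(D_{i,h}(r)-\gamma(x_i)\bigr)+\gamma(x_i)\bigl(1-D^{(1)}\bigr).
\]
The three terms are bounded by \(\delta\), \(\eta_{c,r}(h)\) and \(\gamma_1\eta_{1,r}(h)\), respectively; the last bound uses \(\gamma(x_i)\le\gamma_1\). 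One point needs care: the hypothesis \(\gamma\le\gamma_1\) holds only almost everywhere, while \(x_i\) is a single point. The continuity assumption \(\sqrt\gamma\in C(\overline W)\) lets us take the continuous representative on \(\overline W\), so that \(\gamma(x_i)\in[\gamma_0,\gamma_1]\) for every node. Dividing by \(D^{(1)}\ge c_0\) gives the first claim.

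\textbf{Extrapolated sample.} Since \(\sum_k\omega_{i,h,k}=1\), we can write
\[
\widehat D_{i,h}^{\delta,\mathrm{cal}}-\gamma(x_i)=\sum_k\omega_{i,h,k}\bigl(\widehat D_{i,h}^\delta(r_{i,h,k})-\gamma(x_i)\bigr).
\]
Its absolute value is at most \(\sum_k|\omega_{i,h,k}|\) times the maximum over \(k\) of the single-radius errors. Inserting the first estimate and the assumption \(\sum_k|\omega_{i,h,k}|\le C_\omega\), and then taking the maximum over \(i\), yields \eqref{eq:calibrated-extrapolated-error}. The single-radius case \(m_{i,h}=1\) is included, with \(\omega_{i,h,1}=1\).

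\textbf{Main obstacle.} I expect no genuine obstacle, since the weight bound \(C_\omega\) is assumed rather than proved. The only delicate points are the pointwise interpretation of \(\gamma(x_i)\) discussed above, and the requirement that the lower bound \(c_0\) hold for every retained radius, so that each calibrated value is well defined.
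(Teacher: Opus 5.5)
Your proposal is correct and follows the paper's proof essentially verbatim: it uses the same three-term splitting of the numerator $D^{(1)}_{i,h}(r)\bigl(\widehat D_{i,h}^\delta(r)-\gamma(x_i)\bigr)$, and then preservation of constants ($\sum_k\omega_{i,h,k}=1$) together with the $\ell^1$ weight bound. You also observe that the continuity of $\sqrt\gamma$ on $\overline W$ is what makes the pointwise bound $\gamma(x_i)\le\gamma_1$ meaningful at the nodes; the paper leaves this small point implicit.
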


\begin{proof}
Subtract \(\gamma(x_i)\) from \eqref{eq:calibrated-sample} and use
\begin{align*}
D^{(1)}_{i,h}(r)\bigl(\widehat D_{i,h}^\delta(r)-\gamma(x_i)\bigr)
=\mathcal D_h^\delta(g_{i,h,r},g_{i,h,r})-D_{i,h}(r) +D_{i,h}(r)-\gamma(x_i)+\gamma(x_i)\bigl(1-D^{(1)}_{i,h}(r)\bigr).
\end{align*}
The assumed bounds prove the single-radius estimate. Equation~\eqref{eq:extrapolated-sample}, preservation of constants, and the weight bound imply \eqref{eq:calibrated-extrapolated-error}.
\end{proof}

If \(\max_{i,k}\eta_{1,r_{i,h,k}}(h)\to0\) and \(\sup_{i,h}\sum_k|\omega_{i,h,k}|<\infty\), then the reference denominators are eventually at least \(1/2\), and \eqref{eq:calibrated-extrapolated-error} gives \(\widehat E_h^\delta\lesssim\max_{i,k}(\eta_{c,r_{i,h,k}}+\eta_{1,r_{i,h,k}})+\delta\); distinct radii alone do not ensure the weight bound. Using \(\widehat{\mathbf d}_h^\delta=(\widehat D_{i,h}^{\delta,\mathrm{cal}})_i\) in \eqref{eq:def-J-h-lambda} therefore preserves Theorems~\ref{thm:recon-error} and~\ref{thm:linfty-regularized-reconstruction} with \(E_h^\delta\) replaced by \(\widehat E_h^\delta\), including the rate of Theorem \ref{cor:linfty-fully-discrete-rate} if the reference and concentration errors are \(O(\mathfrak c_h)\). Cancellation by the \(r^{2s}\)-fit requires a proved expansion with a radius-independent \(r^{2s}\) coefficient; Hölder regularity alone gives no such expansion, and the fit does not generally cancel an \(r^{2s}\log r\) term. If noise is added after extrapolation, the input vector has the form \(\widehat{\mathbf d}_h^0+\mathbf e_h\), where the superscript \(0\) denotes the vector before this additional perturbation. Its nodal error is at most \(\widehat E_h^0+\delta_{\rm p}\), where \(\delta_{\rm p}:=\|\mathbf e_h\|_{\ell^\infty}\) is the absolute amplitude of the additional sample noise.

\end{document}